\numberwithin{equation}{section}
\newtheorem{theorem}{Theorem}
\newtheorem{proposition}[theorem]{Proposition}
\newtheorem{lemma}[theorem]{Lemma}
\newtheorem{cor}[theorem]{Corollary}
\newtheorem{remark}{Remark}
\newtheorem{definition}{Definition}
\DeclareRobustCommand\widecheck[1]{{\mathpalette\@widecheck{#1}}}
\def\@widecheck#1#2{%
    \setbox\z@\hbox{\m@th$#1#2$}%
    \setbox\tw@\hbox{\m@th$#1%
       \widehat{%
          \vrule\@width\z@\@height\ht\z@
          \vrule\@height\z@\@width\wd\z@}$}%
    \dp\tw@-\ht\z@
    \@tempdima\ht\z@ \advance\@tempdima2\ht\tw@ \divide\@tempdima\thr@@
    \setbox\tw@\hbox{%
       \raise\@tempdima\hbox{\scalebox{1}[-1]{\lower\@tempdima\box
\tw@}}}%
    {\ooalign{\box\tw@ \cr \box\z@}}}
\newcommand{\ep}{\epsilon}
\DeclareMathOperator{\sech}{sech}
\DeclareMathOperator{\diag}{diag}
\DeclareMathOperator{\sgn}{sgn}
\DeclareMathOperator{\spn}{span}
\DeclareMathOperator{\Lip}{Lip}
\newcommand{\per}{{\text{per}}}
\newcommand{\ds}{\displaystyle}
\newcommand{\be}{\begin{equation}}
\newcommand{\ee}{\end{equation}}
\newcommand{\bes}{\begin{equation*}}
\newcommand{\ees}{\end{equation*}}
\newcommand{\mand}{\quad \text{and}\quad}
\newcommand{\R}{{\bf{R}}}
\newcommand{\C}{{\bf{C}}}
\newcommand{\Z}{{\bf{Z}}}
\newcommand{\N}{{\bf{N}}}
\newcommand{\h}{{\bf{h}}}
\renewcommand{\v}{{\bf{v}}}
\newcommand{\ub}{{\bf{u}}}
\newcommand{\f}{{\bf{f}}}
\newcommand{\jb}{{\bf{j}}}
\newcommand{\ib}{{\bf{i}}}
\newcommand{\pb}{{\bf{p}}}
\newcommand{\xb}{{\bf{x}}}
\newcommand{\yb}{{\bf{y}}}
\newcommand{\nb}{{\bf{n}}}
\newcommand{\gnb}{{\grave{\bf{n}}}}
\newcommand{\ga}{{\grave{a}}}
\newcommand{\tlambda}{\tilde{\lambda}}
\newcommand{\tmu}{\tilde{\mu}}
\newcommand{\tB}{\tilde{\B}}
\newcommand{\tbeta}{\tilde{\beta}}
\newcommand{\tvarpi}{\tilde{\varpi}}
\newcommand{\tzeta}{\tilde{\zeta}}
\newcommand{\txi}{\tilde{\xi}}
\newcommand{\tgamma}{\tilde{\gamma}}
\newcommand{\Fo}{{\mathfrak{F}}}
\newcommand{\X}{{\mathcal{X}}}
\renewcommand{\O}{{\mathcal{O}}}
\newcommand{\A}{{\mathcal{A}}}
\newcommand{\B}{{\mathcal{T}}}
\renewcommand{\P}{{\mathcal{P}}}
\newcommand{\thetab}{{\boldsymbol \theta}}
\newcommand{\gthetab}{\grave{\thetab}}
\newcommand{\varphib}{{\boldsymbol \varphi}}
\newcommand{\sigmab}{{\boldsymbol \sigma}}
\newcommand{\phib}{{\boldsymbol \phi}}
\newcommand{\psib}{{\boldsymbol \psi}}
\newcommand{\Psib}{{\boldsymbol \Psi}}
\newcommand{\etab}{{\boldsymbol \eta}}
\newcommand{\nub}{{\boldsymbol \nu}}
\newcommand{\upsilonb}{{\boldsymbol \upsilonb}}
\renewcommand{\H}{{\mathcal{H}}}
\renewcommand{\tilde}{\widetilde}
\renewcommand{\hat}{\widehat}
\renewcommand{\check}{\widecheck}
\newcommand{\ball}{\mathcal{B}}
\newcommand{\norm}[1]{\|#1\|}
\newcommand{\normlarge}[1]{\left\|#1\right\|}
\newcommand{\ortho}{\mathcal{Z}}
\newcommand{\Y}{\mathcal{Y}}
\newcommand{\bF}{{\bf{F}}}
\newcommand{\bdop}{{\bf{B}}}
\newcommand{\Phib}{\boldsymbol \Phi}
\newcommand{\ip}[2]{\left\langle#1,#2\right\rangle}
\newcommand{\cep}{c_{\ep}}
\renewcommand{\O}{{\mathcal{O}}}
\newcommand{\bG}{{\bf{G}}}
\title[Diatomic FPUT solitary waves with optical ripples]{Exact diatomic Fermi-Pasta-Ulam-Tsingou solitary waves with optical band ripples at infinity}
\author{Timothy E. Faver}\address{Department of Mathematics, Drexel University, 3141 Chestnut St, Philadelphia, PA 19104}
\author{J. Douglas Wright}\address{Department of Mathematics, Drexel University, 3141 Chestnut St, Philadelphia, PA 19104}
\begin{document}
\keywords{FPU, FPUT, nonlinear hamiltonian lattices, periodic traveling waves, solitary traveling waves, singular perturbations, homogenization, heterogenous granular media, dimers, polymers, nanopeterons}
\thanks{The authors would like acknowledge the National Science Foundation which has generously supported
the work through grants DMS-1105635 and DMS-1511488. A debt of gratitude also goes to Nsoki Mavinga and the  Department of Mathematics and Statistics at Swarthmore College who hosted JDW during much of the research which went into this document. 
Additionally, they would like to thank Aaron Hoffman for a huge number of helpful comments and insights.
Finally, they dedicate this article to the memory of Malinda Gilchrist, graduate coordinator of Drexel's math department, colleague, navigator and friend.
}

\begin{abstract}
We study the existence of solitary  waves in a diatomic Fermi-Pasta-Ulam-Tsingou (FPUT) lattice. 
 For monatomic FPUT the 
 traveling wave equations are a regular perturbation of the Korteweg-de Vries (KdV) equation's
 but, surprisingly, we find that for the diatomic lattice the traveling wave equations are a {\it singular} perturbation of KdV's.
Using a method first developed by Beale to study traveling solutions for capillary-gravity waves we 
demonstrate that for wave speeds in slight excess of the lattice's speed of sound
there exists nontrivial traveling wave solutions which are the superposition an exponentially localized solitary wave  and a periodic wave whose amplitude is extremely small.  That is to say, we construct nanopteron solutions. The presence of the periodic wave is an essential part of the analysis and is connected to the fact that
linear diatomic lattices have optical band waves with any possible phase speed.
\end{abstract}

\maketitle

We consider the problem of traveling waves in a diatomic Fermi-Pasta-Ulam-Tsingou (FPUT) lattice. The physical situation is this:
suppose that infinitely many particles are arranged on a line. The mass of the $j$th (where $j \in \Z$) particle
is 
$$
m_j=\begin{cases} m_1 &\text{ when $j$ is odd} \\ m_2 &\text{when $j$ is even.} \end{cases}
$$ 
Without loss of generality we assume that $m_1 > m_2>0$. The position of the $j$th particle at time $\bar{t}$ is $\bar{y}_j(\bar{t})$.
Suppose that each mass  is connected to its two nearest neighbors by a spring and furthermore assume that each spring is identical to every other spring in the sense that the force exerted by said spring when stretched by an amount $r$ from its equilibrium length $l_s$ is given by
$$
F_s( r ) := - k_s r - b_s r^2
$$
where $k_s>0$ and $b_s \ne 0$ are specified constants. 
Such a system is called a ``diatomic lattice" or ``dimer."
Newton's law gives the equations of motion
for the system:
\be\label{nl}
m_j  {d^{2} \bar{y}_j \over d \bar{t}^2} = - k_s \bar{s}_{j-1} - b_s\bar{s}_{j-1} + k_s\bar{s}_j + b_s \bar{s}^2_j
\ee
where
$
\bar{s}_j:=\bar{y}_{j+1}-\bar{y}_j - l_s.
$

In the setting where $m_1=m_2$ it is well-known that there exist\footnote{This result is true for much more general (but still spatially homegeneous) forms of the spring force than the one we have here.}  localized traveling wave solutions of \eqref{nl}, see the seminal articles of Friesecke \& Wattis \cite{friesecke-wattis} and 
Friesecke \& Pego \cite{friesecke-pego1}.
Here we are interested in extending this result to the diatomic case where $m_1>m_2$. There has been quite a bit of interest in the propagation of
waves through polyatomic FPUT lattices. Such systems represent a paradigm for the evolution of waves through heterogeneous and nonlinear granular media (see \cite{brillouin} and \cite{kevrekidis} for an overview).
There are several existence proofs for traveling spatially periodic waves for polyatomic problems \cite{betti-pelinovsky} \cite{qin}, a  number of semi-rigourous  asymptotics for solitary wave solutions in various contexts \cite{jayaprakash}, as well as both formal and rigorous results which state that polyatomic FPUT with periodic material coefficients is well-approximated by the soliton bearing Korteweg-de Vries (KdV) equation over very long time scales \cite{pnevmatikos} \cite{chirilus-bruckner-etal} \cite{gaison-etal}. While all of this previous work strongly suggests that localized traveling waves for polyatomic FPUT will exist, the question of whether a truly localized traveling wave akin to those developed in \cite{friesecke-pego1} remains open.
In this article we demonstrate that the answer to this question---at least for waves  which travel at a speed just a bit larger than the speed of sound---is 
``sort of."

As it happens, the existence problem is inescapably singular. This is particularly surprising because
the existence proof of small amplitude solitary waves for monatomic FPUT in \cite{friesecke-pego1} 
goes through using regular perturbation methods.
In that article,
the equation for the solitary wave's profile $\phi$ is shown to be equivalent to
 $\phi = p_c (\phi^2)$ where $p_c$ is a Fourier multiplier operator and $c$ is the speed of propagation. Making the ``long wave scaling" $\phi(x) = \ep^2 \Phi(\ep x)$ and $c=c_{sound} + \ep^2$ yields $\Phi = P_\ep (\Phi^2)$. The hinge on which their result turns is the fact that the operator $P_\ep$ converges in the operator norm to $(1-\partial_X^2)^{-1}$ as $\ep \to 0^+$.
Which means that the traveling wave equation at $\ep =0$ can be rewritten as $\Phi'' - \Phi + \Phi^2 =0$. This is the traveling wave equation for KdV and has $\sech^2$ type solutions.
Moreover, using classical results from quantum mechanics, they show that the linearization of the equation $\Phi = P_0 (\Phi^2)$ at the KdV solitary
wave results in an operator which is  invertible on even functions. This, with the uniform convergence of $P_\ep$, allows them to extend the wave's existence to $\ep > 0$  using a quantitative inverse function theorem.

This process goes awry in the diatomic setting. In this case the dispersion relation for the linearization of \eqref{nl} has two parts. The ``acoustic" band, which is more or less just like the dispersion relation for the monatomic problem, and the ``optical" band\footnote{The language ``acoustic" and ``optical" bands is taken from Brillouin's 
foundational text on wave propagation in periodic media \cite{brillouin}. This book contains a detailed discussion
of  waves in linear diatomic lattices and in particular contains an excellent treatment of the dispersion relation.} which does not exist in the monatomic problem at all.
Roughly speaking, we are able to decompose our problem into a pair of equations, 
one for the acoustic part and another for the optical part. The analysis for the acoustic part goes forward
along lines much like in the  monatomic case of \cite{friesecke-pego1}; it limits to a KdV  traveling wave equation.
On the other hand,  the equation for the optical part is classically singularly perturbed in the sense that the highest derivative 
of the unknown is multiplied by the small parameter $\ep$.

The possible outcomes for singularly perturbed problems like this is pretty vast, of course. Many of the approaches for sussing out the consequences are either geometric or dynamical in nature. Our problem is nonlocal and as such it is not obvious to us how to use, say, geometric singular perturbation theory (as in  \cite{alexander-etal} for instance) or fast-slow averaging ({\it e.g.}  \cite{benbachir-etal}) to our setting. And so we turn to the functional analytic approach developed by Beale to prove the existence of solitary capillary-gravity waves in his staggering article \cite{beale2}.

Here is what we discover:
\begin{theorem}\label{main result nontech} Suppose that $m_1>m_2>0$. For wavespeeds $c$ sufficiently close to, but larger than, the speed of sound of the lattice, $c_{sound} := \sqrt{2k_s/(m_1+m_2)}$, there exist traveling wave solutions of the diatomic FPUT problem which are  the superposition of two pieces. 
One piece is a nonzero exponentially localized function that is a small perturbation of a $\sech^2$ profile which, in turn, solves a KdV traveling wave equation. This whole localized piece has amplitude roughly proportional to $\left(c-c_{\textrm{sound}}\right)$
and has wavelength roughly proportional to $\left(c-c_{\textrm{sound}}\right)^{-1/2}$.
The other piece is a periodic function, called a ``ripple." The frequency of the ripple is $\O(1)$ when compared to $\left(c-c_{\textrm{sound}}\right)$. Its amplitude is small beyond all orders of $\left(c-c_{\textrm{sound}}\right)$.  
\end{theorem}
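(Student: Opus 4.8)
The plan is to reduce the lattice equations \eqref{nl} to a system of traveling-wave profile equations, then to exploit a long-wave rescaling in the spirit of Friesecke--Pego \cite{friesecke-pego1} together with the nanopteron machinery of Beale \cite{beale2}. First I would nondimensionalize \eqref{nl} (scaling out $k_s$, $l_s$, and a reference mass) and look for solutions in which the spring stretches $\bar{s}_j$ depend only on $j-c\bar{t}$ up to the diatomic parity; because $m_j$ has period two in $j$, this ansatz is carried by a vector-valued profile $\varrho(x)$ solving a system in which the discrete Laplacian is replaced by the shifts $e^{\pm\partial_x}$. Eliminating one component leaves a scalar nonlocal equation of the schematic form $c^2\mathcal{D}^2\varrho = \mathcal{A}_c\varrho + (\text{quadratic in }\varrho)$, where $\mathcal{A}_c$ encodes the diatomic linear coupling and whose symbol, set to zero, is exactly the two-branch dispersion relation. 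Diagonalizing the linear part then splits $\varrho$ into an acoustic component and an optical component.

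Next I would introduce the small parameter by $c = c_{sound} + \epsilon^2$ (up to constants) and the long-wave variables $\varrho \sim \epsilon^2\Phi(\epsilon x)$ on the acoustic piece. On that piece the rescaled Fourier multiplier converges in operator norm, as $\epsilon \to 0^+$, to $(1-\partial_X^2)^{-1}$, so the limiting equation is the KdV traveling-wave ODE $\Phi'' - \Phi + \Phi^2 = 0$ with its $\sech^2$ solution, whose linearization about that $\sech^2$ is invertible on even functions by the classical quantum-mechanical fact used in \cite{friesecke-pego1}. On the optical piece the rescaled multiplier does \emph{not} vanish at zero frequency, and after the long-wave scaling the top derivative carries a factor $\epsilon^2$: this is the singular part. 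Its symbol, of the form $-\epsilon^2 c^2\xi^2 + \omega_{\mathrm{opt}}^2(\epsilon\xi)$, vanishes only near a fixed resonant wavenumber $\xi \approx k^\ast/\epsilon$ with $k^\ast = k^\ast(c)$ converging to a positive constant; hence the optical operator is boundedly invertible once one projects away a neighborhood of this resonance, but not otherwise. This is precisely why a periodic ripple must be admitted.

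I would then posit the solution as (localized acoustic piece $+$ localized optical correction) $+$ (periodic ripple of wavenumber near $k^\ast$, amplitude $a$, and phase $\theta$ to be determined), let $\Pi$ be the spectral projection onto the resonant Fourier mode, and split the traveling-wave equation into its $(1-\Pi)$ and $\Pi$ parts. Working in an exponentially weighted Sobolev space for the localized unknowns crossed with this one-parameter family of ripples, I would solve the $(1-\Pi)$ ``bulk'' equation for each small $a$ by a quantitative implicit function theorem: the acoustic block is controlled by invertibility of the KdV linearization plus the uniform multiplier convergence, the optical block by the $\epsilon$-uniform invertibility of $(1-\Pi)$ composed with the optical operator, and the off-diagonal couplings and the ripple feedback are higher order. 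This yields, for $\epsilon$ and $|a|$ small, a unique localized solution depending smoothly on $(\epsilon,a,\theta)$ and $O(\epsilon)$-close to the rescaled $\sech^2$. It then remains to satisfy the scalar $\Pi$ equation: its forcing is the projection onto wavenumber $\approx k^\ast$ of the quadratic nonlinearity built from the (analytic-in-a-strip, scale-$\epsilon^{-1}$) bulk solution, whose Fourier transform at the $O(1)$ wavenumber $k^\ast$ is $O(e^{-\gamma/\epsilon})$ by a contour-shift estimate; balancing this against the nondegenerate leading (linear in $a$) term of the $\Pi$ equation, after choosing $\theta$ to kill the relative phase, determines $a = a(\epsilon)$ with $|a(\epsilon)| = O(e^{-\gamma/\epsilon})$, i.e. a ripple small beyond all orders of $c - c_{sound}$.

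The hard part will be this last stage carried out rigorously and uniformly: simultaneously controlling the exponentially small soliton-to-ripple coupling, tracking the persistence and simplicity of the resonant wavenumber $k^\ast$ under all the perturbations, and establishing invertibility of the projected optical operator in the weighted spaces with constants independent of $\epsilon$ --- together with the bookkeeping that lets the two disparate scales, $\epsilon^{-1}$ for the soliton and $O(1)$ for the ripple, coexist inside a single fixed-point argument. By comparison, the acoustic analysis is a fairly direct adaptation of \cite{friesecke-pego1}, and the abstract skeleton of the decomposition is modeled on \cite{beale2}; the novelty and the effort are in adapting both to the nonlocal, lattice, and doubly-singular structure of the diatomic problem.
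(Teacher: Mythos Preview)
Your outline is essentially the paper's own strategy: nondimensionalize, make the traveling-wave ansatz, diagonalize into acoustic and optical parts, long-wave rescale, recover KdV on the acoustic side, identify the optical resonance at $K_\ep=O(1/\ep)$, impose Beale's ansatz (localized piece plus periodic ripple), and close by a Lyapunov--Schmidt/fixed-point argument in exponentially weighted spaces. Two departures from the paper are worth flagging.

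First, the paper does \emph{not} use a free phase $\theta$ to kill one of the two resonance conditions. Instead it exploits a structural symmetry: with a carefully chosen normalization $\tgamma$ of the diagonalizing matrices, the map preserves the subspace $\{\theta_1\text{ even},\ \theta_2\text{ odd}\}$, so only the single condition $\iota_\ep[g]=\int g(X)\sin(K_\ep X)\,dX=0$ survives. Relatedly, the periodic ripple is not a bare Fourier mode but a member of a full nonlinear periodic family $a\varphib^a_\ep$ that the paper constructs beforehand (with $\ep$-uniform estimates via a quantitative Crandall--Rabinowitz argument); this is needed because the solution must converge to an \emph{exact} solution at infinity.

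Second, and more important, your claimed bound $|a(\ep)|=O(e^{-\gamma/\ep})$ via a contour shift overreaches. That argument requires the localized bulk solution to be analytic in a horizontal strip in the long-wave variable $X$; the fixed-point argument delivers $\etab\in H^r_q$ for every $r$ (smooth and exponentially decaying), but not analyticity in a strip, so the Fourier transform at $K_\ep=O(1/\ep)$ is only controlled by the Riemann--Lebesgue estimate $|\iota_\ep[f]|\le C\ep^r\|f\|_{r,q}$. The paper accordingly proves only $|a_\ep|\le C_r\ep^r$ for every $r$, obtained by bootstrapping the smoothness of $\etab$ against this estimate; exponential smallness is stated as an expectation, not a theorem. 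Since the statement to be proved asks only for ``small beyond all orders,'' your plan still succeeds once you replace the contour-shift step by this Riemann--Lebesgue bootstrap.
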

As we shall see,  the periodic part 
is fundamentally tied to the optical branch of the dispersion relation.
Moreover we expect that the periodic part is exponentially small in $\left(c-c_{\textrm{sound}}\right)$. 
Solutions of this type---a localized piece plus an extremely small oscillatory part\footnote{Or, equivalently, a heteroclinic connection between small amplitude periodic orbits.}---are sometimes called {\it nanopterons} \cite{boyd}.

It is because the solutions we discover do not converge to zero at spatial infinity that we 
were cagey about our answer to the existence question earlier; our result raises as many questions as it answers. Chief of these is whether or not the ripple at infinity is genuine or merely a technical byproduct of our proof. After all, we do not provide lower bounds on its size, only upper bounds; perhaps the amplitude is zero! While we do not have the right sort of estimates at this time to answer this question either way we point out that Sun, in \cite{sun}, showed that the ripple for the capillary-gravity waves studied in \cite{beale2} was in fact non-zero. 
And so we conjecture that the same happens here, at least for almost all wave speeds.

This article is structured in the following manner. 
\begin{itemize}
\item
In the next section we nondimensionalize \eqref{nl} and rewrite the resulting system in terms of the relative displacements.
\item In Section \ref{TWE} we make the traveling wave ansatz and get the traveling wave equations. We then diagonalize the resulting system using Fourier methods. 
It is during the diagonalization that the structure of the branches of the dispersion relation becomes apparent. 
Then we make a useful ``long wave" rescaling.
It is during this part that  the singular nature of the problem comes into sight.
\item In Section \ref{LWL} we analyze the rescaled system in the limit where $c = c_{sound}$; we find that the problem in this case reduces to a single KdV traveling wave equation.
\item In Section \ref{PS} we construct exact traveling wave solutions which are spatially periodic. These will ultimately be the ripples. A major difference between our results and the extant existence results for periodic traveling waves in polyatomic FPUT (\cite{qin},\cite{betti-pelinovsky}) is that we prove 
estimates on their size and frequency which are uniform in the speed $c$. This is done using a Crandall-Rabinowitz-Zeidler bifurcation analysis \cite{crandall-rabinowitz} \cite{zeidler1}.
\item In Section \ref{N} we make what we call ``Beale's ansatz." That is,  we assume the solution is  the superposition of (a) the KdV solitary wave profile from Section \ref{LWL}, (b)  a periodic solution from Section \ref{PS} with unknown amplitude and (c) a small, localized remainder. We then derive  equations for the remainder and the amplitude of the periodic part. This derivation can be viewed as Liapunov-Schmidt decomposition, albeit a somewhat atypical one. 
\item In Section \ref{EU}  we state the main estimates we need and then, given those estimates, prove our main results  using a modified contraction mapping argument. 
Specifically we prove Theorem \ref{main result} and Corollary \ref{main result descale}, which are the technical versions of Theorem~\ref{main result nontech}.
\item  Sections \ref{BE}, \ref{NE} and \ref{proofs} contain the proof of the main estimates; these are the technical heart of the paper. 
\item
Finally Section \ref{CR} presents some comments on our results, avenues for further investigation and concluding remarks.
\end{itemize}
\section{Nondimensionalization and the equations for relative displacements.}
We can simplify \eqref{nl} somewhat  by putting $\bar{y}_j = \bar{x}_j - j l_s$. Then \eqref{nl} is equivalent to
\be\label{nl2}
m_j  {d^{2} \bar{x}_j \over d \bar{t}^2} = - k_s \bar{r}_{j-1} - b_s\bar{r}_{j-1} + k_s\bar{r}_j + b_s \bar{r}^2_j
\ee
where
$
\bar{r}_j:=\bar{x}_{j+1}-\bar{x}_j .
$
Note  that the system is in equilibrium when $\bar{x}_j = 0$ for all $j$. 

Next we nondimensionalize by taking
$
\bar{x}_{j}(\bar{t}) = a_1 x_j(a_2\bar{t}) 
$
where $a_1,a_2$ are nonzero constants.
This converts \eqref{nl2} to
\bes
m_j  a_2^2 {d^{2} {x}_j \over d {t}^2} = - k_s {r}_{j-1} - a_1 b_s{r}_{j-1} + k_s{r}_j + a_1 b_s {r}^2_j
\quad \text{where} \quad
r_j = x_{j+1} - x_j.
\ees
Note that here $t = a_2 \bar{t}$. 

Selecting $a_1$ and $a_2$ such that
$
m_1 a_2^2 = k_s$ and $a_1 b_s = k 
$
yields
\be\begin{split}\label{dimer newton}
\ddot{x}_j  &= -r_{j-1} - r_{j-1}^2 + r_j + r_{j}^2\quad \text{when $j$ is odd}\\
{1 \over w} \ddot{x}_j  &= -r_{j-1} - r_{j-1}^2 + r_j + r_{j}^2\quad \text{when $j$ is even}.
\end{split}\ee
In the above,
\bes\label{this is w}
w:={m_1 \over m_2}>1\ees 
because $m_1>m_2$.

It is both traditional and technically advantageous to express the equations of motion for lattices in terms of the relative displacements, $r_j$, instead of in the displacements from equilibrium, $x_j$. We find that
\be\begin{split}\label{r eqn}
\ddot{r}_j&=-(1+w)(r_j+r_j^2) + w(r_{j+1}+r_{j+1}^2) + (r_{j-1}+r_{j-1}^2)\quad \text{when $j$ is odd}\\
\ddot{r}_j&=-(1+w)(r_j+r_j^2) + (r_{j+1}+r_{j+1}^2) + w(r_{j-1}+r_{j-1}^2)\quad \text{when $j$ is even.}\\
\end{split}\ee


\section{Derivation of the traveling wave equations}\label{TWE}
We are interested in traveling wave solutions
and so  we make the  ansatz
\be\label{ansatz}
r_j(t) = \begin{cases}p_1(j-ct) &\text{ when $j$ is odd} \\ p_2(j-ct) &\text{when $j$ is even.} \end{cases}
\ee 
Here $c \in \R$ is the wave speed and  $p_1, p_2:\R \to \R$.
Putting this into \eqref{r eqn} gives us the following advance-delay-differential system of equations for $p_1$ and $p_2$:
\be\begin{split}\label{p eqn}
c^2{p}_1''&=-(1+w)(p_1+p_1^2) + wS^{1}(p_{2}+p_{2}^2) + S^{-1}(p_2+p_{2}^2)\\
c^2p''_2&=-(1+w)(p_2+p_2^2) + S^1(p_1+p_1^2) + wS^{-1}(p_1 + p_1^2).
\end{split}\ee
Above, $S^d$ is the ``shift by $d$" operator. Specifically:
$$
S^{d} f(\cdot):= f(\cdot+d).
$$

If we let
\bes
L:=\left[ \begin{array}{cc} 
1+w & -(w S^1 + S^{-1})\\
-(w S^{-1} + S^1) & 1+w
\end{array}
\right]
\ees
then we can compress\footnote{Note the for $2$-vectors $\xb$ and $\yb$ we use the notational convention that $\xb.\yb$ is component-wise multiplication. Likewise $\xb^{.2}$ is component-wise squaring. } \eqref{p eqn} to 
\be\label{p eqn small}
c^2 \pb'' + L(\pb + \pb^{.2}) = 0.
\ee

\subsection{Diagonalization of the linear part}
We can diagonalize \eqref{p eqn small} using Fourier analysis and the first step is to compute the action of $L$ on complex exponentials. 
We find that for any vector $\v \in \R^2$ and $k \in \R$ that
$$
L [ e^{ikx} \v] = [\tilde{L}(k) \v] e^{ikx}
$$
where
$$
\tilde{L}(k):=\left[ \begin{array}{cc} 1+w & -\tilde{\beta}(k) \\  -\tilde{\beta}(-k) & 1+w \end{array} \right]
\mand
\tilde{\beta}(k) := w e^{ik} + e^{-ik}.
$$

A routine calculation shows that the eigenvalues of $\tilde{L}(k)$ are given by
\be\label{this is lambda}
\tilde{\lambda}_\pm(k):=1+w\pm\tilde{\varrho}(k)\quad \text{where} \quad \tilde{\varrho}(k):= \sqrt{
(1 - w)^2 +  {4 w} \cos^2(k)
}.
\ee
The following lemma contains many of the properties of $\tilde{\lambda}_\pm(k)$ we will need (the proof is in Section \ref{proofs}).
\begin{lemma}\label{lambda lemma} The following hold for all $w > 1$.
\begin{enumerate}[(i)]
\item $\tilde{\lambda}_-(0) = 0$ and $\tilde{\lambda}_+(0) = 2 + 2w.$
\item There exists $\tau_0 > 0$ such that $\tilde{\lambda}_\pm(z)$ are uniformly bounded complex analytic functions\footnote{We extend $\tlambda_\pm(k)$ and $\tilde{\varrho}(k)$ to functions of  $z = k + i \tau \in \C$ in a simple way by using the extension of cosine to complex inputs.} in the closed strip $\overline{\Sigma}_{\tau_0}:=\left\{ z \in \C : |\Im z| \le \tau_0 \right\}$.
\item $\tilde{\lambda}_\pm(z)$ are even and $\tilde{\lambda}_\pm(z+\pi) = \tilde{\lambda}_\pm(z)$ for all $z \in \overline{\Sigma}_{\tau_0}$.
\item For all $k\in \R$ we have
\be \label{lambda bounds} 
0\le \tilde{\lambda}_-(k) \le 2<2w \le \tilde{\lambda}_+(k)  \le 2 + 2 w. 
\ee
\item For all $k \in \R$ we have \be\label{lambda derivative bounds}
|\tilde{\lambda}'_\pm(k)|\le2 \mand |{\tlambda'_\pm}(k)| \le 2c_w^2|k|
\ee
where
$$
c_w:=\sqrt{{1 \over 2} \lambda_-''(0)}=\sqrt{{2 w \over 1+w}}=\text{``the (nondimensionalized) speed of sound."}
$$
Additionally $c_w>1$.
\item There exists $c_- \in (0,1)$ and $l_0>0$ such that for all $c \ge c_-$
there exists a unique nonnegative  $k_c$ for which \be\label{this is kc}
c^2 k_c^2 - \tlambda_+(k_c) =0.\ee
Moreover \be \label{size of kc} k_c \in [\sqrt{2w}/c,\sqrt{2+2w}/c]\ee and
\be\label{this is l0}
| 2 c^2 k_c - \tilde{\lambda}_+(k_c)|\ge l_0.
\ee Lastly, the map $c \mapsto k_c$ is $C^\infty$.

\end{enumerate}
\end{lemma}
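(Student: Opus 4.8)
\medskip
\noindent\textbf{Proof proposal.} Parts (i), (iii) and (iv) will just be read off from the explicit formula $\tlambda_\pm(k)=1+w\pm\tilde\varrho(k)$, where $\tilde\varrho(k)^2=(1-w)^2+4w\cos^2 k$. For (i), setting $k=0$ and using $(1-w)^2+4w=(1+w)^2$ gives $\tilde\varrho(0)=1+w$, hence $\tlambda_-(0)=0$ and $\tlambda_+(0)=2+2w$. Part (iii) is inherited from the evenness and $\pi$-periodicity of $\cos^2$, and these persist on the strip once $\cos$ is replaced by its entire extension (by analytic continuation). For (iv), $\cos^2 k\in[0,1]$ on $\R$ forces $\tilde\varrho(k)\in[w-1,w+1]$, and substituting into $\tlambda_\pm=1+w\pm\tilde\varrho$ yields \eqref{lambda bounds}, the strict inequality $2<2w$ being precisely the hypothesis $w>1$.

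For (ii), the only obstruction to analyticity of $\tilde\varrho$ is the branch point of the square root, so the plan is to keep the radicand $(1-w)^2+4w\cos^2 z$ out of $(-\infty,0]$ on a sufficiently thin strip. Writing $z=k+i\tau$, a short computation gives $\operatorname{Re}(\cos^2 z)=\cos^2 k+\sinh^2\tau\,\cos 2k\ge-\sinh^2\tau$, so $\operatorname{Re}\big[(1-w)^2+4w\cos^2 z\big]\ge(w-1)^2-4w\sinh^2\tau$, which is strictly positive as soon as $4w\sinh^2\tau_0<(w-1)^2$. On such a strip $\cos z$, hence the radicand, is bounded, and the radicand also stays a fixed positive distance from $(-\infty,0]$; therefore the principal square root $\tilde\varrho$, and with it $\tlambda_\pm$, is a bounded analytic function on $\overline{\Sigma}_{\tau_0}$.

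For (v), differentiating the identity for $\tilde\varrho^2$ yields $\tlambda_\pm'(k)=\pm\tilde\varrho'(k)=\mp\,2w\sin(2k)/\tilde\varrho(k)$. Putting $s:=\sin^2 k$ and using $\tilde\varrho(k)^2=(1+w)^2-4ws$ together with $\sin^2 2k=4s(1-s)$, the bound $|\tlambda_\pm'(k)|\le 2$ collapses to the perfect-square inequality $\big((1+w)-2ws\big)^2\ge 0$; and the bound $|\tlambda_\pm'(k)|\le 2c_w^2|k|$ follows from $|\sin 2k|/(2|k|)=|\cos k\sin k|/|k|\le|\cos k|$ combined with $\big[(1+w)^2-4w\big]\cos^2 k=(1-w)^2\cos^2 k\le(1-w)^2\le\tilde\varrho(k)^2$. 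Taylor expanding $\tlambda_-$ at $0$ — where $\tlambda_-(0)=\tlambda_-'(0)=0$ and $\tlambda_-''(0)=-\tilde\varrho''(0)=4w/(1+w)$ — then identifies $\tfrac12\tlambda_-''(0)=2w/(1+w)=c_w^2$, and $c_w>1\iff 2w>1+w\iff w>1$.

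For (vi), set $G_c(k):=c^2k^2-\tlambda_+(k)$. By (iv), $G_c(0)=-(2+2w)<0$ while $G_c(k)\to+\infty$ as $k\to\infty$, so a positive zero exists; moreover $G_c'(k)=2c^2k-\tlambda_+'(k)\ge 2c^2k-2>0$ for $k>1/c^2$, whereas on $[0,1/c^2]$ one has $G_c(k)\le 1/c^2-2w<0$ as soon as $c^2>1/(2w)$. Choosing any $c_-\in\big(1/\sqrt{2w},1\big)$ — a nonempty interval since $w>1$ — therefore gives, for every $c\ge c_-$, a unique nonnegative zero $k_c$, forced into $(1/c^2,\infty)$, and the enclosure \eqref{size of kc} drops out of $c^2k_c^2=\tlambda_+(k_c)\in[2w,2+2w]$. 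The non-degeneracy estimate \eqref{this is l0}, i.e.\ a uniform lower bound $l_0$ on $|\partial_k G_c(k_c)|=|2c^2k_c-\tlambda_+'(k_c)|$, follows from $2c^2k_c-\tlambda_+'(k_c)\ge 2c^2k_c-2\ge 2c\sqrt{2w}-2\ge 2c_-\sqrt{2w}-2=:l_0>0$ (using $c^2k_c\ge c\sqrt{2w}$ from \eqref{size of kc}); and since $G_c$ is jointly smooth in $(c,k)$ by (ii) and $\partial_k G_c(k_c)\ne 0$, the implicit function theorem makes $c\mapsto k_c$ of class $C^\infty$. I expect the only genuinely delicate steps to be the two sharp derivative bounds in (v), where getting the constants $2$ and $2c_w^2$ exactly right hinges on spotting the algebraic identity $(1+w)^2-4w=(1-w)^2$ and the perfect square $\big((1+w)-2w\sin^2 k\big)^2$, and — more mildly — calibrating the strip width $\tau_0$ in (ii); the remaining parts are routine bookkeeping with the closed form of $\tlambda_\pm$.
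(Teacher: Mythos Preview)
Your proof is correct and matches the paper's approach essentially verbatim: the strip-width calculation in (ii) is algebraically identical (the paper writes the real part via $1+w^2+2w\cos(2k)\cosh(2\tau)$, which equals your $(1-w)^2-4w\sinh^2\tau$ after the identity $\cosh 2\tau=1+2\sinh^2\tau$), and in (vi) the paper fixes the explicit value $c_-=5/(4\sqrt{2})$ and bounds $G_c'$ on $[\sqrt{2w}/c,\infty)$ where you instead allow any $c_-\in(1/\sqrt{2w},1)$ and split at $1/c^2$. As a bonus, your perfect-square argument $((1+w)-2w\sin^2 k)^2\ge 0$ for the bound $|\tlambda_\pm'|\le 2$ in (v) fills in a step the paper simply declares ``simpler and omitted.''
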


\begin{remark} The eigenvalues $\tlambda_\pm(k)$ 
are tied to the dispersion relation for \eqref{r eqn}. 
To be precise, we have plane wave solutions for the linearization of \eqref{r eqn} at $r = 0$ 
of the form
$$
r_j(t) = \begin{cases}v_1 e^{i(k j - \omega t)} & \text{when $j$ is odd}\\
v_2 e^{i(k j - \omega t)}  & \text{when $j$ is even}
\end{cases}
$$
if and only if $\omega$ and $k$ satisfy the dispersion relation
\be\label{dr}
(\omega^2 - (1+w))^2 - \tbeta(k)\tbeta(-k)=0
\ee
and $(v_1,v_2)^t$ is a an appropriately chosen eigenvector of $\tilde{L}(k)$. (See, for instance,  \cite{brillouin}.)

The set of such $\omega$ and $k$ which meet \eqref{dr} has two connected components with $\omega\ge0$. (It is obviously even in $\omega$.) These are
\bes\begin{split}
\omega &= \sqrt{\tlambda_-(k)}, \quad \text{in which case we say $\omega$ is on the ``acoustic" branch of \eqref{dr},}\\
\text{and}\quad \omega &= \sqrt{\tlambda_+(k)}, \quad \text{in which case we say $\omega$ is on the ``optical" branch.}
\end{split}\ees

In the monatomic problem, the dispersion relation has but one branch and is akin to $\omega^2 -\sin^2(k)=0$, see \cite{friesecke-pego1}. Plotting $\tlambda_-(k)$ and $\sin^2(k)$ will show that the two functions are qualitatively much alike. The phase speed of the monatomic problem is maximum at $k = 0$; this maximum speed is called the speed of sound. The same is true for the phase speed
of waves associated to the acoustic branch. Specifically,  corollary to the second estimate  in \eqref{lambda derivative bounds} is the fact that that the phase speed, $\omega/k$, of plane waves associated to the acoustic branch is no bigger than $c_w$.

The solitary waves for the monatomic problem have  speeds which are strictly supersonic; the reason for this is discussed further below. In any case, the nonlinear waves move faster than the linear waves.\footnote{This fact is crucial to the proof of the stability of the solitary waves in \cite{friesecke-pego2} \cite{friesecke-pego3} \cite{friesecke-pego4} and \cite{hoffman-wayne2}.}
But in the dimer problem we consider there are optical branch linear waves with all possible phase speeds, as can be seen by plotting $\sqrt{\tlambda_+(k)}/k.$
The point is this: if we search for a localized acoustic wave that travels with a supersonic speed $c>c_w$ then there will necessarily be an optical branch linear wave whose phase is exactly $c$. 
 See Figure~\ref{sketch}.
Equating the
solitary wave's speed to the optical phase speed yields the relation 
$
c^2 k^2 - \tlambda_+(k) = 0.
$
As stated in Lemma \ref{lambda lemma}, there is  a unique nonnegative solution of this: $k = k_c$. \begin{figure}[t]
\centering
    \includegraphics[width=5in]{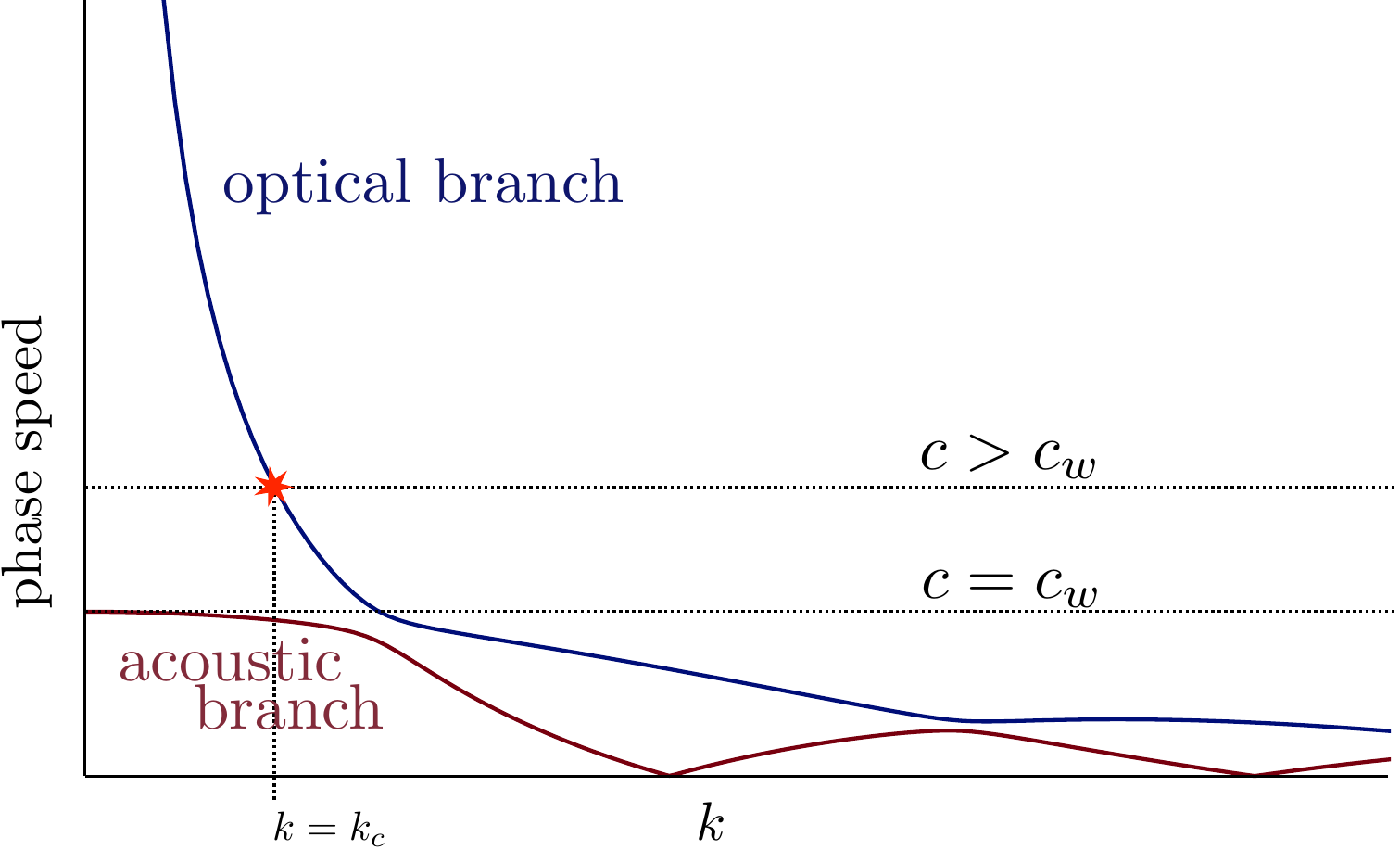}
 \caption{  \it Sketch of the phase speeds for the different branches of the dispersion relation. 
 \label{sketch}}
\end{figure}
In a very rough sense, then, we expect the localized acoustic wave with speed $c$
to excite a mode in the optical branch  with wavenumber $k_c$.
Making this intuition rigorous is, of course, a substantial part of our analysis.
\end{remark}

The inequalities in  \eqref{lambda bounds} imply  $\tilde{\lambda}_-(k) < \tilde{\lambda}_+(k)$ for all $k\in \R$. Thus $\tilde{L}(k)$ is diagonalizable for all $k$.
Towards this end, we compute  that the eigenvectors of $\tilde{L}(k)$ are scalar multiples of
$\ds
 \left[ \begin{array}{c} \tilde{\beta}(k) \\ \tilde{\varrho}(k) \end{array} \right]$
(for $\tilde{\lambda}_-(k)$) and 
 $
 \ds\left[ \begin{array}{c} \tilde{\beta}(k) \\ -\tilde{\varrho}(k) \end{array} \right] 
$ (for $\tilde{\lambda}_+(k)$). 

We can diagonalize $\tilde{L}$ by dropping these into a matrix. It will be advantageous to renormalize them first, though.
Let 
\bes\label{this is gamma}
\tilde{\gamma}(k) : =e^{-ik}+e^{ik} { \tilde{\varrho}(k) \over \tilde{\beta}(k)}
\ees
and put
\bes\label{this is T}
\tilde{J}_2(k):=\left[\begin{array}{cc}
\tilde{\gamma}(k) \tilde{\beta}(k) & \tilde{\gamma}(k) \tilde{\beta}(k) \\
\tilde{\gamma}(k) \tilde{\varrho}(k) & -\tilde{\gamma}(k) \tilde{\varrho}(k) 
\end{array}
\right].
\ees
Its inverse is
\bes\label{this is inv T}
\tilde{J}_1(k):=\tilde{J}_2^{-1}(k)=\left[\begin{array}{cc}
(2\tilde{\gamma}(k) \tilde{\beta}(k))^{-1} & (2\tilde{\gamma}(k) \tilde{\varrho}(k))^{-1} \\
(2\tilde{\gamma}(k) \tilde{\beta}(k))^{-1}&  -(2\tilde{\gamma}(k) \tilde{\varrho}(k))^{-1} 
\end{array}
\right].
\ees
Then we have
$$
\tilde{J}_1(k) \tilde{L}(k) \tilde{J_2}(k) = \tilde{\Lambda}(k) := \left[\begin{array}{cc} \tilde{\lambda}_-(k) & 0 \\ 0 & \tilde{\lambda}_+(k) \end{array}\right].
$$

The reason we choose to normalize the eigenvectors with $\tilde{\gamma}$ is obviously non-obvious. Here is what is special about $\tilde{\gamma}$:
\be\label{gamma is special}
\tilde{\gamma}(-k) \tilde{\beta}(-k) = \tilde{\gamma}(k) \tilde{\varrho}(k).
\ee
This property---easily checked---will imply a certain symmetry below, specifically in the proof of Lemma \ref{EO}.

A short computation indicates that neither $\tilde{\gamma}$ nor $\tilde{\beta}$ vanish when $k \in \R$.
In fact, we have:
\begin{cor} \label{J cor}
If $w > 1$ then there exists $\tau_1\in(0,\tau_0]$ such that $\tilde{\gamma}(z)$, $\tilde{\gamma}^{-1}(z)$, $\tbeta(z)$ and $\tilde{\beta}^{-1}(z)$ are  uniformly bounded analytic functions for $z \in \overline{\Sigma}_{\tau_1}$.
Moreover
$\tilde{J}_1(z)$ and $\tilde{J}_2(z)$ are
uniformly bounded matrix valued analytic functions for $z \in \overline{\Sigma}_{\tau_1}$.
\end{cor}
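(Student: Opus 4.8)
The plan is to establish everything from two basic facts: (a) from Lemma \ref{lambda lemma}(ii), $\tilde\varrho(z)$ is bounded and analytic on $\overline\Sigma_{\tau_0}$, and (b) the entire functions $\tilde\beta(z) = w e^{iz} + e^{-iz}$ and its reflection $\tilde\beta(-z) = w e^{-iz} + e^{iz}$ are manifestly bounded on any horizontal strip. So the only real work is to produce a possibly smaller strip $\overline\Sigma_{\tau_1}$ on which $\tilde\beta$ and $\tilde\gamma$ are bounded \emph{away from zero}, since then $\tilde\beta^{-1}$ and $\tilde\gamma^{-1}$ are automatically bounded analytic there, and $\tilde J_1,\tilde J_2$ are bounded analytic because their entries are products and reciprocals of $\tilde\beta,\tilde\varrho,\tilde\gamma$.

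First I would handle $\tilde\beta$. On the real line, $|\tilde\beta(k)|^2 = \tilde\beta(k)\tilde\beta(-k) = w^2 + 1 + 2w\cos(2k) \ge w^2 + 1 - 2w = (w-1)^2 > 0$ since $w>1$, so $\tilde\beta$ has no real zeros; the same computation (or Lemma \ref{lambda lemma}(i), which gives $\tilde\lambda_\pm(0)=0, 2+2w$ so $\tilde\varrho(0)=1+w\neq 0$, hence $\tilde\beta(0)\neq 0$) shows a uniform lower bound. Since $\tilde\beta$ is continuous and $\pi$-periodic in the real direction and has at most exponential growth, a compactness/continuity argument on a fundamental period times a compact $\tau$-interval yields a $\tau_1'\in(0,\tau_0]$ and a constant $\delta>0$ with $|\tilde\beta(z)|\ge\delta$ on $\overline\Sigma_{\tau_1'}$; boundedness above on that strip is immediate from the exponential form. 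The same argument applies to $\tilde\beta(-z)$.

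Next, $\tilde\gamma(z) = e^{-iz} + e^{iz}\,\tilde\varrho(z)/\tilde\beta(z)$: on $\overline\Sigma_{\tau_1'}$ this is a ratio of bounded analytic functions with bounded-below denominator, hence bounded and analytic. For the lower bound on $|\tilde\gamma|$, note on the real line that $\tilde\gamma(k)\tilde\beta(k) = e^{-ik}\tilde\beta(k) + e^{ik}\tilde\varrho(k)$, and one checks $|\tilde\gamma(k)\tilde\beta(k)|^2$ is a strictly positive continuous $\pi$-periodic function of $k$ (it is, up to constants, related to $\tilde\varrho$ and $\tilde\beta$, which are both bounded away from zero), so $|\tilde\gamma(k)|\ge|\tilde\gamma(k)\tilde\beta(k)|/\sup|\tilde\beta| > 0$ uniformly in $k$; by the same continuity-plus-periodicity argument I shrink once more to a strip $\overline\Sigma_{\tau_1}$, $\tau_1\in(0,\tau_1']$, preserving a uniform lower bound for $|\tilde\gamma|$. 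Finally, reading off the entries of $\tilde J_2$ (products of $\tilde\gamma$ with $\tilde\beta$ and $\tilde\varrho$) and $\tilde J_1$ (products of $(2\tilde\gamma\tilde\beta)^{-1}$ and $(2\tilde\gamma\tilde\varrho)^{-1}$ — note $\tilde\varrho$ is bounded below because $\tilde\varrho(k)\ge\sqrt{(1-w)^2}=w-1>0$ on $\R$, and I extend the lower bound to a strip in the same way), all four matrix-valued functions are uniformly bounded and analytic on $\overline\Sigma_{\tau_1}$.

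The main obstacle — really the only one — is the passage from "nonvanishing on $\R$" to "bounded away from zero on a strip." The clean way is: $|\tilde\beta|,|\tilde\gamma\tilde\beta|,|\tilde\gamma\tilde\varrho|$ restricted to $\R$ are positive, continuous, and $\pi$-periodic, hence bounded below by some $\delta_0>0$ on $\R$; then uniform continuity of these functions on the compact set $[0,\pi]\times[-\tau_0,\tau_0]$ lets me choose $\tau_1$ so small that the imaginary perturbation moves each modulus by less than $\delta_0/2$, giving a uniform lower bound $\delta_0/2$ on $\overline\Sigma_{\tau_1}$; periodicity in the real direction then extends this to the whole strip. I would carry the three lower bounds ($\tilde\beta$, $\tilde\gamma\tilde\beta$, $\tilde\gamma\tilde\varrho$, and implicitly $\tilde\varrho$) through this lemma once and take $\tau_1$ to be the minimum of the resulting values.
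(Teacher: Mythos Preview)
Your proposal is correct and is exactly the sort of argument the paper has in mind: the paper actually omits the proof entirely, stating only that it ``is more or less immediate from the definitions and Lemma~\ref{lambda lemma}.'' Your fleshing-out via the compactness/periodicity/uniform-continuity step is the natural way to pass from nonvanishing on $\R$ to a uniform lower bound on a thin strip. One small slip: $\tilde\beta$ and $\tilde\gamma$ are not $\pi$-periodic (indeed $\tilde\beta(k+\pi)=-\tilde\beta(k)$, and $\tilde\gamma$ has no such symmetry), but they are $2\pi$-periodic, and that is all the compactness argument needs.
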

We do not provide a proof, as it is more or less immediate from the definitions and Lemma~\ref{lambda lemma}.
Note also that
\be\label{J zero}
\tilde{J}_1(0) = {1 \over 4(1+w)} \left[\begin{array}{cr} 1 & 1 \\ 1 & - 1
\end{array}
\right] \mand \tilde{J}_2(0) = {2(1+w)} \left[\begin{array}{cr} 1 & 1 \\ 1 & - 1
\end{array}
\right] .
\ee

Since $\tilde{J}_1$ and $\tilde{J}_2$ diagonalize $\tilde{L}$, we can use Fourier multiplier operators to diagonalize $L$.
We use following normalizations and notations for the Fourier transform and its inverse:
$$
\hat{f}(k):=\Fo[f](k):={1 \over 2 \pi} \int_\R f(x)e^{-ikx} dx \mand
\check{g}(x):=\Fo^{-1}[g](x) := \int_\R g(k)e^{ikx} dk.
$$
Likewise, we use the following normalizations and notations for the Fourier series of
a $2 P$-periodic function:
$$
\hat{f}(k):={1 \over 2P} \int_{-P}^P f(x) e^{-ik\pi x/P} dx \mand {f}(x) = \sum_{k \in \Z} \hat{f}(k) e^{ik \pi x/P}.
$$
We have used the same ``hat" notation for the Fourier transform and the coefficients of the Fourier series; context
will always make it clear which we mean. 

\begin{definition}\label{Fo mult definition} 
Suppose that we have $\tmu:\R \to \C$. The ``Fourier multiplier with symbol $\tmu$" is defined as follows.
\begin{enumerate}[(i)]
\item
If $f:\R \to \C$ has a well-defined Fourier transform then
\be\label{fmreal}
\mu f (x):= \int_\R e^{ikx} \tmu(k)  \hat{f}(k) dk.
\ee
\item If  $f:\R \to \C$ is $2P$-periodic then
\be\label{fmper}
\mu f(x) := \sum_{k \in \Z} e^{ik\pi x/P} \tmu(k \pi /P) \hat{f}(k).  
\ee
\item
If $f = f_1 + f_2$ where $f_1:\R \to \C$ has a well-defined Fourier transform and $f_2:\R \to \C$ is $2P$-periodic then 
we have $\mu f = \mu f_1 + \mu f_2$ where $\mu f_1$ is computed with \eqref{fmreal} and $\mu f_2$ is computed with \eqref{fmper}.
\end{enumerate}
\end{definition}
\begin{remark}
An alternate way to express  \eqref{fmper} goes as follows.
Since $f(x)$ is $2P$-periodic we know $f(x) = \phi( \omega x)$ for some 
$2 \pi$-periodic function $\phi(y)$ and $\omega = \pi/P$.
In this case $\mu f(x) = (\mu^\omega \phi)(\omega x)$ where
$\mu^\omega$ is a Fourier multiplier with symbol $\tmu^\omega(k) = \tmu(\omega k)$.
The nice thing here is that
$$
\mu^\omega \phi(y) = \sum_{k \in \Z} e^{iky} \tmu^\omega(k ) \hat{\phi}(k),
$$
a slightly less complicated formula than \eqref{fmper}.

Note also that if we put a $2P-$periodic function into the Fourier transform integral $\Fo$ then we can interpret the 
output
as a distribution. Specifically, it will be a superposition of delta-functions situated on $(\pi/P) \Z \subset \R$.
In this case we can apply the formula \eqref{fmreal}. The outcome of this
coincides exactly with \eqref{fmper}. In this way we can see that the formula in Part (iii) is the ``correct" way to apply
Fourier multipliers to sums of decaying and periodic functions.
\end{remark}


%

So let $\lambda_\pm$, $\varrho$, $\beta$, $\gamma$, $\Lambda$ and $J_n$ be the Fourier multiplier operators with symbols $\tlambda_\pm$, $\tilde{\varrho}$, $\tbeta$, $\tilde{\gamma}$, $\tilde{\Lambda}$ and $\tilde{J}_n$, respectively.
Put $\pb : = J_2 \h$. If $\pb$ solves \eqref{p eqn small} then we find that $\h$ solves
\be\label{h eqn small}
c^2\h'' + \Lambda \h + \Lambda B(\h,{\h}) = 0
\ee
with
$$
B(\h,\grave{\h}):=\left( \begin{array}{c} b_1(\h,\grave{\h}) \\ b_2(\h,\grave{\h}) \end{array} \right):= J_1 (J_2 \h.J_2 \grave{\h}).
$$
Written out component-wise this is
\be\label{h eqn big}
c^2 h_1'' + \lambda_- h_1 + \lambda_- b_1(\h,\h)=0 \mand  c^2 h_2'' + \lambda_+ h_2 + \lambda_+ b_2(\h,\h)=0.
\ee
\begin{remark} Since $\lambda_-$ is associated to the acoustic branch of the dispersion relation and $\lambda_+$ to the optical branch, we informally think of $h_1$
as the ``acoustic part" of the solution and $h_2$ as the ``optical part." What we shall see down the line is that the dominant part of $h_1$ is a $\sech^2$ traveling wave, whereas the periodic
solutions will be largest in $h_2$.
\end{remark}

\subsection{The Friesecke-Pego cancelation}
Applying $\Fo$ to the first equation in \eqref{h eqn big} gives
\be\label{Fo h 1}
\left(-c^2 k^2  +  \tilde{\lambda}_-(k) \right) \hat{h}_1 +\tilde{\lambda}_-(k) \hat{b_1(\h,\h)} = 0.
\ee
Since $\tilde{\lambda}_-(0) = 0$ and is even, it is roughly quadratic in $k$ near the origin.  Obviously so is $k^2$.
Which indicates that we can cancel a $k^2$ out of the above if we chose.  We do not do precisely this, but instead employ a similar
approach inspired by the proof of the existence of low energy solitary waves for monotomic FPU in \cite{friesecke-pego1}.

We know that $\tlambda_-(0)=0$ . We also know from \eqref{lambda derivative bounds} that $|\tlambda'_-(k)| \le 2c_w^2 |k|$. And so the FTOC\footnote{The Fundamental Theorem of Calculus, of course} implies:
\be\label{super sonic}
c^2 k^2 - \tlambda_-(k) > 0 \quad \text{for all $k \ne 0$ provided
 $c^2 \ge c_w^2$.}
\ee
This 
allows us to divide through in \eqref{Fo  h 1}, not by $k^2$, but rather by $-c^2 k^2  + \tilde{\lambda}_-(k)$.  

So put
$$
\tilde{\varpi}_c(k) :=  -{\tilde{\lambda}_-(k)  \over c^2 k^2 -  \tilde{\lambda}_-(k)}.
$$
This function has a removable singularity at $k =0$ when $c^2>c_w^2$ and no other singularities for $k \in \R$. 
Then \eqref{Fo h 1} is equivalent to
$\ds
\hat{h}_1 +\tilde{\varpi}_c(k) \hat{b_1(\h,\h)} = 0.
$

Let $\varpi_c f$ be the Fourier multiplier operator with symbol $\tilde{\varpi}_c $. The above reasoning shows we can rewrite \eqref{h eqn big} as 
\be\label{h eqn big 2}
h_1 +\varpi_c b_1(\h,\h) = 0 \mand c^2 h_2'' + \lambda_+ h_2 + \lambda_+ b_2(\h,\h) = 0\ee
or alternately as:
$$
\H_c(\h):= \left[ \begin{array}{cc} 1 & 0 \\
0 & c^2 \partial_x^2 + \lambda_+ 
\end{array}
\right] \h +  \left[ \begin{array}{cc} \varpi_c & 0 \\
0 & \lambda_+
\end{array}
\right] B(\h,\h) = 0.
$$
\begin{remark} 
We will henceforth require $c^2>c_w^2$ so that our map $\varpi_c$ is well-defined; this is the technical
reason why we look for (and why the authors of \cite{friesecke-pego1} looked for) nonlinear waves which are supersonic.
Unraveling the scalings that lead to \eqref{r eqn} from \eqref{nl} shows that the traveling wave solutions
under investigation here will, 
in the physical coordinates, travel with a speed faster than $c_{sound}:=\sqrt{k_s / \bar{m}}$,
 where $\bar{m}$ is the average of the two masses.
\end{remark}

We have the following nice symmetry result for $\H_c$.
\begin{lemma}\label{EO} If $h_1$ is even and $h_2$ is odd,
the first and second components of ${\H}_c(\h)$ are, respectively, even and odd.
\end{lemma}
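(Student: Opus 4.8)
The plan is to track how the parity of a function interacts with each of the operators that appear in $\H_c$. First I would record the elementary fact that a Fourier multiplier with an \emph{even} real symbol preserves parity: if $\tmu$ is even then $\mu$ maps even functions to even functions and odd to odd. By Lemma~\ref{lambda lemma}(iii) the symbols $\tlambda_\pm$ are even, and hence so is $\tilde\varpi_c(k)=-\tlambda_-(k)/(c^2k^2-\tlambda_-(k))$; therefore $\varpi_c$ and $\lambda_+$ both preserve parity. Likewise $c^2\partial_x^2+\lambda_+$ preserves parity (the multiplier symbol $-c^2k^2+\tlambda_+(k)$ is even). Consequently the \emph{linear} part of $\H_c$ acting on $\h=(h_1,h_2)$ with $h_1$ even, $h_2$ odd produces a vector whose first component is even and whose second is odd, as desired. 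So the entire content of the lemma is the claim that the nonlinear term $\left(\begin{smallmatrix}\varpi_c&0\\0&\lambda_+\end{smallmatrix}\right)B(\h,\h)$ has first component even and second component odd; and since $\varpi_c$, $\lambda_+$ preserve parity, it suffices to show that $b_1(\h,\h)$ is even and $b_2(\h,\h)$ is odd whenever $h_1$ is even and $h_2$ is odd.

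The heart of the argument is thus an analysis of $B(\h,\grave\h)=J_1(J_2\h.\,J_2\grave\h)$. The subtlety is that the symbols $\tilde J_1(k),\tilde J_2(k)$ are \emph{not} even in $k$ — they involve $\tbeta(k)=we^{ik}+e^{-ik}$ and $\tgamma(k)$, which have no parity — so I cannot argue componentwise one multiplier at a time. Instead I plan to exploit the special identity \eqref{gamma is special}, $\tgamma(-k)\tbeta(-k)=\tgamma(k)\tvarrho(k)$, which is precisely the normalization that was engineered for this purpose. Concretely, write out $J_2\h$ in Fourier variables: its two components are Fourier multipliers applied to $h_1$ and $h_2$ with symbols built from $\tgamma\tbeta$ and $\tgamma\tvarrho$. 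Form the product $J_2\h.\,J_2\grave\h$, which in Fourier space is a convolution, then apply $J_1$. The claim is that the substitution $x\mapsto -x$ (equivalently $k\mapsto -k$ on each Fourier variable) combined with the parity hypotheses $\hat h_1(-k)=\hat h_1(k)$, $\hat h_2(-k)=-\hat h_2(k)$ and the identity \eqref{gamma is special} causes the first component of $B$ to return to itself and the second to pick up a sign. I would set up the bookkeeping by splitting $\h=(h_1,0)+(0,h_2)$, so that $B(\h,\h)$ expands into $B((h_1,0),(h_1,0))+2B((h_1,0),(0,h_2))+B((0,h_2),(0,h_2))$ (using symmetry of $B$ in its two slots), and check the parity claim for each of the three pieces separately; in each piece the relevant combination of $\tgamma,\tbeta,\tvarrho$ factors is exactly the one controlled by \eqref{gamma is special} together with the evenness of $\tvarrho$.

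The main obstacle I anticipate is purely organizational rather than conceptual: keeping the three Fourier variables (two in the product, one surviving after $J_1$) and their sign flips straight, and verifying that every stray factor of $\tbeta(\pm k)$, $\tvarrho(\pm k)$, $\tgamma(\pm k)$ recombines under $k\mapsto -k$ into the form matched by \eqref{gamma is special}. A cleaner route, which I would try first, is to avoid explicit Fourier bookkeeping entirely: let $R$ denote the reflection operator $(Rf)(x)=f(-x)$, so the hypothesis is $Rh_1=h_1$, $Rh_2=-h_2$, i.e. $R\h=\Sigma\h$ with $\Sigma=\diag(1,-1)$. A multiplier with symbol $\tmu$ satisfies $R\mu R=\mu^{R}$ where $\mu^R$ has symbol $\tmu(-\cdot)$. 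Then $R(J_2\h.\,J_2\grave\h)=(RJ_2\h).(RJ_2\grave\h)=(J_2^R R\h).(J_2^R R\grave\h)$, and the task reduces to showing $J_1^R(J_2^R\xb.J_2^R\yb)=\Sigma\,J_1(\Sigma\xb.\Sigma\yb)$ — a bare identity among the symbol matrices $\tilde J_1(\pm k)$, $\tilde J_2(\pm k)$ and $\Sigma$ that unwinds immediately to \eqref{gamma is special} plus the evenness of $\tvarrho$. I would then present whichever of these two formulations is shortest, and point out that the evenness of $\tvarpi_c$ and $\tlambda_+$ finishes the lemma.
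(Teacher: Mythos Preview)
Your second route is essentially the paper's proof: encode the parity hypothesis as $R\h=\Sigma\h$ with $\Sigma=\diag(1,-1)$, reduce to a reflection identity for $B$, and unwind using \eqref{gamma is special} and the evenness of $\tvarrho$. However, the identity you write down, $J_1^R(J_2^R\xb.J_2^R\yb)=\Sigma\,J_1(\Sigma\xb.\Sigma\yb)$, is not correct as stated: the right-hand side has lost its $J_2$'s, and a check with constant vectors at $k=0$ shows the two sides disagree. What you actually need is $J_1^R\big((J_2^R\Sigma\h).(J_2^R\Sigma\grave\h)\big)=\Sigma\,J_1\big((J_2\h).(J_2\grave\h)\big)$.

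The paper organizes this neatly by introducing the swap matrix $I_2=\left(\begin{smallmatrix}0&1\\1&0\end{smallmatrix}\right)$ and observing that \eqref{gamma is special} together with the evenness of $\tvarrho$ gives the matrix identity $\tilde J_2(-k)=I_2\,\tilde J_2(k)\,\Sigma$, hence also $\tilde J_1(-k)=\Sigma\,\tilde J_1(k)\,I_2$. With these in hand the computation is three lines: $RB(\h,\h)=\Sigma J_1 I_2\big[(I_2 J_2\h)^{.2}\big]=\Sigma J_1 I_2^2\big[(J_2\h)^{.2}\big]=\Sigma B(\h,\h)$, using $(I_2\f)^{.2}=I_2(\f^{.2})$. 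This is exactly the ``bare identity among the symbol matrices'' you allude to; once you correct the target formula your argument and the paper's coincide. Your first route (bilinear expansion into three pieces) would also succeed but is more bookkeeping for the same content.
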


\begin{proof}
For a function $f(y)$ let $Rf(y)  = f(-y)$. If $f$ is even then $f = Rf$. If $f$ is odd then $f = - Rf$.
If $\mu$ is a Fourier multiplier with symbol $\tilde{\mu}$ then 
\be\label{R Fo}
R (\mu f)(x) = (R\mu) Rf (x).
\ee By $R \mu$ we mean the Fourier multiplier with symbol $R\tilde{\mu}(k)=\tilde{\mu}(-k)$.

If the symbol $\mu$ is even then this implies $R (\mu f) = \mu (Rf).$ This in turn implies that such a $\mu$ will map even functions to even functions and odd functions to odd functions. Thus $\lambda_\pm$, $\partial_x^2$ and $\varpi_c$ all ``preserve parity."

Informally, we say $\h \in E \times O$ if $h_1$ is even and $h_2$ is odd. The preceding comments imply that we will have our result if we can show that $B(\h,\h)$ maps $E \times O$ to itself since the remaining parts of $\H_c$ will not flip an $E$
to an $O$ or vice versa. 
Note that
$
\h \in E \times O$ if and only if $R\h = I_1 \h 
$
where $I_1 = \diag(1,-1)$.  Thus our goal is to show that if $R \h = I_1 \h$ then $R B(\h,\h) = I_1 B(\h,\h)$.

So suppose that $R \h = I_1 \h$. Using \eqref{R Fo} we have
$\ds
RB(\h,\h) = (RJ_1) R[(J_2 \h)^{.2}].
$
It is easy to see that $R(fg) = Rf Rg$ and so the above gives
$
RB(\h,\h) = (RJ_1) [(R(J_2 \h))^{.2}].
$
Then we use \eqref{R Fo} again to get
$
RB(\h,\h) = (RJ_1) [((RJ_2) (R\h))^{.2}].
$
Since $R \h = I_1 \h$ we have
$
RB(\h,\h) = (RJ_1) [((RJ_2) (I_1 \h))^{.2}].
$
Then associativity gives:
\be\label{RJ2}
RB(\h,\h) = (RJ_1) [((RJ_2 I_1) \h)^{.2}].
\ee

The multiplier for $R J_2$ is, using \eqref{R Fo},
\bes
R\tilde{J}_2(k) = \tilde{J}_2(-k)=\left[\begin{array}{cc}
\tilde{\gamma}(-k) \tilde{\beta}(-k) & \tilde{\gamma}(-k) \tilde{\beta}(-k) \\
\tilde{\gamma}(-k) \tilde{\varrho}(-k) & -\tilde{\gamma}(-k) \tilde{\varrho}(-k) 
\end{array}
\right].
\ees
Now we use the special property of $\tgamma$ in \eqref{gamma is special} to convert this to:
\bes
R\tilde{J}_2(k) =\left[\begin{array}{cc}
\tilde{\gamma}(k) \tilde{\varrho}(k) & \tilde{\gamma}(k) \tilde{\varrho}(k) \\
\tilde{\gamma}(k) \tilde{\beta}(k) & -\tilde{\gamma}(k) \tilde{\beta}(k) 
\end{array}
\right].
\ees
If we let $I_2:=\left[\begin{array}{cc}0 & 1 \\ 1 & 0 \end{array} \right]$ then the above relation 
implies, after a short calculation, that 
$
R\tilde{J}_2 = I_2 \tilde{J}_2 I_1.
$
Since $\tilde{J}_1 = \tilde{J}_2^{-1}$, $I_1^{-1} = I_1$ and $I_2^{-1} = I_2$, this gives.
$
R\tilde{J}_1 = I_1 \tilde{J}_1 I_2.
$

Using these relations in \eqref{RJ2} gives us
$
RB(\h,\h) = I_1 J_1 I_2 [ (I_2 J_2 I_1^2) \h)^{.2} ].
$
Since $I_1^2$ is the identity this is
$
RB(\h,\h) = I_1 J_1 I_2 [ (I_2 J_2) \h)^{.2} ].
$
Also, it is easy to check that $(I_2 \f)^{.2}= I_2 (\f^{.2})$. Thus we have
$
RB(\h,\h) = I_1 J_1 I^2_2 [ (J_2 \h)^{.2} ].
$
Since $I_2^2$ is the identity this is
$
RB(\h,\h) = I_1 J_1  [ (J_2 \h)^{.2} ] = I_1 B(\h,\h).
$
This was our goal and so we are done.
\end{proof}

In light of this result, we restrict our attention henceforth to looking for solutions which are even in the first component and odd in the second.


\subsection{Long wave scaling}
Now make the long wave scaling (inspired by the classical multiscale derivation of the Korteweg-de Vries
equation from monotomic FPU in \cite{kruskal})
$$
h_1(x):=\ep^2 \theta_1( \ep  x),\quad h_2:= \ep^2 \theta_2(\ep x) \mand c^2 = c_w^2 + \ep^2
$$
where $0 < \ep \ll 1$.

\begin{remark}\label{operator scaling}
Note that if $\mu$ is a Fourier multiplier operator with symbol $\tilde{\mu}(k)$ and if $f(x) = \phi(\omega x)$ then
$
\mu f(x) = (\mu^\omega \phi) (\ep x)
$
where $\mu^\omega $
is a Fourier multiplier with symbol
$
{\tmu^\omega}(k) := \tilde{\mu}(\omega k).
$
\end{remark}

After the scaling, \eqref{h eqn big 2} becomes
\be\label{theta eqn big}
\theta_1 +\varpi^\ep b_1^\ep(\thetab,\thetab) = 0 \mand
\ep^2 (c_w^2 +\ep^2) \theta_2'' + \lambda_+^\ep \theta_2 + \ep^2 \lambda^\ep_+b^\ep_2(\thetab,\thetab) = 0
\ee
with $\ds\thetab:=\left(\begin{array}{c} \theta_1\\ \theta_2 \end{array}\right),$
\be\label{lw pi}
\tilde{\varpi^\ep}(K):=\ep^2 \tilde{\varpi}_{{\sqrt{c^2_w+\ep^2}} }(\ep K) = -{\ep^2 \tilde{\lambda}_-(\ep K) \over (c_w^2 + \ep^2)\ep^2K^2 - \tilde{\lambda}_-(\ep K) } 
,
\ee
\be\label{lw lambda}
\tilde{\lambda}^\ep_\pm(K):=\tilde{\lambda}_\pm(\ep K),
\ee
\be\label{lw Jn}
\tilde{J}^\ep_j(K):=\tilde{J}_j(\ep K)
\ee
and
\be\label{lw B def}
B^\ep(\thetab,\grave{\thetab}):=\left(\begin{array}{c} b_1^\ep(\thetab,\grave{\thetab}) \\b_2^\ep(\thetab,\grave{\thetab})\end{array} \right) := J^\ep_1 \left[ J_2^\ep \thetab . J_2^\ep \grave{\thetab} \right].
\ee
Of course  $\varpi^\ep$, $\lambda^\ep_\pm$ and $J_n^\ep$ are Fourier multiplier operators with the  symbols taken in the obvious way. Note
that since we assume the $\ep \in (0,1)$, the scaling implies, via Lemma \ref{lambda lemma} and Corollary \ref{J cor}, that $\tilde{\lambda}^\ep_\pm(Z)$
and $\tilde{J}_n^\ep(Z)$ are analytic for $|\Im(Z)|\le \tau_1 \le \tau_1/\ep$.

\begin{remark} The equation for $\theta_2$ (which is the part associated to the optical branch) is classically singular when $\ep \sim 0$ because of
the term $$\ep^2 (c_w^2 +\ep^2) \theta_2''.$$ 

Had we not performed the Friesecke-Pego cancelation earlier, the $\theta_1$ equation would have a similarly singular term. 
But the cancelation desingularizes that equation as we shall demonstrate below. Of course this why they made that cancelation in their work \cite{friesecke-pego1} and why we do so here. But because $\tlambda_+(0)\ne 0$
there is no chance to make a similar cancelation in the second component.  
\end{remark}

We can write \eqref{theta eqn big} as:
\be\label{theta eqn small}
\Theta_\ep(\thetab):=\left[ \begin{array}{cc} 1 & 0 \\
0 & \ep^2(c_w^2 + \ep^2) \partial_X^2 + \lambda_+^\ep \end{array} \right]\thetab+\left[ \begin{array}{cc} \varpi^\ep & 0 \\
0 & \ep^2\lambda^\ep_+\end{array} \right] B^\ep(\thetab,\thetab) =0.
\ee

The long wave scaling does not effect the symmetry mapping properties that $\H_c$ had. To wit:
\begin{lemma}\label{EO2} If $\theta_1$ is even and $\theta_2$ is odd,
the first and second components of ${\Theta}_\ep(\h)$ are, respectively, even and odd.
\end{lemma}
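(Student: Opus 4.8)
The plan is to notice that $\Theta_\ep$ is simply $\H_c$ conjugated by the long--wave rescaling, and that this rescaling commutes with the reflection $R f(y) := f(-y)$; the lemma then follows immediately from Lemma~\ref{EO}. Concretely, I would fix $\ep \in (0,1)$, set $c^2 = c_w^2 + \ep^2$, and given $\thetab = (\theta_1,\theta_2)$ define $\h(x) := \ep^2 \thetab(\ep x)$. Pushing the rescaling through each Fourier multiplier in turn (apply $J_2$, then the pointwise product, then $J_1$, then $\varpi_c$ in the first slot or $\lambda_+$ and $\partial_x^2$ in the second) via Remark~\ref{operator scaling}, and using the definitions \eqref{lw pi}--\eqref{lw B def} of the rescaled symbols, one checks by a routine accounting of powers of $\ep$ that
\bes
\left[\H_c(\h)\right](x) = \ep^2 \left[\Theta_\ep(\thetab)\right](\ep x).
\ees
The only point that needs care is that the identity $\tilde{\varpi^\ep}(K) = \ep^2\,\tilde{\varpi}_{\sqrt{c_w^2+\ep^2}}(\ep K)$ exactly absorbs the extra $\ep^2$ produced by the quadratic term in the first component, while in the second component $\ep^2(c_w^2+\ep^2)\partial_X^2$ is precisely $c^2$ times the rescaled $\partial_x^2$ with the common $\ep^2$ factored out.

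Next I would observe that the dilation $f \mapsto f(\ep\,\cdot\,)$ preserves parity, so if $\theta_1$ is even then $h_1$ is even and if $\theta_2$ is odd then $h_2$ is odd. Lemma~\ref{EO} then guarantees that the first component of $\H_c(\h)$ is even and the second is odd. Since $[\Theta_\ep(\thetab)](Y) = \ep^{-2}[\H_c(\h)](Y/\ep)$ and $Y \mapsto Y/\ep$ preserves parity, it follows that the first component of $\Theta_\ep(\thetab)$ is even and the second is odd, as claimed.

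Alternatively, for a self-contained argument one simply repeats the proof of Lemma~\ref{EO} verbatim with $\tilde{\varpi}_c,\tilde{\lambda}_\pm,\tilde{J}_n$ replaced by $\tilde{\varpi^\ep},\tilde{\lambda}^\ep_\pm,\tilde{J}^\ep_n$: evenness of $\tilde{\varpi^\ep}$ and $\tilde{\lambda}^\ep_\pm$ is inherited from that of $\tilde{\varpi}_c$ and $\tilde{\lambda}_\pm$ (composing with $k\mapsto \ep k$), $\partial_X^2$ preserves parity, and the special identity \eqref{gamma is special} evaluated at $k=\ep K$ gives $R\tilde{J}_2^\ep = I_2\tilde{J}_2^\ep I_1$, hence $R\tilde{J}_1^\ep = I_1\tilde{J}_1^\ep I_2$, after which the chain of associativity manipulations showing $B^\ep$ maps $E\times O$ into itself goes through word for word. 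Either way there is no genuine obstacle here; the only mild annoyance is bookkeeping — tracking the $\ep$--powers in the conjugation route, or reindexing the symbols in the direct route.
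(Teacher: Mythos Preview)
Your proposal is correct and matches the paper's approach: the paper does not give a separate proof of Lemma~\ref{EO2}, stating only that ``the long wave scaling does not effect the symmetry mapping properties that $\H_c$ had,'' which is precisely your conjugation argument. Your write-up simply fills in the bookkeeping the paper omits, and your alternative (rerunning the Lemma~\ref{EO} argument with the rescaled symbols) is an equally valid way to spell it out.
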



\section{The formal long wave limit}\label{LWL}

In this section we naively set $\ep = 0$ in \eqref{theta eqn big}. 
This is mostly routine. For instance, given the definitions in \eqref{lw lambda} and \eqref{lw Jn}, we set
$
\tilde{\lambda}_+^0(K):=\tilde{\lambda}_+(0)$ and
$\tilde{J}^0_j(K):= \tilde{J}_j(0)$. So we define, (using \eqref{J zero}):
$$
\lambda^0_+ := 2+2w, \quad
J^0_1:={1 \over 4(1 + w)}
\left[\begin{array}{cr} 1 & 1 \\ 1 & -1
\end{array}
\right] \mand  J^0_2:=2(1+w) \left[ \begin{array}{rr} 1& 1 \\ 1 & -1 \end{array}\right]. 
$$
This in turn leads to the definition
$$
B^0(\thetab,\grave{\thetab}):=\left(\begin{array}{c} b_1^0(\thetab,\grave{\thetab}) \\b_2^0(\thetab,\grave{\thetab})\end{array} \right) := J^0_1 \left[ J_2^0 \thetab . J_2^0 \grave{\thetab} \right]
= 2(1+w)\left(\begin{array}{c}
 \theta_1 \grave{\theta}_1 +  \theta_2 \grave{\theta}_2 \\
\theta_1 \grave{\theta}_2 + \theta_2 \grave{\theta}_1 \end{array}\right).
$$

Blindly setting $\ep=0$ in $\tilde{\varpi}^\ep(K)$ will not work; a ``$0/0$" situation occurs.
Computing the Maclaurin expansion of $\tilde{\lambda}_+(k)$ gives
$$
\tilde{\lambda}_-(k) = c_w^2 k^2 - \alpha_w k^4 + \cdots
$$
where
$$
\alpha_w:={c_w^2 \over 3 } {1 -  w + w^2 \over (1 + w)^2}>0.
$$
Thus we have
$$
\tilde{\varpi}^\ep(K) =  -{\ep^2 \tilde{\lambda}_-(\ep K) \over (c_w^2 + \ep^2)\ep^2K^2 - \tilde{\lambda}_-(\ep K) } 
= - { \ep^4 c_w^2 K^2  + \O_f(\ep^6) \over (c_w^2 + \ep^2)\ep^2 K^2 - \ep^2 c_w^2 K^2 + \alpha_w \ep^4 K^4+ \O_f(\ep^6) }
$$
By ``$\O_f(\ep^n)$" we mean terms which are formally of order $\ep^n$. 

After some cancelations this becomes
$$
\tilde{\varpi}^\ep(K) 
= - { \ep^4 c_w^2 K^2  + \O_f(\ep^6) \over   \ep^4 K^2  + \alpha_w \ep^4 K^4+ \O_f(\ep^6) }= - {  c_w^2 K^2  + \O_f(\ep^2) \over    K^2  + \alpha_w  K^4+ \O_f(\ep^2) }.
$$
Now we set $\ep =0$ to get
\be\label{this is pi0}
\tilde{\varpi}^0(K) := -{c_w^2 \over 1 + \alpha_w K^2} \mand {\varpi}^0 := -c_w^2(1 - \alpha_w \partial_X^2)^{-1}.
\ee

\begin{remark} We will give precise estimates on the operator norms of  $\varpi^\ep - \varpi^0$, $\lambda_+^\ep-\lambda_+^0$ and so on in Section \ref{BE}.
\end{remark}

With all of this, if we put $\ep = 0$ in \eqref{theta eqn big} we arrive at:
$$
\theta_1 - {4w\over 1 - \alpha_w \partial_X^2}  \left[ \theta_1^2 +  \theta_2^2  \right]=0 \mand (2+2w)\theta_2 = 0.
$$
To solve this we take $\theta_2 = 0$ and $\theta_1$ a solution of
$$
\theta_1 - {4w \over 1 - \alpha_w \partial_X^2}  \left[ \theta_1^2   \right]=0.
$$
Applying $1 - \alpha_w \partial_X^2$  to the above results in 
\be\label{TWEKDV}
\alpha_w \theta_1''-\theta_1 +4w \theta_1^2   =0.
\ee
This is a rescaling of the nonlinear differential equation whose solutions give the profile for the KdV solitary waves.
It has an explicit solution given by
\be\label{the soliton}
\theta_1(X)=\sigma(X):=\sigma_0 \sech^2 (2 q_0 X) \quad \text{where} \quad \sigma_0:={3 \over 8w}\mand q_0:={1 \over 2 \sqrt{\alpha_w}}.
\ee
Note that if we put $\sigmab:=(\sigma,0)^t$ then we have shown:
\be\label{what sigma does}
\sigma+\varpi^0 b_1^0(\sigmab,\sigmab) = 0. 
\ee

\begin{remark} Observe that in the limit $\ep \to 0^+$ the part of the solution associated to optical branch, $\theta_2$, plays no role. All the action is happening for the acoustic part $\theta_1$.
\end{remark}
\begin{remark} In \cite{gaison-etal} the authors use homogenization theory to show that solutions of \eqref{r eqn} with initial data of the form $r(j,0) = \ep^2 R(\ep j)$ and $\dot{x}(j,0) = \ep^2 V(\ep j)$   are well-approximated over long times by a pair of solutions of KdV. Specifically they show that
$$
r(j,t) = \ep^2 U_-(\ep(j - c_w t),\ep^3t) +\ep^2 U_+(\ep(j + c_w t),\ep^3t) + \O(\ep^{3/2})\quad \text{for all $|t| \le C\ep^{-3}$}
$$
where $U_\pm$ solve the KdV equations
$$
\pm \partial_T U_\pm +  \alpha_w \partial_X^3 U_\pm + 4 w \partial_X (U^2_\pm) = 0.
$$
Making a traveling wave ansatz for these of the form $U_\pm(X,T) = \theta(X \mp T)$ 
results in 
 \eqref{TWEKDV}; the coefficients match exactly. Which is to say the results here are consistent with those in \cite{gaison-etal}.
\end{remark}

\section{Periodic solutions}\label{PS}

In this section we prove the existence of spatially periodic solutions of \eqref{theta eqn big}.
To this end, we first compute the linearization of that equation about $\thetab(X) = 0$ to get
$$
\theta_1 = 0 \mand \ep^2(c_w^2 + \ep^2) \theta_2'' + \lambda_+^\ep \theta_2 = 0.
$$
We are looking for solutions where $\theta_2$ is odd and periodic. Thus we can take $\theta_2(X) = \sin(K_\ep X)$ for some $K_\ep \in \R$. 
Inserting this into the second equation above, and recalling that $\lambda_+^\ep$ is a Fourier multiplier operator with symbol $\tlambda_+(\ep K)$, gives
$$
\ep^2 (c_w^2 + \ep^2) K_\ep^2 - \tlambda_+(\ep K_\ep) = 0.
$$
If we put $c = c_\ep:= \sqrt{c_w^2 + \ep^2}$ and $$K_\ep:=k_{c_\ep}/\ep,$$ this last equation is exactly  \eqref{this is kc}. Which is to say, by virtue of Part (vi) of Lemma \ref{lambda lemma},
that $K_\ep$ is its  unique nonnegative solution.

Thus we have odd periodic solutions of the the linearization of \eqref{theta eqn big} of the form
$$
\thetab(X) = \nub_\ep(X):=\sin(K_\ep X) \jb.
$$
We can extend the existence of periodic solutions for the linear problem to the full nonlinear problem \eqref{theta eqn big} by means of 
 the technique of ``bifurcation from a simple eigenvalue," developed by Crandall \& Rabinowitz in  \cite{crandall-rabinowitz} and Zeidler in \cite{zeidler2}.  
Here is what we find:
\begin{theorem} \label{periodic solutions}For all $w>1$ there exist $\ep_0 > 0$, $a_0>0$ and $0 < C_1 < C_2$ such the following holds for all for $\ep \in (0,\ep_0)$.
There exist maps
\be\label{maps}
\begin{split}
K^a_\ep&: [-a_0,a_0] \longrightarrow \R\\
\psi_{\ep,1}^a&: [-a_0,a_0] \longrightarrow C^\infty_\per\cap\left\{\text{even functions}\right\}\\
\psi_{\ep,2}^a&: [-a_0,a_0] \longrightarrow C^\infty_\per\cap\left\{\text{odd functions}\right\}
\end{split}\ee
with the following properties.
\begin{enumerate}[(i)]
\item Putting
$$\thetab(X)=
a\varphib^a_\ep(X):=
a\left( \begin{array}{c}
0\\
\sin(K_\ep^a X) 
\end{array}
\right)+a\left( \begin{array}{c}
 \psi^a_{\ep,1}(K_\ep^a X)\\
 \psi^a_{\ep,1}(K_\ep^a X)
\end{array}
\right)=:a {\nub}(K_\ep^a X) + \psib^a_\ep(K_\ep^a X)
$$
solves \eqref{theta eqn big} for all $|a|\le a_0$.
\item $K_\ep^0 = K_\ep$ where $K_\ep$ is the unique positive solution of 
\be\label{Kep}
\txi_{\sqrt{c_w^2+\ep^2}}(\ep{K}):= -\ep^2 (c_w^2 + \ep^2)K^2 + \tlambda_+(\ep K) =0.
\ee
Moreover $K_\ep = \O(1/\ep)$ in the sense that
\bes
C_1/\ep < K_\ep < C_2 /\ep \quad \text{ for all $\ep \in (0,\ep_0)$}.
\ees
\item $\psi^0_{\ep,1} = \psi^0_{\ep,2} = 0$.
\item $\ds \int_{-\pi}^{\pi} \psi^a_{\ep,2}(y) \sin(y) \ dy = 0$ for all $|a|\le a_0$.
\item For all $r \ge 0$, there exists $C_r>0$ such that for all $|a|,|\ga|\le a_0$ we have
\be\label{periodic bound}
\left \vert  \ep K_\ep^a \right \vert + \left\|  \psib^a_{\ep} \right\|_{C^r_\per \times C^r_\per}\le C_r
\ee
and
\be\label{periodic lip}
\left \vert  K_\ep^a -K_\ep^\ga\right \vert + \left\|  \psib^a_{\ep} - \psib^\ga_\ep \right\|_{C^r_\per\times C^r_\per} \le C_r|a-\ga|.
\ee
\end{enumerate}
\end{theorem}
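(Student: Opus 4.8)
The plan is to recast the search for periodic solutions of \eqref{theta eqn big} as a bifurcation-from-a-simple-eigenvalue problem with the frequency $K$ as parameter, apply the Crandall--Rabinowitz--Zeidler theorem \cite{crandall-rabinowitz, zeidler2}, and then upgrade its conclusions---whose constants are a priori $\ep$-dependent---to the uniform statement above by tracking powers of $\ep$. First, using Remark~\ref{operator scaling}, a $2\pi/K$-periodic solution of \eqref{theta eqn big} is the same as a $2\pi$-periodic $\phib=(\phi_1,\phi_2)$ with $\thetab(X)=\phib(KX)$ solving $\G(\ep,K,\phib)=0$, where $\G(\ep,K,\cdot)$ is what the left-hand side of \eqref{theta eqn big} becomes under that substitution: each long-wave Fourier multiplier $\mu^\ep$ appearing there turns into a Fourier multiplier on $2\pi$-periodic functions with symbol $n\mapsto\tmu^\ep(Kn)$, which we denote $\mu^{\ep,K}$, so that
\[
\G(\ep,K,\phib)=\begin{pmatrix}\phi_1+\varpi^{\ep,K}b_1^{\ep,K}(\phib,\phib)\\[4pt]\ep^2(c_w^2+\ep^2)K^2\phi_2''+\lambda_+^{\ep,K}\phi_2+\ep^2\lambda_+^{\ep,K}b_2^{\ep,K}(\phib,\phib)\end{pmatrix},
\]
with $b_j^{\ep,K}$ built from $J_1^{\ep,K},J_2^{\ep,K}$ as in \eqref{lw B def}. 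By the (scaling-invariant) parity argument of Lemma~\ref{EO2} we may fix $s>1/2$ and work on the space of pairs with first component an even and second component an odd $2\pi$-periodic function, the first component in $H^s_\per$ and the second in $H^{s+2}_\per$ (the extra two derivatives absorb the $\phi_2''$ term), $\G$ then mapping such pairs into $H^s_\per\times H^s_\per$. Because $\tlambda_\pm$, $\tilde\varrho$, $\tbeta$, $\tilde\gamma$, $\tilde J_n$ and (via \eqref{super sonic}) $\tvarpi^\ep$ are bounded uniformly in $\ep\in(0,1)$ by Lemma~\ref{lambda lemma} and Corollary~\ref{J cor}, $\G$ is real-analytic, $\G$ and its $\phib$-derivatives are bounded uniformly in $\ep$ and in $K$ on bounded subsets of $(0,\infty)$, and $\G(\ep,K,0)\equiv0$.

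A short computation gives $D_\phib\G(\ep,K,0)=\diag(I,\mathcal{L}_\ep(K))$, where $\mathcal{L}_\ep(K)$ is the Fourier multiplier with symbol $m_n(K):=-\ep^2(c_w^2+\ep^2)K^2n^2+\tlambda_+(\ep Kn)$. We take $K=K_\ep$, the unique positive root of \eqref{Kep}, so $m_{\pm1}(K_\ep)=0$; moreover $\ep K_\ep=k_{c_\ep}$ with $c_\ep:=\sqrt{c_w^2+\ep^2}$, and $\ep^2(c_w^2+\ep^2)K_\ep^2=c_\ep^2k_{c_\ep}^2=\tlambda_+(k_{c_\ep})\in[2w,2+2w]$ by \eqref{this is kc} and \eqref{lambda bounds}, whence for $|n|\ge2$
\[
|m_n(K_\ep)|=\tlambda_+(k_{c_\ep})n^2-\tlambda_+(k_{c_\ep}n)\ge2wn^2-(2+2w)\ge\tfrac{3w-1}{2}n^2>0.
\]
Thus $\mathcal{L}_\ep(K_\ep)$ is invertible on the closed codimension-one subspace of odd functions with vanishing $(\pm1)$-st Fourier coefficient, with $\|\mathcal{L}_\ep(K_\ep)^{-1}\|_{H^s\to H^{s+2}}$ bounded uniformly in $\ep$; so $\ker D_\phib\G(\ep,K_\ep,0)$ is spanned by $\nub:=(0,\sin(\cdot))^t$ and $D_\phib\G(\ep,K_\ep,0)$ is Fredholm of index zero. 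For transversality, the second component of $\partial_K D_\phib\G(\ep,K_\ep,0)\nub$ is $m_1'(K_\ep)\sin(\cdot)$ with $m_1'(K_\ep)=\ep\big(\tlambda_+'(k_{c_\ep})-2c_\ep^2k_{c_\ep}\big)$, and $|m_1'(K_\ep)|\ge\ep\,l_0>0$ by \eqref{this is l0}; since the range of $D_\phib\G(\ep,K_\ep,0)$ contains no function with nonzero $(\pm1)$-st Fourier coefficient, $\partial_K D_\phib\G(\ep,K_\ep,0)\nub\notin\operatorname{Range}D_\phib\G(\ep,K_\ep,0)$, which is the Crandall--Rabinowitz transversality condition.

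These are exactly the hypotheses of the Crandall--Rabinowitz--Zeidler theorem, which yields, for each $\ep$, analytic branches $a\mapsto(K_\ep^a,\psib^a_\ep)$ with $K_\ep^0=K_\ep$, $\psib^0_\ep=0$, $\psi^a_{\ep,2}\perp\sin(\cdot)$ in $L^2(-\pi,\pi)$, such that $\phib=a\nub+\psib^a_\ep$ solves $\G(\ep,K_\ep^a,\phib)=0$ for small $|a|$; undoing the rescaling gives (i), (iii) and (iv). To make $\ep_0$, $a_0$ and the $C_r$ uniform I would run the Lyapunov--Schmidt reduction by hand rather than invoke the abstract constants: write $\phi_2=a\sin(\cdot)+u$ with $\hat u(\pm1)=0$ and solve the first equation of $\G=0$ together with the projection of the second onto $\{\,\hat f(\pm1)=0\,\}$ for $(\phi_1,u)$ by a contraction mapping, whose constants are uniform in $\ep$ because $\mathcal{L}_\ep(K)^{-1}$ (off the $\sin$ mode) and all the multipliers are uniformly bounded and the nonlinearity is quadratic, obtaining $(\phi_1,u)=O(a^2)$ in every $H^m$. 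Substituting into the remaining scalar ($\sin$-coefficient) equation, and using that the $\theta_2$ nonlinearity carries a factor $\ep^2$ while $b_2^{\ep,K}(\nub,\nub)$ has no $\sin$-component, gives $a\big(m_1(K)+\ep^2a^2\tilde Q_\ep(a,K)\big)=0$ with $\tilde Q_\ep$ uniformly bounded; since $m_1(K_\ep)=0$ and $|m_1''|=O(\ep^2)$ forces $|m_1'|\ge\ep l_0/2$ on an $\ep$-independent neighbourhood of $K_\ep$, the equation $m_1(K)=-\ep^2a^2\tilde Q_\ep(a,K)$ is solvable for $K=K_\ep^a$ whenever $|a|\le a_0$ with $a_0$ uniform, and $|K_\ep^a-K_\ep|=O(\ep a^2)$. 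Combined with $k_{c_\ep}\in[\sqrt{2w}/c_\ep,\sqrt{2+2w}/c_\ep]$ from \eqref{size of kc} this gives (ii) and the estimate on $\ep K_\ep^a$ in \eqref{periodic bound}; the rest of \eqref{periodic bound} follows by running the reduction in $H^m$ for every $m$ (each application of $\mathcal{L}_\ep(K)^{-1}$ gains two derivatives uniformly) together with Sobolev embedding, and \eqref{periodic lip} from the analytic---hence Lipschitz---dependence of the branch on $a$.

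The one genuinely non-routine point is this $\ep$-uniformity: the bifurcation frequency $K_\ep$ diverges like $\ep^{-1}$ and the transversality constant $|m_1'(K_\ep)|$ collapses like $\ep$, so one must check by hand that these effects---together with the $\ep^2$ in front of the $\theta_2$ nonlinearity and the uniform lower bound $|m_n(K_\ep)|\gtrsim n^2$ for $|n|\ge2$---conspire to keep $a_0$ bounded away from $0$, $|K_\ep^a-K_\ep|$ small, and all the operator norms finite uniformly in $\ep$; each of these estimates in turn reduces to the uniform analytic bounds of Lemma~\ref{lambda lemma} and Corollary~\ref{J cor} read off at the $O(1)$ arguments $\ep Kn\approx k_{c_\ep}n$.
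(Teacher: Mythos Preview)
Your proposal is correct and follows essentially the same route as the paper: frequency-rescale to $2\pi$-periodic functions, identify the simple kernel $\nub=(0,\sin)$ at $K=K_\ep$, and run the Lyapunov--Schmidt reduction by hand (rather than via the abstract Crandall--Rabinowitz constants) so that all bounds are uniform in $\ep$; the paper packages this reduction as a three-component fixed-point map $(\psi_1,\psi_2,t)\mapsto\Psib_\ep(\psi_1,\psi_2,t,a)$ and an abstract uniform contraction lemma. The one place to be slightly more careful is your lower bound $|m_n(K)|\gtrsim n^2$ for $|n|\ge2$: you verify it only at $K=K_\ep$, but the contraction must run over a neighbourhood $K=K_\ep+t$, $|t|\le1$, and the paper devotes a separate lemma to this (and to the analogous uniform bound on $\tvarpi^{\ep,K_\ep+t}$).
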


The remainder of Section \ref{PS} is dedicated to the proof of this theorem.

\subsection{Frequency freezing}
We begin by making the additional scaling 
\be\label{bifur param scaling}
\thetab(X) := \phib(\omega{X}) \quad\text{ with } \phib:=\left(\begin{array}{c} \phi_1\\ \phi_2 \end{array}\right), \ \omega \in \R,  
\ee
where $\phib(Y)$ is $2\pi$-periodic.  By Remark \ref{operator scaling}, our system \eqref{theta eqn small} becomes
\be\label{phi eqn small}
\Phib_{\ep}(\phib,\omega) :=
\left[ \begin{array}{cc} 1 & 0 \\
0 & \ep^2\omega^2(c_w^2 + \ep^2) \partial_Y^2 + \lambda_+^{\ep\omega} \end{array} \right]\phib+\left[ \begin{array}{cc} \varpi^{\ep,\omega} & 0 \\
0 & \ep^2\lambda^{\ep\omega}_+\end{array} \right] B^{\ep\omega}(\phib,\phib) =0.
\ee 
where $\varpi^{\ep,\omega}$ is the multiplier with symbol
$$
\tilde{\varpi}^{\ep,\omega}(K) := \tilde{\varpi}^{\ep}(\omega{K}),
$$
and the multipliers $\lambda_+^{\ep\omega}$ and $B^{\ep\omega}$ conform to their prior definitions.

Since $B^{\ep\omega}$ is quadratic in $\phib$, it is easy to see that 
$$
D_{\phib}\Phib_{\ep}(0,\omega) = \begin{bmatrix*}
1 &0 \\
0 &\ep^2\omega^2(c_w^2+\ep^2)\partial_Y^2 + \lambda_+^{\ep\omega}
\end{bmatrix*}.
$$
When $\omega = K_{\ep}$, one may show that zero is a simple eigenvalue of $D_{\phib}\Phib_{\ep}(0,K_{\ep})$ when the operator is restricted to a suitable function space;
this is essentially just the calculation carried out at the start of this section.  Consequently,
the classical bifurcation results in \cite{crandall-rabinowitz}  and \cite{zeidler2} can be used to show that there exists a nontrivial family of solutions to $\Phib_{\ep}(\phib,\omega) = 0$
branching out of $(0,K_\ep)$.
 Unfortunately, those classical results do not provide, in an easy way, estimates on the solution which are uniform in $\ep$.
 And so, while our strategy is modeled on the proofs of the results in \cite{crandall-rabinowitz}  and \cite{zeidler2}, we carry out the proof from scratch and always with our eyes
 on how quantities depend on $\ep$.
 Our first step is to convert \eqref{phi eqn small} to a fixed point equation.

\subsection{Conversion to a fixed-point problem}\label{conversion} Let 
$$
\Y = (H_{\per}^2 \cap \{\text{even functions}\}) \times (H_{\per}^2 \cap \{\text{odd functions}\}), 
$$
where $H_{\per}^r$ is the Sobolev space of $2\pi$-periodic functions $\phi$ such that 
$$
\norm{\phi}_{H_{\per}^r} := \left(\sum_{k \in \Z} (1+k^2)^r|\hat{\phi}(k)|^2\right)^{1/2} < \infty.
$$

With $\nub(Y) := \sin(Y)\bf{j}$, we have the direct sum decomposition $\Y = \mathcal{N} \oplus \ortho$, where $\ortho \subseteq \Y$ is the orthogonal complement of $\mathcal{N} := \spn(\{\nub\})$ in the standard $H_{\per}^2 \times H_{\per}^2$ inner product, i.e.,
$$
\ip{\phib}{\psib}_{H_{\per}^2\times H_{\per}^2} := \ip{\phi_1}{\psi_1}_{H_{\per}^2} + \ip{\phi_2}{\psi_2}_{H_{\per}^2} \quad\text{ with }\quad \ip{\phi}{\psi}_{H_{\per}^2} := \sum_{k \in \Z} (1+k^2)^2\hat{\phi}(k)\overline{\hat{\psi}(k)}.
$$
We may then write any $\phib \in \Y$ as 
\be\label{ortho decomp}
\phib = a\nub + a\psib \quad\text{ for some }\quad a \in \R, \ \psib \in \ortho.  
\ee
Observe that if $\psib \in \ortho$, then $\hat{\psib}(\pm1)\cdot\jb = 0$.  Set 
\be\label{X!}
\X = \ortho \times \R.
\ee

Since the trivial solution $\phib = 0$ already solves \eqref{phi eqn small} for any choice of $\omega$, we will assume $a \ne 0$. After factoring and dividing by $a$, the problem 
$$
\Phib_{\ep}(\phib,K_{\ep}+t) = \Phib_{\ep}(a\nub+a\psib,K_{\ep}+t)=0
$$
becomes
\be\label{ortho decomp 1-1}
\psi_1 + a\varpi^{\ep,{K_{\ep}}+t}b_1^{\ep({K_{\ep}}+t)}(\nub+\psib,\nub+\psib) = 0 
\ee
and
\be\label{ortho decomp 1-2}
(\ep^2({K_{\ep}}+t)^2(c_w^2+\ep^2)\partial_Y^2 + \lambda_+^{\ep({K_{\ep}}+t)})(\sin(Y)+\psi_2) + a\ep^2\lambda_+^{\ep({K_{\ep}}+t)}b_2^{\ep({K_{\ep}}+t)}(\nub+\psib,\nub+\psib)=0.
\ee

Let $\Pi_1$ be the Fourier multiplier with symbol 
$$
\tilde{\Pi}_1 := \begin{cases}
1, &|k| = 1 \\
0, &|k| \ne 1 
\end{cases}
$$
and set $\Pi_2 = 1-\Pi_1$.  Then \eqref{ortho decomp 1-1} and \eqref{ortho decomp 1-2} are equivalent to
\be\label{ortho decomp 2-1}
\psi_1 + a\varpi^{\ep,{K_{\ep}}+t}b_1^{\ep({K_{\ep}}+t)}(\nub+\psib,\nub+\psib) = 0,
\ee
\be\label{ortho decomp 2-2}
(\ep^2({K_{\ep}}+t)^2(c_2^2+\ep^2)\partial_X^2 + \lambda_+^{\ep({K_{\ep}}+t)})\sin(Y)\!+\! a\ep^2\Pi_1\lambda_+^{\ep({K_{\ep}}+t)}b_2^{\ep({K_{\ep}}+t)}(\nub+\psib,\nub+\psib)=0
\ee
and
\be\label{ortho decomp 2-3}
(\ep^2({K_{\ep}}+t)^2(c_2^2+\ep^2)\partial_X^2 + \lambda_+^{\ep({K_{\ep}}+t)})\psi_2 + a\ep^2\Pi_2\lambda_+^{\ep({K_{\ep}}+t)}b_2^{\ep({K_{\ep}}+t)}(\nub+\psib,\nub+\psib)=0.
\ee

Condition \eqref{ortho decomp 2-1} immediately gives a fixed-point equation for $\psi_1$, and we see that \eqref{ortho decomp 2-2} holds if and only if the Fourier transform of its left side evaluated at $k=\pm1$ is zero.  Because $b_2^{\ep({K_{\ep}}+t)}(\nub+\psib,\nub+\psib)$ is odd by (the proof of) Lemma \ref{EO}, we need only consider this Fourier transform at $k=1$.  With 
$$
\tilde{\xi}_{\sqrt{c_w^2+\ep^2}}(k) = -(c_w^2+\ep^2)(\ep{k})^2 + \tlambda_+(\ep{k})
$$
as in \eqref{Kep} and $\cep := \sqrt{c_w^2+\ep^2}$, set 
\be\label{xi ep t}
\txi^{\ep,t}(k) := \tilde{\xi}_{\cep}(\ep({K_{\ep}}+t)k)
\ee
so that \eqref{ortho decomp 2-2} is equivalent to
\be\label{ortho decomp 2-2 equiv}
\frac{1}{2i}\tilde{\xi}^{\ep,t}(1)+ a\ep^2\Fo\left[\Pi_1\lambda_+^{\ep({K_{\ep}}+t)}b_2^{\ep({K_{\ep}}+t)}(\nub+\psib,\nub+\psib)\right](1) = 0.
\ee

Taylor's theorem tells us that 
$$
\txi^{\ep,1}(1) = \txi_{\cep}(\ep K_{\ep}+ \ep{t}) = \txi_{\cep}'(\ep K_{\ep})(\ep{t}) + R_{\ep}(\ep{t})(\ep{t})^2,
$$
and Part (vi) of Lemma \ref{lambda lemma} provides a number $l_0 > 0$ such that 
 $$
 |\txi_{\cep}'(\ep{K_{\ep}})| \ge l_0
 $$
for all $\ep$ sufficiently close to zero.  So, we may rewrite \eqref{ortho decomp 2-2 equiv} as
\be\label{ortho decomp 2-2 equiv again}
t = -\frac{\ep}{\txi_{\cep}'(\ep{K_{\ep}})}R_{\ep}(\ep{t})t^2 - \frac{2i\ep{a}}{\txi_{\cep}'(\ep{K_{\ep}})}\Fo\left[\Pi_1\lambda_+^{\ep({K_{\ep}}+t)}b_2^{\ep({K_{\ep}}+t)}(\nub+\psib,\nub+\psib) \right](1)
\ee

Finally, we will show that $\txi^{\ep,t}(k) \ne 0$ for $k \ne \pm1$, which means that the multiplier $(\xi^{\ep,t})^{-1}$ with symbol $(\txi^{\ep,t})^{-1}$ is well-defined on the range of $\Pi_2$ for suitably small $\ep$ and $t$.  Then \eqref{ortho decomp 2-3} becomes
\be\label{ortho decomp 3-1}
\psi_2 = -a\ep^2\left(\xi^{\ep,t}\right)^{-1}\Pi_2\lambda_+^{\ep({K_{\ep}}+t)}b_2^{\ep({K_{\ep}}+t)}(\nub+\psib,\nub+\psib).
\ee 

We have arrived at our ultimate fixed-point problem.  Set $\Psib_{\ep} := (\Psi_{\ep,1}, \Psi_{\ep,2},\Psi_{\ep,3})$ with
\begin{align*}
\Psi_{\ep,1}(\psib,t,a) &:=-a\varpi^{\ep,{K_{\ep}}+t}b_1^{\ep({K_{\ep}}+t)}(\nub+\psib,\nub+\psib) \\
\\
\Psi_{\ep,2}(\psib,t,a) &:= -a\ep^2(\xi^{\ep,t})^{-1}\Pi_2\lambda_+^{\ep({K_{\ep}}+t)}b_2^{\ep({K_{\ep}}+t)}(\nub+\psib,\nub+\psib) \\
\\
\Psi_{\ep,3}(\psib,t,a) &:= -\frac{\ep}{\txi_{\cep}'(\ep{K_{\ep}})}R_{\ep}(\ep{t})t^2 - \frac{2i\ep{a}}{\txi_{\cep}'(\ep{K_{\ep}})}\Fo\big[\Pi_1\lambda_+^{\ep({K_{\ep}}+t)}b_2^{\ep({K_{\ep}}+t)}(\nub+\psib,\nub+\psib)\big]\!(1)
\end{align*}

We will solve this problem by applying the following lemma, whose proof is given in Section \ref{fixed point proof}, to the map $\Psib_{\ep}$ for $\ep$ sufficiently small.

\begin{lemma}\label{fixed point lemma}
Let $\X$ be a Banach space and let $\ball(r) = \{x \in \X : \norm{x} \le r\}$.  For $0 < \ep < \ep_0$ let $F_{\ep} \colon \X \times \R \to \X$ be maps with the property that for some $C_1,a_1,r_1 > 0$, if $x,y \in \ball(r_1)$ and $|a| \le a_1$, then
\begin{align}
\sup_{0 < \ep < \ep_0} \norm{F_{\ep}(x,a)} &\le C_1\left(|a|+|a|\norm{x}+\norm{x}^2\right) \label{bound1} \\
\nonumber \\
\sup_{0 < \ep < \ep_0} \norm{F_{\ep}(x,a)-F_{\ep}(y,a)} &\le C_1\left(|a| + \norm{x}+\norm{y}\right)\norm{x-y} \label{bound2}
\end{align}
Then there exist $a_0 \in (0,a_1], r_0 \in (0,r_1]$ such that for each $0 < \ep < \ep_0$ and $|a| \le a_0$, there is a unique $x_{\ep}^a \in \ball(r_0)$ such that $F_{\ep}(x_{\ep}^a,a) = x_{\ep}^a$. 

Suppose as well that the maps $F_{\ep}(\cdot,a)$ are Lipschitz on $\ball(r_0)$ uniformly in $a$ and $\ep$, i.e., there is $L_1 > 0$ such that 
\be\label{lip bound}
\sup_{\substack{0 < \ep < \ep_0 \\ \norm{x} \le r_0}} \norm{F_{\ep}(x,a)-F_{\ep}(x,\grave{a})} \le L_1|a-\grave{a}|
\ee
for all $|a|,|\grave{a}| \le a_0$. Then the mappings $[-a_0,a_0] \to \X \colon a \mapsto x_{\ep}^a$ are also uniformly Lipschitz; that is, there is $L_0 > 0$ such that 
\be\label{fp lip concl}
\sup_{0 < \ep < \ep_0} \norm{x_{\ep}^a-x_{\ep}^{\grave{a}}} \le L_0|a-\grave{a}|
\ee
for all $|a|,|\grave{a}| \le a_1$. 
\end{lemma}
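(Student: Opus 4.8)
The plan is to run a quantitative version of the Banach fixed point theorem, taking care that every threshold depends only on $C_1$, $r_1$, $a_1$, so that all of the resulting constants are uniform in $\ep \in (0,\ep_0)$.

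First I would fix the working radius and the parameter window by setting
\[
r_0 := \min\left\{r_1,\ 1,\ \frac{1}{8C_1}\right\}, \qquad a_0 := \min\left\{a_1,\ \frac{r_0}{3C_1},\ \frac{1}{8C_1}\right\},
\]
both of which are positive and satisfy $r_0 \le r_1$, $a_0 \le a_1$. With these choices I would verify two things for each $\ep \in (0,\ep_0)$ and each $|a| \le a_0$. First, that $F_\ep(\cdot,a)$ maps the closed ball $\ball(r_0)$ into itself: for $x \in \ball(r_0)$, the bound \eqref{bound1} together with $C_1 a_0 \le r_0/3$, $C_1 r_0 \le 1/8 \le 1/3$ and $r_0 \le 1$ gives $\norm{F_\ep(x,a)} \le C_1|a| + C_1|a|\norm{x} + C_1\norm{x}^2 \le r_0/3 + r_0/3 + r_0/3 = r_0$. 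Second, that $F_\ep(\cdot,a)$ is a contraction on $\ball(r_0)$: for $x,y \in \ball(r_0)$, the bound \eqref{bound2} gives $\norm{F_\ep(x,a)-F_\ep(y,a)} \le C_1(|a|+\norm{x}+\norm{y})\norm{x-y} \le C_1(a_0 + 2r_0)\norm{x-y} \le \tfrac38\norm{x-y}$, with contraction factor $\tfrac38$ independent of $\ep$ and $a$. Since $\ball(r_0)$ is a complete metric space, the contraction mapping principle then produces the unique $x_\ep^a \in \ball(r_0)$ with $F_\ep(x_\ep^a,a) = x_\ep^a$, which is the first conclusion.

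For the Lipschitz statement I would, assuming additionally \eqref{lip bound}, subtract the two fixed-point identities $x_\ep^a = F_\ep(x_\ep^a,a)$ and $x_\ep^{\ga} = F_\ep(x_\ep^{\ga},\ga)$ and insert an intermediate term: for $|a|,|\ga| \le a_0$,
\[
\norm{x_\ep^a - x_\ep^{\ga}} \le \norm{F_\ep(x_\ep^a,a) - F_\ep(x_\ep^{\ga},a)} + \norm{F_\ep(x_\ep^{\ga},a) - F_\ep(x_\ep^{\ga},\ga)} \le \tfrac38\norm{x_\ep^a - x_\ep^{\ga}} + L_1|a-\ga|,
\]
where the first term uses the contraction estimate just established (both points lie in $\ball(r_0)$ and $|a| \le a_0$) and the second uses \eqref{lip bound}; absorbing the first term on the left yields \eqref{fp lip concl} with $L_0 := \tfrac85 L_1$, uniformly in $\ep$. (Since $x_\ep^a$ is only defined for $|a| \le a_0$, the bound \eqref{fp lip concl} is to be read over $[-a_0,a_0]$.)

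I do not anticipate a genuine obstacle here: the content is entirely standard. The one thing that requires care — and the reason this is isolated as a lemma rather than a bare invocation of the contraction principle — is the uniformity in $\ep$: the radius $r_0$, the window $a_0$, the contraction factor $\tfrac38$, and the Lipschitz constant $L_0$ must all be manufactured solely out of $C_1$, $r_1$, $a_1$, which is precisely what the hypotheses \eqref{bound1}, \eqref{bound2}, \eqref{lip bound} (each stated with $\sup_{0<\ep<\ep_0}$ on the left) are set up to provide.
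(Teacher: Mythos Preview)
Your proof is correct and follows essentially the same approach as the paper: choose $r_0$ and $a_0$ small enough (depending only on $C_1$, $r_1$, $a_1$) to force both self-mapping and a uniform contraction on $\ball(r_0)$, invoke Banach's fixed point theorem, and then get the Lipschitz dependence on $a$ by the standard subtract-and-absorb argument. The only differences are cosmetic choices of constants (the paper uses $r_0 = \min\{1/(6C_1), r_1\}$, $a_0 = \min\{r_0/(6C_1), 1/(6C_1), a_1\}$, contraction factor $1/2$, and $L_0 = 2L_1$).
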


\subsection{Application of Lemma \ref{fixed point lemma}} We begin with a general observation about Fourier multipliers.  The proof of this lemma follows from direct calculations with the norm
$$
\norm{\psi}_{H_{\per}^r}^2 = \sum_{k \in \Z} (1+k^2)^r|\hat{\psi}(k)|^2,
$$
and so we omit it.  Throughout this section, we denote by $\bdop(\mathcal{U},\mathcal{V})$ the space of bounded linear operators between normed spaces $\mathcal{U}$ and $\mathcal{V}$ and set $\bdop(\mathcal{U}) := \bdop(\mathcal{U},\mathcal{U})$. 

\begin{lemma}\label{fm props}
Let $\mu$ be a Fourier multiplier with symbol $\tmu \in L^{\infty}(\R)$ and let $\omega \in \R$.  As in Remark \ref{operator scaling}, let $\mu^{\omega}$ be the Fourier multiplier with symbol $\tmu^{\omega}(k) = \tmu(\omega{k})$.  Then
\begin{enumerate}[(i)]
\item $\sup_{r,\omega \in \R} \norm{\mu^{\omega}}_{\bdop(H_{\per}^r)} \le \norm{\tmu}_{L^{\infty}(\R)}.$
\item If $\tmu$ is Lipschitz, i.e., there is $\Lip(\tmu)> 0$ such that $|\tmu(k) - \tmu(\grave{k})| \le \Lip(\tmu)|k-\grave{k}|$, then
$$
\norm{(\mu^{\omega}-\mu^{\grave{\omega}})\psi}_{H_{\per}^r} \le \Lip(\tmu)|\omega-\grave{\omega}|\norm{\psi}_{H_{\per}^{r+1}}
$$
for all $\omega,\grave{\omega},r \in \R$ and $\psi \in H_{\per}^{r+1}$.  
\item If there exist $C,p > 0$ such that 
$$
|\tmu(k)| \le \frac{C}{(1+k^2)^p}
$$
for all $k \in \R$, then $\norm{\mu}_{\bdop(H_{\per}^r,H_{\per}^{r+2p})} \le C$.
\end{enumerate}
\end{lemma}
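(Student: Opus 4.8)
The statement to prove is Lemma \ref{fm props}, which collects three elementary bounds on Fourier multipliers acting on periodic Sobolev spaces. The proof is a direct computation with the defining norm
$$
\norm{\psi}_{H_{\per}^r}^2 = \sum_{k \in \Z} (1+k^2)^r|\hat{\psi}(k)|^2,
$$
together with the fact that a Fourier multiplier acts diagonally on Fourier coefficients: if $\mu$ has symbol $\tmu$, then $\widehat{\mu\psi}(k) = \tmu(k)\hat\psi(k)$, and the rescaled multiplier $\mu^\omega$ satisfies $\widehat{\mu^\omega\psi}(k) = \tmu(\omega k)\hat\psi(k)$.

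For part (i), the plan is simply to write
$$
\norm{\mu^\omega\psi}_{H_{\per}^r}^2 = \sum_{k\in\Z}(1+k^2)^r|\tmu(\omega k)|^2|\hat\psi(k)|^2 \le \norm{\tmu}_{L^\infty(\R)}^2\sum_{k\in\Z}(1+k^2)^r|\hat\psi(k)|^2,
$$
and take square roots; the bound is uniform in $r$ and $\omega$ because $\norm{\tmu}_{L^\infty(\R)}$ controls $|\tmu(\omega k)|$ for every integer $k$ and every $\omega$. For part (ii), I would apply the Lipschitz hypothesis coefficientwise: $|\tmu(\omega k) - \tmu(\grave\omega k)| \le \Lip(\tmu)|\omega - \grave\omega|\,|k|$, so
$$
\norm{(\mu^\omega - \mu^{\grave\omega})\psi}_{H_{\per}^r}^2 \le \Lip(\tmu)^2|\omega-\grave\omega|^2\sum_{k\in\Z}k^2(1+k^2)^r|\hat\psi(k)|^2 \le \Lip(\tmu)^2|\omega-\grave\omega|^2\norm{\psi}_{H_{\per}^{r+1}}^2,
$$
using $k^2(1+k^2)^r \le (1+k^2)^{r+1}$. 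For part (iii), the decay hypothesis $|\tmu(k)| \le C(1+k^2)^{-p}$ gives $(1+k^2)^{r+2p}|\tmu(k)|^2 \le C^2(1+k^2)^r$, hence $\norm{\mu\psi}_{H_{\per}^{r+2p}}^2 \le C^2\norm{\psi}_{H_{\per}^r}^2$.

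There is no real obstacle here: each assertion reduces to a pointwise inequality on the symbol multiplied against the weight $(1+k^2)^{r}$, summed against $|\hat\psi(k)|^2$. The only minor point worth stating carefully is that in (i) and (ii) the multiplier $\mu^\omega$ is applied to $2\pi$-periodic functions, so the frequency variable ranges over $\Z$ rather than $\R$, which is exactly why the $L^\infty$ norm of the symbol over all of $\R$ (not just over $\Z$) is the right quantity when $\omega$ is allowed to be arbitrary. Since the paper explicitly says "The proof of this lemma follows from direct calculations with the norm \dots and so we omit it," I would either omit the proof entirely or include only the three displayed chains of inequalities above with a one-line remark that taking square roots finishes each part.
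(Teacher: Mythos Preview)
Your proposal is correct and is exactly the direct computation the paper has in mind; the paper itself omits the proof with the remark that it ``follows from direct calculations with the norm.'' Your three displayed chains of inequalities are precisely those calculations, so either omitting the proof or including your brief argument is entirely appropriate.
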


\begin{remark}
Informally, Part (ii) of Lemma \ref{fm props} means that taking a Lipschitz estimate for the map $\omega \mapsto \mu^{\omega}$ costs us a derivative.
\end{remark}

The following two lemmas on the Fourier multipliers $\varpi^{\ep,K_{\ep}+t}$ and $(\xi^{\ep,t})^{-1}\Pi_2$ are the keys to our application of Lemma \ref{fixed point lemma} to the maps $\Psib_{\ep}$.  They follow directly from the corresponding results for the symbols $\tvarpi^{\ep,K_{\ep}+t}$ and $(\txi^{\ep,t})^{-1}$, which are stated below as Lemmas \ref{varpi symbol props} and \ref{xi symbol props} and proved in Section \ref{mult R props}.

\begin{lemma}\label{varpi props}
\begin{enumerate}[(i)]
\item There exist $\ep_{11}, C_{\varpi \max} > 0$ such that 
$$
\sup_{\substack{0 < \ep < \ep_{11} \\ |t| \le 1 \\ r \in \R}} \norm{\varpi^{\ep,K_{\ep}+t}}_{\bdop(H_{\per}^r, H_{\per}^{r+2})} \le C_{\varpi \max}.
$$

\item There exists $C_{\varpi \Lip} > 0$ such that if $|t|,|\grave{t}| \le 1$, then
$$
\sup_{\substack{0 < \ep < \ep_{11} \\ r \in \R}} \norm{\varpi^{\ep,K_{\ep}+t}-\varpi^{\ep,K_{\ep}+\grave{t}}}_{\bdop(H_{\per}^r)} \le C_{\varpi \Lip}|t-\grave{t}|.
$$
\end{enumerate}
\end{lemma}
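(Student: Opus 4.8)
The plan is to push everything down to pointwise estimates on the scalar symbol and then invoke Lemma~\ref{fm props}. By construction $\varpi^{\ep,K_\ep+t}$ is the rescaled multiplier of Remark~\ref{operator scaling} applied to $\varpi^\ep$ with $\omega=K_\ep+t$; its symbol is $K\mapsto\tvarpi^\ep\big((K_\ep+t)K\big)$, where by \eqref{lw pi}
$$
\tvarpi^\ep(K)=-\frac{\ep^2\tlambda_-(\ep K)}{(c_w^2+\ep^2)\ep^2K^2-\tlambda_-(\ep K)}.
$$
So the lemma follows once I prove, uniformly for $\ep\in(0,\ep_0)$ with $\ep_0$ small, the two symbol bounds (A) $|\tvarpi^\ep(K)|\le C(1+K^2)^{-1}$ and (B) $|(\tvarpi^\ep)'(K)|\le C(1+|K|)^{-1}$ for all $K\in\R$. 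Granting (A): Theorem~\ref{periodic solutions}(ii) gives $K_\ep>C_1/\ep$, so $|K_\ep+t|\ge1$ once $\ep_0<C_1/2$, hence $1+(K_\ep+t)^2k^2\ge1+k^2$ and $|\tvarpi^\ep((K_\ep+t)k)|\le C(1+k^2)^{-1}$; Part~(i) is then Lemma~\ref{fm props}(iii) with $p=1$. For Part~(ii), the mean value theorem writes the symbol difference as $(t-\grave t)\int_0^1 k\,(\tvarpi^\ep)'\big((K_\ep+\grave t+s(t-\grave t))k\big)\,ds$, and (B) together with $|K_\ep+\grave t+s(t-\grave t)|\ge C_1/\ep-1$ bounds the integrand by $C|k|/(|K_\ep+\cdots|\,|k|)\le C'\ep$; thus the symbol difference is $O(\ep\,|t-\grave t|)$ in $L^\infty$, and Lemma~\ref{fm props}(i) yields the $\bdop(H^r_\per)$ estimate. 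Note that Lemma~\ref{fm props}(ii) applied directly would cost a derivative, so the mean value argument exploiting the decay of $(\tvarpi^\ep)'$ is what is needed.

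The content is in (A) and (B), where the obstacle is the $0/0$ at $K=0$ against the demand that every constant be independent of $\ep$. Setting $\kappa:=\ep K$ I would split into three regimes. For $|\kappa|\le\kappa_0$ (fixed small): the Maclaurin expansion $\tlambda_-(\kappa)=c_w^2\kappa^2-\alpha_w\kappa^4+O(\kappa^6)$ from Section~\ref{LWL}, whose remainder is controlled uniformly on $|\kappa|\le\kappa_0$ by the analyticity in a fixed strip of Lemma~\ref{lambda lemma}(ii), gives $(c_w^2+\ep^2)\kappa^2-\tlambda_-(\kappa)=\ep^2\kappa^2+\alpha_w\kappa^4+O(\kappa^6)\ge\ep^2\kappa^2+\tfrac12\alpha_w\kappa^4$ after shrinking $\kappa_0$, while $\tlambda_-(\kappa)\le c_w^2\kappa^2$ by \eqref{lambda derivative bounds}; hence $|\tvarpi^\ep(K)|\le\ep^2c_w^2\kappa^2/(\ep^2\kappa^2+\tfrac12\alpha_w\kappa^4)=c_w^2/(1+\tfrac12\alpha_w K^2)$, which is (A) here. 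For $\kappa_0\le|\kappa|\le\kappa_1$ (fixed compact annulus): \eqref{super sonic} with $c=c_w$ gives $c_w^2\kappa^2-\tlambda_-(\kappa)>0$ there, so by continuity and compactness $(c_w^2+\ep^2)\kappa^2-\tlambda_-(\kappa)\ge\delta_0>0$ with $\delta_0$ independent of $\ep$; with $\tlambda_-\le2$ from \eqref{lambda bounds} this gives $|\tvarpi^\ep|\le2\ep^2/\delta_0$, and since $1+K^2\le1+\kappa_1^2/\ep^2\le C\ep^{-2}$ here, $|\tvarpi^\ep|(1+K^2)$ is bounded uniformly. For $|\kappa|\ge\kappa_1$ (with $\kappa_1\ge2/c_w$): since $c_w>1$, $(c_w^2+\ep^2)\kappa^2-\tlambda_-(\kappa)\ge c_w^2\kappa^2-2\ge\tfrac12c_w^2\kappa^2$, so $|\tvarpi^\ep|\le4/(c_w^2K^2)\le C(1+K^2)^{-1}$. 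Assembling the three regimes proves (A); (B) follows by differentiating the quotient and rerunning the same split, the key being that $(\tvarpi^\ep)'$ keeps the overall $\ep^2$ in the numerator, $\tlambda_-$ and $\tlambda_-'$ are bounded (the latter by \eqref{lambda derivative bounds}), and the relevant denominator powers go up by one, so each regime delivers a bound $\le C(1+|K|)^{-1}$ uniform in $\ep$.

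I expect the first regime to be the genuine obstacle: it is precisely there that the denominator degenerates and the cancellation $\ep^2\kappa^2$ must be extracted quantitatively, so everything rests on the uniform-on-a-fixed-neighborhood validity of the Maclaurin remainder for $\tlambda_-$, which is exactly why the complex-analyticity of $\tlambda_\pm$ in a fixed strip (Lemma~\ref{lambda lemma}(ii)) is indispensable. These three-regime symbol estimates are what should be isolated as the forthcoming Lemma~\ref{varpi symbol props}, with the present lemma then following mechanically from them via Lemma~\ref{fm props} as in the first paragraph.
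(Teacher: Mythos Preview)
Your reduction is exactly the paper's: Lemma~\ref{varpi props} is deduced mechanically from the symbol estimates of Lemma~\ref{varpi symbol props} via Lemma~\ref{fm props}, and for Part~(i) your bound (A) and its three-regime proof are essentially the real-axis restriction of the paper's Lemma~\ref{pi lemma}. Where you diverge is in the Lipschitz part: the paper does \emph{not} pass through a derivative bound (B). Instead it writes the symbol difference $\tvarpi^{\ep,K_\ep+t}(k)-\tvarpi^{\ep,K_\ep+\grave t}(k)$ directly as a sum of quotients and controls each piece using a dedicated lower bound $c_w^2\kappa^2-\tlambda_-(\kappa)\ge C_\delta\kappa^2$ for $\kappa\ge\delta$ (Lemma~\ref{quadratic bound below}), exploiting that $\ep(K_\ep+t)k\ge m_*(w)-\ep>0$ for every integer $k\ne0$, so the small-$\kappa$ regime never arises. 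Your mean-value-theorem route is arguably tidier and in fact yields an extra factor of $\ep$ the paper does not record, but it trades one auxiliary lemma (the quadratic lower bound) for another (the derivative estimate).

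One caveat on your sketch of (B): the phrase ``differentiating the quotient and rerunning the same split'' underspecifies the inner regime $|\ep K|\le\kappa_0$. A crude triangle-inequality bound on $(N'D-ND')/D^2$ there produces a spurious $1/|K|$ singularity; the honest computation needs the cancellation $\kappa\tlambda_-'(\kappa)-2\tlambda_-(\kappa)=O(\kappa^4)$ (or, more slickly, a Cauchy estimate from the strip analyticity in Lemma~\ref{pi lemma}). This is not a real obstacle for the application, since---as with the paper's argument---you only ever evaluate $(\tvarpi^\ep)'$ at $(K_\ep+\cdot)k$ with $|k|\ge1$, hence at arguments $\gtrsim1/\ep$, and the other two regimes of your split already cover that range cleanly. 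It would be cleaner to state (B) only for $|K|\ge1$.
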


\begin{lemma}\label{xi props}
\begin{enumerate}[(i)]
\item There exist $\ep_{12}, C_{\xi \max} > 0$ such that
$$
\sup_{\substack{0 < \ep < \ep_{12} \\ |t| \le 1 \\ r \in \R}} \norm{\ep^2(\xi^{\ep,t})^{-1}\Pi_2}_{\bdop(H_{\per}^r,H_{\per}^{r+2})} \le C_{\xi \max}.
$$
\item There exists $C_{\xi \Lip} > 0$ such that 
$$
\sup_{\substack{0 < \ep < \ep_{12} \\ r \in \R}} \norm{\ep^2(\xi^{\ep,t})^{-1}\Pi_2 - \ep^2(\xi^{\ep,t})^{-1}\Pi_2}_{\bdop(H_{\per}^r)} \le C_{\xi \Lip}|t-\grave{t}|
$$
for all $|t|, |\grave{t}| \le 1$.
\end{enumerate}
\end{lemma}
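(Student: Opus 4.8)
\emph{Proof plan.} The strategy is to reduce Lemma~\ref{xi props} to the pointwise bounds on the symbol $(\txi^{\ep,t})^{-1}$ collected below in Lemma~\ref{xi symbol props}, and then to invoke the abstract multiplier estimates of Lemma~\ref{fm props}. Recalling $\ep(K_\ep+t) = \ep K_\ep + \ep t = k_{\cep} + \ep t$, we have
\[
\txi^{\ep,t}(k) = \tilde{\xi}_{\cep}\bigl((k_{\cep}+\ep t)k\bigr), \qquad \tilde{\xi}_{\cep}(\mu) = -(c_w^2+\ep^2)\mu^2 + \tlambda_+(\mu).
\]
It therefore suffices to establish two facts, uniformly for $\ep$ small, $|t|,|\grave{t}| \le 1$, and all integers $k$ with $|k| \ne 1$ (which is precisely the range left by $\Pi_2$ on $2\pi$-periodic functions): first, $|\ep^2(\txi^{\ep,t}(k))^{-1}| \le C(1+k^2)^{-1}$; and second, $|\ep^2(\txi^{\ep,t}(k))^{-1} - \ep^2(\txi^{\ep,\grave{t}}(k))^{-1}| \le C|t-\grave{t}|$. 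Granting these, Part~(iii) of Lemma~\ref{fm props} (with $p=1$) converts the first into claim~(i), and Part~(i) of Lemma~\ref{fm props} converts the second into claim~(ii); in particular the multiplier $(\xi^{\ep,t})^{-1}\Pi_2$ is well defined once the first bound is known.

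The crux is a uniform lower bound $|\txi^{\ep,t}(k)| \ge c_0\,(1+k^2)$ for all integers $|k| \ne 1$. By Part~(vi) of Lemma~\ref{lambda lemma}, $k_{\cep} \in [\sqrt{2w}/\cep,\sqrt{2+2w}/\cep]$ and $c \mapsto k_c$ is smooth, and since $\cep = \sqrt{c_w^2+\ep^2} \to c_w^+$ as $\ep \to 0^+$, for all small $\ep$ the numbers $k_{\cep}$ (and also $k_{\cep}+\ep t$ for $|t| \le 1$) lie in a fixed compact subinterval $[\kappa_1,\kappa_2]$ of $(0,\infty)$. Because $w > 1 > 1/3$ we have $2\sqrt{2w} > \sqrt{2+2w}$, so shrinking the range of $\ep$ if needed gives $2\kappa_1 > \kappa_2 \ge k_{\cep}$. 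Hence, for $|k| \ge 2$, the point $\mu = (k_{\cep}+\ep t)k$ satisfies $|\mu| \ge 2\kappa_1 > \kappa_2$, so it is bounded away from $k_{\cep}$, the unique nonnegative zero of $\tilde{\xi}_{\cep}$. Using $\tlambda_+ \le 2+2w$ from \eqref{lambda bounds} together with the quadratic term, one checks that $\tilde{\xi}_{\cep}(\mu) \le 2 - 6w < 0$ at $|k| = 2$ and, for larger $|k|$, $\tilde{\xi}_{\cep}(\mu) \le -c_0(1+k^2)$; together with $\txi^{\ep,t}(0) = \tlambda_+(0) = 2+2w$, this yields the claimed lower bound, and so the first fact (in fact with an extra factor $\ep^2$ to spare).

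For the Lipschitz fact I would write $1/A - 1/B = (B-A)/(AB)$ with $A = \tilde{\xi}_{\cep}((k_{\cep}+\ep t)k)$ and $B = \tilde{\xi}_{\cep}((k_{\cep}+\ep\grave{t})k)$. The mean value theorem gives $|B-A| \le \sup|\tilde{\xi}_{\cep}'|\cdot \ep|k|\,|t-\grave{t}|$, and on the relevant range $|\tilde{\xi}_{\cep}'(\mu)| = |-2(c_w^2+\ep^2)\mu + \tlambda_+'(\mu)| \le 2(c_w^2+\ep^2)(\kappa_2+1)|k| + 2 \le C|k|$ by \eqref{lambda derivative bounds}. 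Combined with $|AB| \ge c_0^2(1+k^2)^2$ from the previous paragraph, this yields $|\ep^2/A - \ep^2/B| \le C\ep^3 k^2 (1+k^2)^{-2}|t-\grave{t}| \le C\ep^3|t-\grave{t}|$, which is the second fact (again with room to spare).

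\emph{Main obstacle.} The only delicate step is the uniform lower bound on $|\txi^{\ep,t}(k)|$: one must be sure that the resonant wavenumber --- the zero of $\tilde{\xi}_{\cep}$ tied to the optical branch --- stays isolated from all integers with $|k| \ge 2$ after the frequency rescaling, even as $t$ varies in $[-1,1]$. For $t=0$ this zero sits exactly at $k = \pm 1$, which is harmless since $\Pi_2$ removes those modes; for $t \ne 0$ it drifts to the non-integer point $k_{\cep}/(k_{\cep}+\ep t) \approx 1$, still far from $|k| \ge 2$. This is exactly where uniqueness and localization of $k_c$ from Lemma~\ref{lambda lemma}(vi), the smoothness of $c \mapsto k_c$, the inequality $2\kappa_1 > \kappa_2$, and the boundedness of $\tlambda_+$ against the quadratic growth of $(c_w^2+\ep^2)\mu^2$ all enter; the remaining manipulations are routine bookkeeping.
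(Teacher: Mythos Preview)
Your proposal is correct and follows exactly the paper's approach: the paper derives Lemma~\ref{xi props} directly from the symbol-level Lemma~\ref{xi symbol props} via the abstract multiplier bounds in Lemma~\ref{fm props}, just as you do. Your sketch of the symbol estimates is also essentially the paper's own proof of Lemma~\ref{xi symbol props}: the paper uses the same localization $\ep(K_\ep+t)k \ge (1+\gamma_*)m_*(w)$ for $|k|\ge 2$ (your inequality $2\kappa_1>\kappa_2$ is the same mechanism, expressed more directly), then bounds $\txi_{\cep}$ from above by a negative constant there and handles the Lipschitz part by the same $1/A-1/B=(B-A)/(AB)$ algebra together with $|\tlambda_+'|\le 2$ and the quadratic lower bound on the denominator.
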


\begin{lemma}\label{varpi symbol props}
There exists $\ep_{11} > 0$ such that the following hold.
\begin{enumerate}[(i)]
\item There is $C_{\tvarpi \max} > 0$ such that 
\be\label{varpi bounded}
\sup_{\substack{0 < \ep < \ep_{11} \\ k \in \Z \\ |t| \le 1}} |\tvarpi^{\ep,K_{\ep}+t}(k)| \le \frac{C_{\tvarpi \max}}{1+k^2}.
\ee
\item There is $C_{\tvarpi \Lip} > 0$ such that 
\be\label{varpi lip}
\sup_{\substack{0 < \ep < \ep_{11} \\ k \in \Z}} |\tvarpi^{\ep,K_{\ep}+t}(k)-\tvarpi^{\ep,K_{\ep}+\grave{t}}(k)| \le C_{\tvarpi \Lip}|t-\grave{t}|
\ee
for all $|t|,|\grave{t}| \le 1$.
\end{enumerate}
\end{lemma}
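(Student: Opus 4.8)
The plan is to unwind the definitions so that everything becomes an elementary estimate on the scalar function $\tlambda_-$, using only the facts collected in Lemma~\ref{lambda lemma}. Write $v := \ep(K_\ep+t)k$. Since $K_\ep = k_{\cep}/\ep$ with $\cep = \sqrt{c_w^2+\ep^2}$, we have $\ep(K_\ep+t) = k_{\cep}+\ep t$, and \eqref{size of kc} shows this lies in a fixed compact interval $[m,M]\subset(0,\infty)$ for all $|t|\le 1$ once $\ep_{11}$ is small enough; hence $m|k|\le|v|\le M|k|$. Setting $N:=\tlambda_-$ and $D_\ep(v):=(c_w^2+\ep^2)v^2-N(v)$, we then have
\[
\tvarpi^{\ep,K_\ep+t}(k) = -\frac{\ep^2 N(v)}{D_\ep(v)}.
\]
The case $k=0$ is trivial for both parts, because $\tvarpi^{\ep,K_\ep+t}(0)=\tvarpi^\ep(0)=-c_w^2$ (the removable singularity of $\tvarpi^\ep$ at $0$, which exists since $\cep^2>c_w^2$), a quantity independent of $\ep$ and $t$; so henceforth $k\ne0$ and $|v|\ge m>0$.

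The crux is a good lower bound on $D_\ep$. By \eqref{lambda bounds}, $0\le N\le 2$, and \eqref{super sonic} applied with $c^2=c_w^2$ gives $D_0(v):=c_w^2v^2-N(v)>0$ for every $v\ne0$; since $D_\ep(v)=D_0(v)+\ep^2v^2$, we get $D_\ep(v)\ge\max\{\ep^2v^2,\,D_0(v)\}$. From $D_0(v)\ge c_w^2v^2-2$ we obtain $D_\ep(v)\ge\tfrac12 c_w^2v^2$ once $|v|\ge V_0$ for a suitable fixed $V_0$, while on the compact set $\{m\le|v|\le V_0\}$ the continuous, strictly positive function $D_0$ is bounded below by some $\delta>0$. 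Part (i) is now immediate, using only $D_\ep(v)\ge\ep^2v^2$ and $N(v)\le 2$:
\[
|\tvarpi^{\ep,K_\ep+t}(k)| = \frac{\ep^2 N(v)}{D_\ep(v)} \le \frac{2\ep^2}{\ep^2v^2} = \frac{2}{v^2}\le\frac{2}{m^2k^2}\le\frac{C_{\tvarpi \max}}{1+k^2},\qquad k\ne0.
\]

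For part (ii) I would apply the mean value theorem in $t$, which is legitimate since $\tvarpi^\ep$ is real-analytic on $\R$ ($\cep^2>c_w^2$ makes $D_\ep$ nonvanishing there): it suffices to bound $|k\,(\tvarpi^\ep)'((K_\ep+s)k)|$ uniformly in $k$, $\ep$, and $|s|\le 1$. Differentiating $\tvarpi^\ep(K)=-\ep^2 N(\ep K)/D_\ep(\ep K)$, the $NN'$ contributions cancel and one is left with the identity
\[
(\tvarpi^\ep)'(K) = -\ep^3\,\frac{(c_w^2+\ep^2)\,v\,\big(vN'(v)-2N(v)\big)}{D_\ep(v)^2},\qquad v=\ep K .
\]
Now $|vN'(v)-2N(v)|\le 2|v|+4$ by \eqref{lambda bounds}–\eqref{lambda derivative bounds}, and $|v|\le M|k|$. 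If $|v|\ge V_0$, then $D_\ep(v)^2\ge\tfrac14 c_w^4 v^4$ and, using $|v|\ge m|k|$, $\big|k(\tvarpi^\ep)'((K_\ep+s)k)\big|\lesssim \ep^3|k|\,|v|(|v|+1)/v^4\lesssim \ep^3/|k|\lesssim 1$; if $m\le|v|<V_0$, then $|k|\le|v|/m$ is bounded, $D_\ep(v)\ge\delta$, and again $\big|k(\tvarpi^\ep)'((K_\ep+s)k)\big|\lesssim\ep^3\lesssim 1$. Either way the quantity is $\le C_{\tvarpi \Lip}$, and integrating over $s$ between $t$ and $\grave t$ yields \eqref{varpi lip}.

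The main obstacle is uniformity in $\ep$ in part (ii). The naive bound $D_\ep(v)\gtrsim\ep^2v^2$, combined with any estimate on the numerator of $(\tvarpi^\ep)'$ that is merely $O(v^2)=O(k^2)$, produces a Lipschitz constant of size $\ep^{-1}$, which is useless. Two structural facts must be used together to defeat this: the algebraic cancellation displayed above, which brings the numerator down to size $O(|v|(|v|+1))$, and the upgrade of the denominator lower bound from $\ep^2v^2$ to $v^2$ — that is, the strict supersonicity $D_0(v)>0$ of \eqref{super sonic} — supplemented by a compactness argument to cover the bounded range of $v$, i.e.\ the finitely many wavenumbers $k$ for which $v=\ep(K_\ep+t)k$ is only of order one. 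These are exactly the points where the quadratic vanishing of $\tlambda_-$ at the origin with coefficient precisely $c_w^2$, together with the strict acoustic dispersion estimate, are essential; everything else is bookkeeping with Lemma~\ref{lambda lemma}.
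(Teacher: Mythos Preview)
Your argument is correct and takes a genuinely different route from the paper. The paper proves part~(i) by quoting the complex-analytic Lemma~\ref{pi lemma} for $\tvarpi^\ep$ and then substituting $K=(K_\ep+t)k$; you bypass this entirely with the one-line inequality $D_\ep(v)\ge\ep^2v^2$, $N(v)\le 2$. For part~(ii) the paper works directly with the difference $\tvarpi^{\ep,K_\ep+t}(k)-\tvarpi^{\ep,K_\ep+\grave t}(k)$, splitting it into four terms $I,II,III,IV$ and bounding each using Lemma~\ref{quadratic bound below} (whose proof in turn requires the sign computation Lemma~\ref{lambda- triple prime} for $\tlambda_-'''$). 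You instead differentiate, notice the Wronskian-type cancellation $N'D_\ep-ND_\ep'=(c_w^2+\ep^2)v(vN'-2N)$, and then obtain the needed lower bound $D_0(v)\ge\delta>0$ on $\{m\le|v|\le V_0\}$ by pure compactness rather than monotonicity. Your route is shorter and self-contained from Lemma~\ref{lambda lemma}; the paper's route yields the slightly sharper intermediate statement $c_w^2v^2-\tlambda_-(v)\ge C_\delta v^2$ for $|v|\ge\delta$, but this is not used elsewhere. Both approaches ultimately rest on the same structural fact---strict supersonicity $c_w^2v^2>\tlambda_-(v)$ for $v\ne0$---to defeat the $\ep^{-1}$ loss you correctly flag as the main obstacle.
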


\begin{lemma}\label{xi symbol props}
There exists $\ep_{12} > 0$ such that the following hold.
\begin{enumerate}[(i)]
\item There is $C_{\txi \max} > 0$ such that 
\be\label{xi bounded}
\sup_{\substack{0 < \ep < \ep_{12} \\ k \in \Z\setminus\{-1,1\} \\ |t| \le 1}} \left|\frac{1}{\txi^{\ep,t}(k)}\right| \le \frac{C_{\txi \max}}{1+k^2}.
\ee
\item There is $C_{\txi \Lip} > 0$ such that
\be\label{xi lip}
\sup_{\substack{0 < \ep < \ep_0 \\ k \in \Z\setminus\{-1,1\}}} \left|\frac{1}{\txi^{\ep,t}(k)} - \frac{1}{\txi^{\ep,\grave{t}}(k)}\right| \le C_{\txi \Lip}|t-\grave{t}|.
\ee
\end{enumerate}
\end{lemma}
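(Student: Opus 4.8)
The plan is to reduce the entire statement to elementary estimates on the real-valued function
\[
\txi^{\ep,t}(k)=\txi_{\cep}\bigl(\ep(K_\ep+t)k\bigr)=-(c_w^2+\ep^2)\bigl(\ep(K_\ep+t)k\bigr)^2+\tlambda_+\bigl(\ep(K_\ep+t)k\bigr),
\]
which I abbreviate by setting $\kappa=\kappa(\ep,t):=\ep(K_\ep+t)=\ep K_\ep+\ep t$ and $A=A(\ep,t):=(c_w^2+\ep^2)\kappa^2$, so that $\txi^{\ep,t}(k)=-Ak^2+\tlambda_+(\kappa k)$. First I would extract the one genuinely non-trivial input: a two-sided control of $\kappa$ uniform in $\ep$ and $t$. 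Part~(vi) of Lemma~\ref{lambda lemma} gives $\ep K_\ep=k_{\cep}\in[\sqrt{2w}/\cep,\sqrt{2+2w}/c_w]$, so $\cep^2(\ep K_\ep)^2\ge 2w$ and $\ep K_\ep\le\sqrt{2+2w}/c_w$; since $|\ep t|\le\ep$, expanding $\kappa^2=(\ep K_\ep)^2+\O(\ep)$ shows, after choosing $\ep_{12}$ small,
\[
\tfrac32 w\le A(\ep,t)\quad\text{and}\quad 0<\kappa(\ep,t)\le\kappa_{\max}:=\tfrac{\sqrt{2+2w}}{c_w}+1
\]
for all $0<\ep<\ep_{12}$ and $|t|\le1$. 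This is the only place the definition of $K_\ep$ enters. I would also note that $\txi^{\ep,0}(\pm1)=0$ exactly --- that is precisely the equation defining $K_\ep$, together with the evenness of $\txi^{\ep,t}$ in $k$ --- which is why $k=\pm1$ is excluded, whereas $\txi^{\ep,t}(0)=\tlambda_+(0)=2+2w$ for every $t$.

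For Part~(i) I would split $\Z\setminus\{-1,1\}$ into $k=0$, where $|\txi^{\ep,t}(0)|=2+2w$ gives \eqref{xi bounded} at once, and $|k|\ge2$. For the latter, $\tlambda_+(\kappa k)\le 2+2w$ by Lemma~\ref{lambda lemma}(iv), so with $A\ge\tfrac32 w$,
\[
|\txi^{\ep,t}(k)|\ge Ak^2-(2+2w)\ge\tfrac32 w\,k^2-(2+2w);
\]
using $k^2\ge4$ to absorb the constant into a fraction of $k^2$ turns the right side into $c_0(1+k^2)$ for some $c_0=c_0(w)>0$, and $C_{\txi \max}:=1/c_0$ (enlarged if needed to cover $k=0$) gives \eqref{xi bounded}. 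I would record the resulting uniform lower bound $|\txi^{\ep,t}(k)|\ge c_0(1+k^2)$ for all $k\in\Z\setminus\{-1,1\}$, $0<\ep<\ep_{12}$, $|t|\le1$, as it drives Part~(ii).

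For Part~(ii) I would differentiate in $t$ with $k$ and $\ep$ fixed,
\[
\partial_t\txi^{\ep,t}(k)=-2(c_w^2+\ep^2)\ep^2(K_\ep+t)k^2+\ep k\,\tlambda_+'\bigl(\ep(K_\ep+t)k\bigr),
\]
and bound it: since $\ep^2|K_\ep+t|=\ep\kappa\le\ep\kappa_{\max}$ and $|\tlambda_+'|\le2$ by Lemma~\ref{lambda lemma}(v), we get $|\partial_t\txi^{\ep,t}(k)|\le C\ep\,k^2\le C\ep(1+k^2)$ uniformly in $0<\ep<\ep_{12}$, $|t|\le1$; the mean value theorem then gives $|\txi^{\ep,t}(k)-\txi^{\ep,\grave{t}}(k)|\le C\ep(1+k^2)|t-\grave{t}|$. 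Combining with the lower bound from Part~(i),
\[
\Bigl|\tfrac{1}{\txi^{\ep,t}(k)}-\tfrac{1}{\txi^{\ep,\grave{t}}(k)}\Bigr|=\frac{|\txi^{\ep,\grave{t}}(k)-\txi^{\ep,t}(k)|}{|\txi^{\ep,t}(k)\,\txi^{\ep,\grave{t}}(k)|}\le\frac{C\ep(1+k^2)}{c_0^2(1+k^2)^2}|t-\grave{t}|\le C_{\txi \Lip}|t-\grave{t}|,
\]
which is \eqref{xi lip} for $0<\ep<\ep_{12}$, hence for $0<\ep<\ep_0$ once $\ep_0\le\ep_{12}$ (as it will be taken in the proof of Theorem~\ref{periodic solutions}). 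I expect the only point requiring real care to be the uniform lower bound $A\ge\tfrac32 w$: it rests on Lemma~\ref{lambda lemma}(vi) keeping $\ep K_\ep=k_{\cep}$ bounded away from both $0$ and $\infty$, combined with the elementary inequality $4w>2+2w$ for $w>1$, which is exactly what prevents the bounded term $\tlambda_+(\kappa k)$ from cancelling $Ak^2$ once $|k|\ge2$. Everything else is routine.
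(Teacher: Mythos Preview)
Your proof is correct and takes a genuinely more streamlined route than the paper's.

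For Part~(i), the paper argues indirectly: it introduces an auxiliary function $f(\gamma,w)$, chooses $\gamma_*\in(0,1)$ with $f(\gamma_*,w)<0$, shows $\ep(K_\ep+t)k\ge(1+\gamma_*)m_*(w)$ for $k\ge2$, and then uses the monotonicity of $\txi_{c_w}$ on $(0,\infty)$ to bound $\txi_{\cep}(\ep(K_\ep+t)k)$ above by the negative constant $f(\gamma_*,w)$. This yields only a uniform positive lower bound on $|\txi^{\ep,t}(k)|$; the quadratic growth in $k$ is left implicit. You bypass all of this by observing that the coefficient $A=(c_w^2+\ep^2)\kappa^2$ satisfies $A\ge\tfrac32 w$ (from $\cep^2(\ep K_\ep)^2=\tlambda_+(\ep K_\ep)\ge2w$ plus an $O(\ep)$ perturbation) and then computing $|\txi^{\ep,t}(k)|=Ak^2-\tlambda_+(\kappa k)\ge\tfrac32 w k^2-(2+2w)$ directly. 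Since $6w>2+2w$ for $w>1$, this is positive at $k^2=4$ and gives the full $(1+k^2)$ lower bound in one stroke.

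For Part~(ii), the paper adds and subtracts terms in the numerator, splits into the ranges $2\le k\le R_*$ and $k>R_*$, and invokes Lemma~\ref{xi lemma} for the large-$k$ case. Your mean-value argument with the bound $|\partial_t\txi^{\ep,t}(k)|\le C\ep(1+k^2)$, combined with the uniform lower bound $|\txi^{\ep,t}(k)|\ge c_0(1+k^2)$ already obtained, handles all $|k|\ge2$ at once and in fact yields an extra factor of $\ep/(1+k^2)$ beyond what the lemma requires. The paper's approach is more hands-on but not stronger; your approach is cleaner because Part~(i) already delivers the quadratic lower bound that makes the case-splitting unnecessary.
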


Last, Taylor's theorem provides the following useful decomposition of $\txi^{\ep,t}$, which we prove in Section \ref{mult R props}. 

\begin{lemma}\label{xi decomp}
For $0 < \ep < \ep_{12}$ and $\tau \in \R$, we have
\be\label{xi 1}
\txi_{\cep}(\ep{K_{\ep}}+\tau) = \txi_{\cep}'(\ep{K_{\ep}})\tau + R_{\ep}(\tau)\tau^2,
\ee
where the functions $R_{\ep}$ have the following property: there exist $C_{R \max},C_{R \Lip} > 0$ such that when $0 < \ep < \ep_{12}$,
\be\label{R bounds}
\sup_{0 < \ep < \ep_{12}} |R_{\ep}(\tau)| \le C_{R \max} \mand \sup_{0 < \ep < \ep_{12}} |R_{\ep}(\tau)-R_{\ep}(\grave{\tau})| \le C_{R \Lip}|\tau-\grave{\tau}|
\ee
for all $\tau,\grave{\tau} \in \R$.
\end{lemma}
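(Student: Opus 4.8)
The plan is to obtain \eqref{xi 1} directly from Taylor's theorem with the integral form of the remainder, expanding about the point $\ep K_\ep$, and then to read off the bounds \eqref{R bounds} from global bounds on the second and third derivatives of $\tlambda_+$. Writing $\txi_{\cep}(m) = -\cep^2 m^2 + \tlambda_+(m)$ as in \eqref{Kep}, the crucial structural fact is that $K_\ep$ was \emph{defined} so that $\txi_{\cep}(\ep K_\ep) = 0$ (see \eqref{Kep} and Part (vi) of Lemma \ref{lambda lemma}); hence the zeroth-order term in the expansion about $m = \ep K_\ep$ vanishes and, for every $\tau \in \R$,
\[
\txi_{\cep}(\ep K_\ep + \tau) = \txi_{\cep}'(\ep K_\ep)\,\tau + R_\ep(\tau)\,\tau^2, \qquad R_\ep(\tau) := \int_0^1 (1-s)\,\txi_{\cep}''(\ep K_\ep + s\tau)\,ds,
\]
which is exactly \eqref{xi 1}. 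Everything then reduces to controlling the family $\{R_\ep\}$.

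For the sup bound, I would differentiate twice to get $\txi_{\cep}''(m) = -2\cep^2 + \tlambda_+''(m)$. Since $0 < \ep < \ep_{12}$, the scalar $\cep^2 = c_w^2 + \ep^2$ is bounded by $c_w^2 + \ep_{12}^2$ uniformly in $\ep$; and by Part (ii) of Lemma \ref{lambda lemma}, $\tlambda_+$ is a bounded analytic function on the strip $\overline{\Sigma}_{\tau_0}$, so Cauchy's integral formula yields finite constants $M_w := \sup_{k\in\R}|\tlambda_+''(k)|$ and $M_w' := \sup_{k\in\R}|\tlambda_+'''(k)|$ (alternatively one just notes that $\tlambda_+(k) = 1+w+\sqrt{(1-w)^2+4w\cos^2 k}$ is $C^\infty$ and $\pi$-periodic, its radicand being bounded below by $(w-1)^2 > 0$). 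Thus $\sup_{m\in\R,\,0<\ep<\ep_{12}}|\txi_{\cep}''(m)| \le 2(c_w^2+\ep_{12}^2)+M_w$, and estimating the integral defining $R_\ep$ gives $|R_\ep(\tau)| \le \tfrac12\bigl(2(c_w^2+\ep_{12}^2)+M_w\bigr) =: C_{R\max}$ for all $\tau \in \R$ and all such $\ep$.

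For the Lipschitz bound I would subtract and use that the additive scalar cancels: $R_\ep(\tau) - R_\ep(\grave\tau) = \int_0^1 (1-s)\bigl[\tlambda_+''(\ep K_\ep + s\tau) - \tlambda_+''(\ep K_\ep + s\grave\tau)\bigr]\,ds$, so the mean value theorem and the bound $M_w'$ give $|R_\ep(\tau) - R_\ep(\grave\tau)| \le M_w'\,|\tau-\grave\tau|\int_0^1 s(1-s)\,ds = \tfrac16 M_w'\,|\tau-\grave\tau| =: C_{R\Lip}\,|\tau - \grave\tau|$, again uniformly in $\ep$. There is no substantive obstacle here; the only point needing care is that every constant be genuinely $\ep$-independent, and this works precisely because the sole $\ep$-dependence of $\txi_{\cep}''$ is (a) the additive scalar $-2(c_w^2+\ep^2)$, which is uniformly bounded for $\ep<\ep_{12}$ and drops out of all difference quotients, and (b) the shift of the argument by $\ep K_\ep$, which is harmless since $\tlambda_+$ and all its derivatives are globally bounded (indeed periodic) on $\R$ — so no lower bound on $\ep$ and no control on the size of $\ep K_\ep$ is required.
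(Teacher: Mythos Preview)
Your proposal is correct and follows essentially the same route as the paper: both apply Taylor's theorem with integral remainder about $\ep K_\ep$, use $\txi_{\cep}(\ep K_\ep)=0$ to kill the zeroth-order term, expand $\txi_{\cep}''(m)=-2(c_w^2+\ep^2)+\tlambda_+''(m)$, and read off the uniform sup and Lipschitz bounds from the global boundedness of $\tlambda_+''$ and $\tlambda_+'''$. Your write-up is in fact more explicit than the paper's about why the constants are $\ep$-independent.
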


We are now ready to apply Lemma \ref{fixed point lemma} to our map $\Psib_{\ep}$.

\begin{proposition}\label{fp app}
Let $\ep_0 = \min\{\ep_{11},\ep_{12}\}$. The maps $\Psib_{\ep}, 0 < \ep < \ep_0,$ satisfy the conditions \eqref{bound1}, \eqref{bound2}, and \eqref{lip bound} of Lemma \ref{fixed point lemma} on the space $\X$ defined in \eqref{X!} when $a_0 = r_0 = 1$.  
\end{proposition}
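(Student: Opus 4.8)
The plan is to verify the three estimates \eqref{bound1}, \eqref{bound2} and \eqref{lip bound} directly for $F_\ep=\Psib_\ep$, with $x=(\psib,t)\in\ortho\times\R$ playing the role of the fixed-point variable (so $\norm{x}=\norm{\psib}_{H^2_\per\times H^2_\per}+|t|$) and $a$ the parameter. First I would assemble the standing facts that drive everything. Taking $\ep_0=\min\{\ep_{11},\ep_{12}\}$ makes Lemmas~\ref{varpi props} and \ref{xi props} (hence Lemmas~\ref{varpi symbol props} and \ref{xi symbol props}) applicable for $0<\ep<\ep_0$; in particular $\txi^{\ep,t}(k)\ne0$ for $|k|\ne1$, so $(\xi^{\ep,t})^{-1}\Pi_2$ is a well-defined Fourier multiplier, and Part~(vi) of Lemma~\ref{lambda lemma} gives $|\txi'_{\cep}(\ep K_\ep)|\ge l_0>0$, so the scalar prefactors in $\Psi_{\ep,3}$ are bounded. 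By Lemma~\ref{fm props}(i) together with Corollary~\ref{J cor} and Lemma~\ref{lambda lemma}(iv), the multipliers $J_1^{\ep\omega}$, $J_2^{\ep\omega}$ and $\lambda_+^{\ep\omega}$ are bounded on $H^r_\per$ uniformly in $\ep,\omega,r$; since $H^2_\per$ is a Banach algebra, it follows that $B^{\ep\omega}\colon H^2_\per\times H^2_\per\to H^2_\per\times H^2_\per$ is bounded uniformly in $\ep,\omega$. Finally, because $\nub+\psib$ has even first component and odd second component, the parity computation from the proof of Lemma~\ref{EO} shows $b_1^{\ep\omega}(\nub+\psib,\nub+\psib)$ is even and $b_2^{\ep\omega}(\nub+\psib,\nub+\psib)$ is odd; since the multipliers $\varpi^{\ep,K_\ep+t}$, $\lambda_+^{\ep(K_\ep+t)}$ and $(\xi^{\ep,t})^{-1}$ have even symbols and hence preserve parity, while $\Pi_2$ additionally annihilates the $\pm1$ Fourier modes, $\Psib_\ep$ maps $\X\times\R$ into $\X$.

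For \eqref{bound1} I would expand $(\nub+\psib)^{.2}=\nub^{.2}+2\,\nub.\psib+\psib^{.2}$, so that the bilinear contributions to $b_i^{\ep\omega}(\nub+\psib,\nub+\psib)$ split into pieces of size $O(1)$, $O(\norm{\psib})$ and $O(\norm{\psib}^2)$ in $H^2_\per\times H^2_\per$. In $\Psi_{\ep,1}$ apply Lemma~\ref{varpi props}(i) ($\varpi^{\ep,K_\ep+t}$ bounded $H^0_\per\to H^2_\per$) to bound $\norm{\Psi_{\ep,1}}_{H^2_\per}\le C|a|(1+\norm{\psib}+\norm{\psib}^2)$; in $\Psi_{\ep,2}$ note the $\ep^2$ groups with $(\xi^{\ep,t})^{-1}$ into the operator of Lemma~\ref{xi props}(i), again bounded $H^0_\per\to H^2_\per$, to get the same bound; in $\Psi_{\ep,3}$ bound the first summand by $C\ep\,|R_\ep(\ep t)|\,t^2\le C_{R\max}\,t^2$ using \eqref{R bounds}, and the second summand by $C\ep|a|\,\big|\Fo[\Pi_1\lambda_+^{\ep(K_\ep+t)}b_2^{\ep(K_\ep+t)}(\nub+\psib,\nub+\psib)](1)\big|\le C|a|\,\norm{b_2^{\ep(K_\ep+t)}(\nub+\psib,\nub+\psib)}_{L^2_\per}$, since a single Fourier coefficient is controlled by the $L^2_\per$ norm. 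Since $|a|\le1$ and $\norm{x}\le1$ on $\ball(1)$, one absorbs $|a|\norm{x}^2\le\norm{x}^2$ and $t^2\le\norm{x}^2$, and summing the three components gives $\norm{\Psib_\ep(x,a)}\le C_1(|a|+|a|\norm{x}+\norm{x}^2)$.

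For \eqref{bound2} I would write each component's difference as a telescoping sum that isolates the effect of changing $\psib$ from that of changing the shift $t$. The $\psib$-change is handled by bilinearity, e.g.\ $B^{\ep\omega}(\nub+\psib,\nub+\psib)-B^{\ep\omega}(\nub+\grave\psib,\nub+\grave\psib)=B^{\ep\omega}(\nub+\psib,\psib-\grave\psib)+B^{\ep\omega}(\psib-\grave\psib,\nub+\grave\psib)$, producing the factor $(1+\norm{\psib}+\norm{\grave\psib})\norm{\psib-\grave\psib}$. The $t$-change is handled by the Lipschitz-in-$\omega$ estimates: Lemmas~\ref{varpi props}(ii) and \ref{xi props}(ii) supply $\bdop(H^r_\per)$-bounds proportional to $|t-\grave t|$ with \emph{no} loss of derivatives, while changing a $J_n^{\ep\omega}$ or $\lambda_+^{\ep\omega}$ inside $b_i$ costs one derivative via Lemma~\ref{fm props}(ii) --- harmless because in $\Psi_{\ep,1},\Psi_{\ep,2}$ the result is immediately smoothed by the following $\varpi^{\ep,K_\ep+t}$ or $\ep^2(\xi^{\ep,t})^{-1}\Pi_2$, and in $\Psi_{\ep,3}$ only a single Fourier mode is needed. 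For the non-bilinear summand $R_\ep(\ep t)t^2$ I would use $R_\ep(\ep t)t^2-R_\ep(\ep\grave t)\grave{t}^2=R_\ep(\ep t)(t-\grave t)(t+\grave t)+(R_\ep(\ep t)-R_\ep(\ep\grave t))\grave{t}^2$ with \eqref{R bounds} to get a bound $C(|t|+|\grave t|)|t-\grave t|$. The structural point is that every summand of $\Psib_\ep$ either carries an explicit factor $a$ or (like $R_\ep(\ep t)t^2$) is at least quadratic in $(\psib,t)$, so after differencing its Lipschitz constant over $\ball(1)$ is $\le C(|a|+\norm{x}+\norm{y})$, which yields \eqref{bound2}. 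Then \eqref{lip bound} is immediate: $\Psi_{\ep,1}$, $\Psi_{\ep,2}$ and the second summand of $\Psi_{\ep,3}$ are linear in $a$ with coefficients bounded on $\ball(1)$ uniformly in $\ep$ by the \eqref{bound1} estimates at $|a|=1$, and the first summand of $\Psi_{\ep,3}$ is independent of $a$.

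The main obstacle I anticipate is the derivative bookkeeping inside the Lipschitz-in-$t$ estimates: one must organize each telescoping sum so that every invocation of the ``costs a derivative'' Lemma~\ref{fm props}(ii) --- which is forced whenever one perturbs $\omega$ inside a non-smoothing multiplier such as $J_n^{\ep\omega}$ or $\lambda_+^{\ep\omega}$ --- is followed either by one of the smoothing operators $\varpi^{\ep,K_\ep+t}$, $\ep^2(\xi^{\ep,t})^{-1}\Pi_2$ (which gain two derivatives, per Lemmas~\ref{varpi props}(i) and \ref{xi props}(i)), or by the evaluation of a single Fourier coefficient in $\Psi_{\ep,3}$; it is precisely for this reason that the $\ep^2$ was kept grouped with $(\xi^{\ep,t})^{-1}$ in the definition of $\Psi_{\ep,2}$. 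Once this bookkeeping is arranged, all derivative losses are absorbed and the three estimates follow by repeatedly invoking Lemmas~\ref{fm props}, \ref{varpi props}, \ref{xi props}, \ref{xi decomp} and the algebra property of $H^2_\per$; everything else is routine, if lengthy, accounting.
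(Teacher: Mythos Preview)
Your proposal is correct and follows essentially the same route as the paper: the paper packages the argument by introducing operators $T_0^\ep(t)=\diag(\varpi^{\ep,K_\ep+t},\,\ep^2(\xi^{\ep,t})^{-1}\Pi_2)$, $T_1^\ep(t)=\diag(1,\lambda_+^{\ep(K_\ep+t)})J_1^{\ep(K_\ep+t)}$, $T_2^\ep(t)=J_2^{\ep(K_\ep+t)}$ and auxiliary maps $\bG_\ep=T_1(T_2(\nub+\psib))^{.2}$, $\bF_\ep=T_0\bG_\ep$, but the underlying telescoping, bilinearity, and ``$T_0$ gains two derivatives to absorb the one derivative lost when $T_1,T_2$ are varied in $t$'' logic is exactly what you describe. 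Your explicit parity check that $\Psib_\ep$ maps into $\X$ and your treatment of the $R_\ep(\ep t)t^2$ term match the paper's (partly omitted) details as well.
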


\begin{proof}
We begin with some additional notation.  Set $\H^r := H_{\per}^r \times H_{\per}^r, \norm{\psib}_r := \norm{\psib}_{\H^r},$ and
\be\label{psi T ops}
T_0^{\ep}(t) :=  \begin{bmatrix*}
\varpi^{\ep,{K_{\ep}}+t} &0 \\
0 &\ep^2\left(\xi^{\ep,t}\right)^{-1}\Pi_2
\end{bmatrix*},
\quad
T_1^{\ep}(t) := \begin{bmatrix*}
1 &0 \\
0 &\lambda_+^{\ep({K_{\ep}}+t)}
\end{bmatrix*}
J_1^{\ep({K_{\ep}}+t)},
\quad
T_2^{\ep}(t) := J_2^{\ep({K_{\ep}}+t)}.
\ee

Lemmas \ref{varpi props} and \ref{xi props} combine to produce constants $C_0,C_1,C_2 > 0$ such that the following estimates hold:
\be\label{T0 bound}
\sup_{\substack{0 < \ep < \ep_0 \\ |t| \le 1 \\ r \in \R}} \norm{T_0^{\ep}(t)}_{\bdop(\H^{r},\H^{r+2})} \le C_0  
\ee
\be\label{T12 bound}
\sup_{\substack{0 < \ep < \ep_0 \\ |t| \le 1 \\ r \in \R}} \norm{T_i^{\ep}(t)}_{\bdop(\H^r)} \le C_i,  \ i = 1,2 
\ee
\be\label{T0 lip}
\sup_{\substack{0 < \ep < \ep_0 \\ r \in \R}} \norm{T_0^{\ep}(t)-T_0^{\ep}(\grave{t})}_{\bdop(\H^r)} \le C_0|t-\grave{t}|, \ |t|,|\grave{t}| \le 1 
\ee
\be\label{T12 lip}
\sup_{\substack{0 < \ep < \ep_0 \\ r \in \R}} \norm{T_i^{\ep}(t)-T_i^{\ep}(\grave{t})}_{\bdop(\H^r,\H^{r-1})} \le C_i|t-\grave{t}|, \ |t|,|\grave{t}| \le 1, i = 1,2. 
\ee

Define
\begin{align}
\bG_{\ep}(\psib,t) &= T_1^{\ep}(t)(T_2^{\ep}(t)(\nub+\psib))^{.2} \label{big G} \\
\bF_{\ep}(\psib,t) &= T_0^{\ep}(t)\bG_{\ep}(\psib,t) \label{fep}.
\end{align}

The estimates \eqref{T12 bound} along with the Sobolev embedding estimate
\be\label{sob}
\norm{\phib.\psib}_r \le C_{\text{sob},r}\norm{\phib}_r\norm{\psib}_r, \ \phib,\psib \in \H^r, \ r \ge 1,
\ee
give $M_{12,r} > 0$ such that 
\be\label{G bound}
\sup_{\substack{0 < \ep < \ep_0 \\ |t| \le 1}} \norm{\bG_{\ep}(\psib,t)}_r \le M_{12,r}(\norm{\psib}_{r}^2 + \norm{\psib}_{r} + 1).
\ee
We then use \eqref{T0 bound} to find
$$
\sup_{\substack{0 < \ep < \ep_0 \\ |t| \le 1}} \norm{\bF_{\ep}(\psib,t)}_{r} \le \sup_{\substack{0 < \ep < \ep_0 \\ |t| \le 1}} \norm{T_0^{\ep}(t)}_{\bdop(\H^{r-2},\H^r)}\norm{\bG_{\ep}(\psib,t)}_{r-2} \le C_0M_{12}(\norm{\psib}_{r-2}^2 + \norm{\psib}_{r-2}+1).
$$
Set $M_{012} = C_0M_{12}$.  Since
\be\label{first two comps}
\begin{pmatrix*}
\Psi_{\ep,1}(\psib,t,a) \\
\Psi_{\ep,2}(\psib,t,a)
\end{pmatrix*}
=
-a{\bf{F}}_{\ep}(\psib,t),
\ee
we find
\be\label{bootstrap bound}
\sup_{\substack{0 < \ep < \ep_0 \\ |t| \le 1}}\normlarge{\begin{pmatrix*}
\Psi_{\ep,1}(\psib,t,a) \\
\Psi_{\ep,2}(\psib,t,a)
\end{pmatrix*}}_r \le M_{012}|a|(\norm{\psib}_{r-2}^2 + \norm{\psib}_{r-2} + 1).
\ee
We will return to the estimate \eqref{bootstrap bound} when we prove the bounds \eqref{periodic bound} for our fixed points.  For now, we take $r=2$ to obtain a constant $M_2 > 0$ such that
$$
\sup_{\substack{0 < \ep < \ep_0 \\ \norm{\psib}_2, |t|, |a| \le 1}}\normlarge{\begin{pmatrix*}
\Psi_{\ep,1}(\psib,t,a) \\
\Psi_{\ep,2}(\psib,t,a)
\end{pmatrix*}}_2 \le M_2|a|.
$$
This implies the first estimate \eqref{bound1} of Lemma \ref{fixed point lemma} for the components $\Psi_{\ep,1}$ and $\Psi_{\ep,2}$.  

To prove the second estimate \eqref{bound2} of Lemma \ref{fixed point lemma}, we first rewrite
\be\label{F lip 1}
\bF_{\ep}(\psib,t) - \bF_{\ep}(\grave{\psib},\grave{t}) = T_0^{\ep}(t)(\bG_{\ep}(\psib,t)-\bG_{\ep}(\grave{\psib},\grave{t})) + (T_0^{\ep}(t)-T_0^{\ep}(\grave{t}))\bG_{\ep}(\grave{\psib},\grave{t})
\ee
and then find
\begin{align*}
\bG_{\ep}(\psib,t)-\bG_{\ep}(\grave{\psib},\grave{t}) &= T_1^{\ep}(t)( [T_2^{\ep}(t)(\nub+\psib)+T_2^{\ep}(\grave{t})(\nub+\grave{\psib})].[(T_2^{\ep}(t)-T_2^{\ep}(\grave{t}))(\nub+\grave{\psib})]) \\
&+T_1^{\ep}(t)[T_2^{\ep}(t)(\nub+\psib)+T_2^{\ep}(\grave{t})(\nub+\grave{\psib})].[T_2^{\ep}(t)(\psib-\grave{\psib})] \\
&+(T_1^{\ep}(t)- T_1^{\ep}(\grave{t}))(T_2^{\ep}(\grave{t})(\nub+\grave{\psib}))^{.2}.
\end{align*}
We estimate the third term above; estimates for the first two terms follow by similar techniques.  We have
\begin{align*}
\norm{(T_1^{\ep}(t)- T_1^{\ep}(\grave{t}))(T_2^{\ep}(\grave{t})(\nub+\grave{\psib}))^{.2}}_{r-2} &\le C_1|t-\grave{t}|\norm{(T_2^{\ep}(\grave{t})(\nub+\grave{\psib}))^{.2}}_{r-1} \text{ by \eqref{T12 lip}} \\
\\
&\le C_1C_{\text{sob},r-1}\norm{T_2^{\ep}(\grave{t})(\nub+\grave{\psib})}_{r-1}^2 \text{ by \eqref{sob}} \\
\\
&\le C_1C_{\text{sob},r-1}C_2^2\norm{\nub+\grave{\psib}}_{r-1}^2 \text{ by \eqref{T12 bound}} \\
\\
&\le C_1C_{\text{sob},r-1}C_2^2(\norm{\grave{\psib}}_{r-1}^2+2\norm{\nub}_{r-1}\norm{\grave{\psib}}_{r-1} + \norm{\nub}_{r-1}^2) \\
\\
&\le C_1C_{\text{sob},r-1}C_2^2\max\{2\norm{\nub}_{r-1},1\}(\norm{\grave{\psib}}_{r-1}^2+\norm{\grave{\psib}}_{r-1}+1).
\end{align*}
After comparable work on the other two terms, we ultimately arrive at a constant $L_{12,r} > 0$ such that 
\be\label{G lip}
\sup_{0 < \ep < \ep_0} \norm{\bG_{\ep}(\psib,t)-\bG_{\ep}(\grave{\psib},\grave{t})}_{r-2} \le L_{12,r}(\norm{\grave{\psib}}_r^2 + \norm{\psib}_r + \norm{\grave{\psib}}_r+1)(\norm{\psib-\grave{\psib}}_{r-1} + |t-\grave{t}|).
\ee

We will need this estimate below when we work on $\Psi_{\ep,3}$.  For now, we return to \eqref{F lip 1} and find
$$
\norm{\bF_{\ep}(\psib,t) - \bF_{\ep}(\grave{\psib},\grave{t})}_r \le \norm{T_0^{\ep}(t)}_{\bdop(\H^{r-2},\H^r)}\norm{\bG_{\ep}(\psib,t)-\bG_{\ep}(\grave{\psib},\grave{t})}_{r-2} +\norm{T_0^{\ep}(t)-T_0^{\ep}(\grave{t})}_{\bdop(\H^r)}\norm{\bG_{\ep}(\grave{\psib},\grave{t})}_r.
$$
Combining \eqref{T0 lip}, \eqref{G bound}, and \eqref{G lip} produces $L_{012,r} > 0$ such that
\be\label{F lip}
\sup_{0 < \ep < \ep_0} \norm{\bF_{\ep}(\psib,t) - \bF_{\ep}(\grave{\psib},\grave{t})}_r  \le L_{012,r}( \norm{\psib}_r^2+ \norm{\grave{\psib}}_r^2+\norm{\psib}_r + \norm{\grave{\psib}}_r)(\norm{\psib-\grave{\psib}}_{r-1} + |t-\grave{t}|).
\ee
Taking $r = 2$ and assuming $\norm{\psib}_2,\norm{\grave{\psib}}_2 \le 1$, we find $L_2 > 0$ such that 
$$
\sup_{0 < \ep < \ep_0} \norm{\bF_{\ep}(\psib,t) - \bF_{\ep}(\grave{\psib},\grave{t})}_2  \le L_2(\norm{\psib-\grave{\psib}}_2 + |t-\grave{t}|).
$$
This together with \eqref{first two comps} proves the second estimate \eqref{bound2} of Lemma \ref{fixed point lemma} for $\Psi_{\ep,1}$ and $\Psi_{\ep,2}$.

Now we proceed to study $\Psi_{\ep,3}$.  Set
$$
T_4(t) = \begin{bmatrix*}
0 &0 \\
0 &\Pi_1
\end{bmatrix*}
$$
and keep $T_1$ and $T_2$ as in \eqref{psi T ops}. Here, however, we will only care about the case $r=2$. Using the general bound
\be\label{ft est}
|\Fo[f](k)| \le \norm{f}_{L_{\per}^{\infty}} \le C_{\text{sob},2}\norm{f}_{H_{\per}^2},
\ee
we find
\begin{align*}
|\Psi_{\ep,3}(\psib,t,a)| &\le \frac{\ep_0C_{R \max}}{l_0}t^2 + \frac{2C_{\text{sob},2}\ep_0|a|}{l_0}\norm{\Pi_1\lambda_+^{\ep(K_{\ep}+t)}b_2^{\ep(K_{\ep}+t)}(\nub+\psib,\nub+\psib)}_{H_{\per}^2} \\
\\
&\le  \frac{\ep_0C_{R \max}}{l_0}t^2 + \frac{2C_{\text{sob},2}\ep_0|a|}{l_0}\norm{T_4\bG_{\ep}(\psib,t)}_2 \\
\\
&\le  \frac{\ep_0C_{R \max}}{l_0}t^2 + \frac{2C_{\text{sob},r}\norm{T_4}_{\bdop(\H^r)}M_{12}\ep_0|a|}{l_0}(\norm{\psib}_2^2 + \norm{\psib}_2 + 1) \text{ by \eqref{G bound}}.
\end{align*}
Thorough rearrangement of this last line, as well as the assumption $\norm{\psib}_2, |a| \le 1$, produces a constant $M_3 > 0$ such that 
$$
\sup_{0 < \ep < \ep_0} |\Psib_{\ep,3}(\psib,t,a)| \le L_3(|a| + \norm{\psib}_2 + t^2), \ \norm{\psib}_2,|t|,|a| \le 1
$$
and this is sufficient to obtain the estimate \eqref{bound1} of Lemma \ref{fixed point lemma} for $\Psi_{\ep,3}$.  

The proof of estimate \eqref{bound2} for $\Psi_{\ep,3}$ is similar to the work above; we omit the details but mention that it uses the Fourier transform estimate \eqref{ft est}, the uniform bounds on the functions $R_{\ep}$ from Lemma \ref{xi decomp}, and the Lipschitz estimate \eqref{G lip} for the functions $\bG_{\ep}$.

Last, the final bound \eqref{lip bound} of Lemma \ref{fixed point lemma} is easily established for the components $\Psi_{\ep,i}$ using the uniform bounds on the operators $T_i^{\ep}$ developed above; again, we omit the details.
\end{proof}

Lemma \ref{fixed point lemma} thus provides a number $a_0 > 0$ and, for all $0 < \ep < \ep_0$ and $|a|\le a_0$, a unique pair $(\psib_{\ep}^a,t_{\ep}^a) \in \{(\psib,t) \in \X  :  \norm{(\psib,t)}_{\X} \le 1\}$ such that $\Psib_{\ep}(\psib_{\ep}^a,t_{\ep}^a,a) = (\psib_{\ep}^a,t_{\ep}^a)$.  We may reverse each step of the conversion in Section \ref{conversion} and we recall the scaling \eqref{bifur param scaling} and the decomposition \eqref{ortho decomp} to find that the function
$$
\thetab(X) := a\varphib_{\ep}^a(X) := a\nub(\ep({K_{\ep}}+t_{\ep}^a)X) + a\psib(\ep({K_{\ep}}+t_{\ep}^a)X)
$$
solves \eqref{phi eqn small}.  Defining $K_{\ep}^a := {K_{\ep}} + t_{\ep}^a$, we have the maps \eqref{maps} and property (i) of Theorem \ref{periodic solutions}.  We prove the rest of the theorem below.

\begin{proof} (of Theorem \ref{periodic solutions}, Parts (ii), (iii), (iv) and (v)) When $a = 0$, the fixed-point property of $(\psi_{\ep,1}^0,\psi_{\ep,2}^0,t_{\ep}^0)$ and the definition of $\Psib_{\ep}$ give
\be\label{also (iii)}
(\psi_{1,\ep}^0,\psi_{2,\ep}^0,t_{\ep}^0) = \Psib_{\ep}(\psib_{\ep}^0,t_{\ep}^0,0) = \left(0,0,-\frac{\ep}{\txi_{\cep}'(\ep{K_{\ep}})}R_{\ep}(\ep{t}_{\ep}^0)(t_{\ep}^0)^2\right).
\ee
We see immediately that $\psi_{\ep,1}^0 = \psi_{\ep,2}^0 = 0$, which is Part (iii), and also
$$
t_{\ep}^0 = -\frac{\ep}{\txi_{\cep}'(\ep{K_{\ep}})}R_{\ep}(\ep{t}_{\ep}^0)(t_{\ep}^0)^2.
$$
Scaling both sides by $\ep$ and rearranging, we find
$$
0 = \tilde{\xi}_{\cep}'(\ep{K_{\ep}})(\ep{t}_{\ep}^0) + R_{\ep}(\ep t_{\ep}^0)(\ep t_{\ep}^0)^2 = \tilde{\xi}_{\cep}(\ep{K_{\ep}}+\ep t_{\ep}^0)  
$$
by \eqref{xi 1}.  We may assume that we have taken $\ep_0$ to be so small that $\ep{K_{\ep}} + \ep t > 0$ for any $0 < \ep < \ep_0$ and $|t| \le 1$, thus $\ep{K_{\ep}} + \ep t_{\ep}^0 > 0$.  By the uniqueness of positive roots of $\txi_{\cep}$ given in Part (vi) of Lemma \ref{lambda lemma}, we have $\ep{K_{\ep}} + \ep t_{\ep}^0 = \ep{K_{\ep}}$, hence $t_{\ep}^0 = 0$ and $K_{\ep}^0 = K_{\ep}$.  So, Part (ii) holds.

For Part (iv), since $\psib_{\ep}^a \in \ortho$, we know $\hat{\psib_{\ep}^a}(\pm1)\cdot\jb = 0$, thus
$$
\int_{-\pi}^{\pi} \psi_{\ep,2}^a(y)\sin(y) \ dy = \sum_{k \in \Z} \hat{\psi_{\ep,2}^a}(k)\hat{\sin}(k) = 0.
$$

Last, for Part (v), by \eqref{size of kc} in Lemma \ref{lambda lemma} we have positive constants $m_*(w)$ and $m^*(w)$, depending only on $w$, such that
$$
m_*(w) \le \ep K_{\ep} \le m^*(w), \ 0 < \ep < 1.
$$
This shows $K_{\ep} = \O(1/\ep)$ and also allows us to estimate
\be\label{ep K bound}
|\ep K_{\ep}^a| = |\ep K_{\ep} + \ep t_{\ep}^a| \le m^*(w) + 1.
\ee

Next, relying on the notation of the proof of Proposition \ref{fp app}, when $r=2$ we have
$$
\sup_{\substack{0 < \ep < \ep_0 \\ |a| \le a_0}} \norm{\psib_{\ep}^a}_2 \le r_0 \le 1
$$
by Lemma \ref{fixed point lemma}, and when $r > 2$, \eqref{bootstrap bound} implies the bootstrap estimate
$$
\norm{\psib_{\ep}^a}_r = \normlarge{\begin{pmatrix*}\Psi_{\ep,1}(\psib_{\ep}^a,t_{\ep}^a,a) \\ 
\Psi_{\ep,2}(\psib_{\ep}^a,t_{\ep}^a,a)
\end{pmatrix*}}_r
\le M_{012}(\norm{\psib_{\ep}^a}_{r-1}^2 + \norm{\psib_{\ep}^a}_{r-1}+1).
$$
We induct on $r$, bound $|\ep K_{\ep}^a|$ by \eqref{ep K bound}, and use the Sobolev embedding theorem to produce \eqref{periodic bound}.

For the uniform Lipschitz bound \eqref{periodic lip}, we first apply the uniform Lipschitz condition \eqref{fp lip concl} guaranteed by Lemma \ref{fixed point lemma} to the fixed points $(\psib_{\ep}^a,t_{\ep}^a)$ and compute
\be\label{r=2 lip}
|K_{\ep}^a - K_{\ep}^{\grave{a}}| + \norm{\psib_{\ep}^a - \psib_{\ep}^{\grave{a}}}_{2} \le |t_{\ep}^a - t_{\ep}^{\grave{a}}| +  \norm{\psib_{\ep}^a - \psib_{\ep}^{\grave{a}}}_{2} \le L_1|a-\grave{a}|
\ee
for some $L_1 > 0$.  For $r > 2$, the estimate \eqref{F lip} gives
\begin{align*}
\norm{\psib_{\ep}^a-\psib_{\ep}^{\grave{a}}}_{r} &\le \norm{\bF_{\ep}(\psib_{\ep}^a,t_{\ep}^a)-\bF_{\ep}(\psib_{\ep}^{\grave{a}},t_{\ep}^{\grave{a}})}_{r} \\
&\le L_{012,r}(\norm{\psib_{\ep}^a}_r^2+\norm{\grave{\psib}_{\ep}^a}_r^2+\norm{\psib_{\ep}^a}_r + \norm{\grave{\psib}_{\ep}^a}_r)(\norm{\psib_{\ep}^a-\grave{\psib}_{\ep}^a}_{r-1} + |t_{\ep}^a-\grave{t}_{\ep}^a|)
\end{align*}
for each $0 < \ep < \ep_0$.  We bound the factor
$$
\norm{\psib_{\ep}^a}_r^2+\norm{\grave{\psib}_{\ep}^a}_r^2+\norm{\psib_{\ep}^a}_r + \norm{\grave{\psib}_{\ep}^a}_r
$$
by \eqref{periodic bound} and estimate $|t_{\ep}^a-t_{\ep}^{\grave{a}}| \le L_1|a-\grave{a}|$ as before to find
$$
\norm{\psib_{\ep}^a-\psib_{\ep}^{\grave{a}}}_r \le L_r(\norm{\psib_{\ep}^a-\psib_{\ep}^{\grave{a}}}_{r-1} + |a-\grave{a}|)
$$
for some $L_r > 0$ and all $0 < \ep < \ep_0$.  Taking the existing Lipschitz estimate on $|K_{\ep}^a-K_{\ep}^{\grave{a}}|$ from \eqref{r=2 lip}, using the Sobolev embedding theorem, and inducting on $r$ produces the final Lipschitz estimate \eqref{periodic lip} of Part (v).
\end{proof}

\subsection{Proof of Lemma \ref{fixed point lemma}}\label{fixed point proof}
We set 
$$
r_0 = \min\left\{\frac{1}{6C_1},r_1\right\} \mand a_0 = \min\left\{\frac{r_0}{6C_1}, \frac{1}{6C_1}, a_1\right\}.
$$
Then whenever $0 < \ep < \ep_0, \norm{x} \le r_0, |a| \le a_0$, we have
$$
\norm{F_{\ep}(x,a)} \le C_1(a_0 + a_0r_0 + r_0^2)  \le C_1\left(\frac{r_0}{6C_1} + \frac{r_0}{6C_1} + \frac{r_0}{6C_1}\right) = \frac{r_0}{2} < r_0.
$$
Moreover, 
$$
C_1(|a| + \norm{x}+\norm{y}) \le C_1\left(\frac{1}{6C_1} + \frac{1}{6C_1}+\frac{1}{6C_1}\right) = \frac{1}{2}
$$
whenever $|a| \le a_0, \norm{x},\norm{y} \le r_0$.  So, \eqref{bound2} gives
\be\label{fp contr}
\norm{F_{\ep}(x,a)-F_{\ep}(y,a)} \le \frac{1}{2}\norm{x-y}
\ee
for all such $a,x,y$.  Thus have the uniform contraction condition.

We conclude that for each $0 < \ep < \ep_0$ and $|a| \le a_0$, $F_{\ep}(\cdot,a)$ maps $\ball(r_0)$ into itself and is a contraction (with uniform constant 1/2).  By Banach's fixed point theorem, for each $0 < \ep < \ep_0$ and $|a| \le a_0$, we then have a unique $x_{\ep}^a \in \ball(r_0)$ such that $F_{\ep}(x_{\ep}^a,a) = x_{\ep}^a$.

For the Lipschitz estimate on the mappings $a \mapsto x_{\ep}^a$, compute, for $|a| \le a_0$,
\begin{align*}
\norm{x_{\ep}^a - x_{\ep}^{\grave{a}}} &= \norm{F_{\ep}(x_{\ep}^a,a) - F_{\ep}(x_{\ep}^{\grave{a}},\grave{a})} \\
\\
&\le \norm{F_{\ep}(x_{\ep}^a,a)-F_{\ep}(x_{\ep}^a,\grave{a})} + \norm{F_{\ep}(x_{\ep}^a,\grave{a})-F_{\ep}(x_{\ep}^{\grave{a}},\grave{a})} \\
\\
&\le L_1|a-\grave{a}| + \frac{1}{2}\norm{x_{\ep}^a-x_{\ep}^{\grave{a}}} \text{ by \eqref{lip bound} and \eqref{fp contr}}.
\end{align*}
Hence
$$
\norm{x_{\ep}^a-x_{\ep}^{\grave{a}}} \le 2L_1|a-\grave{a}|
$$
for all $|a| \le a_0$ and $0 < \ep < \ep_0$.

\section{The nanopteron equations}\label{N}

\subsection{Beale's ansatz}
Following \cite{beale2}, we let 
$$\etab(X):=\left(\begin{array}{c} \eta_1(X) \\ \eta_2(X) \end{array} \right)$$
and look for a solution of \eqref{theta eqn small} of the form
\be\label{beale}
\thetab=\sigmab+a \varphib^a_\ep +  \etab.
\ee
In the above there are three unknowns: 
\begin{itemize}
\item
the function $\eta_1$ (which will be an even
exponentially decaying function),\item the function $\eta_2$ (which will be an odd exponentially decaying function) and \item the amplitude of the periodic part, $a \in \R$.
\end{itemize}

\begin{remark}Since $\sigmab=\sigma \ib$ and $\varphib_\ep^0 = \sin(K_\ep X) \jb$, we see that the principal contribution in the first slot is 
connected to the acoustic branch and to the optical branch in the second slot, as described above.\end{remark}

One finds that $\etab$ solves the system:
\be\label{eta eqn 1}\begin{split}
\eta_1 + {4(1+w)} \varpi^0 \left( \sigma \eta_1\right)&= j_1+j_2 + j_3 + j_4+j_5\\
\ep^2 (c_w^2+\ep^2) \eta_2'' + \lambda^\ep_+\eta_2 &= \ep^2 (l_1 + l_2 + l_3 + l_4 + l_5)
\end{split}\ee
where
\begin{align*}
j_1&:=-\left( \sigma+\varpi^\ep b_1^\ep(\sigmab,\sigmab)\right)                                      &l_1:=- \lambda_+^\ep b_2^\ep(\sigmab,\sigmab)\\
j_2&:=-\left( 2\varpi^\ep b_1^\ep(\sigmab,\etab)- 2 \varpi^0 b_1^0(\sigmab,\etab)\right) &l_2:=-2 \lambda_+^\ep  b_2^\ep(\sigmab,\etab)\\
j_3&:=-2a\varpi^\ep b_1^\ep(\sigmab,\varphib^a_\ep)                                                     &l_3:=-2a \lambda_+^\ep b_2^\ep(\sigmab,\varphib^a_\ep)\\
j_4&:=-2a \varpi^\ep b_1^\ep(\etab,\varphib^a_\ep)                                                         &l_4:=-2a \lambda_+^\ep b_2^\ep(\etab,\varphib^a_\ep)\\
j_5&:=-2\varpi^\ep b_1^\ep(\etab,\etab)                                                                           &l_5:=-\lambda_+^\ep b_2^\ep(\etab,\etab).
\end{align*}
We used the fact that
$
2 \varpi^0 b_1^0 (\sigmab,\etab) = {4(1+w)} \varpi^0 \left( \sigma \eta_1\right).
$

The operator
\be\label{A def}
\A := 1 +{4(1+w)} \varpi^0 \left( \sigma \cdot\right)
\ee
was studied in \cite{friesecke-pego1} and is invertible the class of even functions. This is made precise below in Theorem \ref{A inv}.
Thus we can rewrite the first equation in \eqref{eta eqn 1} as
\be\label{N1 eqn}
\eta_1 = \A^{-1}\left( j_1 + j_2+j_3+j_4+j_5\right)=:N_1^\ep(\etab,a).
\ee

\subsection{The solvability condition of $\B_\ep$}
On the other hand
\be\label{Bep def}
\B_\ep:=\ep^2 (c_w^2+\ep^2) \partial_X^2 + \lambda^\ep_+
\ee
is not so nice.  
If we take the Fourier transform of the equation \be\label{Bep}
\B_\ep f = g\ee we find
that
\be\label{Bepfo}
\tilde{\B}_\ep(K)  \hat{f}(K) = \hat{g}(K)
\ee
where $\tilde{\B}_\ep(K)=-\ep^2 (c_w^2 + \ep^2) K^2 + \tilde{\lambda}_+(\ep K).$

In \eqref{Kep} in 
Theorem \ref{periodic solutions}, we saw that there exists a unique $K_\ep>0$ such that
$
\tilde{\B}_\ep(\pm K_\ep)=0.
$
Also
 $K_\ep = \O(1/\ep)$.
%
Since we have $\tilde{\B}_\ep(\pm K_\ep)=0$ we see, by virtue of \eqref{Bepfo}, that
\be\label{solve 1}
\B_\ep f = g \implies 
\hat{g}(\pm K_\ep) =0.
\ee
Which is to say that $\B_\ep$ in not surjective. (It is injective.) 
The appropriate way to view \eqref{solve 1} is as a pair of solvability conditions for \eqref{Bep};
it turns out that if the integral conditions are met then there is a solution $f$ of $\B_\ep f = g$. In this case we write $f = \B_\ep^{-1} g$. This is made precise below in Lemma \ref{B inv}.

Note that if $f$ is odd, so is $g = \B_\ep f$. And therefore so is $\hat{g}(K)$.
Which means that we can eliminate one of the solvability conditions in \eqref{solve 1}.
In particular, if $f$ is odd then
\be\label{solve 2}
\B_\ep f = g \implies
\iota_\ep[g]:=\int_\R g(X) \sin(K_\ep X) dX = 0.
\ee

\subsection{The modified equation for $\eta_2$ and an equation for $a$}
Thus \eqref{solve 2} implies any solution of \eqref{eta eqn 1} has
\be\label{solve 3}
\iota_\ep[l_1 + l_2 + l_3 + l_4 + l_5] = 0.
\ee
Following \cite{beale2} and \cite{amick-toland}, we will use this condition to ``select the amplitude $a$."

Toward this end, we let
$$
\chi_\ep(X):=\lambda_+^\ep J_1^\ep \left( J_2^0 \sigmab. J_2^\ep \nub_\ep \right)\cdot \jb\quad \text{where}\quad
\nub_\ep(X):= \varphib^0_\ep(X)= \sin(K_\ep X) \jb.
$$
We claim that
$$
l_{31}:=l_3 + 2a  \chi_\ep
$$
is ``small", though we hold off on a precise estimate for the time being. Roughly what we mean is that $l_{31}$ contains terms which are either of size comparable to $\ep$, or are quadratic in~$a$.
We also claim that
$$
\kappa_\ep := \iota_\ep[\chi_\ep]
$$
is large in the sense that it is strictly bounded away from zero by an amount that does not depend on $\ep$. Both these claims are verified below (in \eqref{l31 est} and \eqref{key kappa estimate}).
With this definition we can rewrite \eqref{solve 3}  as
\be\label{solve 4}
a ={1 \over2 \kappa_\ep} \iota_\ep[l_1 + l_2 + l_{31} + l_4 + l_5]=: N^\ep_3(\etab,a).
\ee

Next we modify the second equation in \eqref{eta eqn 1} to 
\be\begin{split}\label{mod 1}
\B_\ep \eta_2 = &-2\ep^2 a \chi_\ep + \ep^2 \left(l_1 + l_2 + l_{31} + l_4 + l_5\right) 
\\&-{1 \over \kappa_\ep} 
\iota_\ep \left[-2\ep^2 a \chi_\ep + \ep^2 \left(l_1 + l_2 + l_{31} + l_4 + l_5) \right)\right] \chi_\ep.
\end{split}\ee
By design,
$$\iota_\ep \left[-2\ep^2 a \chi_\ep + \ep^2 \left(l_1 + l_2 + l_{31} + l_4 + l_5\right)-{1 \over \kappa_\ep} 
  \iota_\ep \left[-2\ep^2 a \chi_\ep + \ep^2 \left(l_1 + l_2 + l_{31} + l_4 + l_5) \right)\right] \chi_\ep
\right]=0.
$$
Which is to say that the right hand side of \eqref{mod 1} meets the solvability condition \eqref{solve 2} and we can apply $\B_\ep^{-1}$ to it.
Also, if \eqref{solve 4} is met then the term in the second row of \eqref{mod 1} vanishes and so 
 the right hand side of \eqref{mod 1} agrees exactly with the right hand side of the second equation in \eqref{eta eqn 1}.
Also note that
$\ds
2\ep^2 a \chi_\ep-{1 \over \kappa_\ep} \iota_\ep[2\ep^2 a \chi_\ep ]\chi_\ep=0.
$

So if we put
$$
\P_\ep f := \B_\ep^{-1}\left( f - {1 \over \kappa_\ep} \iota[f] \chi_\ep \right)
$$
then \eqref{mod 1} is equivalent to
\be\begin{split}\label{mod 2}
 \eta_2 &=\ep^2 \P_\ep \left(l_1 + l_2 + l_{31} + l_4 + l_5\right) =:N^\ep_2(\etab,a).\end{split}\ee
\begin{remark}\label{abstract remark}
Stating things more abstractly, what we know is  that the cokernel of $\diag(\A,\B_\ep)$ is nontrivial, due to the 
solvability conditions \eqref{solve 1}. The classical method for the analysis of nonlinear problems where
the cokernel (or, more typically, the kernel) of the linear part is nontrivial is
the Liapunov-Schmidt decomposition, like we used in the construction of the periodic solutions. But in our case we have the additional complication that $\diag(\A,\B_\ep)$ is injective. Which means that its Fredholm index is negative. It is this feature that  results in having different pieces of our problem living in different sorts of function spaces (namely localized and periodic) as opposed to the whole argument taking place in $E^1_q \times O^1_q$. 

Another less precise, but perhaps more evocative way, of saying this is to say that we want to do a regular old Liapunov-Schmidt analysis but the function we want to be the basis for the kernel of $\B_\ep$---specifically $\sin(K_\ep X)$---is not in our function space. And so we need to come up with a way to include periodic functions in the solution at the same time as the localized functions. Which leads us to Beale's ansatz \eqref{beale}. At the end of the day,  the equation for
the periodic amplitude $a$, \eqref{solve 4}, can viewed as being the replacement for
``the projection of \eqref{eta eqn 1} onto the kernel" which would appear in a more standard Liapunov-Schmidt analysis.
Likewise
$\eta_1$, \eqref{N1 eqn} and, more relevantly, $\eta_2$, \eqref{mod 2} are the replacements
for ``the projection onto the orthogonal complement of the kernel."
\end{remark}

\subsection{The final system}
In short, if we can solve the system
\be\label{big sys}
\begin{split}
\eta_1 &= N_1^\ep(\eta_1,\eta_2,a) \\
\eta_2 &= N_2^\ep(\eta_1,\eta_2,a) \\
a & = N_3^\ep(\eta_1,\eta_2,a)
\end{split}
\ee
then we will have found a solution of our problem. 
Observe that \eqref{big sys} is written such that solutions are fixed points of the map $N^\ep:=(N^\ep_1,N^\ep_2,N^\ep_3)$.
We would achieve our goal if we could show that $N^\ep$ is a contraction on a suitable function space. It turns out that the right hand side has some problems
in that regard, due principally to the terms $j_4$ and $l_4$. These have a Lipschitz constant with respect to $a$ that depends
in a bad way on $\etab$. Nevertheless, a modified contraction mapping
argument will get the job done. 
But first we need many estimates.

\section{Existence/uniqueness/regularity/magnitude}\label{EU}
\subsection{Function spaces}
For $r \ge 0$ and $p \in [1,\infty]$, let
$W^{r,p}(\R)$ be the usual Sobolev space of $r$-times (weakly) differentiable functions 
in $L^p(\R)$. The norms on these spaces will be denoted by $\|\cdot \|_{W^{r,p}}$.
Put $H^r(\R) := W^{r,2}(\R)$, per the usual convention. 

For $r \ge 0$ and $q\ge0$, let
$$
H^r_q:=\left \{ f  \in H^r(\R) :  \cosh( q \cdot) f(\cdot) \in H^r(\R)  \right\}.
$$
$H^r_q$ consists of those functions in $H^r(\R)$ which, roughly speaking, behave like $e^{-q|\cdot|}$ as $|\cdot| \to \infty$.
Let $$E^r_q:=H^r_q \cap \left\{ \text{even functions}\right\}\mand O^r_q:=H^r_q\cap \left\{ \text{odd functions}\right\}.$$
Each of these is a Hilbert space with inner product given by
$$
(f,g)_{r,q}:= ( \cosh(q \cdot) f, \cosh(q \cdot) g)_{H^r(\R)}
$$
where $(\cdot,\cdot)_{H^r(\R)}$ is the usual $H^r(\R)$ inner product.
Of course we denote $\| f\|_{r,q}:=\sqrt{ (f,f)}_{r,q}$. We abuse notation slightly and, for elements $\ub$ of $H^r_q \times H^r_q$, write $\| \ub \|_{H^r_q \times H^r_q} = \| \ub \|_{r,q}$.
We will show that \eqref{big sys} has a solution in $E^1_q \times O^1_q\times \R$ for some $q > 0$.

\subsection{Key estimates}
As mentioned above, the existence proof is an iterative argument modeled on the proof of Banach's contraction mapping theorem. The following proposition 
contains all the necessary estimates for proving existence and uniqueness. It also contains estimates
which will be used in a bootstrap argument which will show that the solution is smooth and, more interestingly, that the amplitude of the periodic
piece ``$a$" is small beyond all orders of $\ep$. 
\begin{proposition}\label{main mover}
For all $w >1$ there exists $\ep_\star\in(0,1)$, $q_\star>0$ and $C_\star>1$ such that we have the following 
properties.
\begin{enumerate}[ (i) ]
\item ({\bf  Mapping estimates})
For all 
$$
\etab \in E^1_q \times O^1_q,\quad 0 < \ep \le\ep_\star,\quad {1 \over 2}q_\star \le q \le q_\star \mand -a_0 \le a  \le a_0
$$
we have $N_1^\ep(\etab,a) \in E^1_q$ and  $N_2^\ep(\etab,a) \in O^1_q$ 
together with the estimate:
\be\label{ball1}
\|N_1^\ep(\etab,a)\|_{1,q}+\|N_2^\ep(\etab,a)\|_{1,q}+\left\vert N_3^\ep(\etab,a)\right\vert 
\le C_\star\left(\ep + \ep\|\etab\|_{1,q}+\ep |a| +\|\etab\|_{1,q}^2  + a^2\right).
\ee
\item ({\bf  Lipschitz-type estimates})
For all 
$$
\etab,\grave{\etab} \in E^1_{q'} \times O^1_{q'}, \quad
0 < \ep \le\ep_\star,\quad {1 \over 2}q_\star \le q < q' \le q_\star \mand -a_0 \le a \le \ga \le a_0
$$
we have
\be\label{lip1}\begin{split}
   &\|N_1^\ep(\etab,a)-N_1^\ep(\grave{\etab},\ga)\|_{1,q}
+\|N_2^\ep(\etab,a)-N_2^\ep(\grave{\etab},\ga)\|_{1,q}
+ 
\left \vert N_3^\ep(\etab,a)-N_3^\ep(\grave{\etab},\ga)\right \vert\\
\le &{C_\star \over |q-q'|} 
\left(\ep + \|{\etab}\|_{1,q'}+\|\grave{\etab}\|_{1,q'}
+|a|+|\ga|\right)(|a-\ga|+\|\etab-\grave{\etab}\|_{1,q}).
\end{split}\ee
\item ({\bf Bootstrap estimates})
For all $r \ge 1$ there exists $C_{\star,r}>1$ such that
for all 
$$
\etab \in E^r_q \times O^r_q,\quad 0 < \ep \le\ep_\star,\quad {1 \over 2}q_\star \le q \le q_\star \mand -a_0 \le a  \le a_0
$$
we have $N_1^\ep(\etab,a) \in E^{r+1}_q$ and  $N_2^\ep(\etab,a) \in O^{r+1}_q$ 
together with the estimates:
\begin{multline}\label{boot1}
\|N_1^\ep(\etab,a)\|_{r+1,q}+
\|N_2^\ep(\etab,a))\|_{r+1,q}\\
\le C_{\star,r} \left( \ep + \|\etab\|_{r,q} + \ep^{1-r} |a|   +\ep^{-r}a^2+\ep^{-r}|a|\|\etab\|_{r,q}+\|\etab\|_{r,q}^2 \right)
\end{multline}
and
\be\label{boot2}
\left \vert N_3^\ep(\etab,a) \right \vert \le C_{\star,r}\left( \ep^{r+1} +\ep^r \|\etab\|_{r,q} + \ep|a| + a^2 +|a|\|\etab\|_{r,q} + \ep^{r}\|\etab\|_{r,q}^2\right).
\ee
\end{enumerate} 
\end{proposition}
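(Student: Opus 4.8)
Each of $N_1^\ep, N_2^\ep, N_3^\ep$ is a finite sum of the terms $j_1,\dots,j_5$ (for $N_1$, after applying $\A^{-1}$) and $l_1, l_2, l_{31}, l_4, l_5$ (for $N_2$, after $\ep^2\P_\ep$, and for $N_3$, after $\tfrac{1}{2\kappa_\ep}\iota_\ep$). The plan is to reduce the whole proposition to a short list of \emph{uniform-in-$\ep$} ``atomic'' estimates on the building blocks and then chase the terms one at a time; the algebra bookkeeping is long but mechanical. The atomic estimates I would assemble first --- the analytic ones being the content of Sections~\ref{BE} and~\ref{NE}, Theorem~\ref{A inv}, and Lemma~\ref{B inv} --- are: (a) $\A^{-1}\colon E^r_q\to E^r_q$ bounded uniformly for $\tfrac12 q_\star\le q\le q_\star$ with $q_\star$ small (Friesecke--Pego); (b) the Fourier multipliers $\varpi^\ep,\varpi^0,\lambda_+^\ep,\lambda_+^0,J_n^\ep,J_n^0$ bounded on $H^r_q$ uniformly in $\ep\in(0,\ep_\star]$, together with the consistency bounds $\|\varpi^\ep-\varpi^0\|,\|\lambda_+^\ep-\lambda_+^0\|,\|J_n^\ep-J_n^0\|=\O(\ep^2)$ on the relevant spaces and their Lipschitz-in-$t$ analogues from Section~\ref{PS}; (c) $\ep^2\P_\ep$ bounded $O^r_q\to O^{r+2}_q$ uniformly --- this is where the singular factor $\ep^2\partial_X^2$ in $\B_\ep$ is repaid --- hence, by interpolation with the $\O(\ep^2)$ bound $O^r_q\to O^r_q$, a bound $\O(\ep)$ on $O^r_q\to O^{r+1}_q$; (d) $|\iota_\ep[g]|\le C_r\,\ep^r\|g\|_{r,0}$ for a genuinely localized $g$ (integrate by parts, using $K_\ep\sim\ep^{-1}$) and the sharper $|\iota_\ep[g]|\le C_{r,N}\,\ep^N\|g\|_{r,q}$ for $g\in H^r_q$, which is ultimately the source of the beyond-all-orders smallness of $a$; (e) $|\kappa_\ep|\ge\kappa_\star>0$ uniformly. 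From Theorem~\ref{periodic solutions} I would also keep $\|\varphib_\ep^a\|_{C^r_\per\times C^r_\per}\le C_r$, $\|\varphib_\ep^a-\varphib_\ep^{\ga}\|\le C_r|a-\ga|$, and --- crucially --- $\|\psib_\ep^a\|\le C_r|a|$ (since $\psib_\ep^0=0$), together with $\hat\psib_\ep^a(\pm1)\cdot\jb=0$. Finally I would record that $H^r_q$ is a Banach algebra for $r\ge1$, that $\|fg\|_{r,q_1+q_2}\lesssim\|f\|_{r,q_1}\|g\|_{r,q_2}$, and that $\|fg\|_{r,q}\lesssim\|f\|_{C^r_\per}\|g\|_{r,q}$ when $f$ is periodic; hence every $b_i^\ep$ is a bounded symmetric bilinear map on the relevant products of weighted and periodic spaces, uniformly in $\ep$.

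\textbf{Residual and cancellation identities.} The reason $N^\ep$ is $\O(\ep)$ at $\etab=0,a=0$ is that three ``constant'' terms are small, and I would treat them first. For $j_1$: subtract the identity $\sigma+\varpi^0 b_1^0(\sigmab,\sigmab)=0$ of~\eqref{what sigma does} to write $j_1=-(\varpi^\ep-\varpi^0)b_1^\ep(\sigmab,\sigmab)-\varpi^0(b_1^\ep-b_1^0)(\sigmab,\sigmab)$, which is $\O(\ep^2)$ in every $E^r_q$. For $l_1=-\lambda_+^\ep b_2^\ep(\sigmab,\sigmab)$: since $\sigmab=\sigma\ib$ forces $b_2^0(\sigmab,\sigmab)=0$, we get $l_1=-\lambda_+^\ep(b_2^\ep-b_2^0)(\sigmab,\sigmab)$, which is $\O(\ep^2)$ and localized. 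For $j_3=-2a\varpi^\ep b_1^\ep(\sigmab,\varphib_\ep^a)$: the $\ep=0$, $a$-linear part vanishes because $b_1^0(\sigmab,\nub)=0$ (again as $\sigmab$ and $\nub$ occupy complementary components) and $\psi_{\ep,1}^a=\O(a)$, leaving $\|j_3\|\lesssim\ep|a|+a^2$. And $l_{31}=l_3+2a\chi_\ep$ was designed precisely so that $l_{31}=-2a\lambda_+^\ep J_1^\ep\big[(J_2^\ep-J_2^0)\sigmab\,.\,J_2^\ep\varphib_\ep^a+J_2^0\sigmab\,.\,J_2^\ep\psib_\ep^a\big]\cdot\jb$; this splits into a piece of amplitude $\O(\ep|a|)$ carrying the resonant frequency $\pm K_\ep$ --- so of size $\O(\ep^{1-r}|a|)$ in $H^r_q$ but only $\O(\ep|a|)$ after $\iota_\ep$ --- plus a genuinely localized piece of size $\O(a^2)$. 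These are the promised bounds \eqref{l31 est} and \eqref{key kappa estimate}.

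\textbf{Assembling the estimates.} With the atomic estimates in hand, \eqref{ball1} follows by applying them term by term to $N_1=\A^{-1}(j_1+\dots+j_5)$, $N_2=\ep^2\P_\ep(l_1+l_2+l_{31}+l_4+l_5)$ and $N_3=\tfrac{1}{2\kappa_\ep}\iota_\ep[l_1+l_2+l_{31}+l_4+l_5]$: the constant terms $j_1,l_1,l_{31}$ supply the $\ep$ (and $\ep|a|$) contributions; the $\etab$-linear terms $j_2,l_2$ supply $\O(\ep\|\etab\|_{1,q})$ once the consistency gain (for $j_2$) and the $\ep^2\P_\ep$ / $\iota_\ep$ gains (for $l_2$) are used; the $(\etab,\text{periodic})$-bilinear terms $j_4,l_4$ supply $\O(|a|\,\|\etab\|_{1,q})\le\tfrac12(a^2+\|\etab\|_{1,q}^2)$ by Young's inequality; and $j_5,l_5$ are quadratic in $\etab$. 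For \eqref{lip1} I would expand each difference $N_i(\etab,a)-N_i(\grave{\etab},\ga)$ as a telescoping sum, using bilinearity of $b_i^\ep$, the Lipschitz bound $\|\varphib_\ep^a-\varphib_\ep^{\ga}\|\lesssim|a-\ga|$, and the Lipschitz-in-$t$ multiplier bounds; the coefficient $\ep+\|\etab\|_{1,q'}+\|\grave{\etab}\|_{1,q'}+|a|+|\ga|$ multiplying $|a-\ga|+\|\etab-\grave{\etab}\|_{1,q}$ is precisely what the terms $j_4,l_4$ force, since their Lipschitz constant in $a$ is proportional to $\|\grave{\etab}\|$, and the factor $1/|q-q'|$ is the usual loss incurred when $\A^{-1}$ and $\P_\ep$ are used to trade the faster decay rate $q'$ for the slower one $q$ via a contour shift. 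The bootstrap estimates \eqref{boot1}--\eqref{boot2} are the same analysis in $H^r_q$ for arbitrary $r\ge1$, now tracking how many derivatives each inverse gains: $\ep^2\P_\ep$ gains two derivatives at $\O(1)$ cost but each derivative of a resonant-frequency function costs a factor $\ep^{-1}$, which produces the negative powers $\ep^{1-r}|a|,\ep^{-r}a^2,\ep^{-r}|a|\|\etab\|_{r,q}$ in \eqref{boot1}; and $\iota_\ep$ annihilates localized content by $\ep^r$ (or more), producing $\ep^{r+1}+\ep^r\|\etab\|_{r,q}+\ep^r\|\etab\|_{r,q}^2$ in \eqref{boot2}, while the $\O(\ep|a|)$-amplitude resonant piece of $l_{31}$ contributes the $\ep|a|$ term and the localized $\O(a^2)$ piece together with $l_4$ contribute $a^2+|a|\|\etab\|_{r,q}$. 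Feeding the $r=1$ outputs $\|\etab\|_{1,q}=\O(\ep)$, $|a|=\O(\ep)$ (which come from the contraction argument of Section~\ref{EU}) back into \eqref{boot1}--\eqref{boot2} with $r=2,3,\dots$ then forces $|a|=\O(\ep^N)$ for every $N$ while $\|\etab\|_{r,q}$ stays $\O(\ep)$.

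\textbf{Main obstacle.} The term-chasing is mechanical; the real difficulty is the atomic estimates deferred to Sections~\ref{BE} and~\ref{NE}. The hardest is the uniform-in-$\ep$ boundedness of $\ep^2\P_\ep$ as a gain-of-two-derivatives operator on the weighted spaces: one must control $1/\widetilde{\B}_\ep(K)$ near its resonant zero $K_\ep\sim\ep^{-1}$, where $\widetilde{\B}_\ep'(K_\ep)$ is only $\O(\ep)$ --- so only the lower bound $l_0$ of Lemma~\ref{lambda lemma}(vi) saves the day --- and one must use the one-dimensional cokernel projection built into $\P_\ep$ to cancel the would-be residue while still extracting exponential decay by shifting the contour of integration to $\Im K=q$; it is exactly the tension between that rate-$q$ shift and the near-resonant denominator that forces the $1/|q-q'|$ loss. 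A close second is making the decomposition of $l_{31}$, and the lower bound $|\kappa_\ep|\ge\kappa_\star$, genuinely quantitative, since the honesty of the $\O(\ep|a|)$-versus-$\O(\ep^{-r}|a|)$ dichotomy for the resonant piece is what makes the bootstrap --- and hence the beyond-all-orders smallness of $a$ --- close.
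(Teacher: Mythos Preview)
Your overall strategy matches the paper's, but two of your atomic mechanisms are misidentified, and the first is a genuine gap.

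For $j_3$ you argue that the $\ep=0$, $a$-linear part vanishes because $b_1^0(\sigmab,\nub_\ep)=0$, and conclude $\|j_3\|_{1,q}\lesssim\ep|a|+a^2$. The cancellation is true, but it does not deliver the $\ep$: writing $b_1^\ep(\sigmab,\nub_\ep)=(b_1^\ep-b_1^0)(\sigmab,\nub_\ep)$ gains nothing, because the symbol difference $\tilde J_n(\ep K)-\tilde J_n(0)$ is $O(\ep K)$, and at the frequency $K=K_\ep\sim\ep^{-1}$ carried by $\nub_\ep$ this is $O(1)$. (This is also why your claimed $\|J_n^\ep-J_n^0\|=O(\ep^2)$ is wrong; it is $O(\ep)$ with the loss of one derivative, and that derivative costs $\ep^{-1}$ on $\nub_\ep$.) The paper extracts the missing $\ep$ by a completely different mechanism, the Amick--Toland estimate of Lemma~\ref{pi props}(iv): $\|\varpi^\ep(fe^{iK_\ep\cdot})\|_{1,q}\le CK_\ep^{-1}\|f\|_{2,q}$, which exploits the high-frequency decay $\tilde\varpi^\ep(K)\sim K^{-2}$ of the multiplier rather than any structural cancellation in $B^\ep$. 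Without this, $j_3$ is only $O(|a|)$ and \eqref{ball1} fails.

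You also misattribute the $1/|q-q'|$ loss in \eqref{lip1} to contour shifts in $\A^{-1}$ and $\P_\ep$. In the paper those operators are bounded on each fixed $H^r_q$; the loss comes entirely from the $j_4,l_4$ Lipschitz terms, because the periodic frequency $K_\ep^a$ depends on $a$ and hence $|\partial_X^r J_2^\ep(\varphib_\ep^a-\varphib_\ep^{\ga})(X)|\le C_r\ep^{-r}|a-\ga|(1+|X|)$ grows linearly in $X$ (Lemma~\ref{phi lemma}). Multiplying this by $\etab\in H^1_{q'}$ and measuring in $H^1_q$ with $q<q'$ then costs exactly $\sup_X(1+|X|)\sech((q'-q)|X|)\le C/(q'-q)$. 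Relatedly, your $\ep^2\P_\ep$ bounds are each too optimistic by one power of $\ep$: on the shifted contour $\tilde\B_\ep(K+iq)\sim \ep q$ near the resonant zero $K_\ep$, so $\B_\ep^{-1}$ carries an extra $\ep^{-1}$, and the paper gets $\|\ep^2\P_\ep\|_{H^r_q\to H^{r+j}_q}\le C_q\ep^{1-j}$ for $j=0,1,2$ (Lemma~\ref{B inv}), not $\ep^{2-j}$.
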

The proof of this proposition is lengthy, byzantine and postponed to until Sections \ref{BE} and \ref{NE} below.
Onward to existence.

\subsection{Existence}
Let $$\mathcal{X}_q:=E^1_q \times O^1_q \times \R.$$ 
This is a Banach space with
norm $\| \cdot \|_{\X_q}$ defined in the obvious way.

Fix $w > 1$ and take $\ep_\star$ and $q_\star$ as in Proposition \ref{main mover}.
If we put $\nb=(\etab,a)$ and $\grave{\nb}=(\grave{\etab},\ga)$ then the  estimate \eqref{ball1} is compressed to
\be\label{ball2}
\| N^\ep(\nb)\|_{\X_q} \le C_\star(\ep + \ep \|\nb\|_{\X_q} + \|\nb\|^2_{\X_q}).
\ee
Similarly, \eqref{lip1} implies
\be\label{lip2}
\| N^\ep(\nb)-N^\ep(\grave{\nb})\|_{\X_q} \le {C_\star \over |q-q'|}(\ep + \ep \|\nb\|_{\X_{q'}} + \|\grave{\nb}\|_{\X_{q'}})\| \nb - \grave{\nb}\|_{\X_{q}}.
\ee
Here we have the same restrictions on $q,q',\ep$ as in the proposition, of course.

Put
\bes
\bar{\ep}:=\min\left(\ep_\star,{1 \over 2(C_\star  +2C_\star  ^2)} , {q_\star \over 8(C_\star  +4 C^2_*)}\right).
\ees
Henceforth we assume that $\ep \in (0,\bar{\ep}]$.
Suppose that 
\be\label{the lid}\| \nb\|_{q_\star} \le 2C_\star  \ep.\ee
 Then \eqref{ball2}, \eqref{the lid} and the definition of $\bar{\ep}$ imply
\be\label{big estimate}\begin{split}
\|N^\ep(\nb)\|_{\X_{q_\star}} 
\le&C_\star  \left(\ep + 2C_\star   \ep^2+ 4C_\star  ^2\ep^2\right)
\le C_\star  \ep \left(1 + (2C_\star   + 4C_\star  ^2)\bar{\ep}\right)
\le 2C_\star   \ep.
\end{split}\ee

Now select $\nb^1 \in \X_{q_\star}$ with $\|\nb^1\|_{q_\star} \le 2C_\star  \ep$. For $j \ge 1$, put
\be\label{iterate}
\nb^{j+1} = N^\ep(\nb^{j}).
\ee
A simple induction argument using \eqref{big estimate} shows that, for all $j \in \N$, we have
\be\label{uniform bound}
\| \nb^{j+1}\|_{\X_{q_\star}} \le 2C_\star   \ep.
\ee
Thus we see that $\left\{\nb^{j} \right\}_{j \in \N}$ is a uniformly bounded sequence in $\X_{q_\star}$ (and therefore uniformly bounded
in all spaces $\X_q$ with $q \le q_\star$ too).

We now demonstrate that this sequence is Cauchy in $\X_{3q_\star/4}$. Fix $j \ge 2$. Then \eqref{iterate} and  \eqref{lip2} (with $q = 3q_\star/4$ and $q' = q_\star$) imply
\bes
\begin{split}
\| \nb^{j+1}-{\nb^j}\|_{\X_{3q_\star/4}} &= \|N^\ep(\nb^{j}) - N^\ep(\nb^{j-1}) \|_{\X_{3q_\star/4}}\\
&\le 4C_\star  q_\star^{-1}\left(\ep+ 
 \| \nb^j\|_{\X_{q_\star}}+\| \nb^{j-1} \|_{\X_{q_\star}}\right) \left\| \nb^j-{\nb}^{j-1} \right\|_{\X_{3q_\star/4}}.
\end{split}
\ees
We use use \eqref{uniform bound} in  the first term to get
\bes
\begin{split}
\| \nb^{j+1}-\nb^{j}\|_{\X_{3q_\star/4}} 
&\le 4C_\star  q_\star^{-1}\left(\ep+ 
4 C_\star   \ep\right) \left\| \nb^j-{\nb}^{j-1} \right\|_{\X_{3q_\star/4}} \\
\end{split}
\ees

Using the fact that $\ep \in (0,\bar{\ep}]$ and the definition of $\bar{\ep}$ we see that
$
4 C_\star  q_\star^{-1}\left(\ep+ 
4 C_\star   \ep\right) \le {1 /2}.
$
Thus
\bes
\begin{split}
\| \nb^{j+1}-\nb^{j}\|_{\X_{3q_\star/4}} 
&\le {1 \over 2}\left\| \nb^j-{\nb}^{j-1} \right\|_{\X_{3q_\star/4}} \quad \text{for $j \ge 2$}.
\end{split}
\ees
Also, \eqref{uniform bound} and the triangle inequality give:
$
\| \nb^{2}-\nb^{1}\|_{\X_{3q_\star/4}} \le 4 C_\star   \ep.
$
A classic induction argument then shows that
\be\label{small}
\| \nb^{j+1}-\nb^{j}\|_{\X_{3q_\star/4}} \le 8 C_\star   \ep 2^{-j}.
\ee
for all $j \ge 1$. 

Now fix $m >n\ge1$. The triangle inequality, followed by \eqref{small} and the geometric series summation formula give:
$$
\| \nb^{m}-\nb^{n}\|_{\X_{3q_\star/4}}
\le \sum_{j=n}^{m-1}\| \nb^{j+1}-\nb^{j}\|_{\X_{3q_\star/4}} \le 8 C_\star   \ep\sum_{j=n}^{m-1}  2^{-j}\le 8 C_\star   \ep\sum_{j=n}^{\infty}  2^{-j}= {16 C_\star \ep  \over 2^n}.
$$
Thus we can make $\| \nb^{m}-\nb^{n}\|_{\X_{3q_\star/4}}$ as small as we like by taken $m,n$ sufficiently large, which means
that the sequence is Cauchy. Which means it converges. Call the limit
$
\nb_\ep =(\etab_\ep,a_\ep) \in \X_{3q_\star/4}.
$
Because of \eqref{uniform bound}, we have
\be\label{solution is small}
\| \nb_\ep\|_{\X_{3q_\star/4}} \le 2C_\star \ep.
\ee

Now we claim that \be\label{win}
\nb_\ep = N^\ep(\nb_\ep)
\ee
which would imply that $\nb_\ep$ is the solution we are looking for.
Since the convergence of $\nb^j$ 
is in $\X_{3q_\star/4}$, if we knew that $N^\ep$ was continuous on that space we would have our claim by passing the limit 
through $N^\ep$ in \eqref{iterate}. But $N^\ep$ is not
obviously continuous. One can see this in the fact that the Lipschitz constant in  \eqref{lip2} depends on $\| \nb \|_{\X_{q'}}$ with $q'>q$.
We do know that $N^\ep(\nb_\ep) \in \X_{3q_\star/4}$ by virtue of \eqref{ball2}.

But nonetheless we have \eqref{win}. Since $\nb^j$ converges in $\X_{3q_\star/4}$, the scheme \eqref{iterate} implies
\be\label{converge}
N^\ep(\nb^j) \to \nb_\ep
\ee
too. This convergence takes place in $\X_q$ for all $q \in [0,3q_\star/4]$.
So look at
$$
\| N^\ep(\nb_\ep) - \nb_\ep\|_{\X_{q_\star/2}}.
$$
Note that we are estimating this in the bigger space $\X_{q_\star/2}$, not $\X_{3q_\star/4}$. The triangle inequality shows that
$$
\| N^\ep(\nb_\ep) - \nb_\ep\|_{\X_{q_\star/2}} \le \| N^\ep(\nb_\ep) - N^\ep(\nb^j) \|_{\X_{q_\star/2}} + \| N^\ep(\nb^j) - \nb_\ep\|_{ \X_{q_\star/2}}.
$$
The second term can be made as small as we like by taking $j$ big enough because of \eqref{converge}. 
For the first term we use \eqref{lip2}:
$$
\| N^\ep(\nb_\ep) - N^\ep(\nb^j) \|_{\X_{q_\star/2}}  \le 4 C_* q_\star^{-1} \left(\ep + \|\nb_\ep\|_{\X_{3q_\star/4}} +  \|\nb^j\|_{\X_{3q_\star/4}}   \right)\|\nb_\ep - \nb^j\|_{\X_{q_\star/2}}.
$$
Using \eqref{uniform bound} and \eqref{solution is small} this becomes:
$$
\| N^\ep(\nb_\ep) - N^\ep(\nb^j) \|_{\X_{q_\star/2}}  \le 4 C_* q_\star^{-1}\ep  \left(1 + 4C_\star \ep  \right)\|\nb_\ep - \nb^j\|_{\X_{q_\star/2}}.
$$
Since $\nb^j \to \nb_\ep$ in $\X_{3q_\star/4}$ it also converges in $\X_{q_\star/2}$. 
And thus we can make the above term as small as we want by taking $j$ sufficiently large.
Which is to say that 
$
\| N^\ep(\nb_\ep) - \nb_\ep\|_{\X_{q_\star/2}} =0.
$
Thus we have \eqref{win}. Which is to say, there exists a solution of \eqref{big sys}.

\subsection{Uniqueness}
Suppose that $\grave{\nb}_\ep \in \X_{3q_\star/4}$ has the property that $\grave{\nb} = N^\ep(\grave{\nb}_\ep)$
and $\| \grave{\nb}_\ep \|_{ \X_{3q_\star/4}} \le 2 C_\star   \ep$ and $\ep \in (0,\bar{\ep}]$. Clearly
$$
\gnb_\ep -\nb_\ep = N^\ep(\gnb_\ep) - N^\ep(\nb_\ep).
$$
Applying \eqref{lip2} with $q = q_\star/2$ and $q' = 3q_\star/4$ gives:
$$
\| \gnb_\ep -\nb_\ep \|_{\X_{q_\star/2}} \le 4 C_\star  q_\star^{-1}\left(\ep + 
 \| \gnb_\ep \|_{\X_{3q_\star/4}}+\| \nb_\ep\|_{\X_{3q_\star/4}}\right) \left\| \gnb_\ep - \nb_\ep\right\|_{\X_{q_\star/2}}.
$$
Since $\| \grave{\nb}_\ep \|_{ \X_{3q_\star/4}} \le 2 C_\star   \ep$ and $\| {\nb}_\ep \|_{ \X_{3q_\star/4}} \le 2 C_\star   \ep$
we have
$$
\| \gnb_\ep -\nb_\ep \|_{\X_{q_\star/2}} \le 4 C_\star  q_\star^{-1}\left(\ep + 
4 C_\star  \ep\right)  \left\| \gnb_\ep - \nb_\ep\right\|_{\X_{q_\star/2}}.$$
As above, we saw that $\ep \in (0,\bar{\ep}]$ implies $4 C_\star  q_\star^{-1}\left(\ep + 
4 C_\star  \ep\right) \le 1/2$. Thus we have
$$
\| \gnb_\ep -\nb_\ep \|_{\X_{q_\star/2}}  \le {1 \over 2}\| \gnb_\ep -\nb_\ep \|_{\X_{q_\star/2}} $$
which implies $\grave{\nb}_\ep=\nb_\ep.$ 
And so $\nb_\ep=(\etab_\ep,a_\ep)$ is the unique fixed point of $N^\ep$ in
the ball of radius $2C_\star  \ep$ in $\X_{3q_\star/4}$.

\subsection{Regularity of $\eta_\ep$ and the size of $a_\ep$}
We claim that for 
 all $r \ge 1$, there exists $C_r > 0$ such that for all $\ep \in (0,\bar{\ep}]$ the fixed points $(\etab_\ep,a_\ep)$ constructed above satisfy
\be\label{conclusion}
\| \etab_\ep\|_{r,{3q_\star/4}} \le C_r \ep \mand |a_\ep| \le C_r \ep^r.
\ee

We prove this by induction. The original construction of $(\eta_\ep,a_\ep)$ was done in the ball of radius $2C_\star \ep$ in the space $\X_{{3q_\star/4}}$
and so we have the $r=1$ base  case:
$$
\| \etab_\ep\|_{1,{3q_\star/4}} \le 2C_\star \ep \mand |a_\ep| \le 2 C_\star \ep.
$$

Now suppose that \eqref{conclusion} holds for some  $r\ge1$. We know that
$(\etab_\ep,a_\ep) = N^\ep(\etab_\ep,a_\ep).
$
Therefore, using \eqref{boot1} we see:
\bes\begin{split}
\| \etab_\ep\|_{r+1,3q_\star/4} 
=&\|N_1^\ep(\etab_\ep,a_\ep)\|_{r+1,{3q_\star/4}} +\|N_2^\ep(\etab_\ep,a_\ep)\|_{r+1,{3q_\star/4}} \\ \le &
C_{\star,r}\left( \ep + \|\etab_\ep\|_{r,{3q_\star/4}} + \ep^{1-r} |a_\ep|+ \ep^{-r} a_\ep^2 + \ep^{-r} |a_\ep|\|\etab_\ep\|_{r,{3q_\star/4}} + \|\etab_\ep\|^2_{r,{3q_\star/4}}\right).
\end{split}\ees
Using the inductive hypothesis \eqref{conclusion} gives:
\bes
\| \etab_\ep\|_{r+1,3q_\star/4}  \le
C_{\star,r}\left( \ep  + C_r \ep + C_r \ep + C_r^2 \ep^r + C_r^2 \ep + C_r^2 \ep^2\right) \le C_{r+1} \ep.
\ees
We are half way done.

Using \eqref{boot2}
we have
$$
|a_\ep|=\left \vert N_3^\ep (\etab_\ep,a_\ep) \right \vert \le C_{\star,r} \left( \ep^{r+1}  + \ep^r \|\etab_\ep\|_{r,{3q_\star/4}}
+ \ep|a_\ep| + |a_\ep|^2 + |a_\ep|\|\etab_\ep\|_{r,{3q_\star/4}} + \ep^r \| \etab_\ep\|^2_{r,{3q_\star/4}} \right)
$$
Using the inductive hypothesis \eqref{conclusion} gives:
\bes
|a_\ep|\le
C_{\star,r}\left(\ep^{r+1} + C_r \ep^{r+1} + C_r \ep^{r+1} + C_r^2 \ep^{2r} + C_r^2 \ep^{r+1} + C_r^2 \ep^{r+2} \right) \le C_{r+1} \ep^{r+1}.
\ees
Thus we have established \eqref{conclusion} with for $r+1$ and we are done.

\subsection{The main result}
Summing up, we have proven our main result, stated here in full technicality.
\begin{theorem}\label{main result}
For all $w>1$ there exists $\bar{\ep}>0$ and $\bar{q}>0$ such that the following holds for all $\ep \in (0,\bar{\ep})$.
\begin{enumerate}[(i)]
\item There exists $\ds \etab_\ep \in \cap_{r \ge 0} \left( E^r_{\bar{q}} \times O^r_{\bar{q}} \right)$ and $a_\ep \in [a_0,a_0]$ such that
$$
\thetab(X) = \thetab_\ep(X):= \sigmab(X)+\etab_\ep(X) + a_\ep \varphib^{a_\ep}_\ep(X)
$$
solves \eqref{theta eqn small}.
\item For all $r \ge 0$ there exists $C_r>0$ such that, for all $\ep \in (0,\bar{\ep})$:
$$
\| \etab_\ep\|_{r,\bar{q}} \le C_r \ep \mand |a_\ep| \le C_r \ep^{r}.
$$
\item $\thetab_\ep$ is unique in the sense that $\etab_\ep$ and $a_\ep$ are the only choices for
which $\thetab_\ep$ is a solution of \eqref{theta eqn small} and the estimates in (ii) hold.
\end{enumerate}
\end{theorem}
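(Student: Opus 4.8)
The plan is to read off Theorem \ref{main result} from the three subsections that precede it, so the work is almost entirely bookkeeping. I would fix $w>1$, take $\ep_\star$, $q_\star$, $C_\star$ from Proposition \ref{main mover} and $a_0$ from Theorem \ref{periodic solutions}, set $\bar{q}:=3q_\star/4$, and take $\bar{\ep}$ to be the quantity $\bar{\ep}$ defined in the Existence subsection — shrinking it further, if need be, so that $2C_\star\bar{\ep}\le a_0$, which is what keeps the entire iteration inside the range $-a_0\le a\le a_0$ on which Proposition \ref{main mover} is stated.

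For part (i) I would invoke the modified contraction argument already carried out in the Existence subsection: for each $\ep\in(0,\bar{\ep}]$ it produces a point $\nb_\ep=(\etab_\ep,a_\ep)\in\X_{\bar{q}}$ with $\nb_\ep=N^\ep(\nb_\ep)$ (equation \eqref{win}) and $\|\nb_\ep\|_{\X_{\bar{q}}}\le 2C_\star\ep$ (equation \eqref{solution is small}); in particular $a_\ep\in[-a_0,a_0]$. By the reduction of Section \ref{N} — Beale's ansatz \eqref{beale}, invertibility of $\A$ on even functions (Theorem \ref{A inv}), the solvability/amplitude-selection machinery for $\B_\ep$ (Lemma \ref{B inv}), and the definitions \eqref{N1 eqn}, \eqref{mod 2}, \eqref{solve 4} of $N_1^\ep$, $N_2^\ep$, $N_3^\ep$ — a fixed point of $N^\ep$ is precisely a solution $\thetab_\ep=\sigmab+\etab_\ep+a_\ep\varphib^{a_\ep}_\ep$ of \eqref{theta eqn small}. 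To upgrade the regularity I would feed $\etab_\ep=(N_1^\ep,N_2^\ep)(\etab_\ep,a_\ep)$ into the bootstrap estimate \eqref{boot1} and induct on $r$ exactly as in the Regularity subsection, concluding $\etab_\ep\in\cap_{r\ge0}(E^r_{\bar{q}}\times O^r_{\bar{q}})$.

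Part (ii) is exactly \eqref{conclusion}: I would prove it by induction on $r$, the base case $r=1$ being \eqref{solution is small} and the inductive step being the substitution of $\|\etab_\ep\|_{r,\bar{q}}\le C_r\ep$ and $|a_\ep|\le C_r\ep^r$ into the bootstrap bounds \eqref{boot1} and \eqref{boot2}. The reason the amplitude ends up small beyond all orders is structural in \eqref{boot2}: its leading term is $\ep^{r+1}$ and every other term picks up an extra power of $\ep$ once the inductive hypotheses are inserted, so $|a_\ep|\le C_{r+1}\ep^{r+1}$ and the induction never stops. For part (iii) I would rerun the Uniqueness subsection: if $\gnb_\ep\in\X_{\bar{q}}$ is another fixed point of $N^\ep$ whose components satisfy the estimates of part (ii) (in particular $\|\gnb_\ep\|_{\X_{\bar{q}}}=O(\ep)$), then applying the Lipschitz estimate \eqref{lip2} with $q=q_\star/2<q'=\bar{q}$ gives $\|\gnb_\ep-\nb_\ep\|_{\X_{q_\star/2}}\le\frac12\|\gnb_\ep-\nb_\ep\|_{\X_{q_\star/2}}$ for $\ep$ small, forcing $\gnb_\ep=\nb_\ep$.

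The genuinely nontrivial ingredient — and the one I expect to be the only real obstacle, everything else being substitution into Proposition \ref{main mover} — is the scale-of-spaces (``modified contraction'') device forced by the fact that $N^\ep$ is not Lipschitz on any single space $\X_q$: the Lipschitz constant in \eqref{lip1} blows up as $q'\downarrow q$ and, moreover, depends on the size of the argument. I would handle this exactly as the Existence subsection does, by controlling all iterates uniformly in the smallest-decay space $\X_{q_\star}$, proving the Cauchy property in the intermediate space $\X_{3q_\star/4}$ where the relevant Lipschitz constant is finite, and then verifying the fixed-point equation itself in the still-larger space $\X_{q_\star/2}$ via a limiting argument. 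Since all of this machinery is already in place, writing out the proof of Theorem \ref{main result} ultimately amounts to citing the appropriate displays above.
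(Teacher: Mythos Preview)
Your proposal is correct and follows essentially the same approach as the paper: the theorem is stated as a summary (``Summing up, we have proven our main result'') of the Existence, Uniqueness, and Regularity subsections that immediately precede it, and your plan simply cites those subsections in order. Your additional observation that one should shrink $\bar{\ep}$ so that $2C_\star\bar{\ep}\le a_0$ (to keep the iteration inside the domain of Proposition \ref{main mover}) is a detail the paper leaves implicit.
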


\begin{remark} The uniqueness result above does not rule out two interesting possibilities.
The first is that there could be a different choice for $\etab$ and $a$ where
$\etab \in E^1_{q} \times O^1_q$ with $q \in[0,\bar{q})$.
That is, $\etab \to 0$ at infinity at a rate slower than $e^{-\bar{q}|X|}$.
We consider this to be unlikely; our conjecture is that the solution is in fact unique in the class of $L^2 \times L^2$ functions.

The other possibility is there are solutions of \eqref{theta eqn small} which converge to $a \varphib_\ep^a (X\pm X_0)$
as $X \to \pm \infty$, for some $X_0 \in \R$. That is to say, the solution $\thetab$ converges to a phase-shifted member of the family of periodic solutions. This almost certainly will happen; the analogous result is shown to be true 
for gravity-capillary waves in \cite{beale2} and \cite{sun} and the singularly perturbed KdV-type equation studied in \cite{amick-toland}. To prove such a result can be achieved (following \cite{amick-toland}) by making an adjustment to Beale's ansatz \eqref{beale}. Specifically, replacing $a\varphib_\ep^a(X)$ with $a \varphib_\ep^a(X + \sgn(X) X_0) \Xi(X)$ where $\Xi(X)$ is a smooth positive $C^\infty$ function which is zero at $X =0$ and exactly equal to one for $|X|$ large. Obviously this generates more than a few extra terms in \eqref{eta eqn 1} and complications in proving estimates down the line! 
\end{remark}

Theorem \ref{main result} implies, after undoing all the changes of variables that led from \eqref{r eqn} to \eqref{theta eqn small}:
\begin{cor} \label{main result descale} For all $w > 1$ there exists $\bar{\ep}>0$ and $\bar{q}$ such that 
the following holds for all $\ep \in (0,\bar{\epsilon})$. Let $c_\ep := \sqrt{c_w^2 + \ep^2}$. There is a solution of \eqref{r eqn}
of the form
$$
r(j,t)= {3 \over 4} \ep^2(1+w) \sech^2\left({\ep \over \sqrt{\alpha_w}}\left(j\pm c_\ep t\right)\right) +  v^\ep_j(\ep(j\pm  c_\ep t))
+ p_j^\ep(j \pm c_\ep t)
$$
where:
\begin{enumerate}[(i)]
\item $v_{j+2}^\ep (X) = v^\ep_{j}(X)
$
and
$
p^\ep_{j+2}(X) =p^\ep_j (X)
$
for all $j \in \Z$ and $X \in \R$.
\item For all $r \ge 0$ we have $\|v_1^\ep\|_{H^r_{\bar{q}}}+ \|v_2^\ep\|_{H^r_{\bar{q}}} \le C_r \ep^3$.
$C_r>0$ depends only on $r$ and $w$ and not on $\ep$.
\item $p_1^\ep$ and $p_2^\ep$ are periodic with period $P_\ep \in I_w$ where $I_w$ 
is a closed bounded subset of $\R^+$. $I_w$ depends only on $w$ and not on $\ep$.
\item For all $r \ge 0$ we have $\|p_1^\ep\|_{W^{r,\infty}}+ \|p_2^\ep\|_{W^{r,\infty}} \le C_r \ep^r$.
$C_r>0$ depends only on $r$ and $w$ and not on $\ep$.
\end{enumerate}
\end{cor}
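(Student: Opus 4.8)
The proof is an exercise in unwinding. Theorem~\ref{main result} supplies $\thetab_\ep = \sigmab + \etab_\ep + a_\ep\varphib^{a_\ep}_\ep$ solving \eqref{theta eqn small}, and \eqref{theta eqn small} was obtained from \eqref{r eqn} by a chain of reversible changes of variable; the plan is to push $\thetab_\ep$ back through that chain and then read off the four assertions from estimates already in hand. Concretely I would: (a) undo the long-wave scaling by setting $\h_\ep(x) := \ep^2\thetab_\ep(\ep x)$ and $c_\ep := \sqrt{c_w^2+\ep^2}$, so that $\h_\ep$ solves \eqref{h eqn big 2} with $c = c_\ep$ (legitimate since $c_\ep^2 = c_w^2 + \ep^2 > c_w^2$); (b) undo the diagonalization by setting $\pb_\ep := J_2\h_\ep$, which then solves \eqref{p eqn small}; and (c) reinstate the traveling-wave ansatz, putting $r_j(t) := (\pb_\ep)_1(j - c_\ep t)$ for $j$ odd and $r_j(t) := (\pb_\ep)_2(j - c_\ep t)$ for $j$ even, which solves \eqref{r eqn} because \eqref{p eqn small} is equivalent to \eqref{p eqn}. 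The ``$+c_\ep t$'' variant then comes from the time-reversal symmetry of \eqref{r eqn} together with the evenness of $\sech^2$.

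The key computational point is Remark~\ref{operator scaling}: because $\h_\ep(x) = \ep^2\thetab_\ep(\ep x)$, the multiplier $J_2$ transports across the scale change as $\pb_\ep(x) = \ep^2(J_2^\ep\thetab_\ep)(\ep x)$, where $J_2^\ep$ is the multiplier in \eqref{lw Jn}. Splitting $\thetab_\ep$ into its three summands and writing $J_2^\ep = J_2^0 + (J_2^\ep - J_2^0)$ on the $\sigmab$ term produces the decomposition asserted in the corollary: the $\sech^2$ piece is the relevant component of $\ep^2 J_2^0\sigmab(\ep x)$, which by \eqref{J zero} and \eqref{the soliton} is exactly the advertised profile, the same for both parities since the two rows of $\tilde{J}_2(0)$ act identically on $(\sigma,0)^t$; the localized remainder is $v_j^\ep(Y) :=$ the first (resp. second) component of $\ep^2(J_2^\ep - J_2^0)\sigmab(Y) + \ep^2 J_2^\ep\etab_\ep(Y)$ for $j$ odd (resp. even), a function of the slow variable $Y$; and the ripple is $p_j^\ep(x) :=$ the first (resp. second) component of $\ep^2 a_\ep(J_2^\ep\varphib^{a_\ep}_\ep)(\ep x)$ for $j$ odd (resp. even), a function of the fast variable. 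Assertion (i) is then immediate, since $v_j^\ep$ and $p_j^\ep$ depend on $j$ only through its parity.

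For (iii), $\varphib^{a_\ep}_\ep$ is $2\pi/K_\ep^{a_\ep}$-periodic in its argument by Theorem~\ref{periodic solutions}, and the Fourier multiplier $J_2^\ep$ preserves periodicity, so $p_j^\ep$ is $P_\ep$-periodic in $x$ with $P_\ep := 2\pi/(\ep K_\ep^{a_\ep})$; the uniform bounds $C_1 < \ep K_\ep < C_2$ from Theorem~\ref{periodic solutions}(ii) and $|K_\ep^{a_\ep} - K_\ep| = |t_\ep^{a_\ep}| \le 1$ confine $P_\ep$ to a fixed closed bounded $I_w \subset \R^+$. For (ii), Corollary~\ref{J cor} makes $\tilde{J}_2$ bounded and analytic on $\overline{\Sigma}_{\tau_1}$, so for $\bar{q} \le \tau_1$ the operators $J_2^\ep$ are bounded on $H^r_{\bar{q}} \times H^r_{\bar{q}}$ uniformly in $\ep$; hence $\|\ep^2 J_2^\ep\etab_\ep\|_{r,\bar{q}} \le C\ep^2\|\etab_\ep\|_{r,\bar{q}} \le C_r\ep^3$ by Theorem~\ref{main result}(ii), while $|\tilde{J}_2(\ep k) - \tilde{J}_2(0)| \le C\ep|k|$ on the strip together with the exponential decay of $\sigmab$ (so $\sigmab \in H^s_{\bar{q}}$ for every $s$ once $\bar{q} < 2q_0$) gives $\|\ep^2(J_2^\ep - J_2^0)\sigmab\|_{r,\bar{q}} \le C_r\ep^3$ via a Plancherel/contour-shift estimate; together these yield $\|v_1^\ep\|_{H^r_{\bar{q}}} + \|v_2^\ep\|_{H^r_{\bar{q}}} \le C_r\ep^3$. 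Finally, for (iv), the uniform $C^\infty_\per$ bounds of Theorem~\ref{periodic solutions}(v) make $\|J_2^\ep\varphib^{a_\ep}_\ep\|_{C^s_\per}$ (in the slow argument) of size $O(|K_\ep^{a_\ep}|^s) = O(\ep^{-s})$, so each $x$-derivative of $p_j^\ep(x) = \ep^2 a_\ep(J_2^\ep\varphib^{a_\ep}_\ep)_j(\ep x)$ costs a factor $\ep\cdot\ep^{-1} = O(1)$, giving $\|p_j^\ep\|_{W^{r,\infty}} \le C_r\ep^2|a_\ep|$; since $|a_\ep| \le C_N\ep^N$ for every $N$ by Theorem~\ref{main result}(ii), this is in fact $O(\ep^N)$ for every $N$, in particular $\le C_r\ep^r$.

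I do not expect a serious obstacle: the argument is largely change-of-variables bookkeeping. The one place demanding care is keeping the fast and slow scales disentangled when transporting $J_2$ across the long-wave rescaling, and verifying that the operators $J_2^\ep$ --- and the difference $J_2^\ep - J_2^0$, where genuine $\ep$-smallness is needed --- act boundedly on the exponentially weighted spaces $H^r_{\bar{q}}$ with constants independent of $\ep$; this is exactly what the uniform analyticity in a strip from Lemma~\ref{lambda lemma} and Corollary~\ref{J cor} is there to provide. Everything else falls out directly from Theorems~\ref{main result} and~\ref{periodic solutions}.
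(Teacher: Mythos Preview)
Your proposal is correct and is exactly the route the paper intends: the paper states only that the corollary follows from Theorem~\ref{main result} ``after undoing all the changes of variables that led from \eqref{r eqn} to \eqref{theta eqn small},'' and your argument carries out precisely that unwinding. Two small refinements: for the $(J_2^\ep-J_2^0)\sigmab$ bound you can simply invoke Lemma~\ref{long wave J} (estimate \eqref{lw J}) rather than appealing to a contour-shift argument, and for the $W^{r,\infty}$ bound on $J_2^\ep\varphib_\ep^{a_\ep}$ you can cite \eqref{phi bound} in Lemma~\ref{phi lemma} directly.
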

It is this corollary which is paraphrased nontechnically in Theorem \ref{main result nontech}.


\section{Basic estimates}\label{BE}

\subsection{Estimates on $\sigmab$.}

Since $\sigma(X) = \sigma_0 \sech^2(2 q_0 X)$, for all $r \ge 0$ there exists $C_r>0$ such that
\be\label{sigma bound}
\| \sigma \|_{r,q} = \|\sigmab\|_{r,q} \le C_r
\ee
holds for all $q \in [0,q_0]$.  In fact $\sigma$ is in $H^r_q$ for all $q \in [0,2 q_0)$, but 
by restricting the interval for $q$ we can ensure that the constant $C_r$ does not depend on $q$. 
The constant does depend on $r$, of course.
Obviously it does not depend on $\ep$ since $\sigma$ does not.

\subsection{Estimates for $a \varphib_\ep^a$.}
The estimates for $\varphib_\ep^a$ in Theorem \ref{periodic solutions} are valid for rescaled versions which are $2\pi$-periodic. 
They are not scaled in this  way when they appear in the expressions $j_n$ and $l_n$ and so we need to ``translate" the estimates from Theorem \ref{periodic solutions}.
The chief difficulty here---which is in fact one of the chief difficulties in the whole argument---is that the frequency of $\varphib^a$ depends on $a$. This frequency mismatch will
 ultimate lead to the loss of spatial decay in the Lipschitz estimates \eqref{lip1}. Here is the result.
 \begin{lemma}\label{phi lemma} For all $r \ge 0$ there exists $C_r>0$ such that for all $\ep \in (0,1)$ and $a,\ga \in [-a_0,a_0]$ we have
 \item \be\label{phi bound}
\| \varphib_\ep^a \|_{W^{r,\infty}} +\| J_2^\ep \varphib_\ep^a \|_{W^{r,\infty}}  \le C_r\ep^{-r}
\ee
and, for all $X \in \R$,
\be\label{Lip est phi}
\left\vert \partial_X^r J_2^\ep (\varphib_\ep^a - \varphib_\ep^\ga) \right\vert \le C_r \ep^{-r}|a-\ga|(1+|X|). 
\ee
 \end{lemma}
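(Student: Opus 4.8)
The plan is to reduce both estimates to the uniform-in-$\ep$ bounds already recorded in Theorem~\ref{periodic solutions}(v), together with the Fourier-multiplier estimates of Lemma~\ref{fm props} and the regularity of $\tilde J_2$ from Corollary~\ref{J cor}. First I would write $\varphib_\ep^a(X)=\phib_\ep^a(K_\ep^a X)$ with $\phib_\ep^a:=\nub+\psib_\ep^a$ a $2\pi$-periodic profile; \eqref{periodic bound} gives $\|\phib_\ep^a\|_{C^r_\per\times C^r_\per}\le C_r$ for every $r$, uniformly in $\ep$ and $|a|\le a_0$, and in particular $|\ep K_\ep^a|\le C_0$, hence $|K_\ep^a|\le C_0\ep^{-1}$. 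The bound \eqref{phi bound} for $\varphib_\ep^a$ itself is then immediate from the chain rule $\partial_X^s\varphib_\ep^a(X)=(K_\ep^a)^s(\partial_Y^s\phib_\ep^a)(K_\ep^a X)$ together with $\ep<1$, giving $\|\varphib_\ep^a\|_{W^{r,\infty}}\le\sum_{s=0}^r|K_\ep^a|^s\|\phib_\ep^a\|_{C^s_\per\times C^s_\per}\le C_r\ep^{-r}$.

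For the $J_2^\ep$-terms I would use the periodic interpretation of a Fourier multiplier (Definition~\ref{Fo mult definition}(ii) and Remark~\ref{operator scaling}): since $\varphib_\ep^a$ is $(2\pi/K_\ep^a)$-periodic,
\[
(J_2^\ep\varphib_\ep^a)(X)=\g_\ep^a(K_\ep^a X),\qquad \g_\ep^a:=\left(J_2^\ep\right)^{K_\ep^a}\phib_\ep^a,
\]
where $\left(J_2^\ep\right)^{K_\ep^a}$ is the $2\pi$-periodic Fourier multiplier with symbol $k\mapsto\tilde J_2(\ep K_\ep^a k)$. By Corollary~\ref{J cor}, $\tilde J_2$ is bounded and analytic on $\overline\Sigma_{\tau_1}$, hence bounded on $\R$ with bounded derivative there; write $M_J:=\|\tilde J_2\|_{L^\infty(\R)}$ and note $\Lip(\tilde J_2)<\infty$. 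Lemma~\ref{fm props}(i) then gives $\|\g_\ep^a\|_{H^r_\per\times H^r_\per}\le M_J\|\phib_\ep^a\|_{H^r_\per\times H^r_\per}\le C_r$ for every $r$, and Sobolev embedding on the torus upgrades this to $\|\g_\ep^a\|_{C^r_\per\times C^r_\per}\le C_r$, uniformly in $\ep$ and $a$. The chain rule, exactly as above, then yields $\|J_2^\ep\varphib_\ep^a\|_{W^{r,\infty}}\le C_r\ep^{-r}$, which completes \eqref{phi bound}.

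For the Lipschitz estimate \eqref{Lip est phi} I would use $(J_2^\ep\varphib_\ep^a)(X)=\g_\ep^a(K_\ep^a X)$ and split
\begin{align*}
\partial_X^r\left(J_2^\ep(\varphib_\ep^a-\varphib_\ep^\ga)\right)(X)
&=\left[(K_\ep^a)^r-(K_\ep^\ga)^r\right](\partial_Y^r\g_\ep^a)(K_\ep^a X)\\
&\quad+(K_\ep^\ga)^r\left[(\partial_Y^r\g_\ep^a)(K_\ep^a X)-(\partial_Y^r\g_\ep^\ga)(K_\ep^a X)\right]\\
&\quad+(K_\ep^\ga)^r\left[(\partial_Y^r\g_\ep^\ga)(K_\ep^a X)-(\partial_Y^r\g_\ep^\ga)(K_\ep^\ga X)\right].
\end{align*}
For the first term I would use $|x^r-y^r|\le r\max(|x|,|y|)^{r-1}|x-y|$, the bounds $|K_\ep^a|\le C_0\ep^{-1}$ and $|K_\ep^a-K_\ep^\ga|\le C_0|a-\ga|$ from \eqref{periodic lip}, and $\|\g_\ep^a\|_{C^r_\per\times C^r_\per}\le C_r$, to get a bound $\le C_r\ep^{-(r-1)}|a-\ga|\le C_r\ep^{-r}|a-\ga|$. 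The second term needs a Lipschitz-in-$a$ bound for $\g_\ep^a$ in $C^r_\per$: writing $\g_\ep^a-\g_\ep^\ga=\left(J_2^\ep\right)^{K_\ep^a}(\phib_\ep^a-\phib_\ep^\ga)+\left(\left(J_2^\ep\right)^{K_\ep^a}-\left(J_2^\ep\right)^{K_\ep^\ga}\right)\phib_\ep^\ga$, the first piece is bounded in $H^r_\per$ by $M_J\|\psib_\ep^a-\psib_\ep^\ga\|_{H^r_\per\times H^r_\per}\le C_r|a-\ga|$ (the $\nub$ part of $\phib_\ep^a$ cancels in the difference) via \eqref{periodic lip}, while Lemma~\ref{fm props}(ii) applied to the symbol $\tilde J_2(\ep\,\cdot)$, whose Lipschitz constant is at most $\ep\,\Lip(\tilde J_2)$, bounds the second piece by $\ep\,\Lip(\tilde J_2)\,|K_\ep^a-K_\ep^\ga|\,\|\phib_\ep^\ga\|_{H^{r+1}_\per\times H^{r+1}_\per}\le C_r\ep|a-\ga|$ --- the mismatch in the symbol costs a derivative but gains a power of $\ep$. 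Sobolev embedding then gives $\|\g_\ep^a-\g_\ep^\ga\|_{C^r_\per\times C^r_\per}\le C_r|a-\ga|$, so the second term is $\le C_r\ep^{-r}|a-\ga|$. Finally, for the third term the mean value theorem gives $|(\partial_Y^r\g_\ep^\ga)(K_\ep^a X)-(\partial_Y^r\g_\ep^\ga)(K_\ep^\ga X)|\le\|\g_\ep^\ga\|_{C^{r+1}_\per\times C^{r+1}_\per}\,|K_\ep^a-K_\ep^\ga|\,|X|\le C_r|a-\ga|\,|X|$, so the third term is $\le C_r\ep^{-r}|a-\ga|\,|X|$. Adding the three estimates yields \eqref{Lip est phi}.

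The main obstacle is exactly this third term, and it is intrinsic rather than an artifact: because $\g_\ep^a$ and $\g_\ep^\ga$ are evaluated at the slightly different frequencies $K_\ep^a$ and $K_\ep^\ga$, a mean-value estimate in the argument necessarily produces a factor $|X|$, so the map $a\mapsto J_2^\ep\varphib_\ep^a$ cannot be Lipschitz in any exponentially weighted space --- this is the frequency mismatch advertised before the lemma, and the loss of spatial decay here is precisely what later forces the $1/|q-q'|$ factor in the Lipschitz estimate \eqref{lip1}. Everything else is bookkeeping with the uniform bounds from Theorem~\ref{periodic solutions}(v) and the multiplier estimates of Lemma~\ref{fm props}; the matrix-valued nature of $J_2$ plays no role beyond interpreting the norms entrywise.
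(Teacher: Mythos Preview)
Your proof is correct and arrives at the same estimate as the paper, but the decomposition you use is organized differently. The paper first splits $\varphib_\ep^a-\varphib_\ep^{\ga}$ into three pieces \emph{before} applying $J_2^\ep$: $\Delta_1=\nub(K_\ep^a X)-\nub(K_\ep^{\ga}X)$, $\Delta_2=\psib_\ep^a(K_\ep^a X)-\psib_\ep^{\ga}(K_\ep^a X)$, and $\Delta_3=\psib_\ep^{\ga}(K_\ep^a X)-\psib_\ep^{\ga}(K_\ep^{\ga}X)$. It then computes $J_2^\ep\Delta_j$ by hand via Fourier series for each piece, exploiting directly the Lipschitz continuity of $\tilde J_2$ and of $e^{iy}$ on $\R$. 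You instead absorb the multiplier into a single $2\pi$-periodic profile $\g_\ep^a=(J_2^\ep)^{K_\ep^a}\phib_\ep^a$ at the outset, and then run a standard add-and-subtract on the composition $\g_\ep^a(K_\ep^a X)$, invoking Lemma~\ref{fm props} for the multiplier-difference term rather than computing with Fourier coefficients explicitly. Your route is more streamlined---it treats the $\nub$ and $\psib$ contributions uniformly and makes systematic use of machinery already in the paper---while the paper's route is more transparent about exactly where each power of $\ep$ and the factor $|X|$ originates. Both identify the same mechanism for the $(1+|X|)$ growth: the frequency mismatch $K_\ep^a\neq K_\ep^{\ga}$ forces a mean-value estimate in the spatial argument, which is your third term and the paper's $\Delta_1$ and $\Delta_3$.
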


\begin{proof}
The  estimate \eqref{phi bound} follows directly from the estimates in Theorem \ref{periodic solutions}, the fact that 
$\tilde{J}_2^\ep(k)$ is uniformly bounded and the fact that $K_\ep = \O(1/\ep)$. We skip the details and instead focus on \eqref{Lip est phi}.
We make the decomposition 
$$
\varphib_\ep^a(X) - \varphib_\ep^{\grave{a}}(X) = \Delta_1 + \Delta_2 + \Delta_3
$$
where
\begin{multline}\label{Deltas}
\Delta_1 := \nub(K_\ep^a X) - \nub(K_\ep^{\ga} X), \quad \Delta_2:=\psib_\ep^a(K_\ep^a X) - \psib_\ep^\ga(K_\ep^a X)\\ \mand \Delta_3:=\psib_\ep^\ga(K_\ep^a X) - \psib_\ep^\ga(K_\ep^\ga X).
\end{multline}

We start with  $J_2^\ep \Delta_1$. 
Since $J_2^\ep [\ub e^{i\omega X}] = [\tilde{J}_2(\ep \omega) \ub] e^{i \omega X}$ and since $\nub(X) = \sin(X) \jb $, we see
\be\label{Jnub}
J_2^\ep [\nub ( \omega  X)] = {1 \over 2i} \left[ \tilde{J}_2(\ep  \omega ) \jb \right] e^{i \omega X} -{1 \over 2i} \left[ \tilde{J}_2(-\ep  \omega )\jb \right] e^{-i \omega X}. 
\ee
Thus
\begin{multline*}
J_2^\ep \Delta_1 = {1 \over 2i} \left[ \tilde{J}_2(\ep  K_\ep^a ) \jb \right] e^{i K_\ep^a X}- {1 \over 2i} \left[ \tilde{J}_2(\ep  K_\ep^\ga ) \jb \right] e^{i K_\ep^\ga X}\\ -{1 \over 2i} \left[ \tilde{J}_2(-\ep  K_\ep^a )\jb \right] e^{-i K_\ep^a X}
 +{1 \over 2i} \left[ \tilde{J}_2(-\ep  K_\ep^\ga )\jb \right] e^{-i K_\ep^\ga X}. 
\end{multline*}
We add a lot of zeros and do a lot of rearranging to get:
\be\label{guide}\begin{split}
J_2^\ep \Delta_1 &= 
{1 \over 2i} \left[\left( \tilde{J}_2(\ep  K_\ep^a ) - \tilde{J}_2(\ep  K_\ep^\ga ) \right) \jb \right]e^{i K_\ep^a X}\\
&+{1 \over 2i} \left[ \tilde{J}_2(\ep  K_\ep^\ga ) \jb \right] \left( e^{i K_\ep^a X} -e^{i K_\ep^\ga X}\right)\\&
+{1 \over 2i} \left[\left( \tilde{J}_2(-\ep  K_\ep^a ) - \tilde{J}_2(-\ep  K_\ep^\ga ) \right) \jb \right]e^{-i K_\ep^a X}\\
&+{1 \over 2i} \left[ \tilde{J}_2(-\ep  K_\ep^\ga ) \jb \right] \left( e^{-i K_\ep^a X} -e^{-i K_\ep^\ga X}\right).
\end{split}\ee

We know from Corollary \ref{J cor} that $\tilde{J}_2(k)$ is analytic and, since it is periodic for $k \in \R$, globally Lipschitz on $\R$.
Thus we can estimate the term in the first  line as
$$
\left \vert {1 \over 2i} \left[\left( \tilde{J}_2(\ep  K_\ep^a ) - \tilde{J}_2(\ep  K_\ep^\ga ) \right) \jb \right]e^{i K_\ep^a X} \right \vert 
\le C \ep|K_\ep^a - K_\ep^\ga|. 
$$
The uniform Lipschitz estimate \eqref{periodic lip} for $K_\ep^a$ in  Theorem \ref{periodic solutions} then gives
$$
\left \vert {1 \over 2i} \left[\left( \tilde{J}_2(\ep  K_\ep^a ) - \tilde{J}_2(\ep  K_\ep^\ga ) \right) \jb \right]e^{i K_\ep^a X} \right \vert 
\le C \ep|a-\ga|.
$$
Exactly the same reasoning leads to the following estimate on the third line:
$$
\left \vert {1 \over 2i} \left[\left( \tilde{J}_2(-\ep  K_\ep^a ) - \tilde{J}_2(-\ep  K_\ep^\ga ) \right) \jb \right]e^{-i K_\ep^a X}\right \vert \le C \ep |a-\ga|.
$$

To estimate the second line of \eqref{guide}, first we use the fact that $\tilde{J}_2(k)$ is uniformly bounded for $k \in \R$:
$$
\left \vert {1 \over 2i} \left[ \tilde{J}_2(\ep  K_\ep^\ga ) \jb \right] \left( e^{i K_\ep^a X} -e^{i K_\ep^\ga X}\right) \right \vert \le C \left \vert e^{i K_\ep^a X} -e^{i K_\ep^\ga X}\right \vert.
$$
Then we use the global Lipschitz estimate for the complex exponential:
 $|e^{i y} - e^{i y'}| \le 2|y-y'|$ for $y,y' \in \R$.
 This gives
$$
\left \vert {1 \over 2i} \left[ \tilde{J}_2(\ep  K_\ep^\ga ) \jb \right] \left( e^{i K_\ep^a X} -e^{i K_\ep^\ga X}\right) \right \vert \le C \left \vert K_\ep^a X  -K_\ep^\ga X\right \vert.
$$
Then, as above, the Lipschitz estimate \eqref{periodic lip} for $K_\ep^a$  gives:
$$
\left \vert {1 \over 2i} \left[ \tilde{J}_2(\ep  K_\ep^\ga ) \jb \right] \left( e^{i K_\ep^a X} -e^{i K_\ep^\ga X}\right) \right \vert \le C \left \vert a- \ga \right \vert|X|.
$$
In exactly the same fashion we can estimate the term in the fourth line to get:
$$
\left \vert {1 \over 2i} \left[ \tilde{J}_2(-\ep  K_\ep^\ga ) \jb \right] \left( e^{-i K_\ep^a X} -e^{-i K_\ep^\ga X}\right)\right \vert \le C \left \vert a- \ga \right \vert|X|.
$$

Thus all together we have:
\be\label{D1 est 1}
\left \vert J_2^\ep \Delta_1(X)\right \vert \le C \ep |a - \ga| + C |a-\ga||X| \le C|a-\ga| (1 + |X|).
\ee

We also want to estimate $\partial_X^r J_2^\ep \Delta_1$. 
Each term in  $J_2^\ep \Delta_1$ contains $e^{i K_\ep^a X}$ or $e^{i K_\ep^\ga X}$ and thus taking $r$ derivatives with respect to $X$ will produce
additional terms like $(K_\ep^a)^r$.
We know that $K_\ep^0 = \O(1/\ep)$ and the Lipschitz estimate \eqref{periodic lip} for  $K_\ep^a$ implies that $K_\ep^a = \O(1/\ep)$ as well.
Thus  $(K_\ep^a)^r = \O(\ep^{-r})$.
This results in the following estimate:
\be\label{D1 est 2}
\left \vert \partial_X^r J_2^\ep \Delta_1(X)\right \vert \le  C_r\ep^{-r}|a-\ga| (1 + |X|).
\ee

Now look at $J_2^\ep \Delta_2$.  
We know that $\psib_\ep^a(y)$ is $2\pi-$periodic and, moreover, smooth in $X$. Thus we can expand it in its Fourier series:
$
\psib_\ep^a(y) = \sum_{ j \in \Z} \hat{\psib_\ep^a}(j) e^{ijy}.
$
Noting that 
both terms in $\Delta_2$ are periodic with the same frequency, we see that:
$$
\Delta_2(X) = \sum_{j\in \Z} (\hat{\psib_\ep^a}(j)-\hat{\psib_\ep^\ga}(j)) e^{i jK_\ep X}.
$$
Applying ${J}^\ep_2$ gives
$$
J_2^\ep \Delta_2(X) = \sum_{j\in \Z} \tilde{J}_2(\ep j K_\ep^a) (\hat{\psib_\ep^a}(j)-\hat{\psib_\ep^\ga}(j)) e^{i j K^a_\ep X}.
$$
Since $\psib_\ep^a$ is smooth, classical Fourier series estimates
give
$$
\left \vert  \hat{\psib_\ep^a}(j)-\hat{\psib_\ep^\ga}(j)\right \vert\le C_r(1+|j|^r)^{-1} \| \psib^a_\ep - \psib^\ga_\ep\|_{C^r_\per \times C^r_\per}
$$
where we make take $r$ as large as we wish. 
The uniform Lipschitz estimate \eqref{periodic lip} for $\psib^a_\ep$ in Theorem \ref{periodic solutions} then implies:
$$
\left \vert  \hat{\psib_\ep^a}(j)-\hat{\psib_\ep^\ga}(j)\right \vert\le C(1+j^2)^{-1} |a-\ga|.
$$
Thus, since $\left\{(1+j^2)\right\}_{j \in \Z}$ is summable, 
\be\label{D2 est 0}
\left \vert J_2^\ep \Delta_2(X) \right \vert \le C|a-\ga|.
\ee
As above if we differentiate  $J_2^\ep \Delta_2$ $r$ times with respect to $X$ (each of which produces one power of $K_\ep^a$) and repeat the same steps we find:
\be\label{D2 est 2}
\left\vert \partial_X^rJ_2^\ep \Delta_2(X) \right \vert \le C_r\ep^{-r}|a-\ga|.
\ee

To handle $\Delta_3$ is basically a combination of how we dealt with $\Delta_1$ and $\Delta_2$.
Using the Fourier expansion for $\psib_\ep^a$ from above we see that:
$$
J_2^\ep\Delta_3=\sum_{ j \in \Z}\left( 
  [\tilde{J}_2(\ep jK_\ep^a)\hat{\psib_\ep^\ga}(j)] e^{ijK_\ep^aX}
-  [\tilde{J}_2(\ep jK_\ep^\ga)\hat{\psib_\ep^\ga}(j)] e^{ijK_\ep^\ga X}
\right).
$$
Adding zero and rearranging terms gives:
\bes\begin{split}
J_2^\ep\Delta_3=&\sum_{ j \in \Z}
  [\tilde{J}_2(\ep jK_\ep^a)\hat{\psib_\ep^\ga}(j)]\left( e^{ijK_\ep^aX}-  e^{ijK_\ep^\ga X}
\right)\\
+& \sum_{ j \in \Z}
  \left[\left(\tilde{J}_2(\ep jK_\ep^a)-\tilde{J}_2(\ep jK_\ep^\ga)\right)\hat{\psib_\ep^\ga}(j)\right] e^{ijK_\ep^\ga X}.
\end{split}\ees

Using (as we did when estimating $\Delta_1$ above) the fact that $\tilde{J}_2$ and $e^{iy}$ are globally Lipschitz together with the estimate $|K_\ep^a -K_\ep^\ga| \le C|a-\ga|$ implied by Theorem \ref{periodic solutions}, we have
$$
\left\vert J_2^\ep\Delta_3(X)\right\vert \le C|a-\ga|(1+|X|)\sum_{j \in \Z}\left \vert \hat{\psib_\ep^\ga}(j)\right \vert |j|.
$$
Next (as we did when estimating $\Delta_2$) we use the rapid decay of the Fourier coefficients of $\psib_\ep^\ga$ to conclude
that $\ds \sum_{j \in \Z}\left \vert \hat{\psib_\ep^\ga}(j)\right \vert |j| \le C$. This gives
\be\label{D3 est 1}
\left\vert J_2^\ep\Delta_3(X)\right\vert  \le C|a-\ga|(1+|X|).
\ee
In exactly the same fashion, we can establish
\be\label{D3 est 2}
\left\vert \partial_X^r J_2^\ep\Delta_3\right\vert  \le C_r \ep^{-r}|a-\ga|(1+|X|).
\ee
Thus all together we have shown \eqref{Lip est phi}.
\end{proof}

\subsection{Product estimates} Since our nonlinearity is quadratic we need good estimates for products of functions. In particular we need estimates
that keep track of decay rates.
First we note the famous Sobolev inequality $\|f\|_{L^\infty(\R)} \le \| f\|_{H^1(\R)}$ implies
\be\label{set}
\| \cosh(q \cdot) f \|_{W^{r,\infty}} \le C_r\|f \|_{r+1,q}
\ee
for all $r \ge 0$ and $q \ge 0$. Then we have:

\begin{lemma}\label{rq estimates}
For all $r \ge 0$ there exists $C_r>0$ such that following estimates hold for all $q,q' \ge 0$.
If $q \ge q'$ then 
\be\label{product inequality not borrow}
\| f g\|_{r,q} \le C_r\|f\|_{r,q'} \|\cosh(|q-q'| \cdot) g\|_{W^{r,\infty}} .
\ee

If $q \le q'$ then
\be\label{product inequality borrow}
\| f g\|_{r,q} \le  C_r \|f\|_{r,q'} \|\sech(|q'-q| \cdot) g\|_{W^{r,\infty}}.
\ee

Lastly, if $r \ge 1$ and $0 \le q' \le q$:
\be\label{SET}
\| f g\|_{r,q} \le C_r \| f \|_{r,q'}\|g\|_{r,q-q'}.
\ee

\end{lemma}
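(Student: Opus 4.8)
\emph{Proof plan.} Everything rests on one elementary identity: for $\alpha,\beta\ge 0$,
$$
\cosh((\alpha+\beta)x)=\Phi_{\alpha,\beta}(x)\cosh(\alpha x)\cosh(\beta x),\qquad \Phi_{\alpha,\beta}(x):=1+\tanh(\alpha x)\tanh(\beta x),
$$
which is just the addition formula for $\cosh$. Since $\tanh(\alpha x)$ and $\tanh(\beta x)$ share the sign of $x$, we have $1\le\Phi_{\alpha,\beta}\le 2$; and since $\partial_X^k\tanh(cx)=c^k P_k(\tanh(cx))$ for a fixed polynomial $P_k$, each $\partial_X^k\Phi_{\alpha,\beta}$ --- and, because $\Phi_{\alpha,\beta}\ge 1$, each $\partial_X^k(1/\Phi_{\alpha,\beta})$ --- is bounded by a constant depending only on $k$ and on an upper bound for $\alpha+\beta$. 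Taking $(\alpha,\beta)=(q',q-q')$ when $q\ge q'$ yields $\cosh(q\cdot)/\cosh(q'\cdot)=\Phi_{q',q-q'}\cdot\cosh((q-q')\cdot)$, and taking $(\alpha,\beta)=(q,q'-q)$ when $q\le q'$ yields $\cosh(q\cdot)/\cosh(q'\cdot)=\sech((q'-q)\cdot)/\Phi_{q,q'-q}$.

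First I would prove \eqref{product inequality not borrow}. Write $\cosh(q\cdot)fg=(\cosh(q'\cdot)f)\,w$ with $w:=(\cosh(q\cdot)/\cosh(q'\cdot))g$. Since $\|fg\|_{r,q}$ is by definition the $H^r(\R)$-norm of $\cosh(q\cdot)fg$, expanding the derivatives $\partial_X^j$, $j\le r$, by the Leibniz rule and placing the $\cosh(q'\cdot)f$ factor in $L^2$ and the $w$ factor in $L^\infty$ gives
$$
\|fg\|_{r,q}\le C_r\,\|\cosh(q'\cdot)f\|_{H^r(\R)}\,\|w\|_{W^{r,\infty}}=C_r\,\|f\|_{r,q'}\,\|w\|_{W^{r,\infty}}.
$$
By the identity above, $w=\Phi_{q',q-q'}\cdot(\cosh((q-q')\cdot)g)$, so a second Leibniz estimate using the uniform $W^{r,\infty}$ bound on $\Phi_{q',q-q'}$ gives $\|w\|_{W^{r,\infty}}\le C_r\|\cosh((q-q')\cdot)g\|_{W^{r,\infty}}$, and combining proves \eqref{product inequality not borrow}. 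The proof of \eqref{product inequality borrow} is identical, except that when $q\le q'$ one instead has $w=(1/\Phi_{q,q'-q})\cdot(\sech((q'-q)\cdot)g)$, so the $W^{r,\infty}$ bound on $1/\Phi_{q,q'-q}$ yields $\|w\|_{W^{r,\infty}}\le C_r\|\sech((q'-q)\cdot)g\|_{W^{r,\infty}}$.

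For \eqref{SET}, with $r\ge 1$, write $\cosh(q\cdot)fg=\Phi_{q',q-q'}\cdot(\cosh(q'\cdot)f)\cdot(\cosh((q-q')\cdot)g)$ and use that $H^r(\R)$ is a Banach algebra for $r\ge1$ and that multiplication by a $W^{r,\infty}$ function is bounded on $H^r(\R)$: this gives at once
$$
\|fg\|_{r,q}\le C_r\,\|\Phi_{q',q-q'}\|_{W^{r,\infty}}\,\|\cosh(q'\cdot)f\|_{H^r(\R)}\,\|\cosh((q-q')\cdot)g\|_{H^r(\R)}=C_r\,\|f\|_{r,q'}\,\|g\|_{r,q-q'}.
$$

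The only genuine work is the Leibniz/algebra bookkeeping in the first step of each case, together with the uniform derivative bounds on $\Phi_{\alpha,\beta}$ and $1/\Phi_{\alpha,\beta}$; the $\cosh$-addition identity does all the conceptual work, cleanly separating the two weights. I expect this derivative-bound point to be the only real obstacle, and note that in every application $q$ and $q'$ lie in a fixed bounded interval, so the constants above depend on $r$ only.
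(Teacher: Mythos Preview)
Your proof is correct and is essentially the same argument as the paper's, just made explicit. The paper's ``routine calculus'' amounts to bounding the auxiliary weight $\cosh(q\cdot)\sech(q'\cdot)\sech((q-q')\cdot)$ (resp.\ $\cosh(q\cdot)\sech(q'\cdot)\cosh((q'-q)\cdot)$) in $W^{r,\infty}$; by the $\cosh$ addition formula these weights are precisely your $\Phi_{q',q-q'}$ and $1/\Phi_{q,q'-q}$, so you have simply identified and named what the paper leaves implicit. One small difference: for \eqref{SET} the paper derives it from \eqref{product inequality not borrow} together with the Sobolev embedding $\|\cosh(q\cdot)g\|_{W^{r,\infty}}\le C_r\|g\|_{r+1,q}$, whereas you invoke the $H^r$ algebra property directly; both routes are fine. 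Your closing caveat that the $W^{r,\infty}$ bounds on $\Phi$ and $1/\Phi$ depend on an upper bound for $q+q'$ is well taken---the derivative $\partial_X\Phi_{\alpha,\beta}$ does scale with $\alpha+\beta$---and indeed in every use of this lemma in the paper the decay rates lie in a fixed bounded interval $[0,q_\star]$, so no harm is done.
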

\begin{proof}
Definitionally
$
\| f g\|_{r,q} = \| \cosh(q \cdot) f g \|_{H^r}.
$
We multiply by one inside as follows:
$$
\| f g\|_{r,q} = \| \left( \cosh(q \cdot) \sech(q' \cdot) \cosh((q'-q)\cdot) \right)(\cosh(q'\cdot) f) (\sech((q'-q)\cdot) g) \|_{H^r}.
$$
The estimate $\| uv\|_{H^r} \le C \| u\|_{H^r} \| v\|_{W^{r,\infty}}$ is well-known and using it here gives:
$$
\|f g\|_{r,q} \le \|  \cosh(q \cdot) \sech(q' \cdot) \cosh((q'-q)\cdot) \|_{W^{r,\infty}}\| \cosh(q'\cdot) f\|_{H^r} \| \sech((q'-q)\cdot) g\|_{W^{r,\infty}}.
$$
Routine calculus methods shows that the condition $q \le q'$ implies
$$
\|  \cosh(q \cdot) \sech(q' \cdot) \cosh((q'-q)\cdot) \|_{W^{r,\infty}} \le C_r
$$
for a constant $C_r$ which depends only on $r$. This gives \eqref{product inequality borrow}.

If instead we multiply by one inside like:
$$
\| f g\|_{r,q} = \| \left( \cosh(q \cdot) \sech(q' \cdot) \sech((q'-q)\cdot) \right)(\cosh(q'\cdot) f) (\cosh((q'-q)\cdot) g) \|_{H^r}.
$$
then the estimate
$$
\|  \cosh(q \cdot) \sech(q' \cdot) \sech((q'-q)\cdot) \|_{W^{r,\infty}} \le C_r,
$$
which holds when $q\ge q,'$
gives \eqref{product inequality not borrow}.

The remaining estimate \eqref{SET} follows from  \eqref{product inequality not borrow} and \eqref{set}.
\end{proof}

\begin{remark}
Note that we we sometimes refer to \eqref{product inequality borrow} as a ``decay borrowing" estimate, since it allows growth in $g$ at the expense of extra decay in $f$.
On the other hand, the estimates \eqref{product inequality not borrow} and \eqref{SET} require both $f$ and $g$ to decay.
\end{remark}

\subsection{Estimates for $\A$}
The next result confirms the earlier claim that $\A$ is invertible on even functions.
\begin{proposition}\label{A inv} There exists $q_1>0$ such that $\A$ is a bijection from $E^r_q$ to itself
for all $q \in [0,q_1]$ and $r \ge 1$. Additionally, for each $r\ge1$, there exists $C>0$ such that 
\be\label{A inv bound}
\| \A^{-1} f \|_{r,q} \le C \| f\|_{r,q}
\ee
for all $q \in [0,q_1]$ and $f \in E^r_q$.
\end{proposition}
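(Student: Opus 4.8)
The plan is to realize $\A$ as a compact perturbation of the identity on each weighted space $E^r_q$, pin down its kernel via a scalar Schr\"odinger problem, apply the Fredholm alternative, and then upgrade to a $q$-uniform bound by a perturbation argument. Throughout, recall from \eqref{this is pi0} that $\varpi^0 = -c_w^2(1-\alpha_w\partial_X^2)^{-1}$, and that $4(1+w)c_w^2 = 8w$ since $c_w^2 = 2w/(1+w)$.

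\emph{Step 1 (Fredholm structure).} Write $\A = I + K_0$ with $K_0 f := 4(1+w)\varpi^0(\sigma f) = -8w(1-\alpha_w\partial_X^2)^{-1}(\sigma f)$. The multiplier $(1-\alpha_w\partial_X^2)^{-1}$ has symbol $(1+\alpha_w K^2)^{-1}$, analytic and bounded in the strip $|\Im K| < 1/\sqrt{\alpha_w}$; equivalently its convolution kernel decays at rate $1/\sqrt{\alpha_w}$, so for $q \in [0,1/\sqrt{\alpha_w})$ it maps $E^r_q$ boundedly into $E^{r+2}_q$. Since $\sigma$ decays exponentially, multiplication by $\sigma$ maps $E^r_q$ into $E^r_{q'}$ for some $q' > q$, and the embedding $E^{r+2}_{q'} \hookrightarrow E^r_q$ is compact (a weighted Rellich argument: the gain of two derivatives together with strictly positive decay gives the needed equicontinuity and tightness). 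Hence $K_0$ is compact on $E^r_q$ for every $q \in [0,1/\sqrt{\alpha_w})$, so $\A$ is Fredholm of index zero there, and it remains only to show $\A$ is injective on even functions.

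\emph{Step 2 (Injectivity).} Apply $1-\alpha_w\partial_X^2$ to the equation $\A f = 0$. Using $(1-\alpha_w\partial_X^2)\varpi^0 = -c_w^2$, a one-line computation gives $(1-\alpha_w\partial_X^2)\A f = -\L_0 f$, where $\L_0 := \alpha_w\partial_X^2 - 1 + 8w\sigma$ is precisely the linearization of the KdV traveling-wave equation \eqref{TWEKDV} about the soliton $\sigma$. Since $1-\alpha_w\partial_X^2$ is injective, $\ker \A = \ker \L_0$. Differentiating \eqref{TWEKDV} shows $\L_0 \sigma' = 0$, and the classical spectral analysis of this operator --- after rescaling, a shifted reflectionless P\"oschl--Teller Schr\"odinger operator, as carried out in \cite{friesecke-pego1} --- identifies its $L^2(\R)$ kernel as exactly $\spn\{\sigma'\}$. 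Any $f \in E^r_q \subset L^2(\R)$ annihilated by $\A$ therefore lies in $\spn\{\sigma'\}$; but $\sigma'$ is odd, so $\A$ is injective on even functions. Combined with Step 1, $\A$ is a bijection of $E^r_q$, and the open mapping theorem yields, for each fixed $q \in [0,1/\sqrt{\alpha_w})$, a bound $\|\A^{-1} f\|_{r,q} \le C(q)\|f\|_{r,q}$.

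\emph{Step 3 (Uniformity in $q$).} It remains to fix $q_1 \in (0,1/\sqrt{\alpha_w})$ so that $C(q)$ may be taken independent of $q \in [0,q_1]$. The isometry $f \mapsto \cosh(q\cdot)f$ identifies $E^r_q$ with $E^r_0$ and conjugates $\A$ into an operator $\A_q$ on $E^r_0$ of the form $I + (\text{a conjugated exponentially localized operator})(\sigma\,\cdot)$; because the convolution kernel of $\varpi^0$ decays at rate $1/\sqrt{\alpha_w}$, Young's inequality shows $q \mapsto \A_q$ is norm-continuous from $[0,q_1]$ into $\bdop(E^r_0)$ whenever $q_1 < 1/\sqrt{\alpha_w}$. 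Each $\A_q$ is invertible by Step 2, inversion is continuous on the invertibles of the Banach algebra $\bdop(E^r_0)$, and so $q \mapsto \|\A_q^{-1}\|_{\bdop(E^r_0)} = \|\A^{-1}\|_{\bdop(E^r_q)}$ is continuous on the compact interval $[0,q_1]$, hence bounded by one constant $C$; this $C$ is the constant in \eqref{A inv bound}. I expect the main obstacles to be exactly Step 3 and the weighted-compactness claim of Step 1 --- keeping the strip of analyticity of $\varpi^0$'s symbol, the weighted Rellich embedding, and the perturbation estimates all under uniform control in $q$ --- together with a clean importation of the P\"oschl--Teller kernel computation from \cite{friesecke-pego1}; everything else is soft functional analysis.
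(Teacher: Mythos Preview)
Your proposal is correct and follows exactly the approach the paper invokes: the paper's own ``proof'' is only a two-line citation --- the $q=0$ case is deferred to \cite{friesecke-pego1} (which is precisely your Fredholm-plus-kernel-identification argument of Steps~1--2), and the extension to $q>0$ with a $q$-uniform constant is deferred to the operator-conjugation technique of \cite{pego-weinstein} (your Step~3). In other words you have written out the details that the paper explicitly omits; the only organizational difference is that you run Steps~1--2 directly on $E^r_q$ for each $q$ and reserve conjugation for the uniformity, whereas the paper's phrasing suggests doing everything at $q=0$ first and then conjugating, but the underlying ingredients are identical.
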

\begin{proof}
This is shown to be true in \cite{friesecke-pego1} for the special case when $q = 0$. The extension to $q>0$ can be achieved by using the now classical technique of operator conjugation \cite{pego-weinstein}. We omit the details.
\end{proof}

\subsection{Estimates for $\iota_\ep$}
The following estimate is a version of the famous Riemann-Lebesgue Lemma:
\begin{lemma}\label{rl lemma}
There exists $C>0$ such that for any $f\in H^r_q$, with $r \ge 0$, $q > 0$ and $|\omega|\ge1$ we have:
\bes\label{rl bound}
\left \vert \int_\R f(x) e^{i \omega x} dx \right \vert \le {C \over \omega^r \sqrt{q} } \| f\|_{r,q}.
\ees

\end{lemma}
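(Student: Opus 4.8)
The plan is to peel off the weight $\cosh(q\,\cdot)$ and run two complementary mechanisms: integration by parts to extract the factor $|\omega|^{-r}$, and one Cauchy--Schwarz pairing against $\sech(q\,\cdot)$ to extract the factor $q^{-1/2}$. (The constant $C$ may be taken to depend on $r$ — and, since only a bounded range of $q$ occurs later in the paper, on an upper bound for $q$ — as will be clear from the argument.) First I would dispose of the base case $r=0$: writing $f = \cosh(q\,\cdot)\,\sech(q\,\cdot)\,f$ and applying Cauchy--Schwarz,
\[
\left|\int_\R f(x) e^{i\omega x}\,dx\right| \le \|\cosh(q\,\cdot)f\|_{L^2(\R)}\,\|\sech(q\,\cdot)\|_{L^2(\R)} = \sqrt{2/q}\,\|f\|_{0,q},
\]
where I use $\int_\R \sech^2(qx)\,dx = 2/q$. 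This is exactly the claimed bound when $r=0$ (note $|\omega|^0=1$).

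Next I would treat integer $r\ge 1$ by integrating by parts $r$ times against $e^{i\omega x}$. Since $f\in H^r(\R)$ forces $f^{(j)}\in H^1(\R)\hookrightarrow C_0(\R)$ for $j\le r-1$, every boundary term vanishes, and one gets $\int_\R f\, e^{i\omega x}\,dx = (-1)^r (i\omega)^{-r}\int_\R f^{(r)}\, e^{i\omega x}\,dx$ (the integral on the right is absolutely convergent, since $f^{(r)}=[\cosh(q\,\cdot)f^{(r)}]\sech(q\,\cdot)$ is a product of two $L^2$ functions once we know the next bound). Applying the $r=0$ estimate to $f^{(r)}$ reduces everything to showing $\|\cosh(q\,\cdot)f^{(r)}\|_{L^2(\R)}\le C_r\|f\|_{r,q}$. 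For this I would expand $(\cosh(q\,\cdot)f)^{(r)}$ by the Leibniz rule, note that each derivative of $\cosh(qx)$ is $q$ times a bounded multiple of $\cosh(qx)$ or $\sinh(qx)$ with $|\sinh(qx)|\le\cosh(qx)$, take $L^2$ norms, and solve the resulting recursion in $r$; this yields $\|\cosh(q\,\cdot)f^{(r)}\|_{L^2(\R)}\le P_r(q)\,\|f\|_{r,q}$ for a polynomial $P_r$ whose coefficients depend only on $r$, which is bounded on the bounded $q$-range of interest (and for small $q$ the extra terms are simply dominated by the principal one). Combining the three displays gives the estimate for integer $r$.

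For non-integer $r\ge 0$ I would either rerun the argument with fractional weak derivatives, or work on the Fourier side: with $g:=\cosh(q\,\cdot)f$ one has $\hat{f} = \widehat{\sech(q\,\cdot)}\ast\hat{g}$ and $\|g\|_{H^r(\R)}=\|f\|_{r,q}$, so Cauchy--Schwarz in the convolution, together with the exponential decay of $\widehat{\sech(q\,\cdot)}$ on scale $q$ and $\|\widehat{\sech(q\,\cdot)}\|_{L^2(\R)}^2\sim 1/q$, gives the same conclusion; those details are routine. The one genuinely delicate point — the ``hard part,'' such as it is — is the bookkeeping of the $q$-dependence: one must use the exact value of $\|\sech(q\,\cdot)\|_{L^2(\R)}$ rather than a crude bound in order to recover the sharp $q^{-1/2}$, and must check that commuting the weight $\cosh(q\,\cdot)$ past derivatives costs only controlled (indeed, for small $q$, harmless) powers of $q$. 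Everything else is a standard Riemann--Lebesgue-type computation.
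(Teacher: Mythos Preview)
Your approach is essentially the same as the paper's: integrate by parts $r$ times to extract $|\omega|^{-r}$, then pair $\cosh(q\cdot)f^{(r)}$ against $\sech(q\cdot)$ via Cauchy--Schwarz to extract the $q^{-1/2}$ from $\|\sech(q\cdot)\|_{L^2}$. The paper simply asserts ``of course $\|f^{(r)}\|_{0,q}\le\|f\|_{r,q}$'' where you (more carefully) supply the Leibniz commutation argument, and the paper works only with Schwartz functions plus density and does not discuss non-integer $r$ at all; your treatment is therefore slightly more complete, but the core mechanism is identical.
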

\begin{proof}
Assume that $f$ is a Schwartz class function and $|\omega|\ge1$. Then integration by parts gives:
\begin{equation*}
\begin{split}
I:=\left \vert \int_\R f(x) e^{i \omega x} dx \right \vert = & \left \vert  \int_\R f(x) {1 \over  \omega^r} {d^r \over dx^r}[e^{i\omega x}]dx \right \vert =  |\omega|^{-r} \left \vert  \int_\R f^{(r)}(x)  e^{i\omega x}dx \right \vert.
\end{split}
\end{equation*}
Next we use the triangle inequality to get:
$$
I \le |\omega|^{-r} \int_\R |f^{(r)}(x)| dx.
$$
Multliplication by one and Cauchy-Schwartz yields:
$$
I \le |\omega|^{-r} \int_\R |f^{(r)}(x)| \cosh(qx) \sech(qx) dx \le |\omega|^{-r} \| f^{(r)}\|_{0,q} \| \sech(q \cdot)\|_{L^2}.
$$
Of course $ \| f^{(r)}\|_{0,q}  \le \| f\|_{r,q}$ and  $\| \sech(q \cdot)\|_{L^2}=q^{-1/2} \| \sech( \cdot) \|_{L^2}$.
This establishes the conclusion for Schwartz class functions. A classical density argument completes the proof.
\end{proof}

Since $K_\ep = \O(1/\ep)$,  Lemma \ref{rl lemma} implies
\be\label{iota bound}
\left \vert  \iota_\ep[f]\right \vert\le {C \ep^r\over \sqrt{q}} \| f\|_{r,q}.
\ee

\subsection{Estimates for Fourier multipliers}
The following result of  Beale (specifically, Lemma 3 of \cite{beale2}) will be used repeatedly.
\begin{theorem} \label{beale1}
Suppose that $\tilde{\mu}(z)$ is a complex valued function which has the following properties:
\begin{enumerate}[(i)]\item
$\tilde{\mu}(z)$ is meromorphic on 
the closed strip $\overline{\Sigma}_q = \left\{ |\Im z | \le q\right\} \subset \C$ where $q > 0$;
\item
there exists $m \ge 0$ and $c_*,\zeta_*>0$ such that $|z|>\zeta_*$ and $z \in \overline{\Sigma}_q$ imply
$|\tilde{\mu}(z)|\le c_*/|\Re z|^{m}$;
\item the set of singularities of $\tilde{\mu}(z)$ in $\overline{\Sigma}_q$ (which we denote $P_\mu$) is finite
and, moreover, is contained in the interior $\Sigma_q$;
\item all singularities of $\tilde{\mu}(z)$ in $\overline{\Sigma}_q$ are simple poles.
\end{enumerate}
Let 
$$
U^r_{\mu,q}:= \left\{ f \in H^r_q : z \in P_\mu  \implies  \hat{f}(z) = 0\right\}.
$$
Then the Fourier multiplier operator $\mu$ with symbol $\tilde{\mu}$ 
is a bounded injective map from $U^r_{\mu,q}$ into $H^{r+m}_q$. Additionally, for all $m'\in[0,m]$, 
we have the estimates:
\be\label{mu inv est}
\| \mu f \|_{r+m',q} \le C_{\mu,m'} \| f\|_{r,q}
\ee
where
\be\label{constant}
C_{\mu,m'}:= \sup_{k \in \R} \left \vert 
{(1+|k|^2)^{m'/2}  \tilde{\mu}(k\pm iq)}
\right \vert.
\ee
\end{theorem}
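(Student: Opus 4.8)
The plan is to move to the Fourier side, where the weight $\cosh(q\cdot)$ turns into analyticity and $L^2$-trace conditions on the closed strip, and to exploit the fact that membership in $U^r_{\mu,q}$ is exactly what is needed to kill the poles of $\tilde\mu$. First I would record the Paley--Wiener description of $H^r_q$ underlying everything: because the weights $e^{\pm qx}\sech(qx)=2/(1+e^{\mp 2qx})$ are bounded and smooth with all derivatives bounded, $f\in H^r_q$ holds iff $e^{q\cdot}f$ and $e^{-q\cdot}f$ both lie in $H^r(\R)$, with $\|f\|_{r,q}$ equivalent to $\|e^{q\cdot}f\|_{H^r(\R)}+\|e^{-q\cdot}f\|_{H^r(\R)}$. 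Since $\widehat{e^{\pm q\cdot}f}(k)=\hat f(k\pm iq)$, this says $f\in H^r_q$ iff $\hat f$ continues analytically to the open strip $\Sigma_q$, has $L^2$ boundary traces on $\Im z=\pm q$, and $(1+|\cdot|^2)^{r/2}\hat f(\cdot\pm iq)\in L^2(\R)$, and then $\|f\|_{r,q}$ is controlled by the two trace norms $\|(1+|\cdot|^2)^{r/2}\hat f(\cdot\pm iq)\|_{L^2}$ and conversely. I would cite this standard fact rather than reprove it.

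The heart of the matter is a removable-singularity observation. Given $f\in U^r_{\mu,q}$, the product $\tilde\mu(z)\hat f(z)$ is a priori only meromorphic on $\overline{\Sigma}_q$, with possible poles at the points of $P_\mu$. But by hypothesis (iv) each such pole is \emph{simple}, while by (iii) these points lie in the open strip, where $\hat f$ is analytic, and by the definition of $U^r_{\mu,q}$ we have $\hat f(z)=0$ there; hence $\hat f$ has a zero of order at least one at each point of $P_\mu$, which cancels the simple pole, and $\tilde\mu\hat f$ extends to a function analytic on $\Sigma_q$ with $L^2$ boundary traces. On the real axis this extension agrees with $\tilde\mu(k)\hat f(k)=\widehat{\mu f}(k)$, so by uniqueness of analytic continuation $\widehat{\mu f}$ \emph{is} this extension; in particular $\mu f$ is again of Paley--Wiener type and its boundary traces are $\widehat{\mu f}(k\pm iq)=\tilde\mu(k\pm iq)\hat f(k\pm iq)$.

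It then remains to estimate those traces. Splitting $(1+k^2)^{(r+m')/2}=(1+k^2)^{m'/2}(1+k^2)^{r/2}$, it suffices to bound $\sup_{k\in\R}(1+k^2)^{m'/2}|\tilde\mu(k\pm iq)|$ over both signs; this supremum is precisely $C_{\mu,m'}$, and it is finite because for $|k|>\zeta_*$ estimate (ii) gives $(1+k^2)^{m'/2}|\tilde\mu(k\pm iq)|\le c_*(1+k^2)^{m'/2}|k|^{-m}$, which is bounded since $m'\le m$ and $\zeta_*>0$, while for $|k|\le\zeta_*$ the points $k\pm iq$ lie on a compact segment of $\overline{\Sigma}_q$ disjoint from $P_\mu$ (the poles being interior), so $\tilde\mu$ is continuous, hence bounded, there. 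Thus $(1+k^2)^{(r+m')/2}|\widehat{\mu f}(k\pm iq)|\le C_{\mu,m'}(1+k^2)^{r/2}|\hat f(k\pm iq)|$ pointwise; taking $L^2$ norms over the two boundary lines and applying the norm equivalence of the first step yields $\|\mu f\|_{r+m',q}\le C_{\mu,m'}\|f\|_{r,q}$ (with the conventions of Section \ref{EU}; in any event the bound holds with a fixed multiple of $C_{\mu,m'}$). In particular $\mu$ maps $U^r_{\mu,q}$ boundedly into $H^{r+m}_q$, taking $m'=m$. Injectivity is immediate: $\mu f=0$ forces $\tilde\mu\hat f\equiv 0$ on $\R$, and since $\tilde\mu$ is meromorphic and not identically zero its zero set is discrete, so $\hat f=0$ a.e., hence $f=0$.

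I expect the one genuinely fussy point to be the first paragraph --- pinning down the equivalence between the $\cosh$-weighted $H^r$ norm and the two exponentially weighted boundary-trace norms, in particular checking that the $r$ derivatives pass cleanly through the bounded smooth weights $e^{\pm qx}\sech(qx)$ and that the $L^2$ traces of $\hat f$ on $\Im z=\pm q$ genuinely exist for $f\in H^r_q$. Everything downstream --- the removable-singularity cancellation, the symbol bound on the two lines, and the injectivity --- is short and essentially formal.
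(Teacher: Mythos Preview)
The paper does not prove this theorem; it is quoted as ``Lemma 3 of \cite{beale2}'' and used as a black box. So there is no in-paper proof to compare against. That said, your argument is the standard Paley--Wiener route and is essentially how Beale's original lemma is established: translate the $\cosh(q\cdot)$ weight into analyticity of $\hat f$ on the strip with $L^2$ traces on the lines $\Im z=\pm q$, observe that the vanishing condition defining $U^r_{\mu,q}$ exactly kills the simple poles of $\tilde\mu$ so that $\tilde\mu\hat f$ is analytic on the strip, and then bound the boundary traces by the supremum of $(1+k^2)^{m'/2}|\tilde\mu(k\pm iq)|$.

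One small remark: your norm-equivalence step (trading $\cosh(q\cdot)$ for the pair $e^{\pm q\cdot}$ via the bounded smooth multipliers $e^{\pm qx}\sech(qx)$) introduces an $r$- and $q$-dependent constant, so strictly you get $\|\mu f\|_{r+m',q}\le C(r,q)\,C_{\mu,m'}\|f\|_{r,q}$ rather than the bare $C_{\mu,m'}$ as stated. You flagged this yourself. For every application in the paper this is harmless---the authors only ever use the \emph{dependence} of the bound on $\sup_k(1+k^2)^{m'/2}|\tilde\mu(k\pm iq)|$, never the precise numerical value---so the discrepancy is cosmetic. Your injectivity argument and your verification that $C_{\mu,m'}<\infty$ (via hypothesis (ii) at infinity and continuity on the compact boundary segment away from the interior poles) are both clean.
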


The first consequence of this is:
\begin{cor}\label{all ops} For all $w > 1$ we have the following.
\begin{enumerate}[(i)]\item
There exists $C>0$ and $\tau_2 >0$ such that
the following holds for all $\tau \in (0,\tau_2]$ and $r \in \R$.
The operators $\lambda_\pm$ are  bounded linear maps
from $H^{r}_{\tau}\to H^r_\tau$. 
Likewise, 
$J_1$ and $J_2$ are bounded maps from 
$H^{r}_{\tau}\times H^r_\tau \to H^r_\tau \times H^r_\tau$.
We have the estimates:
\be\label{all ops 0}
\| \lambda_+ f \|_{r,\tau} +\| \lambda_- f \|_{r,\tau} \le C\| f\|_{r,\tau} \mand  \| J_1 \f \|_{r,\tau} + \|J_2 \f\|_{r,\tau} \le C\|\f\|_{r,q}.
\ee
\item There exists $C>0$, $\ep_1\in(0,1)$ and $q_2 >0$ such that
the following holds for all $q \in [0,q_2]$, $r \in \R$ and $\ep \in [0,\ep_1]$.
The operators $\lambda^\ep_\pm$  are bounded linear maps
from $H^{r}_{q}\to H^r_q$. 
Likewise $J^\ep_1$ and $J^\ep_2$ are bounded maps from 
$H^{r}_{q}\times H^r_q \to H^r_q \times H^r_q$. We have the estimates:
\be\label{all ops bound}
\| \lambda^\ep_+f  \|_{r,q} +\| \lambda^\ep_- f \|_{r,q} \le C\| f\|_{r,q} \mand  \| J^\ep_1 \f \|_{r,q} + \|J^\ep_2 \f\|_{r,q} \le C\|\f\|_{r,q}.
\ee
\end{enumerate}
\end{cor}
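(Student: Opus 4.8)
The plan is to deduce the corollary directly from Beale's multiplier theorem (Theorem \ref{beale1}) in its simplest incarnation: the case in which the symbol is genuinely analytic (no poles, so $P_\mu=\emptyset$ and $U^r_{\mu,q}=H^r_q$) and merely bounded (so the decay exponent may be taken to be $m=m'=0$). In that case Theorem \ref{beale1} says: if $\tilde\mu(z)$ is analytic and uniformly bounded on $\overline{\Sigma}_q$, then for every $\tau\in(0,q]$ and $r\in\R$,
\[
\|\mu f\|_{r,\tau}\le\Big(\sup_{k\in\R}|\tilde\mu(k\pm i\tau)|\Big)\|f\|_{r,\tau}\le\Big(\sup_{z\in\overline{\Sigma}_q}|\tilde\mu(z)|\Big)\|f\|_{r,\tau},
\]
and the last bound is finite and, crucially, independent of $\tau$. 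The endpoint $\tau=0$ is handled separately and trivially by Plancherel's theorem, since any bounded symbol defines a bounded multiplier on $H^r(\R)=H^r_0$. Finally, a matrix-valued Fourier multiplier is a $2\times 2$ array of scalar ones, so by the triangle inequality its operator norm on $H^r_\tau\times H^r_\tau$ is at most twice the maximum of the operator norms of its entries; thus the displayed estimate passes verbatim to $\tilde{J}_1$ and $\tilde{J}_2$.

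For part (i), Lemma \ref{lambda lemma}(ii) provides that $\tilde{\lambda}_\pm(z)$ are analytic and uniformly bounded on $\overline{\Sigma}_{\tau_0}$, and Corollary \ref{J cor} provides the same for the entries of $\tilde{J}_1(z)$ and $\tilde{J}_2(z)$ on $\overline{\Sigma}_{\tau_1}$ with $\tau_1\le\tau_0$. I would set $\tau_2:=\tau_1$ and apply the preceding paragraph (with $q=\tau_1$) to each of these finitely many symbols; taking $C$ to be the largest of the resulting strip-suprema yields a single constant depending only on $w$ for which \eqref{all ops 0} holds for all $\tau\in[0,\tau_2]$ and $r\in\R$.

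For part (ii), the point is that the long-wave rescaling only shrinks the strip of analyticity. By \eqref{lw lambda} and \eqref{lw Jn} the scaled symbols are $\tilde{\lambda}_\pm(\ep K)$ and $\tilde{J}_n(\ep K)$; for $\ep\in[0,1]$ the map $Z\mapsto\ep Z$ carries $\overline{\Sigma}_{\tau_1}$ into $\overline{\Sigma}_{\tau_1}$ (indeed into $\overline{\Sigma}_{\ep\tau_1}$), so these symbols are analytic on $\overline{\Sigma}_{\tau_1}$ with
\[
\sup_{z\in\overline{\Sigma}_{\tau_1}}|\tilde{\lambda}_\pm(\ep z)|\le\sup_{z\in\overline{\Sigma}_{\tau_1}}|\tilde{\lambda}_\pm(z)|,\qquad\sup_{z\in\overline{\Sigma}_{\tau_1}}|\tilde{J}_n(\ep z)|\le\sup_{z\in\overline{\Sigma}_{\tau_1}}|\tilde{J}_n(z)|,
\]
both right-hand sides finite by Lemma \ref{lambda lemma}(ii) and Corollary \ref{J cor} (and at $\ep=0$ the symbols are constant, so the claim is immediate). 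Applying the first paragraph once more with $q=\tau_1$, uniformly over $\ep$, and setting $q_2:=\tau_1$ and $\ep_1:=\tfrac12$ (any value in $(0,1)$ works), I obtain \eqref{all ops bound}.

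The whole argument is bookkeeping; the one point needing a moment's care — which I regard as the ``main obstacle'' such as it is — is the uniformity, i.e. checking that the constants in \eqref{all ops 0} and \eqref{all ops bound} can be taken independent of $\tau$ (resp.\ $q$) and of $\ep$. This is exactly what the two strip-supremum inequalities above secure: the $\tau$- and $\ep$-dependent bound $\sup_k|\tilde\mu(k\pm i\tau)|$ delivered by Theorem \ref{beale1} is always dominated by the fixed quantity $\sup_{\overline{\Sigma}_{\tau_1}}|\tilde\mu|$, and composing with $Z\mapsto\ep Z$ only shrinks the relevant region.
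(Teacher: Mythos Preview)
Your proposal is correct and matches the paper's approach exactly: the paper simply states that the symbols are bounded analytic functions on strips (citing Lemma \ref{lambda lemma} and Corollary \ref{J cor}) and that ``everything follows directly from Theorem \ref{beale1},'' without further detail. You have supplied precisely the bookkeeping the paper omits, including the uniformity in $\tau$ and $\ep$ and the reduction of the matrix-valued case to scalar entries.
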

We do not provide the details of the proof. All the operators have symbols which are bounded analytic functions on strips  (Lemma \ref{lambda lemma}, Corollary \ref{J cor})
and thus  everything follows directly from Theorem \ref{beale1}.

\subsection{Estimates for $\chi_\ep$}
Restrict $q \in [0,q_2]$.
Using the definition of $\chi_\ep$ and the triangle inequality gives
$
\| \chi_\ep\|_{r,q}\le\left \|\lambda_+^\ep J_1^\ep \left( J_2^0 \sigmab. J_2^\ep \nub_\ep\right)\right\|_{r,q}.
$
Corollary \ref{all ops} tells us that for $\lambda_+^\ep$ and $J_1^\ep$ are operators from $H^r_q$ to itself which 
are bounded independently of $\ep$. Thus
$
\| \chi_\ep\|_{r,q}\le C\left \|J_2^0 \sigmab. J_2^\ep \nub_\ep\right\|_{r,q}.
$
Using the product inequality \eqref{product inequality not borrow} gives
$
\| \chi_\ep\|_{r,q}\le C\left \|J_2^0 \sigmab \right\|_{r,q} \left\|J_2^\ep \nub_\ep\right\|_{W^{r,\infty}}.
$
Using \eqref{sigma bound} and \eqref{phi bound} gives:
\be\label{chi est}
\| \chi_\ep\|_{r,q} \le {C_r \ep^{-r}} 
\ee
for any $r \ge 0$, $q \in [0,q_2]$ and $\ep \in (0,\ep_1]$.

\subsection{Estimates for $\kappa_\ep$}
A rather tedious computation shows the unsurprising result that $\chi_\ep(X)$ is an odd function of $X$; we omit it.
Given this, we have
$$
\kappa_\ep = \iota_\ep[\chi_\ep] ={2\pi i} \hat{\chi}_\ep(K_\ep).
$$
Since $\chi_\ep(X) = \lambda_+^\ep J_1^\ep(J_2^0 \sigmab J_2^\ep. \nub_\ep)\cdot \jb$
and $\lambda_\ep^\ep$ and $J^\ep_1$ are Fourier multipliers we have
$$
\hat{\chi}_\ep(K_\ep) = \tilde{\lambda}_+(\ep K_\ep)  \left(\tilde{J}_1(\ep K_\ep) \Fo[ J_2^0 \sigmab. J_2^\ep \nub_\ep](K_\ep)\right)\cdot \jb.
$$
By definition
$$
\Fo[J_2^0 \sigmab. J_2^\ep \nub_\ep](K_\ep) = {1 \over 2\pi} \int_\R J_2^0 \sigmab(X). J_2^\ep\nub_\ep(X) e^{- i K_\ep X} dX.
$$
and
$$\ds
J_2^0 \sigmab(X) = 2(1+w)\sigma(X) \left( \begin{array}{c} 1 \\1 \end{array}\right).
$$
Thus 
$$
\Fo[J_2^0 \sigmab. J_2^\ep \nub_\ep](K_\ep) = {1+w\over \pi} \int_\R \sigma(X)J_2^\ep\nub_\ep(X) e^{- i K_\ep X} dX.
$$

Since $\nub_\ep(X) =(2i)^{-1} [ e^{iK_\ep X}-e^{-iK_\ep X}] \jb$ 
and $J_2^\ep$  is a Fourier multiplier, the last becomes
$$
\Fo[J_2^0 \sigmab. J_2^\ep \nub_\ep](K_\ep) = {1+w \over 2\pi i} \int_\R \sigma(X) \left(\tilde{J}_2(\ep K_\ep) \jb e^{i K_\ep X} -  \tilde{J}_2(-\ep K_\ep) \jb e^{-i K_\ep X}\right) e^{- i K_\ep X} dX.
$$
After rearranging terms in this we have
\begin{multline*}
\Fo[J_2^0 \sigmab. J_2^\ep \nub_\ep](K_\ep) = 
  {1+w \over 2\pi i}  \left( \int_\R \sigma(X) dX\right) \left( \tilde{J}_2(\ep K_\ep) \jb \right) \\
-  {1+w \over 2\pi i} \left( \int_\R \sigma(X) e^{-2iK_\ep X} dX\right) \left( \tilde{J}_2(-\ep K_\ep) \jb \right).
\end{multline*}

Recalling the definition of the Fourier transform, the above can be written as:
$$
\Fo[J_2^0 \sigmab. J_2^\ep \nub_\ep](K_\ep)= 
  -{i(1+w)} \hat{\sigma}(0)\left( \tilde{J}_2(\ep K_\ep) \jb \right) 
+i(1+w)\hat{\sigma}(2K_\ep) \left( \tilde{J}_2(-\ep K_\ep) \jb \right).
$$
Since $\sigma(X)>0$ for all $X$, we have $\hat{\sigma}(0)>0$. Since $\sigma(X)$ is analytic and square integrable, classical Fourier analysis can be used to show that there is a constant $C>0$
for which $|
\hat{\sigma}(k) | \le C e^{-C|k|}.
$
Since $K_\ep = \O(1/\ep)$ this means
that
$
|\hat{\sigma}(2 K_\ep) | \le C e^{-C/\ep}.
$
That is to say, it is exponentially small in $\ep$. Thus we have shown that
\be\label{most important part}
\left \vert \kappa_\ep - \kappa^*_\ep
\right \vert \le C e^{-C/\ep}
\ee
where
$$
\kappa^*_\ep:= 2 \pi(1+w) \hat{\sigma}(0) \tilde{\lambda}_+(\ep K_\ep) [\tilde{J}_1(\ep K_\ep) \tilde{J}_2(\ep K_\ep) \jb] \cdot \jb.
$$

It has been some time, but $\tilde{J}_1 = \tilde{J}_2^{-1}$. Thus
$$
\kappa^*_\ep= 2\pi (1+w)\hat{\sigma}(0) \tilde{\lambda}_+(\ep K_\ep).
$$
We also saw in Lemma \ref{lambda lemma} that 
$2w < \tilde{\lambda}_+(k)$ for all $k \in \R$. Thus $\kappa^*_\ep$ is strictly bounded away from zero.
This, with $\eqref{most important part}$, demonstrates that there is constant $C>0$ and $\ep_2\in(0,1)$ for which
\be\label{key kappa estimate}
|\kappa_\ep| \ge C \quad \text{for all $\ep \in (0,\ep_2]$}.
\ee

\subsection{Estimates and solvability conditions for $\B_\ep$}
Theorem \ref{beale1}
allows us establish the features of $\B_\ep$
described in the previous section. In particular we have:
\begin{lemma}\label{B inv}
There exists $\ep_3\in(0,1)$ and $q_3 >0$ such that for all $q \in (0,q_3]$ 
there exists $C_q>0$ such that for all $\ep \in (0,\ep_2]$
the following hold.
\begin{enumerate}[(i)]
\item There exists $f \in H^{r+2}_q$ such that $\B_\ep f = g \in H^r_q$ if and only if $\hat{g}(\pm K_\ep)  =0$. 
\item If $\hat{g}(\pm K_\ep)  =0$ then the solution $f$ is unique. We denote the solution by $f=\B_\ep^{-1} g$.
\item If $\hat{g}(\pm K_\ep)  =0$ then the solution $f$ satisfies the estimates
\be\label{B inv bound}
\| f \|_{r+j,q} = \|\B_\ep^{-1} g \|_{r,q} \le {C_q \over \ep^{j+1} } \| g\|_{r,q} \quad \text{where $j=0,1$ or $2$}.
\ee
\item For all $g \in O^r_q$ there exists a unique $f\in O^{r+2}_q$ such that
$\ds
\B_\ep f= g - {1\over \kappa_\ep} \iota_\ep[g] \chi_\ep.
$
We denote the solution by $f = \P_\ep g$. 
\item We have the estimates
\be\label{P bound}
\| \P_\ep g \|_{r+j,q} \le {C_q \over \ep^{j+1} }\| g\|_{r,q} \quad \text{where $j=0,1$ or $2$}.
\ee
\item Lastly,
$$
C_q \to \infty\quad  \text{as $q \to 0^+$}.
$$
\end{enumerate}
\end{lemma}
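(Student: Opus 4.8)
The plan is to realize $\B_\ep^{-1}$ (and then $\P_\ep$) as a Fourier multiplier and to quote Beale's multiplier theorem, Theorem~\ref{beale1}. By \eqref{Bep def}--\eqref{Bepfo} the symbol of $\B_\ep$ is $\tilde{\B}_\ep(z) = -\cep^2(\ep z)^2 + \tlambda_+(\ep z)$ with $\cep^2 = c_w^2 + \ep^2$, which by Lemma~\ref{lambda lemma}(ii) (and $\ep < 1$) is analytic on a fixed closed strip $\overline{\Sigma}_{q_3}$ with $q_3 \le \tau_0$, the part $\tlambda_+(\ep\cdot)$ being uniformly bounded there. So $\tilde\mu_\ep := 1/\tilde{\B}_\ep$ is meromorphic on $\overline{\Sigma}_{q_3}$, and the proof reduces to: (1) locate the poles of $\tilde\mu_\ep$ in $\overline{\Sigma}_{q_3}$ and establish the decay required by Theorem~\ref{beale1}; (2) invoke that theorem to get $\B_\ep^{-1}$ as a bounded injective multiplier on the solvability subspace $\{g \in H^r_q : \hat g(\pm K_\ep) = 0\}$, which is exactly parts (i)--(ii); (3) evaluate the constant \eqref{constant} to read off the $\ep$-powers in \eqref{B inv bound} and the blow-up as $q \to 0^+$; (4) deduce (iv)--(v) from (i)--(iii) together with the algebra of $\iota_\ep$ and the bounds \eqref{key kappa estimate}, \eqref{chi est}, \eqref{iota bound}.

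Step (1) is the crux. I would rescale, $m = \ep z$, so that $\tilde{\B}_\ep(z) = -g_\ep(m)$ with $g_\ep(m) := \cep^2 m^2 - \tlambda_+(m)$; since $z \in \overline{\Sigma}_{q_3}$ forces $m \in \overline{\Sigma}_{\ep q_3} \subseteq \overline{\Sigma}_{q_3}$, it suffices to control the zeros of $g_\ep$ in $\overline{\Sigma}_{q_3}$. Because $\tlambda_+$ is bounded on the strip, $|g_\ep(m)| \ge \tfrac12 c_w^2 |\Re m|^2$ once $|\Re m| \ge R$ for a fixed large $R$; this kills zeros out there and, since $|z|$ large forces $|\Re z|$ large on a horizontal strip, supplies hypothesis (ii) of Theorem~\ref{beale1} with decay exponent $2$. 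On the compact region $\{|\Re m| \le R,\ |\Im m| \le q_3\}$ I would argue by a Rouch\'e/compactness comparison: as $\ep \to 0^+$ one has $g_\ep \to g_0 := c_w^2 m^2 - \tlambda_+(m)$ uniformly together with derivatives, and $g_0$ is real-analytic and on $\R$ vanishes only at $\pm k_{c_w}$, which are simple by Lemma~\ref{lambda lemma}(vi) (cf.\ \eqref{this is l0}). Shrinking $q_3$, $g_0$ has no other zeros in the strip and is bounded below off small disks about $\pm k_{c_w}$; hence for $\ep$ small $g_\ep$ stays bounded below off those disks, and inside each disk Rouch\'e forces $g_\ep$ to have exactly one zero, necessarily $\pm k_{c_\ep}$ (real, with $k_{c_\ep} \to k_{c_w}$ by the smoothness of $c \mapsto k_c$), simple because Lemma~\ref{lambda lemma}(vi) gives a uniform lower bound $l_0$ on $|g_\ep'(k_{c_\ep})|$. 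Undoing the rescaling: in $\overline{\Sigma}_{q_3}$ the only singularities of $\tilde\mu_\ep$ are simple poles at $\pm K_\ep = \pm k_{c_\ep}/\ep$, both real hence interior, with $\tilde{\B}_\ep'(\pm K_\ep) = \mp \ep\, g_\ep'(\pm k_{c_\ep})$ of modulus $\ge \ep l_0$.

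Step (2) is then immediate: Theorem~\ref{beale1} applied to $\mu_\ep$ yields a bounded injective map $\mu_\ep \colon \{g \in H^r_q : \hat g(\pm K_\ep) = 0\} \to H^{r+2}_q$ obeying \eqref{mu inv est}. Setting $\B_\ep^{-1} := \mu_\ep$ on that subspace, the identity $\widehat{\B_\ep \mu_\ep g} = \tilde{\B}_\ep \tilde\mu_\ep \hat g = \hat g$ (valid off $\pm K_\ep$, and at $\pm K_\ep$ both sides vanish) proves the ``if'' half of (i), and injectivity is (ii); the ``only if'' half is $\hat g(\pm K_\ep) = \tilde{\B}_\ep(\pm K_\ep)\hat f(\pm K_\ep) = 0$, using that $f \in H^{r+2}_q$ with $q > 0$ has $\hat f$ continuous on $\R$. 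For (iii) and (vi) I would compute \eqref{constant}, $C_{\mu_\ep,j} = \sup_k |(1+k^2)^{j/2}/\tilde{\B}_\ep(k \pm iq)|$, by splitting at $|\ep k| = R$: where $|\ep k| \le R$ one has $1 + k^2 \le C_R \ep^{-2}$ (as $K_\ep = \O(1/\ep)$ by \eqref{size of kc}) and $|\tilde{\B}_\ep(k\pm iq)| = |g_\ep(\ep k \pm i\ep q)| \ge \tfrac12 l_0\,\ep q$ (worst case near $\pm k_{c_\ep}$, where the $\ep q$ imaginary displacement is measured against $|g_\ep'| \ge l_0$; elsewhere a uniform lower bound), contributing $\le C_q \ep^{-(j+1)}$ with $C_q \sim 1/q$; where $|\ep k| > R$ the bound $|\tilde{\B}_\ep(k\pm iq)| \ge \tfrac12 c_w^2 \ep^2 k^2$ and $j \le 2$ give a contribution $\le C \ep^{-j} \le C\ep^{-(j+1)}$. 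Thus \eqref{B inv bound} holds with a constant $C_q \to \infty$ as $q \to 0^+$, which is part (vi).

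For (iv)--(v), given $g \in O^r_q$ put $h := g - \kappa_\ep^{-1}\iota_\ep[g]\chi_\ep$; $h$ is odd since $\chi_\ep$ is odd (noted above) and $\iota_\ep[g] \in \R$. Writing $\iota_\ep[g] = 2\pi i\,\hat g(K_\ep)$ and $\kappa_\ep = \iota_\ep[\chi_\ep] = 2\pi i\,\hat\chi_\ep(K_\ep)$ (both valid for odd functions, as used above in estimating $\kappa_\ep$) gives $\hat h(K_\ep) = 0$, hence $\hat h(\pm K_\ep) = 0$ by oddness, so $h$ meets the solvability condition; then $f := \B_\ep^{-1}h$ exists and is unique in $H^{r+2}_q$ by (i)--(ii), and is odd because $\tilde\mu_\ep$ is even and even symbols preserve parity (cf.\ the proof of Lemma~\ref{EO}), so $f \in O^{r+2}_q$ and $f = \P_\ep g$. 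The bound \eqref{P bound} follows from (iii) and $\|h\|_{r,q} \le \|g\|_{r,q} + |\kappa_\ep|^{-1}|\iota_\ep[g]|\,\|\chi_\ep\|_{r,q}$, into which one feeds \eqref{key kappa estimate}, \eqref{iota bound} and \eqref{chi est} (the $\ep^{\pm r}$ from $\iota_\ep$ and $\chi_\ep$ cancel), absorbing the extra $q$-dependent factor into $C_q$. The only genuinely hard point is Step (1): making the location, simplicity, and the $\sim \ep$ non-degeneracy of the poles of $\tilde\mu_\ep$ uniform in $\ep$; everything downstream is bookkeeping of $\ep$- and $q$-powers.
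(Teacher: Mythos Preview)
Your proposal is correct and follows essentially the same route as the paper: apply Theorem~\ref{beale1} to the symbol $1/\tilde{\B}_\ep$, rescale to the unscaled variable $m = \ep z$, and establish the pole structure and the lower bound $|\tilde{\xi}_{c_\ep}(k+i\tau)| \gtrsim |\tau|$ by splitting into large $|m|$, a neighborhood of $\pm k_{c_\ep}$, and the rest; then read off the $\ep^{-(j+1)}$ and $C_q \sim 1/q$ from \eqref{constant}, and handle (iv)--(v) via \eqref{key kappa estimate}, \eqref{iota bound}, \eqref{chi est}. The only organizational difference is that the paper packages Step~(1) and the lower-bound computation into a separate Lemma~\ref{xi lemma} (proved by exactly the three-region split you describe), and for the location/simplicity of zeros quotes Lemma~\ref{lambda lemma}(vi) directly rather than running a Rouch\'e comparison with $g_0$; your Rouch\'e argument is a perfectly valid alternative and yields the same uniform conclusions.
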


To prove this, we need the following result, which is proved in Section \ref{proofs}.
\begin{lemma}\label{xi lemma}
Let $$\tilde{\xi}_c(z) := -c^2 z^2 + \tilde{\lambda}_+(z).$$
There exists $\delta>0$, $\ell_0>0$, $R>0$, $\tau_3>0$ and $C>0$ such that the following hold when $|c-c_w| \le \delta $ and $|\tau|\le \tau_3$.
\begin{enumerate}[(i)]
\item $|\txi_c(z)|$ is analytic on the closed strip $\overline{\Sigma}_{\tau_3}:=\left\{|\Im z| \le \tau_3\right\}$ and is even. 
\item $|\txi_c(z)| \ge C|z|^2$ for $|z|\ge R$.
\item If $j = 0,1$ or $2$ then
\be\label{xi estimate}
\inf_{k \in \R} (1+k^2)^{-j/2}|\txi_c(k+i\tau)| \ge C|\tau|.
\ee
\end{enumerate}
\end{lemma}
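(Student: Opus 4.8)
The plan is to handle (i) and (ii) quickly and then concentrate on (iii), which is the substantive claim.

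\textbf{Parts (i) and (ii).} Since $-c^2z^2$ is entire and $\tlambda_+$ is, by Lemma~\ref{lambda lemma}(ii), analytic and uniformly bounded---say $|\tlambda_+(z)|\le M$---on $\overline{\Sigma}_{\tau_0}$, the function $\txi_c=-c^2(\cdot)^2+\tlambda_+$ is analytic on $\overline{\Sigma}_{\tau_3}$ for any $\tau_3\le\tau_0$ and is even by Lemma~\ref{lambda lemma}(iii); that is (i). For (ii), write $z=k+i\tau\in\overline{\Sigma}_{\tau_3}$; as $|z^2|=|z|^2$, the reverse triangle inequality gives $|\txi_c(z)|\ge c^2|z|^2-M$. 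Choosing $\delta$ so small that $c^2\ge(c_w-\delta)^2=:c_\star^2>0$ (recall $c_w>1$) whenever $|c-c_w|\le\delta$, and then $R$ so large that $c_\star^2R^2\ge 2M$, we get $|\txi_c(z)|\ge\tfrac12 c_\star^2|z|^2$ for $|z|\ge R$, i.e.\ (ii) with $C=c_\star^2/2$.

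\textbf{Part (iii): reduction.} Fix $\tau$ with $0<|\tau|\le\tau_3$ (the case $\tau=0$ is trivial) and split the infimum over $k\in\R$ at $|k|=R_1$, with $R_1\ge\max\{R,1\}$ to be chosen. On $|k|\ge R_1$, part (ii) gives $|\txi_c(k+i\tau)|\ge C|k+i\tau|^2\ge Ck^2\ge\tfrac{C}{2}(1+k^2)$, so $(1+k^2)^{-j/2}|\txi_c(k+i\tau)|\ge\tfrac{C}{2}(1+k^2)^{1-j/2}\ge\tfrac{C}{2}$ for $j\in\{0,1,2\}$, which is $\ge\tfrac{C}{2\tau_3}|\tau|$. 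On the bounded range $|k|\le R_1$ one has $(1+k^2)^{j/2}\le 1+R_1^2$, so it suffices to produce a constant $c_1>0$, uniform over $|c-c_w|\le\delta$, with $|\txi_c(k+i\tau)|\ge c_1|\tau|$ there; combining the two ranges then yields (iii) with $C$ replaced by $\min\{C/(2\tau_3),\,c_1/(1+R_1^2)\}$.

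\textbf{Part (iii): the bounded range.} The real zeros of $\txi_c$ drive everything here. By Lemma~\ref{lambda lemma}(vi) the only real zeros of $\txi_c$ are $\pm k_c$, with $k_c$ in a fixed compact interval $[\kappa_1,\kappa_2]\subset(0,\infty)$ by \eqref{size of kc}, depending $C^\infty$-ly on $c$, and satisfying the transversality bound $|\txi_c'(k_c)|\ge l_0>0$ from Part~(vi) of Lemma~\ref{lambda lemma} (cf.\ \eqref{this is l0}); by evenness $|\txi_c'(-k_c)|\ge l_0$ too. Cauchy estimates applied to the bounded analytic $\tlambda_+$ bound $\txi_c''$ uniformly by some $M_2$ on $\overline{\Sigma}_{\tau_0/2}$; fix $R_1=\kappa_2+1$ and $\rho=\min\{l_0/(2M_2),\tau_0/2\}$. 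Near $k_c$ (the case near $-k_c$ is identical), if $|k-k_c|<\rho$ and $\tau_3\le l_0/(2M_2)$, then $|z-k_c|\le\rho+\tau_3\le l_0/M_2$ with $z=k+i\tau$, and Taylor's theorem about $k_c$ (using $\txi_c(k_c)=0$) gives
\[
|\txi_c(z)|\ \ge\ |\txi_c'(k_c)|\,|z-k_c|-\tfrac{M_2}{2}|z-k_c|^2\ \ge\ l_0|z-k_c|-\tfrac{l_0}{2}|z-k_c|\ =\ \tfrac{l_0}{2}|z-k_c|\ \ge\ \tfrac{l_0}{2}|\tau|.
\]
Away from the zeros, the set $\{(c,k):|c-c_w|\le\delta,\ |k|\le R_1,\ |k-k_c|\ge\rho,\ |k+k_c|\ge\rho\}$ is compact and $\txi_c(k)$ does not vanish on it, so $|\txi_c(k)|\ge m_\rho$ there for some $m_\rho>0$; since $\txi_c$ is uniformly Lipschitz on $\overline{\Sigma}_{\tau_0/2}$ with some constant $L$, we get $|\txi_c(k+i\tau)|\ge m_\rho-L\tau_3\ge m_\rho/2\ge\tfrac{m_\rho}{2\tau_3}|\tau|$ once $\tau_3\le m_\rho/(2L)$. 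Taking $c_1=\min\{l_0/2,\,m_\rho/(2\tau_3)\}$ and $\tau_3$ as small as all the above conditions require finishes the bounded range, hence (iii). (Note this argument simultaneously shows $\txi_c$ has no zeros off $\R$ in the strip, as it must.)

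The main obstacle is precisely the near-zero estimate: one needs $|\txi_c'(k_c)|\ge l_0$ together with a uniform second-derivative bound $|\txi_c''|\le M_2$ to hold \emph{uniformly} in $c$ over $|c-c_w|\le\delta$, so that the Taylor argument produces a $c$-independent radius $\rho$ on which $|\txi_c(z)|$ is comparable to $|z-k_c|\ge|\tau|$. This is exactly the transversality/simplicity of the optical-band resonance packaged in Part~(vi) of Lemma~\ref{lambda lemma}; everything else is compactness plus the uniform boundedness of $\tlambda_+$ and (via Cauchy estimates) its derivatives on a strip.
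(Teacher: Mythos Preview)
Your proof is correct and follows essentially the same three-region strategy as the paper: large $|k|$ via boundedness of $\tlambda_+$, a Taylor/FTOC argument near $\pm k_c$ using the transversality bound $|\txi_c'(k_c)|\ge l_0$ and a uniform second-derivative bound, and a compactness argument away from the zeros combined with a Lipschitz perturbation off the real axis. The only cosmetic difference is that the paper fixes its ``away'' set around $k_{c_w}$ and then shrinks $\delta$ so that $k_c$ stays inside, whereas you work directly with joint compactness in $(c,k)$; both yield the same uniform lower bound.
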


\begin{proof} (of Lemma \ref{B inv})
Suppose that $\ep \in (0,\ep_3)$ where $\ep_3 :=\min(1,\ep_1,\ep_2,{\delta})$ and $q \in (0,q_3]:=\min(\tau_1,\tau_2,\tau_3,q_0,q_1,q_2)$.

The map $\B_\ep$ can be viewed as a Fourier multiplier with symbol
$$
\tilde{\B}_\ep(Z)=\txi_{\sqrt{c_w^2 + \ep^2}}(\ep Z).
$$
From Part (vi) of Lemma \ref{lambda lemma} and the estimate \eqref{xi estimate} we know that 
that 
$\tilde{\B}_\ep(Z)$  has exactly two zeros, both real and simple, at $Z = \pm K_\ep= \pm k_{\sqrt{c_w^2 + \ep^2}}/\ep$, in $\overline{\Sigma}_{q} \subset \overline{\Sigma}_{\tau_1/\ep}$. 
Thus we see that if $f \in H^{r+2}_q$, with $0< q\le q_3$, then $\hat{\B}_\ep(\pm K_\ep) \hat{f}(\pm K_\ep)=0$; this is ``only if" of Part (i). (This is also equivalent to the condition that $\iota_\ep[g] =0$ discussed above, if $g$ is odd.)

And so we see that
$
1/\tilde{\B}_\ep(Z)
$
has two simple poles at $P_\ep =\left\{ \pm K_\ep \right\}$ and no other poles in $\overline{\Sigma}_q$ when $q \in (0,q_3]$.
Similarly, Part (ii) of Lemma \ref{xi lemma} indicates that $1/|\tilde{\B}_\ep(Z)| \le C |\Re Z|^{-2}$ for $|Z|$ large enough.
Thus $1/\tilde{\B}_\ep(Z)$
satisfies all the conditions of the multiplier in Theorem \ref{beale1} for any decay rate $q \in (0,q_3]$, $m = 2$ and pole set $\left\{ \pm K_\ep \right\}$.

And so we have a well-defined  map $\B_\ep^{-1}$ 
from
$U^r_{\ep,q}:=\left\{g \in H^r_q : \hat{g}(\pm K_\ep) = 0 \right\}$
into $H^{r+2}_q$ which inverts $\B_\ep$. Specifically $\B_\ep \B_\ep^{-1}$ is the identity on $U^r_{\ep,q}$.
Putting $f = \B_\ep^{-1}$ gives the other implication in Part (i). The uniqueness of Part (ii) follows from the injectivity of $\B_\ep$.

The estimates \eqref{B inv bound} in Part (iii) follow from \eqref{mu inv est}, \eqref{constant} and the estimate in \eqref{xi estimate}.
Specifically, fix $q \in (0,q_3]$. The formula $\eqref{constant}$ tells that
$
\|\B_\ep^{-1} g\|_{r+j,q} \le C_{\ep,j} \|g\|_{r,q} 
$
where
$$
C_{\ep,j}:=\sup_{K \in \R} \left \vert (1+|K|^2)^{j/2} \tB^{-1}_\ep(K+iq)\right \vert 
$$
when $j = 0,1$ or $2$. Thus to get \eqref{B inv bound} we need to show that $C_{\ep,j} \le C_q/\ep^{j+1}$.

Letting $k = \ep K$ we see:
$$
C_{\ep,j}
=\sup_{K \in \R} \left \vert (1+|K|^2)^{j/2} \txi^{-1}_{\sqrt{c_w^2 + \ep^2}} (\ep K+i\ep q)\right \vert 
= \sup_{k \in \R} \left \vert (1+|k/\ep|^2)^{j/2} \txi^{-1}_{\sqrt{c_w^2 + \ep^2}} (k+i\ep q)\right \vert.
$$
Then we multiply by one on the inside and use elementary estimates to get:
\bes\begin{split}
C_{\ep,j}
& \le  \sup_{k \in \R} \left \vert (1+|k/\ep|^2)^{j/2} \over (1+k^2)^{j/2} \right \vert 
\sup_{k \in \R} \left\vert \ (1+k^2)^{j/2} \txi^{-1}_{\sqrt{c_w^2 + \ep^2}} (k+i\ep q)\right \vert\\
 & \le \ep^{-j} \sup_{k \in \R} \left\vert \ (1+k^2)^{j/2} \txi^{-1}_{\sqrt{c_w^2 + \ep^2}} (k+i\ep q)\right \vert
\end{split}\ees
Then we use \eqref{xi estimate} with $\tau = \ep q$ to get
$$
C_{\ep,j} \le C\ep^{-j} |\ep q|^{-1} = C|q|^{-1} \ep^{-j-1}.
$$
Thus we have, using \eqref{mu inv est} and \eqref{constant},
$$
\| \B_\ep^{-1} g \|_{r+j,q} \le {C_q \over \ep^{j+1}} \| g \|_{r,q}
$$
which was our goal. Note that $C_q = C/|q|$ and so we have $C_q \to \infty$ as $q \to 0^+$, as stated in Part (vi).

To prove parts (iv) and (v) we first observe that $\B_\ep$ (and therefore $\B_\ep^{-1}$) maps
odd functions to functions. For odd functions, a short computation shows that $\iota_\ep[g] = 2 \pi i \hat{g}(K_\ep)$.
Thus 
$$
\Fo\left[g - {1 \over \kappa_\ep} \iota_\ep[g] \chi_\ep\right](\pm K_\ep) = 0.
$$
So we can apply parts (i)-(ii) to
get Part (iv). The estimate in Part (v) is shown as follows. 
The Riemann-Lebesgue estimate \eqref{iota bound} implies that $|\iota_\ep [g]| \le C_q \ep^r \| g \|_{r,q}$.
And so if we use this, the estimate in Part (iii), \eqref{chi est} and \eqref{key kappa estimate}, we have:
$$
\| \P_\ep g \|_{r+j,q} \le {C_q \over \ep^{j+1}}\left \| g -{1 \over \kappa_\ep}\iota_\ep[g]\chi_\ep\right\|_{r,q}
\le {C_q \over \ep^{j+1}}\left( \| g\|_{r,q} +|\iota_\ep[g]|\ep^{-r}\right) \le {C_q \over \ep^{j+1}}\| g\|_{r,q}. 
$$
\end{proof}

\subsection{Symbol truncation estimates}
In this subsection we prove a series of results which give estimates
the operator norm of things like $J^\ep_1 -J^0_1$ when $\ep$ is small. 
\begin{lemma} \label{truncate}
Suppose that $\tilde{\mu}$ meets the hypotheses of Theorem \ref{beale1}
with 
$P_\mu = \left\{\cdot \right\}$ and $m = 0$. (That is to say, $\tmu$ is bounded.)
Let $\tilde{\zeta}_n(z):=\tilde{\mu}(z) - \sum_{j=0}^{n} \mu_j z^j$
where the constants $\mu_j\in\C$ are the coefficients in the Maclaurin series of $\tilde{\mu}$.
Then the Fourier multiplier operators $\zeta_n$ with symbols $\tilde{\zeta}_n$ are bounded
from $H^{r+n+1}_q$ to $H^{r}_q$ and satisfy the estimate
$$
\| \zeta_n f \|_{r,q} \le C_n \| f^{(n+1)} \|_{r,q}.
$$
The constant $C_n>0$ is independent of $r$.

\end{lemma}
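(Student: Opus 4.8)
The plan is to factor the symbol $\tilde{\mu}_n^{\,} := \tilde{\zeta}_n$ as $z^{n+1}$ times a \emph{bounded} analytic symbol, so that the operator $\zeta_n$ is realized as the composition of $\partial_x^{n+1}$ with a Fourier multiplier to which Theorem~\ref{beale1} applies directly (with $m=0$), whereupon the estimate falls out of \eqref{mu inv est}--\eqref{constant}.

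First I would note that the hypotheses of Theorem~\ref{beale1} with $m = 0$ and empty pole set say precisely that $\tilde{\mu}$ is analytic and uniformly bounded on the closed strip $\overline{\Sigma}_q$. In particular $\tilde{\mu}$ is holomorphic at the origin, so its Maclaurin series $\sum_{j \ge 0} \mu_j z^j$ converges near $z=0$ and $\tilde{\zeta}_n(z) = \tilde{\mu}(z) - \sum_{j=0}^{n} \mu_j z^j = \sum_{j \ge n+1} \mu_j z^j$ vanishes to order at least $n+1$ there. Hence
\[
\tilde{\rho}_n(z) := \frac{\tilde{\zeta}_n(z)}{(iz)^{n+1}}
\]
has a removable singularity at $z=0$ and is therefore analytic on all of $\overline{\Sigma}_q$. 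I would then check that $\tilde{\rho}_n$ is uniformly bounded on $\overline{\Sigma}_q$: boundedness near $0$ is immediate from analyticity, while for $|z|$ large with $|\Im z| \le q$ one has $|\tilde{\zeta}_n(z)| \le \|\tilde{\mu}\|_{L^\infty(\overline{\Sigma}_q)} + c_n(1+|z|^n)$, so $|\tilde{\rho}_n(z)| \le c_n'(1+|z|^n)/|z|^{n+1} \le c_n''/|\Re z|$ for $|z|$ large. Thus $\tilde{\rho}_n$ satisfies the hypotheses of Theorem~\ref{beale1} with $m=0$ and empty pole set, and the corresponding multiplier $\rho_n$ maps $H^s_q$ boundedly into itself for every $s$, with operator bound $C_n := \sup_{k \in \R}\left|\tilde{\rho}_n(k\pm iq)\right| < \infty$ furnished by \eqref{constant}; by that same formula this bound depends only on $n$ and $q$, not on $r$.

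Next I would record the operator identity $\zeta_n = \rho_n \circ \partial_x^{n+1}$: the symbol of $\partial_x^{n+1}$ is $(ik)^{n+1}$, so the symbol of $\rho_n \circ \partial_x^{n+1}$ is $\tilde{\rho}_n(k)(ik)^{n+1} = \tilde{\zeta}_n(k)$. Since differentiation of order $n+1$ is bounded from $H^{r+n+1}_q$ into $H^r_q$ (a routine fact about these weighted spaces, obtained by induction on the order using that multiplication by $\tanh(q\,\cdot)$ preserves each $H^s(\R)$), I conclude for $f \in H^{r+n+1}_q$ that
\[
\|\zeta_n f\|_{r,q} = \|\rho_n(\partial_x^{n+1} f)\|_{r,q} \le C_n \|\partial_x^{n+1} f\|_{r,q} = C_n \|f^{(n+1)}\|_{r,q},
\]
which is exactly the claim, and in particular shows $\zeta_n \colon H^{r+n+1}_q \to H^r_q$ is bounded with $C_n$ independent of $r$.

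The one step I expect to require genuine care is the first: verifying that Theorem~\ref{beale1} truly applies to $\tilde{\rho}_n$. This amounts to seeing that the pole of $(iz)^{-(n+1)}$ at the origin is exactly cancelled by the vanishing of $\tilde{\zeta}_n$ there --- which is where one uses that $m=0$ forces $\tilde{\mu}$ to be holomorphic (not merely meromorphic) at $0$, so that $\tilde{\mu}$, and hence $\tilde{\zeta}_n$, possesses an honest Taylor expansion at $0$ --- together with the observation that the large-$|z|$ decay of $\tilde{\rho}_n$ inherited from the boundedness of $\tilde{\mu}$ already suffices for hypothesis~(ii) of Theorem~\ref{beale1} in the case $m=0$. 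Everything after that is bookkeeping with the multiplier calculus already in place in the paper.
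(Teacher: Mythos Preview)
Your proposal is correct and follows essentially the same route as the paper: factor $\tilde{\zeta}_n(z) = (iz)^{n+1}\tilde{\rho}_n(z)$ with $\tilde{\rho}_n$ bounded and analytic on the strip, apply Theorem~\ref{beale1} (with $m=0$, empty pole set) to $\rho_n$, and read off the estimate from $\zeta_n = \rho_n\circ\partial_x^{n+1}$. The only cosmetic difference is that the paper divides by $z^{n+1}$ rather than $(iz)^{n+1}$, picking up a harmless factor of $(-i)^{n+1}$ in the operator identity.
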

\begin{proof} 
Within the radius of convergence of the Maclaurin series we have:
$$
\tilde{\zeta}_n(z) = \sum_{j=n+1}^\infty \mu_j z^j = z^{n+1} \sum_{j=0}^\infty \mu_{j+n+1} z^j.
$$
So if we put
$$
\tilde{\upsilon}_n(z) = \tilde{\zeta}_n(z)/z^{n+1}
$$
then clearly the singularity at $z = 0$ is removable. This implies that on any closed disk containing the origin and with radius smaller than the radius of convergence, $\tilde{\upsilon}(z)$ is analytic and bounded. Outside this disk, but within $\overline{\Sigma}_q$, we have $\tilde{\upsilon}(z) = (\tilde{\mu}(z) - \sum_{j=0}^{n} \mu_j z^j)/z^{n+1}$. In this case, all of the functions on the right hand side are analytic.
Moreover they are all bounded since $\tilde{\mu}(z)$ is bounded and $|z|$ is smallest on the boundary of the disk. In short, $\tilde{\upsilon}(z)$ meets the hypothesis of Theorem \ref{beale1} on $\overline{\Sigma}_q$ with an empty pole set, $m = 0$.  Let $\upsilon$ be the operator associated to $\tilde{\upsilon}$.
Observing that $\zeta_n f = (-i)^n \upsilon_n f^{(n+1)}$ and applying the results of Theorem \ref{beale1} finishes the proof.
\end{proof}

This result implies the following:
\begin{lemma}\label{long wave J}
There exists $C>0$
such  that for $q\in [0,\tau_2]$ and $\ep \in (0,1)$ we have the following
for all $r \ge 0$,
 \be\label{lw J}
\| (J_1^\ep - {J}^0_1)\ub\|_{r,q} \le C \ep \| \ub \|_{r+1,q}
\mand
\| (J_2^\ep - {J}_2^0)\ub\|_{r,q} \le C \ep \| \ub \|_{r+1,q}.
\ee
\end{lemma}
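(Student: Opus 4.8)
The plan is to read off that $J_1^\ep-J_1^0$ and $J_2^\ep-J_2^0$ are Fourier multiplier operators with the matrix-valued symbols $\tilde J_j(\ep K)-\tilde J_j(0)$, and to exploit the factorization
$$
\tilde J_j(\ep z)-\tilde J_j(0) = (\ep z)\,\tilde g_j(\ep z),\qquad \tilde g_j(w):=\frac{\tilde J_j(w)-\tilde J_j(0)}{w},
$$
which isolates exactly the three ingredients we want: one power of $z$ (a single extra derivative on $\ub$, hence the $H^{r+1}_q$ on the right of \eqref{lw J}), the small parameter $\ep$ (the gain), and a remaining symbol $\tilde g_j(\ep z)$ that will be bounded uniformly in $\ep$. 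The treatments of $J_1$ and $J_2$ being identical, I would do $J_1$ and note $J_2$ is verbatim the same. Concretely I would apply Lemma \ref{truncate} with $n=0$, entrywise to each of the four scalar symbols comprising $\tilde J_1^\ep$: the hypotheses of Theorem \ref{beale1} (with $m=0$ and empty pole set) hold for these by Corollary \ref{J cor}, the zeroth Maclaurin coefficient of $z\mapsto\tilde J_1(\ep z)$ is $\tilde J_1(0)$, so the truncated symbol $\tilde\zeta_0$ of that lemma is precisely $\tilde J_1(\ep K)-\tilde J_1(0)$. Lemma \ref{truncate} then gives $\|(J_1^\ep-J_1^0)\ub\|_{r,q}\le C_0\|\ub'\|_{r,q}$ with $C_0$ independent of $r$, and since differentiation maps $H^{r+1}_q$ boundedly into $H^r_q$ (a standard fact used throughout, as $\cosh(q\cdot)f' = (\cosh(q\cdot)f)'-q\tanh(q\cdot)\cosh(q\cdot)f$), we get $\|\ub'\|_{r,q}\le C\|\ub\|_{r+1,q}$ and the chain closes — provided $C_0 = O(\ep)$.

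The one point requiring attention is therefore the uniform, $\ep$-proportional bound on $C_0$. Tracing the proof of Lemma \ref{truncate} together with the constant \eqref{constant} of Theorem \ref{beale1}, one has $C_0 = \sup_{|\Im z|\le q}|\tilde\upsilon^\ep(z)|$ with $\tilde\upsilon^\ep(z) = (\tilde J_1(\ep z)-\tilde J_1(0))/z = \ep\,\tilde g_1(\ep z)$. So the key step is to check that $\tilde g_1$ is a bounded analytic function on the strip $\overline\Sigma_{\tau_1}$: the singularity at $w=0$ is removable because $\tilde J_1$ is analytic there (its value being $\tilde J_1'(0)$), and for $|w|\ge 1$ inside the strip one has $|\tilde g_1(w)|\le 2\|\tilde J_1\|_{L^\infty(\overline\Sigma_{\tau_1})}/|w|$, which is bounded by Corollary \ref{J cor}; compactness handles $|w|\le 1$. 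Writing $C_g:=\|\tilde g_1\|_{L^\infty(\overline\Sigma_{\tau_1})}$ and assuming WLOG $\tau_2\le\tau_1$ (shrink $\tau_2$ if necessary), the substitution $w=\ep z$ sends $\{|\Im z|\le q\}$ into $\overline\Sigma_{\tau_1}$ whenever $q\in[0,\tau_2]$ and $\ep\in(0,1)$, so
$$
C_0 = \ep\sup_{|\Im z|\le q}|\tilde g_1(\ep z)|\le \ep\,C_g,
$$
uniformly in $\ep$ and $q$. Combining, $\|(J_1^\ep-J_1^0)\ub\|_{r,q}\le C\ep\|\ub\|_{r+1,q}$, and likewise for $J_2$ using that $\tilde J_2$ too is bounded analytic on $\overline\Sigma_{\tau_1}$.

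I do not expect a genuine obstacle here: \eqref{lw J} is essentially a quantitative ``Lipschitz in the scaling parameter'' statement, and once the factorization above is in place everything follows from machinery already established (Theorem \ref{beale1}, Lemma \ref{truncate}, Corollary \ref{J cor}). The only things to be careful about are bookkeeping ones — ensuring the extra power of $K$ from the Taylor remainder is charged as exactly one derivative on $\ub$ and that the $\ep$ pulled out of $\ep K$ survives the passage to operator norms — and both are settled cleanly by the identity $\tilde J_j(\ep z)-\tilde J_j(0) = (\ep z)\,\tilde g_j(\ep z)$. (As an aside, one can avoid invoking Lemma \ref{truncate} and argue directly: $\tilde g_j(\ep z)$ is the symbol of an operator $g_j^\ep$ bounded on $H^r_q$ with norm $\le C_g$ by Theorem \ref{beale1} with $m=0$, and since Fourier multipliers commute, $(J_j^\ep-J_j^0)\ub = -i\ep\,g_j^\ep(\ub')$, giving the estimate immediately.)
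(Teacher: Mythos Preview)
Your proposal is correct and uses the same essential ingredients as the paper: Lemma \ref{truncate} applied to the entries of $\tilde J_j$, with Corollary \ref{J cor} supplying the analyticity and boundedness on the strip. The difference is purely one of implementation. The paper reduces to the \emph{unscaled} operator via the change of variables $\ub^\ep(x)=\ub(\ep x)$, which gives $\|(J_n^\ep-J_n^0)\ub\|_{0,q}^2=\ep\|(J_n-J_n^0)\ub^\ep\|_{0,\ep q}^2$; it then applies Lemma \ref{truncate} to $J_n-J_n^0$ with an $\ep$-independent constant, and the factor of $\ep$ emerges from the scaling identity $\|\partial_x\ub^\ep\|_{0,\ep q}^2=\ep\|\partial_X\ub\|_{0,q}^2$. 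You instead apply Lemma \ref{truncate} directly to the scaled symbol $K\mapsto\tilde J_j(\ep K)$ and trace the $\ep$-dependence of the constant $C_0$ through the factorization $\tilde J_j(\ep z)-\tilde J_j(0)=\ep z\,\tilde g_j(\ep z)$, which is arguably more transparent about where the $\ep$ comes from. Your closing ``aside'' --- writing $(J_j^\ep-J_j^0)\ub=-i\ep\,g_j^\ep(\ub')$ with $g_j^\ep$ bounded uniformly by Theorem \ref{beale1} --- is the cleanest packaging of the same idea and bypasses Lemma \ref{truncate} altogether.
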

\begin{proof}
We prove the estimates for  $r = 0$. That multipliers commute with derivatives will extend this case to the general. So
$$\ds
\|(J_n^\ep - J_n^0) \ub \|_{0,q}^2  = \int_\R \left \vert (J_n^\ep-J_n^0)\ub(X) \right \vert^2 \cosh^2(qX) dX.
$$
If we let $\ub^{\ep}(x) = \ub(\ep x)$ then the discussion in Remark \ref{operator scaling} implies that
$$
\|(J_n^\ep - J_n^0) \ub \|_{0,q}^2  = \int_\R \left \vert [(J_n-J_n^0)\ub^\ep] (X/\ep) \right \vert^2 \cosh^2(qX) dX.
$$
We make the change of variables $X = \ep x$ in the integral to get:
$$
\|(J_n^\ep - J_n^0) \ub \|_{0,q}^2  = \ep \int_\R \left \vert [(J_n-J_n^0)\ub^\ep] (x) \right \vert^2 \cosh^2(q\ep x) dX = \ep \| (J_n - J_n^0) \ub^\ep\|_{0,q\ep}^2.
$$

Since $q \in [0,\tau_2$ and $\ep \in (0,1)$ we have $q\ep \le \tau_2$. We know that $\tilde{J}_2$ is analytic on $\overline{\Sigma}_{\tau_2}$ from Corollary \ref{J cor}.
Thus  we can use Lemma \ref{truncate} to get
$
\|(J_n^\ep - J_n^0) \ub \|_{0,q}^2 \le C \ep \| \partial_x \ub^\ep\|_{0,q\ep}^2.
$
A routine calculation show that $\| \partial_x \ub^\ep\|_{0,q\ep}^2 = \ep \| \partial_X \ub\|_{0,q}^2$. Thus we have the $r=0$ estimates
$
\|(J_n^\ep - J_n^0) \ub \|_{0,q} \le C \ep  \| \ub\|_{1,q}.
$

\end{proof}

\subsection{Estimates for $\varpi^\ep$}
In this subsection we shall prove some useful estimates for $\varpi^\ep$ and, in particular, show that
it converges in the operator norm topology to $\varpi^0$ as $\ep \to 0^+$. This result is similar to one employed in \cite{friesecke-pego1} and stands 
in contrast to the results from the previous subsection where the the approximation of $J^\ep_j$ by $J^0_j$
comes at the cost of a derivative. We have:

\begin{lemma} \label{pi props}
There exists $C>0$, $\ep_4 \in (0,1)$ and $q_4 >0$ such that following hold for all $\ep \in [0,\ep_4]$
and $q\in [0,q_4]$.
\begin{enumerate}[(i)]
\item $\varpi^\ep$ is a bounded map from  $H^r_q$ to $H^{r+2}_q$, for any $r \ge 0$.
\item For all $r \ge 0$ we have
\be\label{varpi smooth} \| \varpi^\ep f\|_{r+2,q} \le C \|f\|_{r,q}.\ee
\item For all $r \ge 0$ we have \be\label{lw varpi} \|\varpi^\ep - \varpi^0\|_{r,q} \le C \ep^2 \| f\|_{r,q}.\ee 
\item For all $\omega>1$ we have
\be\label{AT trick}
\| \varpi^\ep \left(f e^{i \omega \cdot}\right)\|_{1,q} \le C\omega^{-1} \| f\|_{2,q}.
\ee
\end{enumerate}
\end{lemma}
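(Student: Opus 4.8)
\emph{Strategy.} The plan is to feed everything through Beale's multiplier theorem (Theorem~\ref{beale1}) after recording uniform‑in‑$\ep$ estimates on the symbol $\tilde{\varpi}^\ep$. The essential preliminary step is to rewrite the symbol so the long‑wave structure is visible. Starting from \eqref{lw pi} and writing $\tilde{\lambda}_-(\ep K)=(\ep K)^2 g(\ep K)$ with $g(k):=\tilde{\lambda}_-(k)/k^2$, and then $c_w^2-g(k)=k^2h(k)$ with $h(k):=(c_w^2-g(k))/k^2$, the removable factor $\ep^4K^2$ cancels and one gets
\[
\tilde{\varpi}^\ep(K)=-\frac{g(\ep K)}{1+K^2h(\ep K)}=-\frac{\ep^2 g(\ep K)}{(c_w^2+\ep^2)-g(\ep K)}.
\]
By Lemma~\ref{lambda lemma} and the Maclaurin expansion recorded in Section~\ref{LWL}, $g$ and $h$ are even and extend across their removable singularities at the origin to bounded analytic functions on $\overline{\Sigma}_{\tau_0}$, with $g(0)=c_w^2$ and $h(0)=\alpha_w>0$; moreover the supersonic positivity \eqref{super sonic} (valid at $c=c_w$) gives $h(k)>0$ for all real $k$. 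Note $\tilde{\varpi}^0(K)=-g(0)/(1+K^2h(0))$ is exactly \eqref{this is pi0}.

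\emph{Symbol estimates.} From this representation I would extract, for $q_4$ and $\ep_4$ chosen small, three estimates holding uniformly in $\ep\in[0,\ep_4]$ on the strip $\overline{\Sigma}_{q_4}$: (a) $\tilde{\varpi}^\ep$ has no poles there; (b) $|\tilde{\varpi}^\ep(k\pm iq)|\le C/(1+k^2)$; and (c) $|\tilde{\varpi}^\ep(k\pm iq)-\tilde{\varpi}^0(k\pm iq)|\le C\ep^2$. For (a)–(b) the crux is a lower bound $|1+K^2h(\ep K)|\ge c(1+|K|^2)$: at $\ep=0$ the denominator $1+\alpha_wK^2$ has zeros $\pm i/\sqrt{\alpha_w}$ off a thin strip, and for $\ep>0$ one splits into the regimes $|\ep K|\le R_1$ (where $\ep K$ stays in a fixed compact subset of $\overline{\Sigma}_{q_4}$ on which $\mathrm{Re}\,h$ is bounded below, by continuity from $h>0$ on $\R$, forcing $\mathrm{Re}(1+K^2h(\ep K))$ to grow quadratically in $\mathrm{Re}\,K$) and $|\ep K|\ge R_1$ (where $g(\ep K)$ is small, so one works with $(c_w^2+\ep^2)-g(\ep K)$, bounded below by $c_w^2/2$, against the decay $|g(\ep K)|\le C/|\ep K|^2$). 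For (c) one uses that $g$ and $h$ are even analytic to get $|g(\ep K)-g(0)|+|h(\ep K)-h(0)|\le C\ep^2|K|^2$ on the whole strip (Taylor when $|\ep K|$ is bounded, the crude bound otherwise), subtracts the two fractions, and finds numerator $O(\ep^2|K|^2(1+|K|^2))$ over denominator $\ge c(1+|K|^2)^2$.

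\emph{Conclusion of the four parts.} Given (a)–(c): parts (i) and (ii) follow from Theorem~\ref{beale1} applied to $\tilde{\varpi}^\ep$ with empty pole set and $m=m'=2$, the constant \eqref{constant} being uniformly bounded by (b). Part (iii) follows from the same theorem applied to the bounded, pole‑free symbol $\tilde{\varpi}^\ep-\tilde{\varpi}^0$ with $m=m'=0$, the constant being $\le C\ep^2$ by (c). For part (iv) no new symbol work is needed: since multipliers commute with $\partial_X$ and $fe^{i\omega X}=\tfrac{1}{i\omega}\partial_X(fe^{i\omega X})-\tfrac{1}{i\omega}f'e^{i\omega X}$, one has $\varpi^\ep(fe^{i\omega\cdot})=\tfrac{1}{i\omega}\partial_X\varpi^\ep(fe^{i\omega\cdot})-\tfrac{1}{i\omega}\varpi^\ep(f'e^{i\omega\cdot})$; bounding with part (ii) (which gains two derivatives), the boundedness of $\partial_X\colon H^2_q\to H^1_q$, and the isometry $\|e^{i\omega\cdot}u\|_{0,q}=\|u\|_{0,q}$, gives $\|\varpi^\ep(fe^{i\omega\cdot})\|_{1,q}\le\tfrac{C}{\omega}(\|f\|_{0,q}+\|f\|_{1,q})\le\tfrac{C}{\omega}\|f\|_{2,q}$. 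One finally takes $\ep_4,q_4$ to be the minimum of the finitely many thresholds produced, also requiring $q_4<\min(\tau_0,1/\sqrt{\alpha_w})$.

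\emph{Main obstacle.} The hard part is items (a)/(b): the quadratic lower bound on $|1+K^2h(\ep K)|$ \emph{uniform in $\ep$ over the unbounded strip}. Because $h(k)\to0$ as $|k|\to\infty$ along $\R$, no single elementary estimate covers all of $K$‑space, so one must genuinely carry out the three‑regime split above and glue the pieces; the uniformity in (c) over unbounded $K$ is delicate for the same reason, and it is precisely this that yields the clean $\ep^2$ gain distinguishing $\varpi^\ep$ from the operators $J_j^\ep$, which instead cost a derivative.
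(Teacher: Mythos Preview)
Your treatment of parts (i)--(iii) is essentially the paper's: you introduce the same auxiliary functions (the paper writes $\tzeta(z)=\tilde{\lambda}_-(z)/z^2$ for your $g$), record the same symbol bounds on a strip via a near/far split in $\ep K$, and then invoke Theorem~\ref{beale1}. The paper isolates these symbol estimates as a separate Lemma~\ref{pi lemma} but the content and structure match your sketch.

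For part (iv) you take a genuinely different route. The paper follows an idea of Amick--Toland: it writes $(1-\alpha_w\partial_X^2)^{-1}(fe^{i\omega X})=g$, sets $g_1:=g+\tfrac{1}{\alpha_w\omega^2}fe^{i\omega X}$, computes the equation satisfied by $g_1$ (whose right side now carries an explicit $\omega^{-1}$), bounds $\|g_1\|_{2,q}\le C\omega^{-1}\|f\|_{2,q}$ using the smoothing from part (ii), and then reduces $\varpi^\ep$ to $\varpi^0$ via $(1-\alpha_w\partial_X^2)\varpi^\ep$ being bounded. Your argument instead uses the single identity $fe^{i\omega X}=\tfrac{1}{i\omega}\partial_X(fe^{i\omega X})-\tfrac{1}{i\omega}f'e^{i\omega X}$, commutes $\varpi^\ep$ through $\partial_X$, and invokes part (ii) together with $\|ue^{i\omega\cdot}\|_{0,q}=\|u\|_{0,q}$. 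This is shorter, treats $\varpi^\ep$ directly without the detour through $\varpi^0$, and in fact yields the sharper bound $\|\varpi^\ep(fe^{i\omega\cdot})\|_{1,q}\le C\omega^{-1}\|f\|_{1,q}$ (only one derivative of $f$ is needed, versus two in the paper's proof because of the $f''$ term in $g_1$'s equation). The paper's approach, on the other hand, makes the mechanism more transparent: the gain comes from the leading symbol $-c_w^2/(1+\alpha_w K^2)$ being small near $K=\omega$, which the $g_1$ substitution exposes explicitly.
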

To prove this we need the following result, which is proved in Section \ref{proofs}.
\begin{lemma}\label{pi lemma}
There exists $C>0$, $\ep_4 \in (0,1)$ and $q_4 >0$ such that following hold for all $\ep \in [0,\ep_4]$
and $q\in [0,q_4]$.
\begin{enumerate}[(i)]
\item $\tilde{\varpi}^\ep(Z)$ is analytic and bounded in the strip $\overline{\Sigma}_q$.
\item $|\tilde{\varpi}^\ep(Z)| \le C/(1+|Z|^2)$ for all $Z \in \overline{\Sigma}_q$.
\item $|\tilde{\varpi}^\ep(Z)-\tilde{\varpi}^0(Z)| \le C\ep^2$ for all $Z \in \overline{\Sigma}_q$.
\end{enumerate}

\end{lemma}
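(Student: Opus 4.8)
The plan is to rewrite $\tvarpi^\ep$ in a ``desingularized'' form that makes the $\ep\to0^+$ degeneration transparent — exactly the long-wave trick of \cite{friesecke-pego1} — and then estimate the resulting symbol by hand. Set $\tilde g(k):=\tlambda_-(k)/k^2$. By Lemma \ref{lambda lemma} ($\tlambda_-$ is even with $\tlambda_-(0)=0$ and is bounded analytic on $\overline{\Sigma}_{\tau_0}$), $\tilde g$ extends to a bounded analytic function on $\overline{\Sigma}_{\tau_0}$ with $\tilde g(0)=c_w^2$, $M_g:=\sup_{\overline{\Sigma}_{\tau_0}}|\tilde g|<\infty$, and $|\tilde g(k)|\le M/|k|^2$ for $|k|\ge1$ where $M:=\sup_{\overline{\Sigma}_{\tau_0}}|\tlambda_-|$; moreover $|\tlambda_-'(k)|\le 2c_w^2|k|$ and the fundamental theorem of calculus give $0\le\tlambda_-(k)\le c_w^2k^2$ on $\R$, i.e.\ $\tilde g(k)\le c_w^2$ there. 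A direct computation from \eqref{lw pi}, cancelling the common factor $\ep^2Z^2$ from numerator and denominator, yields
\[
\tvarpi^\ep(Z)=-\frac{\ep^2\,\tilde g(\ep Z)}{(c_w^2+\ep^2)-\tilde g(\ep Z)}=:-\frac{\ep^2\,\tilde g(\ep Z)}{D^\ep(\ep Z)},
\]
which already exhibits the removable singularity at $Z=0$. I would also record the Taylor-with-remainder form $\tilde g(k)=c_w^2-\alpha_wk^2+k^4\tilde h(k)$ with $\tilde h$ bounded analytic on $\overline{\Sigma}_{\tau_0}$ (removable singularity at $0$; $\tilde h(k)\to0$ at infinity since $\tlambda_-$ is bounded), so that $D^\ep(\ep Z)=\ep^2\bigl(1+\alpha_wZ^2-\ep^2Z^4\tilde h(\ep Z)\bigr)$ and
\[
\tvarpi^\ep(Z)=-\frac{c_w^2-\alpha_w\ep^2Z^2+\ep^4Z^4\tilde h(\ep Z)}{1+\alpha_wZ^2-\ep^2Z^4\tilde h(\ep Z)},
\]
to be compared with $\tvarpi^0(Z)=-c_w^2/(1+\alpha_wZ^2)$ from \eqref{this is pi0}. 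The case $\ep=0$ is immediate once $q_4<1/\sqrt{\alpha_w}$ (this keeps $\overline{\Sigma}_{q_4}$ away from the poles $\pm i/\sqrt{\alpha_w}$ of $\tvarpi^0$, and $|1+\alpha_wZ^2|\ge c(1+|Z|^2)$ on $\overline{\Sigma}_{q_4}$ for $q_4$ small), so I will assume $0<\ep\le\ep_4$.

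The analytic core is a lower bound on $D^\ep$ over the relevant strip. Writing $w:=\ep Z$, the decisive observation is that $Z\in\overline{\Sigma}_q$ forces $|\Im w|\le\ep q$ — the strip has width only $O(\ep)$ in the $w$-variable. Set $D^\ep(w)=\ep^2+w^2\tilde r(w)$ with $\tilde r(w):=(c_w^2w^2-\tlambda_-(w))/w^4$ bounded analytic on $\overline{\Sigma}_{\tau_0}$, $\tilde r(0)=\alpha_w$, and — by \eqref{super sonic} with $c=c_w$ — $\tilde r(w)>0$ for all real $w$. I split $\overline{\Sigma}_{\ep q}$ into three regions. For $|w|\le\delta_0$: Taylor gives $D^\ep(w)=\ep^2+\alpha_ww^2+O(|w|^3)$, and since $\Re(w^2)\ge(\Re w)^2-\ep^2q^2$ one obtains, after fixing $\delta_0$ small and shrinking $q$, $\Re D^\ep(w)\ge\tfrac12\ep^2+\tfrac{\alpha_w}{2}(\Re w)^2$, hence $|D^\ep(w)|\ge c\,(\ep^2+|w|^2)$. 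For $\delta_0\le|w|\le\rho_0$: $|\Im w|\le\ep q\ll\delta_0$ forces $|\Re w|\ge\delta_0/2$, so continuity and positivity of $\tilde r$ on a compact real set give $D^\ep(\Re w)\ge c_2>0$, and a Cauchy/Lipschitz estimate for $\tilde g$ lets us perturb to $|D^\ep(w)|\ge c_2/2$. For $|w|>\rho_0$: taking $\rho_0$ large makes $|\tilde g(w)|\le M/|w|^2\le c_w^2/2$, so $|D^\ep(w)|\ge c_w^2/2$. The delicate region is the first: the real-axis bound $D^\ep(k)\ge\ep^2$ is too weak to survive a complex perturbation of fixed size, so one must keep the $\alpha_ww^2$ term explicitly and exploit $\Im w=O(\ep q)$. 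This step — pinning down how small $q_4$ (and $\ep_4$) must be in terms of $\alpha_w$ and the bounds on $\tilde r$ and $\tilde g$ — is the main obstacle; the rest is bookkeeping.

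Given the denominator bound, (i) follows because $D^\ep(\ep Z)\neq0$ on $\overline{\Sigma}_q$ makes $\tvarpi^\ep(Z)=-\ep^2\tilde g(\ep Z)/D^\ep(\ep Z)$ a composition of analytic maps there, agreeing with \eqref{lw pi} off $Z=0$; boundedness then follows from (ii). For (ii): combine $|\tilde g(\ep Z)|\le\min\{M_g,\,M/(\ep|Z|)^2\}$ with $|D^\ep(\ep Z)|\ge c\,(\ep^2+\ep^2|Z|^2)$ when $|\ep Z|\le\rho_0$ (respectively $|D^\ep(\ep Z)|\ge c_w^2/2$ together with $|\tilde g(\ep Z)|\le M/(\ep|Z|)^2$ when $|\ep Z|>\rho_0$); the factors $\ep^2$ cancel, and splitting at $|Z|=1$ gives $|\tvarpi^\ep(Z)|\le C/(1+|Z|^2)$. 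For (iii): with the Taylor form, put $\tvarpi^\ep-\tvarpi^0$ over the common denominator $\bigl(1+\alpha_wZ^2-\ep^2Z^4\tilde h(\ep Z)\bigr)\bigl(1+\alpha_wZ^2\bigr)$; the $c_w^2$-terms cancel, the numerator is $\ep^2$ times an expression that is $O\bigl(\ep^2(1+|Z|^2)^2\bigr)$ on $|\ep Z|\le\rho_0$ (using $\ep^2|Z|^4=|\ep Z|^2|Z|^2\le\rho_0^2|Z|^2$ there), and the denominator is $\gtrsim(1+|Z|^2)^2$ because $|1+\alpha_wZ^2|\gtrsim1+|Z|^2$ on the strip and $|1+\alpha_wZ^2-\ep^2Z^4\tilde h(\ep Z)|=\ep^{-2}|D^\ep(\ep Z)|\gtrsim1+|Z|^2$; this gives $|\tvarpi^\ep(Z)-\tvarpi^0(Z)|\le C\ep^2$ on $|\ep Z|\le\rho_0$, while for $|\ep Z|>\rho_0$ both $\tvarpi^\ep(Z)$ and $\tvarpi^0(Z)$ are already $O(\ep^2)$ by (ii) and its analogue for $\tvarpi^0$. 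Finally take $q_4$ and $\ep_4$ to be the minima of the finitely many thresholds imposed above; all constants depend only on $w$.
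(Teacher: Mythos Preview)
Your proof is correct and follows essentially the same approach as the paper's: both desingularize via $\tzeta(z)=\tlambda_-(z)/z^2$ (your $\tilde g$), rewrite $\tvarpi^\ep(Z)=-\ep^2\tzeta(\ep Z)/(c_w^2+\ep^2-\tzeta(\ep Z))$, and then split into regions according to the size of $|\ep Z|$, using Taylor expansion near the origin and positivity of $c_w^2-\tzeta$ on $\R\setminus\{0\}$ plus a Lipschitz perturbation into the strip away from it. The only technical differences are cosmetic: the paper uses a two-region split (inside/outside $|Z|\sim\delta_1/\ep$) with a reverse-triangle-inequality bound on the denominator near zero, whereas you use three regions and a real-part argument there; and for large $|\ep Z|$ you invoke the decay $|\tilde g(w)|\le M/|w|^2$ rather than the paper's uniform bound $\tzeta(k)<c_w^2$ on compact real sets. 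One small slip: in your common-denominator step for (iii) you write ``$\ep^2$ times an expression that is $O(\ep^2(1+|Z|^2)^2)$'', but you mean the numerator itself is $O(\ep^2(1+|Z|^2)^2)$; the argument goes through as intended.
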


\begin{proof} (Of Lemma \ref{pi props})
Parts (i)-(iii) of Lemma \ref{pi props} follow from Parts (i)-(iii) of Lemma \ref{pi lemma}
and an application of Theorem \ref{beale1}. We spare the details.

Part (iv) is proven using an idea employed from \cite{amick-toland}.
Let
$(1-\alpha_w \partial_X^2)^{-1} \left(f e^{i \omega X}\right) = g$. 
Equivalently,
$\ds
\alpha_w g'' -g = - f e^{i \omega X}.
$
Put $g_1:=g\ds +{1 \over \alpha_w\omega^2} f e^{i \omega X} $. Then
\bes\begin{split}
\alpha_w g_1'' -g_1 =& \alpha_w g'' - g -{1 \over \alpha_w \omega^2} f e^{i \omega X}+  f e^{i\omega X} +{2i \over  \omega} f' e^{i\omega X} + {1 \over  \omega^2} f'' e^{i \omega X} \\
=&-{1 \over \alpha_w \omega^2} f e^{i \omega X} +{2i \over  \omega} f' e^{i\omega X} + {1 \over  \omega^2} f'' e^{i \omega X} 
\end{split}\ees
or rather
\bes\begin{split}
g_1 =&  -(1-\alpha_w \partial_X^2)^{-1}\left(-{1 \over \alpha_w \omega^2} f e^{i \omega X} +{2i \over  \omega} f' e^{i\omega X} + {1 \over  \omega^2} f'' e^{i \omega X} \right)\\
        =&  c_w^{-2}\varpi^0\left(-{1 \over \alpha_w \omega^2} f e^{i \omega X} +{2i \over  \omega} f' e^{i\omega X} + {1 \over  \omega^2} f'' e^{i \omega X} \right).
\end{split}\ees
Then we use Part (i) to conclude that
$$
\| g_1\|_{2,q} \le C\omega^{-1} \| f\|_{2,q}.
$$

Next, naive estimates show that $\| f e^{i \omega \cdot} \|_{1,q} \le C \omega \| f\|_{1,q}$. And since 
$(1-\alpha_w \partial_X^2)^{-1}(f e^{i \omega X})=g = g_1- \ds {1 \over \alpha_w \omega^2} f e^{i \omega X}$ we have
$$
\|(1-\alpha_w \partial_X^2)^{-1}(f e^{i \omega \cdot})\|_{1,q} \le \| g_1\|_{1,q} + C \omega^{-2}\| f e^{i \omega \cdot} \|_{1,q}\le C \omega^{-1} \| f\|_{2,q}.
$$
This establishes the estimate for $\varpi^0=-c_w^2(1-\alpha_w \partial_X^2)^{-1}$. To establish it for $\varpi^\ep$ observe that the estimates in Part (ii) give $\|(1-\alpha_w \partial_x^2)\varpi^\ep f\|_{r,q} \le C \|f\|_{r,q}$. Thus we can reduce the $\varpi^\ep$ estimate to the $\varpi^0$ case in a simple way.
\end{proof}

\subsection{Estimates for $B^\ep$}
Finally we have several basic estimates for $B^\ep$. First we have some straightforward upper bounds.
\begin{lemma}\label{B lemma} For all $r \ge 0$ there exists $C_r>0$ such that for all $q,q' \in [0,q_2]$ and $\ep \in [0,\ep_1]$
we have the following estimates.
If $q \ge q'$ then 
\be\label{B bound not borrow}
\|B^\ep(\thetab,\grave{\thetab})\|_{r,q} \le C_r\|\thetab\|_{r,q'} \|\cosh(|q-q'| \cdot) J_2^\ep \grave{\thetab}\|_{W^{r,\infty}} 
\ee
for all $r\ge0$.
If $q \le q'$ then
\be\label{B bound borrow}
\|B^\ep(\thetab,\grave{\thetab})\|_{r,q} \le C_r\|\thetab\|_{r,q'} \|\sech(|q-q'| \cdot) J_2^\ep \grave{\thetab}\|_{W^{r,\infty}} 
\ee
for all $r\ge0$.
Lastly, if $0 \le q' \le q$ and $r\ge1$ then
\be\label{B bound set}
\|B^\ep(\thetab,\grave{\thetab})\|_{r,q} \le C \|\thetab\|_{r,q'}\|\grave{\thetab}\|_{r,q-q'}.
\ee

\end{lemma}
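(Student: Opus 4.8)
The plan is to peel $B^\ep$ apart into its three constituent operations — the outer multiplier $J_1^\ep$, the componentwise product, and the inner multipliers $J_2^\ep$ — and bound each using tools already established.

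First I would recall the definition \eqref{lw B def}, namely $B^\ep(\thetab,\grave{\thetab}) = J_1^\ep[J_2^\ep\thetab . J_2^\ep\grave{\thetab}]$. Since the hypotheses restrict $q,q' \in [0,q_2]$ and $\ep \in [0,\ep_1]$, these parameters lie in the range where Corollary \ref{all ops}(ii) applies, so \eqref{all ops bound} gives $\|J_1^\ep \f\|_{r,q} \le C\|\f\|_{r,q}$ with a constant independent of $\ep$. Applying this with $\f = J_2^\ep\thetab . J_2^\ep\grave{\thetab}$ reduces all three desired inequalities to estimating $\|J_2^\ep\thetab . J_2^\ep\grave{\thetab}\|_{r,q}$.

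Next I would invoke the product estimates of Lemma \ref{rq estimates}, applied componentwise with $f = J_2^\ep\thetab$ and $g = J_2^\ep\grave{\thetab}$: the component-wise product $\xb.\yb$ is handled one coordinate at a time, and the norm on $H^r_q \times H^r_q$ is built from the scalar norms, so nothing is lost in passing to pairs. For \eqref{B bound not borrow} (the case $q \ge q'$) I use \eqref{product inequality not borrow} to get $\|J_2^\ep\thetab . J_2^\ep\grave{\thetab}\|_{r,q} \le C_r\|J_2^\ep\thetab\|_{r,q'}\|\cosh(|q-q'| \cdot)J_2^\ep\grave{\thetab}\|_{W^{r,\infty}}$, and then bound $\|J_2^\ep\thetab\|_{r,q'} \le C\|\thetab\|_{r,q'}$ again by \eqref{all ops bound}. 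For \eqref{B bound borrow} (the case $q \le q'$) I run the same two steps with \eqref{product inequality borrow} in place of \eqref{product inequality not borrow}. For \eqref{B bound set} (the case $0 \le q' \le q$, $r \ge 1$) I instead use \eqref{SET} to obtain $\|J_2^\ep\thetab.J_2^\ep\grave{\thetab}\|_{r,q} \le C_r\|J_2^\ep\thetab\|_{r,q'}\|J_2^\ep\grave{\thetab}\|_{r,q-q'}$ and bound both factors by \eqref{all ops bound}.

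The argument is essentially bookkeeping; the only point requiring a moment's care — and the closest thing to an obstacle — is checking that every decay rate appearing as a subscript or as a weight ($q'$, $q-q'$, and the implicit $|q-q'|$) still lies in $[0,q_2]$, the interval on which Corollary \ref{all ops} and Lemma \ref{rq estimates} are stated; this is immediate from $q,q' \in [0,q_2]$, since $0 \le q-q' \le q \le q_2$ in the last case. Beyond that, Lemma \ref{B lemma} is a direct consequence of the composition structure of $B^\ep$ together with the already-proven boundedness of $J_n^\ep$ on weighted Sobolev spaces and the three product inequalities, so I do not anticipate any genuine difficulty.
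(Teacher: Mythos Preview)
Your proposal is correct and follows essentially the same approach as the paper: bound $J_1^\ep$ using Corollary \ref{all ops}, apply the product estimates of Lemma \ref{rq estimates} to $J_2^\ep\thetab . J_2^\ep\grave{\thetab}$, and then bound $J_2^\ep$ using Corollary \ref{all ops} again. The paper's proof is in fact more terse than yours, omitting the bookkeeping you included about the decay rates lying in $[0,q_2]$.
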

\begin{proof}
First use the bound on $J_1^\ep$  in \eqref{all ops bound} from Corollary \ref{all ops} to get
$
\| B^\ep(\thetab,\grave{\thetab})\|_{r,q} \le \| J_2^\ep \thetab . J_2^\ep \grave{\thetab}\|_{r,q}.
$
Using the various product estimates in Lemma \ref{rq estimates} followed by the bound 
 on $J_2^\ep$  in \eqref{all ops bound} from Corollary \ref{all ops} gives the estimates.

\end{proof}

The next result deals with approximation of $B^\ep$ by $B^0$.
\begin{lemma}\label{long wave B} There exists $\ep_5,q_5>0$ such that 
for all $r\ge0$ there exists $C_r>0$  such  that for $q\in [0,q_5]$ and $\ep \in (0,\ep_5]$ we have the following inequality
\be\label{lw B}
\|B^\ep(\thetab,\grave{\thetab}) - B^0(\thetab,\grave{\thetab})\|_{r,q} \le C_r \ep \| \thetab \|_{r+1,q'} \|\grave{\thetab}\|_{r+1,q''}.
\ee
Here $q' + q'' = q \in [0,q_4]$ and both are positive.
\end{lemma}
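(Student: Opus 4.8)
The plan is a routine ``add and subtract'' decomposition of $B^\ep(\thetab,\grave{\thetab}) - B^0(\thetab,\grave{\thetab})$, after which each resulting term is dispatched by a single use of the long--wave approximation Lemma~\ref{long wave J}, the uniform operator bounds of Corollary~\ref{all ops}, and one of the weighted product inequalities of Lemma~\ref{rq estimates}. Write $P^\ep := J_2^\ep \thetab . J_2^\ep \grave{\thetab}$ and $P^0 := J_2^0 \thetab . J_2^0 \grave{\thetab}$, so that $B^\ep(\thetab,\grave{\thetab}) = J_1^\ep P^\ep$ and $B^0(\thetab,\grave{\thetab}) = J_1^0 P^0$. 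Then
\be
B^\ep(\thetab,\grave{\thetab}) - B^0(\thetab,\grave{\thetab}) = (J_1^\ep - J_1^0) P^\ep + J_1^0 \bigl( P^\ep - P^0 \bigr),
\ee
and, splitting the product difference in the usual way,
\be
P^\ep - P^0 = \bigl[ (J_2^\ep - J_2^0)\thetab \bigr] . J_2^\ep \grave{\thetab} + J_2^0 \thetab . \bigl[ (J_2^\ep - J_2^0)\grave{\thetab} \bigr].
\ee
I would take $\ep_5 := \ep_1$ and $q_5 := \min\{\tau_2, q_2\}$ so that, for $q \in [0,q_5]$ and the split weights $0 < q', q''$ with $q' + q'' = q$, both Corollary~\ref{all ops} and Lemma~\ref{long wave J} are applicable at all the weights that occur.

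First I would bound $(J_1^\ep - J_1^0) P^\ep$: estimate \eqref{lw J} gives $\|(J_1^\ep - J_1^0) P^\ep\|_{r,q} \le C\ep \, \|P^\ep\|_{r+1,q}$, the product estimate \eqref{SET} (applicable since $r+1 \ge 1$) gives $\|P^\ep\|_{r+1,q} \le C_r \|J_2^\ep\thetab\|_{r+1,q'} \|J_2^\ep\grave{\thetab}\|_{r+1,q''}$, and the uniform bound \eqref{all ops bound} on $J_2^\ep$ turns this into $\le C_r \ep \|\thetab\|_{r+1,q'} \|\grave{\thetab}\|_{r+1,q''}$. For each of the two terms coming from $J_1^0(P^\ep - P^0)$, I would strip $J_1^0$ using \eqref{all ops bound}, then peel the product apart with a product inequality of Lemma~\ref{rq estimates} --- estimate \eqref{SET} when $r \ge 1$, or \eqref{product inequality not borrow} combined with the Sobolev bound \eqref{set} when $r = 0$ --- keeping the exponential weight on whichever factor carries the difference $(J_2^\ep - J_2^0)(\cdot)$; on that factor \eqref{lw J} supplies the $\ep$ at the cost of one derivative, and on the remaining factor \eqref{all ops bound} gives a uniform bound. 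Each of the three contributions is then $\le C_r \ep \|\thetab\|_{r+1,q'} \|\grave{\thetab}\|_{r+1,q''}$, and adding them yields \eqref{lw B}.

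I do not expect a genuine obstacle here; the only delicate point is the bookkeeping of exponential weights and derivative indices. The one--derivative loss built into \eqref{lw J} --- which ultimately comes from the symbol--truncation Lemma~\ref{truncate}, where $\tilde{J}_n^\ep - \tilde{J}_n^0$ vanishes at the origin --- is exactly what forces the right--hand side of \eqref{lw B} to carry $r+1$ rather than $r$ derivatives, and one must split $q = q' + q''$ with both parts strictly positive so that every product in the decomposition has a genuinely localized first factor, which is what makes the product and ``decay--borrowing'' estimates of Lemma~\ref{rq estimates} applicable.
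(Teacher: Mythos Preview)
Your proposal is correct and follows essentially the same route as the paper: an add-and-subtract telescoping of $J_1^\ep P^\ep - J_1^0 P^0$ into three pieces, each handled by one application of \eqref{lw J}, the uniform bounds \eqref{all ops bound}, and a product estimate from Lemma~\ref{rq estimates}. The only cosmetic difference is the order of the telescope --- the paper puts the $J_1$-difference on $P^0$ and keeps $J_1^\ep$ on the two $J_2$-difference terms, whereas you put the $J_1$-difference on $P^\ep$ and keep $J_1^0$ on the rest --- but this changes nothing of substance.
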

\begin{proof}
Let $\ep_5:=\min(1,\ep_1)$ and $q_5:=\min(q_2,\tau_2)$.
The triangle inequality gives
\be\label{ewe}\begin{split}
\| B^\ep(\thetab,\grave{\thetab}) - B^0(\thetab,\gthetab)\|_{r,q} 
=  &\left\| J_1^\ep\left(J_2^\ep \thetab . J_2^\ep \gthetab \right)-J_1^0\left(J_2^0 \thetab . J_2^0 \gthetab \right) \right\|_{r,q}\\
\le&\left\| J_1^\ep\left(J_2^\ep \thetab . J_2^\ep \gthetab \right)-J_1^\ep\left(J_2^\ep\thetab . J_2^0 \gthetab \right) \right\|_{r,q}\\
+&\left\| J_1^\ep\left(J_2^\ep \thetab . J_2^0 \gthetab \right)-J_1^\ep\left(J_2^0 \thetab . J_2^0 \gthetab \right) \right\|_{r,q}\\
+&\left\| J_1^\ep\left(J_2^0 \thetab . J_2^0 \gthetab \right)-J_1^0\left(J_2^0 \thetab . J_2^0 \gthetab \right) \right\|_{r,q}.
\end{split}\ee
For the first term, we use the bound on $J_1^\ep$  in \eqref{all ops bound} from Corollary \ref{all ops} to get$$
\left\| J_1^\ep\left(J_2^\ep \thetab . J_2^\ep \gthetab \right)-J_1^\ep\left(J_2^\ep\thetab . J_2^0 \gthetab \right) \right\|_{r,q}
\le C\left\| J_2^\ep \thetab .\left( J_2^\ep \gthetab-J_2^0 \gthetab\right) \right\|_{r,q}.
$$
Then we use the product inequality \eqref{product inequality not borrow}
$$
\left\| J_1^\ep\left(J_2^\ep \thetab . J_2^\ep \gthetab \right)-J_1^\ep\left(J_2^\ep\thetab . J_2^0 \gthetab \right) \right\|_{r,q}
\le C\left\| \cosh(q' \cdot) J_2^\ep \thetab\right \|_{W^{r,\infty}}\left \|  J_2^\ep \gthetab-J_2^0 \gthetab \right\|_{r,q''}
$$
where we have $q' + q'' = q$ and both are positive.
The Sobolev embedding theorem applied to the first term on the right hand side gives:
$$
\left\| J_1^\ep\left(J_2^\ep \thetab . J_2^\ep \gthetab \right)-J_1^\ep\left(J_2^\ep\thetab . J_2^0 \gthetab \right) \right\|_{r,q}
\le C\left\| J_2^\ep \thetab\right \|_{r+1,q'}\left \|  J_2^\ep \gthetab-J_2^0 \gthetab \right\|_{r,q''}.$$
Using the boundedness of $J_2^\ep$ we get
$$
\left\| J_1^\ep\left(J_2^\ep \thetab . J_2^\ep \gthetab \right)-J_1^\ep\left(J_2^\ep\thetab . J_2^0 \gthetab \right) \right\|_{r,q}
\le  C\left\|  \thetab\right \|_{r+1,q'}\left \|  J_2^\ep \gthetab-J_2^0 \gthetab \right\|_{r,q''}.
$$
On the second term we use the estimate \eqref{lw J} from Lemma \ref{long wave J} to arrive at:
$$
\left\| J_1^\ep\left(J_2^\ep \thetab . J_2^\ep \gthetab \right)-J_1^\ep\left(J_2^\ep\thetab . J_2^0 \gthetab \right) \right\|_{r,q}
\le  C \ep \left\|  \thetab\right \|_{r+1,q'} \|  \gthetab \|_{r+1,q''}. 
$$
So the first term in \eqref{ewe} is handled.

The rest of the terms in \eqref{ewe} are estimated in the same way, and we leave out the details.

\end{proof}

\section{The proof of Proposition \ref{main mover}.}\label{NE}
We are now in position to estimate $N^\ep$ and prove Proposition \ref{main mover}. Put $$q_\star := \min(q_0,q_1,q_2,q_3,q_4,q_5)\mand \ep_\star:=\min(\ep_0,\ep_1,\ep_2,\ep_3,\ep_4,\ep_5).$$
In this section we restrict
$$
\ep \in (0,\ep_\star] \mand q \in \left[{q_\star / 2},q_\star\right].
$$
We have the lower bound on $q$ in place so that the constant $C_q$ in the estimates for $\B^{-1}_\ep$ and $\P_\ep$ in Lemma \ref{B inv} is bounded above.
In this way, any constant $C>0$ which appears below is independent of $\ep$, $q$, $\etab$ (which is  $E^1_q \times O^1_q$) and $a$ (which is in $[-a_0,a_0]$).
Note that it is a consequence of Lemma \ref{EO2} that if $\eta_1$ is even and $\eta_2$ is odd that $N_1^\ep$ and $N_2^\ep$ are even and odd, respectively.
 

\subsection{The mapping estimates}
In this subsection we prove the estimate \eqref{ball1}.
The definitions of $N^\ep_1,N^\ep_2$ and $N^\ep_3$ give us:
$$
\|N^\ep_1(\etab,a)\|_{1,q} \le \| \A^{-1} j_1\|_{1,q} +\| \A^{-1} j_2\|_{1,q}+ \| \A^{-1} j_3\|_{1,q} +\| \A^{-1} j_4\|_{1,q}+ \| \A^{-1} j_5\|_{1,q},
$$
$$
\|N^\ep_2(\etab,a)\|_{1,q} \le \| \ep^2 \P_\ep  l_1\|_{1,q} + \| \ep^2 \P_\ep  l_2\|_{1,q} + \| \ep^2 \P_\ep  l_{31}\|_{1,q} +\| \ep^2 \P_\ep  l_4\|_{1,q} + \| \ep^2 \P_\ep  l_5\|_{1,q}
$$
and
$$
|N^\ep_3(\etab,a)| \le | \iota_\ep l_1|+ | \iota_\ep l_2|+| \iota_\ep l_{31}|+| \iota_\ep l_4|+| \iota_\ep l_5|.
$$

Using the uniform bound \eqref{A inv bound} on $\A^{-1}$ from Lemma \ref{A inv} we have
$$
\|N^\ep_1(\etab,a)\|_{1,q} \le C\left( \|  j_1\|_{1,q} +\|  j_2\|_{1,q}+ \|  j_3\|_{1,q} +\|  j_4\|_{1,q}+ \|  j_5\|_{1,q}\right).
$$
In Lemma \ref{B inv}, the estimate \eqref{P bound} gives $\|\P_\ep f\|_{1,q} \le C \ep^{-1} \| f\|_{1,q}$. Thus
$$
\|N^\ep_2(\etab,a)\|_{1,q} \le C\ep\left( \|  l_1\|_{1,q} + \|   l_2\|_{1,q} + \|  l_{31}\|_{1,q} +\|   l_4\|_{1,q} + \|   l_5\|_{1,q}\right).
$$
And the Riemann-Lebesgue estimate for $\iota_\ep$, \eqref{iota bound}, following Lemma \ref{rl lemma} gives (for $r=1$)
$$
|N^\ep_3(\etab,a)| \le C\ep\left( \|  l_1\|_{1,q} + \|   l_2\|_{1,q} + \|  l_{31}\|_{1,q} +\|   l_4\|_{1,q} + \|   l_5\|_{1,q}\right).
$$
Thus we will have \eqref{ball1} if we can show that each of the ten terms
$$
\|  j_1\|_{1,q},\ \|  j_2\|_{1,q},\ \|  j_3\|_{1,q},\ \|  j_4\|_{1,q},\ \|  j_5\|_{1,q},\ 
\ep\|l_1\|_{1,q},\ \ep\|l_2\|_{1,q},\ \ep\|l_{31}\|_{1,q},\ \ep\|l_4\|_{1,q}\ \text{and}\ \ep\|l_5\|_{1,q}
$$
is bounded by $C |RHS_{map}|$, where
$$
|RHS_{map}|:= \ep + \ep \|\etab\|_{1,q} + \ep |a|+   \|\etab\|^2_{1,q}+a^2.
$$

\subsubsection{Mapping estimates for $j_1$ and $ l_1$}
The choice of $\sigma$ was made so that 
$
\sigma+\varpi^0 b_1^0(\sigmab,\sigmab) = 0.
$
See~\eqref{what sigma does}.
Which means that we have
$$
j_1 = \varpi^0 b_1^0(\sigmab,\sigmab) - \varpi^\ep b_1^\ep(\sigmab,\sigmab)
=(\varpi^0-\varpi^\ep)b_1^0(\sigmab,\sigmab) + \varpi^\ep\left( b_1^0(\sigmab,\sigmab) - b_1^\ep(\sigmab,\sigmab)\right).
$$
Call the terms on the right hand side $I$ and $II$ respectively.

To estimate $I$,  we first use the
estimate \eqref{lw varpi} for  $\varpi^\ep-\varpi^0$  to get
$
\|I\|_{1,q} \le C \ep^2 \|b_1^0(\sigmab,\sigmab)\|_{1,q}. 
$
Then we use the estimate for $B^0$ from \eqref{B bound set} and get 
$
\|I\|_{1,q} \le C \ep^2 \| \sigmab \|^2_{1,q/2}.
$
Then the uniform bounds \eqref{sigma bound} for $\sigmab$ give
$
\|I\|_{1,q} \le C\ep^2.
$

For $II$ we use the smoothing property of $\varpi^\ep$  and the associated estimate
\eqref{varpi smooth} to get
$\|II\|_{1,q} \le  C\|b_1^0(\sigmab,\sigmab) - b_1^\ep(\sigmab,\sigmab)\|_{0,q}$.
Then we use the approximation estimates for $B^\ep$ by $B^0$ in \eqref{lw B}
to get
$\|II\|_{1,q} \le  C\ep\| \sigmab \|^2_{1,q/2}$.
Then the uniform bounds \eqref{sigma bound} for $\sigmab$ give
$
\|II\|_{1,q} \le C\ep.
$
Thus we have
$$
\|j_1\|_{1,q} \le C \ep \le C|RHS_{map}|.
$$

To estimate $l_1=\lambda_+^\ep b_2^\ep(\sigmab,\sigmab)$
is very easy using the  bounds for $\lambda_+^\ep$ in \eqref{all ops bound}, the bounds on $B^\ep$
in \eqref{B bound set} and the bounds on $\sigmab$ in \eqref{sigma bound}. We get
$\| l_1\|_{1,q} \le C$. And so 
$$
\ep \|l_1\|_{1,q} \le C \ep \le C|RHS_{map}|.
$$



\subsubsection{Mapping estimates for $j_2$ and $l_2$}
By adding zero we see
$$
j_2 = -2 (\varpi^\ep-\varpi^0) b_1^\ep(\sigmab,\etab) - 2 \varpi^0 \left(b_1^\ep(\sigmab,\etab)-b_1^0(\sigmab,\etab) \right).
$$
Call these two terms $I$ and $II$ respectively. 

To estimate $I$,  we first use the
estimate \eqref{lw varpi} for  $\varpi^\ep-\varpi^0$  to get
$
\|I\|_{1,q} \le C \ep^2 \|b_1^0(\sigmab,\etab)\|_{1,q}. 
$
Then we use the estimate for $B^0$ from \eqref{B bound set} and get 
$
\|I\|_{1,q} \le C \ep^2 \| \sigmab \|_{1,q}\|\etab\|_{1,0}.
$
Then the uniform bounds \eqref{sigma bound} for $\sigmab$ give
$
\|I\|_{1,q} \le C\ep^2\|\etab\|_{1,0}.
$

For $II$ we use the smoothing property of $\varpi^0$  and the associated estimate
\eqref{varpi smooth} to get
$\|II\|_{1,q} \le  C\|b_1^\ep(\sigmab,\etab) - b_1^0(\sigmab,\etab)\|_{0,q}$.
Then we use the approximation estimates for $B^\ep$ by $B^0$ in \eqref{lw B}
to get
$\|II\|_{1,q} \le  C\ep\| \sigmab \|_{1,q}\|\etab\|_{1,0}$.
Then the uniform bounds \eqref{sigma bound} for $\sigmab$ give
$
\|II\|_{1,q} \le C\ep\|\etab\|_{1,0}.
$
Thus we have
\be\label{j2 map est}
\|j_2\|_{1,q} \le C \ep\|\etab\|_{1,0} \le C|RHS_{map}|.
\ee

To estimate $l_2=\lambda_+^\ep b_2^\ep(\sigmab,\etab)$, as with $l_1$, 
is very straightforward using the  bounds for $\lambda_+^\ep$ in \eqref{all ops bound}, the bounds on $B^\ep$
in \eqref{B bound set} and the bounds on $\sigmab$ in \eqref{sigma bound}. We get
$\| l_2\|_{1,q} \le C\|\etab\|_{1,0}$. And so 
\be\label{l2 map est}
\ep \|l_2\|_{1,q} \le C \ep\|\etab\|_{1,0} \le C|RHS_{map}|.
\ee

\subsubsection{Mapping estimates for $j_3$ and $l_{31}$}
Recalling the definition of $j_3$, we have
\bes\begin{split}
j_3 &= -2a\varpi^\ep J_1^\ep \left( J_2^0\sigmab . J_2^\ep \nub_\ep\right)\cdot \ib -2 a \varpi^\ep J_1^\ep \left((J_2^\ep-J_2^0)\sigmab. J^\ep_2\nub_\ep\right)\cdot\ib -2a \varpi^\ep B^\ep(\sigmab,\varphib_\ep^a-\varphib^0_\ep)\cdot\ib \\&=: j_{30} + j_{31}
\end{split}\ees
with 
$
j_{30}:=-2a\varpi^\ep J_1^\ep \left( J_2^0\sigmab .J_2^\ep \nub_\ep\right)\cdot \ib
$
and $j_{31}$ is the rest. 

First we will estimate
$
B^\ep(\sigmab,\varphib_\ep^a - \varphib_\ep^{0}).
$
Using the  ``decay borrowing estimate" \eqref{B bound borrow} for $B^\ep$ we have
$$
\|B^\ep(\sigmab,\varphib_\ep^a - \varphib_\ep^{0}) \|_{r,q} \le C_r\| \sigmab\|_{q_0}\| \sech(|q_0-q| \cdot) J_2^\ep (\varphib_\ep^a - \varphib_\ep^{0})\|_{W^{r,\infty}}.
$$
%
%
%
Using the definition of the $W^{r,\infty}$ norm  we get:
$$
 \| \sech(|q_0-q| \cdot) J_2^\ep (\varphib_\ep^a - \varphib_\ep^{0})\|_{W^{r,\infty}} \le C_r\sup_{X \in \R} \left \vert
  \sech(|q_0-q| X)  \sum_{n = 0}^r (\partial_X^n J_2^\ep (\varphib_\ep^a - \varphib_\ep^{0}))\right \vert.
$$
We estimated $\partial_X^n J_2^\ep (\varphib_\ep^a - \varphib_\ep^{0})$ above in
\eqref{Lip est phi} in Lemma \ref{phi lemma}. We use that estimate here to get
$$
 \| \sech(|q_0-q| \cdot) J_2^\ep (\varphib_\ep^a - \varphib_\ep^{0})\|_{W^{r,\infty}} \le C_r\ep^{-r}\sup_{X \in \R} \left \vert
  \sech(|q_0-q| X)  (1+|X|)\right \vert|a|.
$$
Our restrictions on $q$ imply $|q_0 - q|$ is strictly bounded away from zero in a way independent of $q$ or $\ep$. 
Thus
$\sup_{X \in \R} \left \vert
  \sech(|q_0-q| X)  (1+|X|)\right \vert \le C.
$
And so we see that
 \bes\label{borrow1}
 \| \sech(|q_0-q| \cdot) J_2^\ep (\varphib_\ep^a - \varphib_\ep^{0})\|_{W^{r,\infty}} \le C_r\ep^{-r}|a|
\ees
which in turn gives
\be\label{B sig phi est 1}
\| B^\ep(\sigmab,\varphib_\ep^a - \varphib_\ep^{0})\|_{r,q} \le C_r\ep^{-r} |a|.
\ee
This estimate is one of the keys for  estimating $j_{31}$ and, as it happens, $l_{31}$.

Estimation of $j_{31}$ goes as follows. First we use the smoothing estimate \eqref{varpi smooth} for $\varpi^\ep$ from 
 Lemma \ref{pi props}:
\begin{align*}
\| j_{31}\|_{1,q} &\le  C|a|\|  \varpi^\ep J_1^\ep \left((J_2^\ep-J_2^0)\sigmab. J^\ep_2\nub_\ep\right)\|_{1,q}
+C |a|\|  \varpi^\ep B^\ep(\sigmab,\varphib_\ep^a-\varphib^0_\ep)\|_{1,q}  \\
&\le C|a| \|  J_1^\ep \left((J_2^\ep-J_2^0)\sigmab. J^\ep_2\nub_\ep\right)\|_{0,q} 
+C |a|\|  B^\ep(\sigmab,\varphib_\ep^a-\varphib^0_\ep)\|_{0,q}.
\end{align*}
Then we use \eqref{B sig phi est 1}, with $r = 0$, on the second term to get
$$
\| j_{31}\|_{1,q} \le  C|a|\|  J_1^\ep \left((J_2^\ep-J_2^0)\sigmab. J^\ep_2\nub_\ep\right)\|_{0,q} +C a^2 . 
$$
As for the first term, first we use the boundedness of $J_1^\ep$ from \eqref{all ops bound} followed by the product estimate \eqref{product inequality not borrow}:
\begin{align*}
\| j_{31}\|_{1,q} \le  C|a|\|(J_2^\ep-J_2^0)\sigmab\|_{0,q} \|J^\ep_2\nub_\ep\|_{W^{0,\infty}} +C a^2 . 
\end{align*}
Using the estimate for $\nub_\ep=\varphib_\ep^0$ in \eqref{phi bound} gives
\begin{align*}
\| j_{31}\|_{1,q} \le  C|a|\|(J_2^\ep-J_2^0)\sigmab\|_{0,q}  +C a^2 . 
\end{align*}
Then we use the estimate \eqref{lw J} for $J^\ep_2-J^0_2$ from Lemma \ref{long wave J} and the bounds on $\sigmab$ in \eqref{sigma bound}:
\begin{align}\label{j31 est map}
\| j_{31}\|_{1,q} \le  C\ep|a| + C a^2 \le C|RHS_{map}|.
\end{align}

Next we estimate $j_{30}$. Estimates like the ones we just used give
\be\label{bad j30}
\| j_{30}\|_{1,q} \le C|a|\|\varpi^\ep J_1^\ep( J_2^0\sigmab . J_2^\ep \nub_\ep)\|_{1,q}
\le C|a|\| J_1^\ep( J_2^0\sigmab . J_2^\ep \nub_\ep)\|_{0,q} \le C |a|.
\ee
This is not less than $C |RHS_{map}|$.
It turns out this estimate is not good enough for our purposes; basically, with this estimate $j_{30}$ looks like an $\O(1)$ linear perturbation in our equation and will ruin our contraction mapping argument.
But we can improve this  using the estimate \eqref{AT trick} from Lemma \ref{pi props}. 

To wit
$
\| j_{30}\|_{1,q} = 2|a|\| \varpi^\ep J_1^\ep \left( J_2^0\sigmab .J_2^\ep \nub_\ep\right)\cdot \ib\|_{1,q}.
$
Using the boundedness \eqref{all ops bound} of $J^1_\ep$ from Corollary \ref{all ops} we have
$
\| j_{30}\|_{1,q} \le C|a|\| \varpi^\ep (  J_2^0\sigmab .J_2^\ep \nub_\ep) \cdot \ib\|_{1,q}.
$
Then we apply $J_2^\ep$ to $\nub_\ep$ (as in \eqref{Jnub}) and $J_2^0$ to $\sigmab$ 
to get
$
\| j_{30}\|_{1,q} \le C|a|\| \varpi^\ep ( \sigma e^{i K_\ep\cdot}) \|_{1,q}.
$
Then we use the estimate \eqref{AT trick} of Lemma \ref{pi props} with $\omega = K_\ep$ to see
$
\| j_{30}\|_{1,q} \le C|a| |K_\ep|^{-1} \|\sigma\|_{2,q}.
$
And since $K_\ep = \O(1/\ep)$ we have
$
\| j_{30}\|_{1,q} \le C\ep |a|. $
This is one whole power of $\ep$ better than the naive estimate \eqref{bad j30}. With \eqref{j31 est map} 
we have
\be\label{j3 est map}
\|  j_{3}\|_{1,q} \le C \ep|a| + C a^2  \le C|RHS_{map}|.
\ee

To estimate $l_{3}$ is much the same, though recall that we do not need to estimate all of $l_3$, rather just the term $l_{31}$. As with $j_3$ above, we have:
$$
l_3 = -2a \lambda_+^\ep J_1^\ep \left(  J_2^0\sigmab .J_2^\ep \nub_\ep\right)\cdot \jb -2 a  \lambda_+^\ep  J_1^\ep \left((J_2^\ep-J_2^0)\sigmab. J^\ep_2\nub_\ep\right)\cdot\jb -2a  \lambda_+^\ep  B^\ep(\sigmab,\varphib_\ep^a-\varphib^0_\ep)\cdot\jb .
$$
Since $\chi_\ep:= -2a \lambda_+^\ep J_1^\ep \left(  J_2^0\sigmab .J_2^\ep \nub_\ep\right)\cdot \jb$ we see that 
\be\label{this is l31}
l_{31} = -2 a   \lambda_+^\ep J_1^\ep \left((J_2^\ep-J_2^0)\sigmab. J_2^\ep\nub_\ep\right)\cdot\jb -2a \lambda_+^\ep   B^\ep(\sigmab,\varphib_\ep^a-\varphib^0_\ep)\cdot\jb .
\ee

Now that we have an explicit formula for $l_{31}$, we use the
boundedness \eqref{all ops bound} of $\lambda_+^\ep$:
\begin{align*}
\| l_{31}\|_{1,q} \le & C|a|\|J_1^\ep \left((J_2^\ep-J_2^0)\sigmab. J^\ep_2\nub_\ep\right)\|_{1,q}
+C |a|\|  B^\ep(\sigmab,\varphib_\ep^a-\varphib^0_\ep)\|_{1,q}. & 
\end{align*}
Then we use \eqref{B sig phi est 1}, with $r = 1$, on the second term:
\begin{align*}
\| j_{l1}\|_{1,q} \le & C|a|\|  J_1^\ep \left((J_2^\ep-J_2^0)\sigmab. J^\ep_2\nub_\ep\right)\|_{1,q} +C \ep^{-1}a^2 . 
\end{align*}
For the first term, first we use the boundedness of $J_1^\ep$ from \eqref{all ops bound} followed by the product estimate \eqref{product inequality not borrow}:
\begin{align*}
\| l_{31}\|_{1,q} \le & C|a|\|(J_2^\ep-J_2^0)\sigmab\|_{1,q} \|J^\ep_2\nub_\ep\|_{W^{1,\infty}} +C \ep^{-1}a^2 . 
\end{align*}
Using the estimate for $\nub_\ep=\varphib_\ep^0$ in \eqref{phi bound}, with $r = 1$, gives
\begin{align*}
\| l_{31}\|_{1,q} \le & C\ep^{-1}|a|\|(J_2^\ep-J_2^0)\sigmab\|_{0,q}  +C\ep^{-1} a^2 . 
\end{align*}
Then we use the estimate \eqref{lw J} for $J^\ep_2-J^0_2$ from Lemma \ref{long wave J} and the bounds on $\sigmab$ in \eqref{sigma bound}
$
\| l_{31}\|_{1,q} \le  C|a| + C \ep^{-1} a^2.
$
Thus
\be\label{l31 est}
\ep \| l_{31}\|_{1,q} \le  C\ep|a| + C a^2 \le C|RHS_{map}|.
\ee
and we can move on.

%


\subsubsection{Mapping estimates for $j_4$ and $l_4$}
Applying the estimate  \eqref{B bound not borrow} for $B^\ep$ gives
$\|B^\ep(\etab,\varphib_\ep^a)\|_{r,q} \le C_r\| \etab\|_{r,q}\|J_2^\ep \varphib_\ep^a\|_{W^{r,\infty}}$. Then \eqref{phi bound} gives
\be\label{j4l4 bound}\begin{split}
\|B^\ep(\etab,\varphib_\ep^a)\|_{r,q}  \le C_r\ep^{-r}\|\etab\|_{r,q}.
\end{split}\ee
And so, using the same steps as in the previous subsubsection, we get:
\bes\label{j4 est map}
\| j_4\|_{1,q} \le  C|a|\| \varpi^\ep b_1^\ep(\etab,\varphib_\ep^a)\|_{1,q}  \le C|a|\|  b_1^\ep(\etab,\varphib_\ep^a)\|_{0,q} \le
C |a|\|\etab\|_{0,q}\le  C|RHS_{map}|
\ees
and
\bes\label{l4 est map 2}
\ep \| l_4\|_{1,q} \le  C\ep |a|\| b_2^\ep(\etab,\varphib_\ep^a)\|_{1,q}   \le C |a|\|\etab\|_{1,q}\le C|RHS_{map}|.
\ees

\subsubsection{Mapping estimates for $j_5$ and $l_5$}
Using \eqref{B bound set} we have
$
\| B^\ep(\etab,{\etab})\|_{r,q} \le C_r \|\etab\|_{r,q/2}^2.
$
Thus
\bes\label{j5 est map}
\| j_5\|_{1,q} \le C \|\etab\|_{1,q/2}^2\le  C|RHS_{map}|
\mand 
\ep \|  l_5\|_{1,q} \le C \ep \|\etab\|_{1,q/2}^2\le  C|RHS_{map}|.
\ees
With these estimates, the validation of the mapping estimate \eqref{ball1} is complete.
We move on to the Lipschitz estimates.

\subsection{The Lipschitz estimates}
Now we prove the estimate \eqref{lip1}. In this subsection  $\grave{j}_n$  is the same as ${j}_n$ but evaluated at $\grave{\etab}$ and $\ga$ instead of at $\etab$ and $a$.
Likewise  $\grave{l}_n$  is the same as ${l}_n$ but evaluated at $\grave{\etab}$ and $\ga$ instead of at $\etab$ and $a$. Also we have $q_\star/2 \le q < q' \le q_\star$.

We have by definition and the triangle inequality:
\begin{multline*}
\|N_1^\ep(\etab,a)-N_1^\ep(\grave{\etab},\ga)\|_{1,q}\\ \le \| \A^{-1} (j_2 -\grave{j}_2)\|_{1,q} +  \| \A^{-1} (j_3 -\grave{j}_3)\|_{1,q} +  \| \A^{-1} (j_4 -\grave{j}_4)\|_{1,q} +  \| \A^{-1} (j_5 -\grave{j}_5)\|_{1,q} 
\end{multline*}
\begin{multline*}
\|N_2^\ep(\etab,a)-N_2^\ep(\grave{\etab},\ga)\|_{1,q}\\ \le \| \ep^2\P_\ep (l_2 -\grave{l}_2)\|_{1,q} +  \| \ep^2\P_\ep (l_{31} -\grave{l}_{31})\|_{1,q} +  \| \ep^2\P_\ep (l_4 -\grave{l}_4)\|_{1,q} +  \| \ep^2\P_\ep (l_5 -\grave{l}_5)\|_{1,q} 
\end{multline*}
and
$$
|N_3^\ep(\etab,a)-N_2^\ep(\grave{\etab},\ga)| \le  \left\vert \iota_\ep [l_2 -\grave{l}_2]\right\vert +  \left\vert \iota_\ep [l_{31} -\grave{l}_{31}]\right\vert +  \left\vert \iota_\ep [l_4 -\grave{l}_4]\right\vert +  \left\vert \iota_\ep [l_5 -\grave{l}_5]\right\vert.
$$
Using the uniform bound \eqref{A inv bound} on $\A^{-1}$ from Lemma \ref{A inv} we have
\bes
\|N_1^\ep(\etab,a)-N_1^\ep(\grave{\etab},\ga)\|_{1,q}\\ \le C\left( \| j_2 -\grave{j}_2\|_{1,q} +  \| j_3 -\grave{j}_3\|_{1,q} +  \| j_4 -\grave{j}_4\|_{1,q} +  \| j_5 -\grave{j}_5\|_{1,q}\right).
\ees
In Lemma \ref{B inv}, the estimate \eqref{P bound} gives $\|\P_\ep f\|_{1,q} \le C \ep^{-1} \| f\|_{1,q}$. Thus
\bes
\|N_2^\ep(\etab,a)-N_2^\ep(\grave{\etab},\ga)\|_{1,q}\\ \le C \ep \left(\|  l_2 -\grave{l}_2\|_{1,q} +  \| l_{31} -\grave{l}_{31}\|_{1,q} +  \| l_4 -\grave{l}_4\|_{1,q} +  \| l_5 -\grave{l}_5\|_{1,q} \right).
\ees
And the Riemann-Lebesgue estimate \eqref{iota bound} in Lemma \ref{rl lemma} gives (with $r=1$)
\bes
|N_3^\ep(\etab,a)-N_2^\ep(\grave{\etab},\ga)|\\ \le C \ep \left(\|  l_2 -\grave{l}_2\|_{1,q} +  \| l_{31} -\grave{l}_{31}\|_{1,q} +  \| l_4 -\grave{l}_4\|_{1,q} +  \| l_5 -\grave{l}_5\|_{1,q} \right). 
\ees

Thus we will have \eqref{lip1} if we can show that each of the eight terms
\begin{multline*}
\| j_2 -\grave{j}_2\|_{1,q},\
\| j_3 -\grave{j}_3\|_{1,q},\
\| j_4 -\grave{j}_4\|_{1,q},\
\| j_5 -\grave{j}_5\|_{1,q},\\
\ep\| l_2 -\grave{l}_2\|_{1,q},\
\ep\| l_3 -\grave{l}_3\|_{1,q},\
\ep\|l _4 -\grave{l}_4\|_{1,q},\ \text{and}\
\ep\| l_5 -\grave{l}_5\|_{1,q}
\end{multline*}
is bounded by $C |RHS_{lip}|$, where
$$
|RHS_{lip}|:= {1 \over |q-q'|} 
\left(\ep + \|{\etab}\|_{1,q'}+\|\grave{\etab}\|_{1,q'}
+|a|+|\ga|\right)(|a-\ga|+\|\etab-\grave{\etab}\|_{1,q}). 
$$

\subsubsection{Lipschitz estimates for $j_2$ and $l_2$}
Note that $j_2$ and $l_2$ are linear in $\etab$ and do not depend at all on $a$. Thus we can 
use the estimates \eqref{j2 map est} and \eqref{l2 map est} to get:
\bes
\|j_2 -\grave{j}_2\|_{1,q} \le C\ep \| \etab - \grave{\etab}\|_{1,0} \le C|RHS_{lip}|
\mand
\ep \|l_2 -\grave{l}_2\|_{1,q} \le C\ep \| \etab - \grave{\etab}\|_{1,0} \le C|RHS_{lip}|.
\ees

\subsubsection{Lipschitz estimates for $j_3$ and $l_3$}
Explicit computations give
\begin{multline*}
j_3 -\grave{j}_3= -2(a-\ga)\varpi^\ep J_1^\ep \left( J_2^0\sigmab. J_2^\ep \nub_\ep\right)\cdot \ib -2 (a-\ga) \varpi^\ep J_1^\ep \left((J_2^\ep-J_2^0)\sigmab. J^\ep_2\nub_\ep\right)\cdot\ib\\ 
-2(a-\ga) \varpi^\ep B^\ep(\sigmab,\varphib_\ep^a - \varphib_\ep^0)\cdot \ib + 2\ga \varpi^\ep B^\ep(\sigmab,\varphib_\ep^\ga - \varphib_\ep^a)\cdot \ib
\end{multline*}
and
\begin{multline*}
l_{31} -\grave{l}_{31}=  -2 (a-\ga) \lambda_+^\ep J_1^\ep \left((J_2^\ep-J_2^0)\sigmab. J^\ep_2\nub_\ep\right)\cdot\jb\\ 
-2(a-\ga) \lambda_+^\ep B^\ep(\sigmab,\varphib_\ep^a - \varphib_\ep^0)\cdot \jb + 2\ga \lambda_+^\ep B^\ep(\sigmab,\varphib_\ep^\ga - \varphib_\ep^a)\cdot \jb.
\end{multline*}

We begin with an estimate of
$
B^\ep(\sigmab,\varphib_\ep^a - \varphib_\ep^{\ga}).
$
This estimate parallels the one for $
B^\ep(\sigmab,\varphib_\ep^a - \varphib_\ep^{0})
$
in \eqref{B sig phi est 1}.
Using the  ``decay borrowing estimate" \eqref{B bound borrow} for $B^\ep$ we have
$$
\|B^\ep(\sigmab,\varphib_\ep^a - \varphib_\ep^\ga) \|_{r,q} \le C_r\| \sigmab\|_{q_0}\| \sech(|q_0-q| \cdot) J_2^\ep (\varphib_\ep^a - \varphib_\ep^\ga)\|_{W^{r,\infty}}.
$$
%
%
%
Using the definition of the $W^{r,\infty}$ norm  we get:
$$
 \| \sech(|q_0-q| \cdot) J_2^\ep (\varphib_\ep^a - \varphib_\ep^\ga)\|_{W^{r,\infty}} \le C_r\sup_{X \in \R} \left \vert
  \sech(|q_0-q| X)  \sum_{n = 0}^r (\partial_X^n J_2^\ep (\varphib_\ep^a - \varphib_\ep^\ga))\right \vert.
$$
We estimated $\partial_X^n J_2^\ep (\varphib_\ep^a - \varphib_\ep^\ga)$ above in
\eqref{Lip est phi} in Lemma \ref{phi lemma}. We use that estimate here to get
\be\label{this guy}
 \| \sech(|q_0-q| \cdot) J_2^\ep (\varphib_\ep^a - \varphib_\ep^\ga)\|_{W^{r,\infty}} \le C_r\ep^{-r}\sup_{X \in \R} \left \vert
  \sech(|q_0-q| X)  (1+|X|)\right \vert|a-\ga|.
\ee
We know that
$\sup_{X \in \R} \left \vert
  \sech(|q_0-q| X)  (1+|X|)\right \vert \le C
$
and so
 \bes
 \| \sech(|q_0-q| \cdot) J_2^\ep (\varphib_\ep^a - \varphib_\ep^\ga)\|_{W^{r,\infty}} \le C_r\ep^{-r}|a-\ga|
\ees
which in turn gives
\be\label{B sig phi est 2}
\| B^\ep(\sigmab,\varphib_\ep^a - \varphib_\ep^\ga)\|_{r,q} \le C_r\ep^{-r} |a-\ga|.
\ee

The formulas for $j_3-\grave{j}_3$ and $l_{31}-\grave{l}_{31}$ differ only slightly from those 
for $j_3$ and $l_{31}$. Any differences there are can be handled with \eqref{B sig phi est 2}; we leave out the steps.
The results are
\bes\label{j3 est lip}
\|  j_{3}-\grave{j}_3\|_{1,q} \le C \ep|a-\ga| + C\left(|a|+|\ga|\right)|a-\ga| \le C|RHS_{lip}|
\ees
and
\bes\label{le1 est 3}
\ep \| l_{31}-\grave{l}_{31}\|_{1,q} \le C \ep|a-\ga| + C\left(|a|+|\ga|\right)|a-\ga|\le C|RHS_{lip}|.
\ees

\subsubsection{Lipschitz estimates for $j_4$ and $l_4$}
Much of this parallels the earlier treatment of 
 $j_3$ and $l_3$, just swapping out $\sigmab$ for $\etab$.
 There is one major wrinkle however: varying $a$ results in a loss of decay rate $q$ for $\etab$. 
 This is not terribly suprising given the estimate \eqref{Lip est phi}.
Thus we must keep careful track of the decay rates. So fix $q \in [q_*/2,q_*)$.

We look at
$\ds
B^\ep(\etab,\varphib_\ep^a - \varphib_\ep^{\grave{a}}). 
$
We use the decay borrowing estimate \eqref{B bound borrow} to get, if $q'> q$,
$$
\| B^\ep(\etab,\varphib^a_\ep-\varphib^\ga_\ep)\|_{r,q} \le C_r\|\etab\|_{r,q'} \|\sech(|q-q'|\cdot) J_2^\ep(\varphib^a_\ep-\varphib^\ga_\ep)\|_{W^{r,\infty}}.
$$
As with the estimates that led to \eqref{this guy}, we can use \eqref{Lip est phi} to get
\bes
 \| \sech(|q-q'| \cdot) J_2^\ep (\varphib_\ep^a - \varphib_\ep^\ga)\|_{W^{r,\infty}} \le C\ep^{-r}\sup_{X \in \R} \left \vert
  \sech(|q-q'| X)  (1+|X|)\right \vert|a-\ga|.
\ees
Now, however, we do not know that $|q-q'|$ is bounded strictly from zero.

Elementary calculus can be used to show that
$$
\sup_{X\in\R} \left \vert (1+ |X|) \sech(|q-q'|X) \right \vert \le C|q-q'|^{-1}.
$$
Thus we have
\be\label{j4l4 est lip 2}
\| B^\ep(\etab,\varphib^a_\ep-\varphib^\ga_\ep)\|_{r,q} \le C\ep^{-r}|q-q'|^{-1}\|\etab\|_{r,q'} |a-\ga| \quad \text{when $q'>q$}.
\ee

Now the triangle inequallity and binliearity of $B^\ep$ give:
\begin{multline*}
     \| B^\ep(\etab,a\varphib^a_\ep) - B^\ep(\grave{\etab},\ga\varphib^\ga_\ep)\|_{r,q}  
\le |a| \| B^\ep(\etab,\varphib^a_\ep-\varphib^\ga_\ep)\|_{r,q} \\
+    |a-\ga|\| B^\ep(\etab,\varphib^\ga_\ep) \|_{r,q} 
+    \| B^\ep(\etab-\grave{\etab},\ga\varphib^\ga_\ep)\|_{r,q} 
\end{multline*}
So if we use \eqref{j4l4 bound} and \eqref{j4l4 est lip 2} we get:
\begin{multline*}
  \| B^\ep(\etab,a\varphib^a_\ep) - B^\ep(\grave{\etab},\ga\varphib^\ga_\ep)\|_{r,q} 
\le  C\ep^{-r}|q-q'|^{-1} \|\etab\|_{r,q'}|a||a-\ga| \\+ C\ep^{-r} \left(\|\etab\|_{r,q}|a-\ga|+|\ga|\|\etab - \grave{\etab}\|_{r,q}
\right) \quad \text{when $q'>q$}.
\end{multline*}
This estimate, together with the sorts of steps we have used above, lead to:
\bes
\| {j}_4 - \grave{j}_4\|_{1,q} \\\le C|q-q'|^{-1}\left( \|\etab\|_{1,q'}|a||a-\ga|+\|\etab\|_{1,q}|a-\ga|+|\ga|\|\etab - \grave{\etab}\|_{1,q}\right) \le C|RHS_{lip}|
\ees
and
\bes
\ep \|{l}_4 - \grave{l}_4\|_{1,q_*/2}\\ \le C|q-q'|^{-1}\left( \|\etab\|_{1,q'}|a||a-\ga|+\|\etab\|_{1,q}|a-\ga|+|\ga|\|\etab - \grave{\etab}\|_{1,q}\right) \le C|RHS_{lip}|
\ees
so long as $q_*/2 \le q < q' \le q_*$.

\subsubsection{Lipschitz estimates for $j_5$ and $l_5$}
Using \eqref{B bound set} from  Lemma \ref{B lemma} gives
$$
\| B^\ep(\etab,{\etab})-B^\ep(\grave{\etab},\grave{\etab})\|_{r,q} \le C 
(\|\etab\|_{r,q/2}+
\|\grave{\etab}\|_{r,q/2})\|\etab-\grave{\etab}\|_{r,q/2}.
$$
Thus
\bes\label{j5 est lip}
\| j_5-\grave{j_5} \|_{1,q} \le C 
(\|\etab\|_{1,q/2}+
\|\grave{\etab}\|_{1,q/2})\|\etab-\grave{\etab}\|_{1,q/2}\le C|RHS_{lip}|
\ees
and
\bes
\ep \| l_5-\grave{l}_5 \|_{1,q} \le C \ep
(\|\etab\|_{1,q/2}+
\|\grave{\etab}\|_{1,q/2})\|\etab-\grave{\etab}\|_{1,q/2}\le C|RHS_{lip}|.
\ees

This completes the estimate that give rise to \eqref{lip1} and we move on to the bootstrap estimates.

\subsection{The bootstrap estimates}
In this section we prove the estimates \eqref{boot1} and \eqref{boot2}. 
The triangle inequality gives:
$$
\|N_1(\etab,a)\|_{r+1,q} \le \| \A^{-1} j_1\|_{r+1,q} +\| \A^{-1} j_2\|_{r+1,q}+ \| \A^{-1} j_3\|_{r+1,q} +\| \A^{-1} j_4\|_{r+1,q}+ \| \A^{-1} j_5\|_{r+1,q}
$$
$$
\|N_2(\etab,a)\|_{r+1,q} \le \| \ep^2 \P_\ep  l_1\|_{r+1,q} + \| \ep^2 \P_\ep  l_2\|_{r+1,q} + \| \ep^2 \P_\ep  l_{31}\|_{r+1,q} +\| \ep^2 \P_\ep  l_4\|_{r+1,q} + \| \ep^2 \P_\ep  l_5\|_{r+1,q}
$$
and
$$
|N_3(\etab,a)| \le | \iota_\ep l_1|+ | \iota_\ep l_2|+| \iota_\ep l_{31}|+| \iota_\ep l_4|+| \iota_\ep l_5|
$$

Using the bound \eqref{A inv bound} for $\A^{-1}$ we have
$$
\|N_1(\etab,a)\|_{r+1,q} \le C_r \left (\| j_1\|_{r+1,q} +\| j_2\|_{r+1,q}+ \|  j_3\|_{r+1,q} +\| j_4\|_{r+1,q}+ \| j_5\|_{r+1,q}\right).
$$
As seen in Lemma \ref{B inv}, the operator $\P_\ep$ is smoothing by up to two derivatives.
However each derivative of smoothing comes at a cost of an additional negative power of $\ep$. Choosing to smooth by just one derivative 
we have:
$$
\|N_2(\etab,a)\|_{r+1,q} \le C (\| l_1\|_{r,q}+\| l_2\|_{r,q} + \|  l_{31}\|_{r,q} +\|  l_4\|_{r,q} + \|   l_5\|_{r,q}).
$$
And the Riemann-Lebesgue estimate \eqref{iota bound} in Lemma \ref{rl lemma} 
\bes
|N_3^\ep(\etab,a)|\\ \le C \ep^r(\| l_1\|_{r,q}+\| l_2\|_{r,q} + \|  l_{31}\|_{r,q} +\|  l_4\|_{r,q} + \|   l_5\|_{r,q}).
\ees

Thus we will have \eqref{boot1} and \eqref{boot2} if we can show that each of the ten terms
\begin{multline*}
\| j_1\|_{r+1,q},\ \| j_2\|_{r+1,q},\ \|  j_3\|_{r+1,q},\ \| j_4\|_{r+1,q},\ \| j_5\|_{r+1,q},\\
\| l_1\|_{r,q},\  \| l_2\|_{r,q},\  \|  l_{31}\|_{r,q},\ \|  l_4\|_{r,q},\ \text{and}\ \|   l_5\|_{r,q}
\end{multline*}
is bounded by $C_r |RHS_{boot}|$, where
$$
|RHS_{boot}|:= \ep + \|\etab\|_{r,q} + \ep^{1-r} |a|   +\ep^{-r}a^2+\ep^{-r}|a|\|\etab\|_{r,q}+\|\etab\|_{r,q}^2.
$$
%
%

\subsubsection{Bootstrap estimates $j_1$ and $l_1$}
Since $\sigmab$ is a smooth function, the estimates on $j_1$ and $l_1$ can be improved from above more or less for free. Specifically we have, for any $r \ge 0$,
we have
$$
\| j_1 \|_{r+1,q_0} + \ \|l_1\|_{r,q_0} \le C_r\ep\le C_r |RHS_{boot}|.
$$

\subsubsection{Bootstrap estimates for $j_2$ and $l_2$}
Recall that $j_2 = -2 \varpi^\ep b_1^\ep(\sigmab,\etab) + 2 \varpi^0 b_1^0(\sigmab,\etab)$. From the estimate \eqref{varpi smooth} in
Lemma \ref{pi props} we see that the operators $\varpi^\ep$ and $\varpi^0$ smooth by up to two derivatives at no cost in $\ep$. Thus we conclude, with the help of \eqref{B bound set},
$$
\| j_1 \|_{r+1,q} \le C_r \|b_1^\ep(\sigmab,\etab)\|_{r,q} + C_r \|b_1^0(\sigmab,\etab)\|_{r,q}\le
C_r \| \etab \|_{r,q}\le C_r |RHS_{boot}|.
$$
Since $l_2 = 2b_2^\ep(\sigmab,\etab)$, we use \eqref{B bound set} and see that
$$
\|l_2\|_{r,q} \le C_r\| \etab \|_{r,q}\le C_r |RHS_{boot}|.
$$

\subsubsection{Bootstrap estimates for $j_3$ and $l_{31}$}
Since $\varpi^\ep$ smooths by up to two derivatives we have, using \eqref{varpi smooth},
$
\| j_3 \|_{r+1,q} \le C|a|\| b_1^\ep(\sigmab,\varphib^a_\ep)\|_{r-1,q}.
$
Then using the product inequality for $B^\ep$ in  \eqref{B bound not borrow} followed by the estimate for $J_2^\ep \varphib_\ep^a$ in \eqref{phi bound} gives
$$
\| j_3 \|_{r+1,q} \le C|a|\|\sigmab\|_{r-1,q} \|J_2^\ep \varphib^a_\ep\|_{W^{r-1,\infty}} \le C_r \ep^{1-r}|a|.
$$

We saw above that
$
l_{31} =-2 a  J_1^\ep \left((J_2^\ep-J_2^0)\sigmab. J_2^\ep\nub_\ep\right)\cdot\jb -2a  B^\ep(\sigmab,\varphib_\ep^a-\varphib^0_\ep)\cdot\jb .
$
The estimate of this in $H^r_{q}$ is not much different than our earlier estimate in $H^1_q$.  Specifically, using \eqref{all ops bound} and \eqref{product inequality not borrow}:
$$
\|J_1^\ep \left((J_2^\ep-J_2^0)\sigmab. J_2^\ep\nub_\ep\right)\cdot\jb\|_{r,q} \le C \| (J_2^\ep - J_2^0)\sigmab\|_{r}\|J_2 \nub_\ep\|_{W^{r,\infty}}.
$$
Using the approximation inequality \eqref{lw J} on the first term and then the bounds on $\varphib_\ep^a$ in \eqref{phi bound} for the second gives
$$
\|J_1^\ep \left((J_2^\ep-J_2^0)\sigmab. J_2^\ep\nub_\ep\right)\cdot\jb\|_{r,q} \le C\ep^{1-r}.
$$
Then we recall \eqref{B sig phi est 1} gives
$
\|B^\ep(\sigmab,\varphib_\ep^a-\varphib^0_\ep)\|_{r,b} \le C_r \ep^{-r}|a|
$
and so all together
we find that
$$
\| l_{31} \|_{r,q} \le C_r \ep^{1-r} |a| + C_r\ep^{-r} a^2 \le C|RHS_{boot}|.
$$

\subsubsection{Bootstrap estimates  $j_4$ and $l_{4}$}
Since $\varpi^\ep$ smooths by two derivatives we have, after using \eqref{varpi smooth},
$
\| j_4 \|_{r+1,q} \le C|a|\| b_1^\ep(\etab,\varphib^a_\ep)\|_{r,q}.
$
Then using the product inequality  \eqref{B bound not borrow} and the $\varphib$ bound \eqref{phi bound} gives.
$$
\| j_4 \|_{r+1,q} \le C|a|\|\etab\|_{r,q} \|J_2^\ep\varphib^a_\ep\|_{W^{r,\infty}} \le C_r \ep^{-r}|a|\|\etab\|_{r,q} \le C|RHS_{boot}|.
$$
The term $l_4$ has no smoothing operator attached, but is otherwise estimated in the same way. We have
$$
\| l_4 \|_{r,q} \le C|a|\|\etab\|_{r,q} \|\varphib^a_\ep\|_{W^{r,\infty}} \le C_r \ep^{-r}|a|\|\etab\|_{r,q} \le C|RHS_{boot,1}|.
$$

\subsubsection{Bootstrap estimates for $j_5$ and $l_{5}$}
Since $\varpi^\ep$ smooths by up to two derivatives we have, after using \eqref{varpi smooth} and \eqref{B bound set},
$$
\| j_5 \|_{r+1,q} \le C\|\etab\|_{r,q/2}^2 \le C|RHS_{boot}|.
$$
The term $l_4$ has no smoothing operator attached, but is otherwise estimated in the same way. We have
$$
\| l_5 \|_{r,q} \le C\|\etab\|_{r,q/2}^2 \le C|RHS_{boot}|.
$$
That completes our proof of \eqref{boot1}, \eqref{boot2} and Proposition \ref{main mover}.

\section{Function analysis}\label{proofs}
In this section we prove  Lemmas \ref{lambda lemma}, \ref{varpi symbol props}, \ref{xi symbol props}, \ref{xi decomp}, \ref{xi lemma} and \ref{pi lemma}. Each of these lemmas gives quantitative estimates for some specific meromorophic function.
We use little more than foundational methods from real and complex analysis here though their implementation is sometimes complicated.

\subsection{Multiplier properties in $\C$}
This subsection contains the proofs of Lemmas \ref{lambda lemma}, \ref{xi lemma} and \ref{pi lemma}.
\begin{proof} (of Lemma \ref{lambda lemma})
Parts (i),  (iii)  and (iv) are easily inferred from properties of cosine. 

For Part (ii) note that so long as $\Re((1-w)^2 + 4w \cos^2(z))>0$ we can use
the prinicipal square root to extend $\tilde{\varrho}(k)$ analytically into the complex plane.  Complex trigonometry identities show
that
\begin{multline*}
\Re((1-w)^2 + 4w \cos^2(k+i \tau)) = 1+w^2 + 2 w \cos(2k) \cosh(2 \tau) \\
\ge  1+w^2 - 2 w  \cosh(2 \tau) =: f(\tau).
\end{multline*}
Note that $f(0) = (1-w)^2$. Since $w>1$, this is strictly positive. Thus we can find $\tau_0 > 0$ such that $f(\tau) > (1-w)^2/2$ when  $|\tau| \le \tau_0$.
In turn this implies that $\tilde{\varrho}(z)$ (and thus $\tlambda_\pm(z)$) is analytic when $|\Im(z)| \le \tau_0.$  Since the functions are periodic in the real
direction and the strip $\overline{\Sigma}_{\tau_0}$ is bounded in the imaginary direction, the extreme value theorem implies that the functions (and all their derivatives) are uniformly bounded
on it. Thus we have (ii).

For Part (v), we compute
$$\left \vert \tlambda_\pm'(k)\right \vert = \left \vert {4w \sin(k)\cos(k) \over \sqrt{ (1+w)^2 - 4w \sin^2(k)}}\right \vert =2c_w^2 |\sin(k)|\left \vert  {\cos(k) \over \sqrt{1-{4w \over (1+w)^2}\sin^2(k) }}\right \vert.$$
It is clear that
$$
\sup_{s \in [0,1]} \left \vert {{1-s}\over {1-{r}s }} \right \vert \le 1
$$
when $0 \le r \le 1$. The fact that $w>1$ implies that $ 0 < {4w / (1+w)^2}< 1$. Thus 
$$
\sup_{k \in \R} \left \vert  {\cos(k) \over \sqrt{1-{4w \over (1+w)^2}\sin^2(k) }} \right \vert=
\sup_{k \in \R} \left \vert  {\sqrt{1 - \sin^2(k) \over 1-{4w \over (1+w)^2}\sin^2(k) }} \right \vert \le 1.
$$
Since $|\sin(k)| \le |k|$ for all $k$, this gives
$\left \vert \tlambda_\pm'(k)\right \vert \le 2c_w^2 |k|$, the second inequality in Part (v). The first inequality is simpler and omitted.
Since $c_w^2 = 2w/(1+w)$ and $w>1$ we have $c_w^2 > 1$.

For Part (vi), by (iv) we have 
$\tilde{\lambda}_+(k) \in [2w,2+2w]$ for all $k$.
We have $c^2 k^2 < 2w$ when $|k|<\sqrt{2w}/c=:k_1$ and $c^2 k^2 > 2+2w$
when $|k|> \sqrt{2+2w}/c=:k_2$. Since our functions are continuous, the intermediate value theorem
implies that there is at least one value of $k$ such that $c^2 k^2 = \tilde{\lambda}_+(k)$ in $[k_1,k_2]$. Likewise, there can be no solutions of 
outside $[k_1,k_2]$.

Now put $c_-= 5/4\sqrt{2}\in(0,1)$ and assume $c > c_-$.
If $k\ge k_1$, and 
because $w > 1$, we have
$$
2c^2 k \ge 2 c^2 k_1 = 2 c \sqrt{2 w} \ge2 \sqrt{2}c_-  = {5 \over 2}.
$$
This implies that 
\be\label{xi deriv estimate}
\left \vert \ds {d \over dk}\left(c^2 k^2 - \tilde{\lambda}_+(k) \right) \right \vert  = \left \vert 2c^2 k - \tilde{\lambda}_+'(k)\right \vert\ge {5 \over 2}- 2=l_0>0
\ee
when $k\ge k_1$. This implies that there can be at most one solution
of $c^2 k^2 - \tilde{\lambda}(k) = 0$ for $k\ge k_1$ and also gives the estimate for $|2c^2 k_c - \tilde{\lambda}_+'(k_c)|$.  
The smoothness of the map $c \mapsto k_c$
follows in a routine way from this derivative estimate and the implicit function theorem. Thus we have all of Part (vi).

\end{proof}

\begin{proof}  (of Lemma \ref{xi lemma})
Take $c_-$ as in the proof of Lemma \ref{lambda lemma} and let $c_+=\sqrt{c_w^2 + 1}$ when $c > c_-$.
Note that Part (vi) of that lemma tells us that $\txi_c(k_c)=0$ and $|\txi'(k_c)| \ge l_0$.
Henceforth assume $c \in (c_-,c_+)$. Clearly $c_w \in (c_-,c_+)$.
Parts (i) and (ii) follow immediately from Lemma \ref{lambda lemma}.
And so all that remains is to prove the estimate \eqref{xi estimate} in Part (iii).

{\bf Estimates when $|z|$ is large:}
First, note that since $\tlambda_+(z)$ is bounded in $\overline{\Sigma}_{\tau_0}$ 
there exists $k_{big}>0$ such that $|\Re z|\ge k_{big}$ and
$\Im(z) \le \tau_0$ implies
$$
|\txi_c(z)|\ge {1 \over 2} c_w^2 |z|^2
$$
for any $c\in(c_-,c_+)$.
So clearly, for $j =0,1,2$, we have
\be\label{biggish}
\inf_{k\ge k_{big}} (1+k^2)^{-j/2} |\txi_c(k+i\tau)| \ge C_1>0
\ee
where $C_1$ does not depend on $c$.
Thus we only need to concern ourselves with $|\Re(z)| \le k_{big}$. Our next stop is near $k_c$.

{\bf Estimates when $z\sim k_c$:}
Since $\txi_c''(z) = -2c^2 + \tlambda_+''(z)$ there exists $C_2<0$ such that
$
|\txi_c''(z)| \le C_2
$
for all $z \in \overline{\Sigma}_{\tau_0}$ and $c \in (c_-,c_+)$. This implies
$
| \txi_c'(z) - \txi'_c(z')| \le C_2|z-z'|
$
for all $z,z' \in \overline{\Sigma}_{\tau_0}$.
Thus if $|z-k_c| \le \delta_1:=l_0/2C_2$ this implies $\left \vert\txi_c'(z)-\txi'_c(k_c) \right \vert \le l_0/2$.
Note that $\delta_1$ does not depend on $c$.
The reverse triangle inequality gives
$$
|\txi_c'(z)| = \left\vert |\txi'(k_c)|- \left \vert\txi_c'(z)-\txi'_c(k_c) \right \vert\right \vert \ge {l_0 \over 2}
$$
provided $|z-k_c| \le \delta_1$. The FTOC then implies
$$
|\txi_c(z)| \ge {l_0 \over 2}|z-k_c|
$$
for all $|z-k_c| \le \delta_1$ and $c \in (c_-,c_+)$.

Thus if $z = k + i \tau$ and  $|z-k_c| \le \delta_1$ then
\be\label{middle}
|\txi_c(z)| \ge {l_0 \over 2}|z-k_c|\ge {l_0 \over 2} |\tau|
\ee
for any $c \in (c_-,c_+)$. 

{\bf Estimates for $z$ everywhere else:}
Now we know that $k_c$ depends smoothly on $c$. So select $\delta>0$ such that $|c-c_w| \le \delta$ implies $|k_c- k_{c_w}| \le {1 \over 100} \delta_1$.
Let $K:=[0,k_{c_w}-\delta_1/2] \cup [k_{c_w}+\delta_1/2,k_{big}]$. Let
$$
m_*:=\inf_{|k| \in K,k\in\R} \inf_{|c-c_w| \le \delta} |\txi_c(k)|
$$
This number is strictly positive, since the only real zeros for $\txi_c(k)$, by the definition of $\delta$, lie outside of $K$, which is compact.

In the set $\left\{ |\Re(z)| \le k_{big}, |\Im(z)|\le \tau_0\right\}$, which is compact, $\xi_c(z)$ is Lipschitz with a constant (say $C_3>0$) that, so long as $c$ lies in a compact set,
is bounded independent of $c$. Thus, if $|k| \in K$ and $|\tau|\le m_*/2C_3$ we have
$$
|\txi_c(k+i\tau) - \txi_c(k)| \le C_3|\tau| \le m_*/2.
$$
The reverse triangle inequality then gives
\be\label{smallish}
|\txi_c(k+i \tau)| \ge |\txi_c(k)| - C_3|\tau| \ge m_*/2.
\ee

{\bf Overall estimates:}
So put $\tau_3:=\min(\delta_1/2,\tau_0,m_*/2C_3)$. If $z = k + i \tau$ with $|\tau| \le \tau_3$ then notice that we have either
(a) $|k| \in K$, (b) $|k-k_c|\le \delta_1$ or (c) $|k|\ge k_{big}$. Thus we can use either \eqref{biggish}, \eqref{middle} or  \eqref{smallish} to see that
$$
(1+k^2)^{-j/2}|\txi_c(k+i \tau)| \ge C|\tau|
$$
where $C>0$. This completes the proof.

\end{proof}

\begin{proof} (of Lemma \ref{pi lemma})
We have the series expansion
$
\tlambda_-(z) = c_w^2z^2 - \alpha_w z^4 + \cdots.
$
So if we put $$\tzeta(z):={\tilde{\lambda}_-(z) \over z^2}$$ 
we see the singularity at $z = 0$ is removable. 
Since $\tlambda_-(z)$ is analytic and uniformly bounded in the strip 
$\overline{\Sigma}_{\tau_0}$, we have the same for $\tzeta(z)$. With this, we rewrite $\tvarpi^\ep$ as:
$$
\tvarpi^\ep(Z) = -{\ep^2 \tlambda_-(\ep Z) \over (c_w^2+\ep^2) \ep^2 Z^2 - \tlambda(\ep Z)}= -{\ep^2 \tzeta(\ep Z) \over c_w^2 + \ep^2  - \tzeta(\ep Z)}.
$$
Obviously
$
\tzeta(z) = c_w^2 - \alpha_w z^2 + \cdots.
$
And so Taylor's theorem (with the uniform bound) then implies there exists $C_1>0$ such that  for all $z$ in the strip $\overline{\Sigma}_{\tau_0}$
we have:
\be\label{taylor zeta jones}
|\tzeta(z)| \le C_1,\quad |\tzeta(z) - c_w^2| \le C_1|z|^2, |\tzeta(z) - c_w^2 + \alpha_w z^2| \le C_1 |z|^4.
\ee
Likewise, the uniform bound on $\tzeta'(z)$ implies
\be\label{zeta lip}
|\tzeta(z) - \tzeta(z')| \le C_1|z-z'|
\ee
for all $z,z'$ in the strip $\overline{\Sigma}_{\tau_0}$.


{\bf Estimates near $Z=0$:}
The reverse triangle inequality gives:
\begin{multline}\label{reverse}
|c_w^2 + \ep^2 - \tzeta(\ep Z)| = |\ep^2 + \ep^2 \alpha_w Z^2 + (c_w^2 - \ep^2 \alpha_w Z^2 - \tzeta(\ep Z)|\\
\ge \left \vert \ep^2|1+\alpha_w Z^2| - | c_w^2 - \ep^2 \alpha_w Z^2 - \tzeta(\ep Z)|\right \vert
\end{multline}
If $Z= K + i q$, then we have
$$ 
|1 + \alpha_w Z^2| \ge |1 + \alpha_w \Re Z^2| = |1 + \alpha_w (K^2-q^2)|.
$$
If we restrict $|q|$ so that
$1-\alpha_w q^2 >1/2$ 
then we see that 
\be\label{jet 1}
 |1+\alpha_w Z^2| \ge {1 \over 2}  (1 + 2\alpha_w K^2) \ge {1 \over 2}
\ee
for all $Z$.

From \eqref{taylor zeta jones} we have
$$
| c_w^2 - \ep^2 \alpha_w Z^2 - \tzeta(\ep Z)| \le C_1 \ep^4 |Z|^4.
$$
So let us demand that $$
|Z| \le {\delta_1 \over \ep}.
$$
We will specify $\delta_1>0$ in a moment.
Then
$$
C_1\ep^4|Z|^4 \le C_1 \delta_1^2 \ep^2|Z|^2 = C_1 \delta_1^2 \ep^2(K^2 + q^2). 
$$
Of course there exists $C_2$ such that
$$
K^2 + q^2 \le C_2(1+2 \alpha_w K^2)
$$
Thus 
$$
C_1\ep^4|Z|^4 \le  C_2C_1 \delta^2_1 \ep^2(1+2\alpha_w K^2 ). 
$$
Then take $\delta_1 = \ds{1 \over 2 \sqrt{C_1 C_2}}$ so that
\be\label{jet 2}
C_1 \ep^4|Z|^4 \le {1 \over 4} \ep^2(1+2\alpha_w K^2 ). 
\ee

Putting \eqref{reverse}, \eqref{jet 1} and \eqref{jet 2} together gives, for all $ \ep \in (0,1)$,
\be\label{den est in}
|c_w^2 + \ep^2 - \tzeta(\ep Z)| \ge  {1 \over 4} \ep^2(1+2\alpha_w K^2 ) \ge {1 \over 4} \ep^2
\ee
so long as \be\label{inside}
|\Im (Z) |\le q_{51}:= \min\left({1 \over \sqrt{2 \alpha_w}},\tau_0\right) \mand |Z|\le {\delta_1 \over \ep}.
\ee

And so, if $Z$ meets \eqref{inside} then \eqref{den est in} and the first estimate in \eqref{taylor zeta jones} give
\be\label{smoothing bound part 1}
|\tvarpi^\ep(Z)| =  {\left \vert \ep^2 \tzeta(\ep Z) \right \vert\over \left |c_w^2 + \ep^2 - \tzeta(\ep Z)\right| } \le {4C_1 \over 1+2 \alpha_w K^2} \le {C \over 1+K^2} .
\ee

Next look at
$$
\tilde{\rho}^\ep(Z):= \tvarpi^\ep(Z) - \tvarpi^0(Z) =  -{\ep^2\tzeta(\ep Z)  \over c_w^2 + \ep^2 - \tzeta(\ep Z)}+ {c_w^2 \over 1+\alpha_w Z^2}
$$
Adding zero and the triangle inequality gives:
$$
\left \vert\tilde{\rho}^\ep(Z) \right \vert \le \left \vert {\tzeta(\ep Z) - c_w^2 \over 1+ \alpha_w^2Z^2} \right\vert+ \left \vert \tzeta(\ep Z) \right\vert \left \vert {\ep^2 \over c_w^2 + \ep^2 - \tzeta(\ep Z)} - {1 \over 1+\alpha_w Z^2} \right \vert=:I+II.
$$

The second estimate in \eqref{taylor zeta jones} gives
$$
I \le {C_1 \ep^2 |Z|^2 \over |1+ \alpha_w^2 Z^2|}.
$$
Then if we assume \eqref{inside} and apply \eqref{jet 1} we have:
$$
I \le C\ep^2 {K^2+q^2 \over 1+ 2 \alpha_w K^2} \le C \ep^2.
$$

Next, combining fractions gives:
$$
II=  { \left \vert \tzeta(\ep Z) \right\vert \left \vert \tzeta(\ep Z) - c_w^2 + \ep^2 \alpha_w Z^2 \right \vert\over \left \vert c_w^2 + \ep^2 - \tzeta(\ep Z) \right \vert \left \vert 1+\alpha_w Z^2 \right \vert} 
$$
Using the first and third inequalities in \eqref{taylor zeta jones} gives:
$$
II \le  {  C\ep^4 |Z|^4\over \left \vert c_w^2 + \ep^2 - \tzeta(\ep Z) \right \vert \left \vert 1+\alpha_w Z^2 \right \vert} 
$$
Then if we assume \eqref{inside} and apply \eqref{jet 1} and \eqref{den est in} we have:
$$
II \le  {  C\ep^2 |Z|^4\over \left( 1+2 \alpha_w K^2 \right )^2 } 
$$
Then we see
$$
II \le  {  C\ep^2 (K^2+q^2)^2\over \left( 1+2 \alpha_w K^2 \right )^2 } \le C \ep^2.
$$
Therefore, if \eqref{inside} is met, we have
\be\label{error bound part 1}
\left \vert  \tvarpi^\ep(Z) - \tvarpi^0(Z)\right \vert \le C\ep^2.
\ee


{\bf Estimates far from $Z=0$:} We saw in \eqref{super sonic} that $c_w^2 k^2 - \tlambda_-(k) \ge 0$ with equality only at $k =0$.
This implies that
 \be\label{zeta on R}\tzeta(k) < c_w^2\ee for all $k \ne 0$ and $k \in \R$.
%
%
Take $\delta_1$ as above and put $\tzeta_1:=\sup_{|k|\ge \delta_1/2} \tzeta(k)$. Because we have \eqref{zeta on R}, we know that
$$
c_w^2 - \tzeta_1 =: \delta_3 >0.
$$
It should be obvious that $\delta_3$ does not depend at all on $\ep$.

Suppose that $Z = K+iq$ with $|K| \ge \delta_1/2\ep$. Then the reverse triangle inequality gives:
\be\label{rev2}
\left \vert c_w^2 + \ep^2 - \tzeta(\ep Z)\right \vert \ge 
\left \vert   
\left \vert c_w^2 + \ep^2 - \tzeta(\ep K)\right \vert - \left \vert \tzeta(\ep K) - \tzeta(\ep K + i \ep q)\right \vert 
\right\vert 
\ee
Since $\ep|K|\ge\delta_1/2$ we have
\be\label{jet 3}
\left \vert c_w^2 + \ep^2 - \tzeta(\ep K)\right \vert  \ge \delta_3.
\ee
Then we use \eqref{zeta lip} to see that
$$
\left \vert \tzeta(\ep K) - \tzeta(\ep K + i \ep q)\right \vert \le C_1 \ep |q|.
$$
Thus if we restrict $|q| \le q_{52}:=\min(\delta_3/2C_1,\tau_0)$ we have
\be\label{jet 4}
\left \vert \tzeta(\ep K) - \tzeta(\ep K + i \ep q)\right \vert \le\delta_3/2
\ee
for all $\ep \in (0,1)$.

Thus if we have
\be\label{outside}
\Im(Z) \le q_{52} \mand \Re(Z) \ge \delta_1/2\ep
\ee
then \eqref{rev2}, \eqref{jet 3} and \eqref{jet 4} give:
\be\label{den est 2}
\left \vert c_w^2 + \ep^2 - \tzeta(\ep Z)\right \vert \ge {1 \over 2} \delta_3.
\ee

This gives 
$$
\left \vert \tvarpi^\ep(Z) \right \vert \le C\ep^2 \left \vert \tzeta(\ep Z) \right \vert  \le C{\left\vert \tlambda_-(\ep Z)\right \vert\over |Z|^2}.
$$
when we have \eqref{outside}. The uniform bound on $\tlambda_-$ converts this to
$$
\left \vert \tvarpi^\ep(Z) \right \vert \le C\ep^2 \left \vert \tzeta(\ep Z) \right \vert  \le {C\over |Z|^2}.
$$
But since we have \eqref{outside}, clearly
$$
|Z|^2 \ge K^2 + q^2 \ge {1 \over 2} K^2 + {\delta_1^2 \over 4\ep^2}
$$
This implies
\be\label{ep out}
\left \vert \tvarpi^\ep(Z) \right \vert \le {C \ep^2  \over 1+ \ep^2 K^2} \le {C \over 1+ K^2}
\ee
Along the same lines we can prove
\be\label{zero out}
\left \vert \tvarpi^0(Z) \right \vert = {c_w^2 \over |1 + \alpha_w Z^2|} \le {C \ep^2  \over 1+ \ep^2 K^2} 
\ee
provided we have \eqref{outside}.

{\bf Overall estimates:}
Let $q_{5}:=\min(q_{51},q_{52}).$ If $\Im(Z) \le q_5$ the observe that $Z$ satisfies either \eqref{inside} or \eqref{outside}.
Thus putting \eqref{smoothing bound part 1} together with \eqref{ep out} yields
$$
\left \vert \tvarpi^\ep(Z) \right \vert \le {C \over 1+ |Z|^2}. 
$$
This estimate holds for all $\ep \in (0,1)$.
This is Part (ii) of the lemma and  it implies Part (i).
Putting \eqref{error bound part 1} with \eqref{ep out}, \eqref{zero out} and the triangle inequality gives
$$
\left \vert \tvarpi^\ep(Z)-\tvarpi^0(Z) \right \vert  \le {C \ep^2  \over 1+ \ep^2 |Z|^2}.
$$
This estimate holds for all $\ep \in (0,1)$.
This implies Part (iii) and we are done.

\end{proof}

\subsection{Multiplier properties in $\R$}\label{mult R props}
This subsection contains the proofs of Lemmas \ref{varpi symbol props}, \ref{xi symbol props} and \ref{xi decomp}.  We begin with two more lemmas to prove Lemma \ref{varpi symbol props}.  

\begin{lemma}\label{lambda- triple prime}
$\lambda_-'''(k) < 0$ for $k \in (0,\pi/2)$.
\end{lemma}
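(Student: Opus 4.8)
The plan is to reduce everything to a single one-variable inequality via the double-angle identity and then complete a square. First I would rewrite the symbol: since $\cos^2 k = \tfrac12(1+\cos 2k)$, the quantity under the radical in $\tilde\varrho$ becomes $(1-w)^2 + 4w\cos^2 k = 1 + w^2 + 2w\cos 2k$, so that
$$
\lambda_-(k) = 1+w - \tilde\varrho(k), \qquad \tilde\varrho(k) = \sqrt{1+w^2 + 2w\cos(2k)}.
$$
Because $\lambda_-''' = -\tilde\varrho'''$, it suffices to prove $\tilde\varrho'''(k) > 0$ for $k \in (0,\pi/2)$. Introduce $g(u) := (A + B\cos u)^{1/2}$ with $A := 1+w^2$ and $B := 2w$; note $A - B = (w-1)^2 > 0$ since $w>1$, so $A > B > 0$. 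Then $\tilde\varrho(k) = g(2k)$, hence $\tilde\varrho'''(k) = 8\,g'''(2k)$, and since $2k$ ranges over $(0,\pi)$ the lemma reduces to the claim that $g'''(u) > 0$ for all $u \in (0,\pi)$.

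Next I would carry out the (routine) differentiation. Writing $f := A + B\cos u$, so $f' = -B\sin u$, $f'' = -B\cos u$, $f''' = B\sin u$, a direct computation for $g = f^{1/2}$ gives
$$
8 f^{5/2} g''' = 3(f')^3 - 6 f f' f'' + 4 f^2 f'''.
$$
Substituting the expressions for $f,f',f'',f'''$ and factoring out the common factor $B\sin u$ (which is positive on $(0,\pi)$, as is $f^{5/2}$) leaves
$$
8 f^{5/2} g''' = B\sin u\left[-3B^2\sin^2 u - 6B\cos u\,(A+B\cos u) + 4(A+B\cos u)^2\right].
$$
Setting $t = \cos u$ and using $\sin^2 u = 1-t^2$, the bracket expands and collapses to
$$
Q(t) := B^2 t^2 + 2AB\,t + 4A^2 - 3B^2.
$$

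Finally I would finish by completing the square: $Q(t) = (Bt + A)^2 + 3(A^2 - B^2) = (Bt+A)^2 + 3(A-B)(A+B)$, which is strictly positive for every $t$ since $A > B > 0$. Therefore $g'''(u) > 0$ on $(0,\pi)$, hence $\tilde\varrho'''(k) > 0$ on $(0,\pi/2)$, and so $\lambda_-'''(k) = -\tilde\varrho'''(k) < 0$ there, as claimed. I do not anticipate any real obstacle here: the only "trick" is recognizing the double-angle simplification (which turns the awkward $\cos^2 k$ into an affine function of $\cos 2k$) and then noticing that the resulting quadratic $Q$ is a sum of a square and a positive constant; the third-derivative computation itself is mechanical and I would present it compactly rather than step by step.
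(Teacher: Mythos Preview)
Your proof is correct and follows essentially the same strategy as the paper: compute $\tilde\varrho'''$ explicitly, factor out the manifestly positive piece, and reduce the sign to the positivity of a quadratic. The organizational choices differ slightly. The paper differentiates $\tilde\varrho$ directly as a function of $k$ and obtains a quadratic $q_w$ in the variable $X=\cos^2 k$, then shows $q_w>0$ by checking that its discriminant $-12w^2(w^4-w^2+1)$ is negative. Your double-angle substitution $\tilde\varrho(k)=g(2k)$ with $g(u)=\sqrt{A+B\cos u}$ streamlines the differentiation and yields a quadratic $Q$ in $t=\cos 2k$; in fact $Q(2X-1)=4\,q_w(X)$, so the two quadratics are the same object in different coordinates. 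Your completion of the square $Q(t)=(Bt+A)^2+3(A-B)(A+B)$ is arguably more transparent than the discriminant computation, since it makes the role of the hypothesis $w>1$ (i.e.\ $A-B=(w-1)^2>0$) immediately visible.
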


\begin{proof}
We find
$$
\tlambda_-'''(k) = -\frac{16w\sin(k)\cos(k)}{\tilde{\varrho}(k)^5}q_w(\cos^2(k)),
$$
where $q_w$ is the quadratic 
$$
q_w(X) = 4w^2X^2 + (2w^3-4w^2+2w)X + (w^4-w^3-w+1).
$$
The discriminant of $q_w$ is
$$
\Delta(w) := (2w^3-4w^2+2w)-4(4w^2)(w^4-w^3-w+1) = -12w^6+24w^4-12w^2 = -12w^2(w^4-w^2+1),
$$
and when $w > 1$,
$$
w^4-w^2+1 > w^4-2w^2+1 = (w^2-1)^2 > 0,
$$
hence $\Delta(w) < 0$, and so $q_w$ is either strictly positive or strictly negative.  Since $q_w$ has positive leading coefficient $4w^2$, $q_w$ is strictly positive, and because $\sin(k)\cos(k) > 0$ on $(0,\pi/2)$, we conclude $\tlambda_-'''(k) < 0$ on $(0,\pi/2)$.
\end{proof}

\begin{lemma}\label{quadratic bound below}
For all $\delta > 0$ there exists $C_{\text{quad},\delta} > 0$ such that $C_{\text{quad},\delta}k^2 \le c_w^2k^2-\tlambda_-(k)$ for all $k \in [\delta,\infty)$.
\end{lemma}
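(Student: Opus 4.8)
The plan is to reduce the claim to the single statement $\inf_{k \ge \delta} g(k)/k^2 > 0$, where $g(k) := c_w^2 k^2 - \tlambda_-(k)$, and then to establish this by splitting $[\delta,\infty)$ into a tail and a compact piece. Once $\inf_{k\ge\delta} g(k)/k^2 =: C_{\text{quad},\delta} > 0$ is known, the inequality $C_{\text{quad},\delta} k^2 \le c_w^2 k^2 - \tlambda_-(k)$ on $[\delta,\infty)$ is immediate.

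For the tail I would use the uniform bound $0 \le \tlambda_-(k) \le 2$ from \eqref{lambda bounds} in Lemma \ref{lambda lemma}: setting $K_0 := \max\{\delta, 2/c_w\}$, for every $k \ge K_0$ one has
\[
\frac{g(k)}{k^2} = c_w^2 - \frac{\tlambda_-(k)}{k^2} \ge c_w^2 - \frac{2}{k^2} \ge c_w^2 - \frac{c_w^2}{2} = \frac{c_w^2}{2}.
\]
For the remaining interval $[\delta, K_0]$ (possibly a single point, if $\delta \ge 2/c_w$) I would invoke compactness: since $k \ge \delta > 0$ there, the map $k \mapsto g(k)/k^2$ is continuous on $[\delta, K_0]$, and by the strict supersonic inequality \eqref{super sonic} we have $g(k) = c_w^2 k^2 - \tlambda_-(k) > 0$ for every $k \ne 0$, hence throughout $[\delta, K_0]$. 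By the extreme value theorem $g(k)/k^2$ attains a strictly positive minimum $m$ on this set. Taking $C_{\text{quad},\delta} := \min\{m, c_w^2/2\} > 0$ then gives $g(k) \ge C_{\text{quad},\delta}\, k^2$ on all of $[\delta,\infty)$.

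I do not anticipate any genuine obstacle: the argument uses only the boundedness of $\tlambda_-$ and the strict positivity of $c_w^2 k^2 - \tlambda_-(k)$ away from the origin, both already recorded in the excerpt, together with compactness. The one point worth flagging is that the hypothesis $k \ge \delta$ is essential: near $k = 0$ the Maclaurin expansion $\tlambda_-(k) = c_w^2 k^2 - \alpha_w k^4 + \cdots$ gives $g(k) \sim \alpha_w k^4$, so no quadratic lower bound can survive there, and it is precisely this small-$k$ regime that Lemma \ref{lambda- triple prime} is set up to treat separately.
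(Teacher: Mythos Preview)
Your argument is correct but takes a genuinely different route from the paper's. You reduce to showing $\inf_{k\ge\delta}\bigl(c_w^2 - \tlambda_-(k)/k^2\bigr)>0$ and obtain this by a tail estimate (using $\tlambda_-\le 2$) plus compactness on $[\delta,K_0]$ together with the strict positivity from \eqref{super sonic}. The paper instead defines $\tzeta(k)=\tlambda_-(k)/k^2$, proves that $\tzeta$ is \emph{decreasing} on $(0,\pi/2)$ by showing $k\tlambda_-'(k)-2\tlambda_-(k)=k^3\int_0^1 s(1-s)\tlambda_-'''(sk)\,ds<0$ via Lemma~\ref{lambda- triple prime}, and then handles $k\ge\pi/2$ with the crude bound $\tzeta(k)\le 8/\pi^2$; this yields the semi-explicit constant $C_{\text{quad},\delta}=(c_w^2-\tzeta(\delta))/10$. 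Your approach is more elementary and avoids Lemma~\ref{lambda- triple prime} entirely; the paper's monotonicity argument buys a constant with explicit $\delta$-dependence, but since the downstream application (Lemma~\ref{varpi symbol props}) only uses $C_{\text{quad},\delta}$ as a black box, your shorter proof would serve just as well. Your closing remark about Lemma~\ref{lambda- triple prime} treating the small-$k$ regime ``separately'' is slightly off: in the paper that lemma is used \emph{inside} the proof of the present result, not elsewhere.
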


\begin{proof}
Without loss of generality, suppose $0 < \delta < \pi/2$.  From the proof of Lemma \ref{pi lemma} the function
$$
\tzeta(k) := \begin{cases}
\dfrac{\tlambda_-(k)}{k^2}, &k\ne 0 \\
\\
c_w^2, &k= 0 \\
\end{cases}
$$
is bounded, analytic, and nonnegative on $\R$, and it is an easy computation to see that $\tzeta$ is Lipschitz as well.  Next,
$$
\tzeta'(k) = \frac{k^2\tlambda_-'(k)-2k\tlambda_-(k)}{k^2}, k \ne 0.
$$
We will show $k\tlambda_-'(k)-2\tlambda_-(k) < 0$ for all $k \in (0,\pi/2)$, which implies $\tzeta'(k) < 0$ on $(0,\pi/2)$ and therefore that $\tzeta$ is decreasing there.  First, Taylor's theorem gives
$$
\tlambda_-(k) = \tlambda_-(0) + \tlambda_-'(0)k + k^2\int_0^1 (1-t)\tlambda_-''(sk) \ ds = k^2\int_0^1(1-t)\tlambda_-''(sk) \ ds.
$$
Then differentiating under the integral, we find
$$
\tlambda_-'(k) = 2k\int_0^1(1-s)\tlambda_-''(sk) \ ds + k^2\int_0^1s(1-s)\tlambda_-'''(sk) \ ds,
$$
hence
$$
k\tlambda_-'(k)-2\tlambda_-(k) = k^3\int_0^1s(1-s)\tlambda_-'''(sk) \ ds < 0
$$
by Lemma \ref{lambda- triple prime}.

So, $\tzeta$ is decreasing on $(0,\pi/2)$, and therefore
$$
c_w^2 = \tzeta(0) > \tzeta(\delta) > \tzeta(k)
$$
for $k \in (\delta,\pi/2)$, from which
\be\label{tzj 1}
0 < c_w^2-\left(\frac{c_w^2-\tzeta(\delta)}{10}\right)-\tzeta(k), k \in (\delta,\pi/2).
\ee
The inequality \eqref{tzj 1} remains true at $k=\pi/2$ since $\tzeta(\pi/2) = 0$.  When $k > \pi/2$, observe that
$$
\tzeta(k) \le \frac{\tlambda_-(k)}{\left(\dfrac{\pi}{2}\right)^2} = \frac{4\tlambda_-(k)}{\pi^2} \le \frac{8}{\pi^2} = \tzeta\left(\frac{\pi}{2}\right)
$$
by \eqref{lambda bounds}, thus
\begin{multline*}
c_w^2-\left(\frac{c_w^2-\tzeta(\delta)}{10}\right) - \tzeta(k) \ge c_w^2-\left(\frac{c_w^2-\tzeta(\delta)}{10}\right)-\tzeta\left(\frac{\pi}{2}\right) > \frac{9c_w^2}{10}-\frac{\tzeta(\delta)}{10}-\tzeta\left(\frac{\pi}{2}\right) \\
\\
> \frac{9c_w^2}{10}- \left(1+\frac{1}{10}\right)\frac{8}{\pi^2} = \frac{9}{10}\left(\frac{2w}{w+1}\right) - \frac{11}{10}\left(\frac{8}{\pi^2}\right) > 0
\end{multline*}
since $w>1$. So, we take $C_{\text{quad},\delta} = (c_w^2-\tzeta(\delta))/10$.
\end{proof}

\begin{proof} (of Lemma \ref{varpi symbol props})
We begin with some comments on our choice of $\ep_{12}$.  From \eqref{size of kc} in Part (vi) of Lemma \ref{lambda lemma}, we have
\be\label{K-ep bounds}
m_*(w) := \sqrt{\frac{2w}{c_w^2+1}} \le \ep K_{\ep} \le \frac{\sqrt{2+2w}}{c_w} =: m^*(w), \ 0 < \ep < 1. 
\ee
By taking $\ep_* = \ep_*(w)$ close to 0, we will have
\be\label{ep min}
K_{\ep} \ge 2 \mand m_*(w) - \ep_* > 0. 
\ee
Consequently,
\be\label{ep aux}
K_{\ep} + t \ge 1 \mand \ep K_{\ep} + \ep{t} \ge m_*(w) - \ep_* \text{ for } |t| \le 1, 0 < \ep < \ep_*. 
\ee
With $\ep_4$ as in Lemma \ref{pi lemma}, set $\ep_{11} = \min\{\ep_*,\ep_4\}$.  Then Lemma \ref{pi lemma} gives $C > 0$ such that 
$$
\sup_{0 < \ep < \ep_{11}} |\tvarpi^{\ep}(K)| \le \frac{C}{1+K^2}, \ K \in \R.
$$
Then for each $k \in \Z$, 
$$
\sup_{0 < \ep < \ep_{11}} |\tvarpi^{\ep,K_{\ep}+t}(k)| = \sup_{0 < \ep < \ep_0} |\tvarpi^{\ep}((K_{\ep}+t)k)| \le \frac{C}{1+((K_{\ep}+t)k)^2}.
$$
Using \eqref{ep aux}, we have
$$
\frac{C}{1+((K_{\ep}+t)k)^2} \le \frac{C}{1+k^2}
$$
for all $k \in \Z$. This proves the first estimate \eqref{varpi bounded} for the multiplier $\tvarpi^{\ep,K_{\ep}+t}$.

We prove the Lipschitz estimate \eqref{varpi lip} only when $k \ge 1$ as when $k=0$ the left side of this inequality is zero, and evenness takes care of $k\le -1$.  Fix $0 < \ep < \ep_{12}, k \ge 1$, and $|t| \le 1$ and abbreviate $K := \ep({K_{\ep}}+t)k$ and $\grave{K} := \ep({K_{\ep}}+\grave{t})k$ to find
\begin{align*}
\varpi^{\ep,{K_{\ep}}+t}(k) - \varpi^{\ep,{K_{\ep}}+\grave{t}}(k) &= \frac{\ep^2\tlambda_-(K)}{(c_w^2+\ep^2)K^2-\tlambda_-(K)} - \frac{\ep^2\tlambda_-(\grave{K})}{(c_w^2+\ep^2)K^2-\tlambda_-(K)} \\
\\
&+ \frac{\ep^2\tlambda_-(\grave{K})}{(c_w^2+\ep^2)K^2-\tlambda_-(K)} - \frac{\ep^2\tlambda_-(\grave{K})}{(c_w^2+\ep^2)\grave{K}^2-\tlambda_-(\grave{K})} \\
\\
&= \frac{\ep^2(\tlambda_-(K)-\tlambda_-(\grave{K}))}{(c_w^2+\ep^2)K^2-\tlambda_-(K)} \\
\\
&+ \frac{\ep^2\tlambda_-(\grave{K})\big(c_w^2+\ep^2)\grave{K}^2-\tlambda_-(\grave{K})\big)-\ep^2\tlambda_-(\grave{K})\big((c_w^2+\ep^2)K^2-\tlambda_-(K)\big)}{\big((c_w^2+\ep^2)K^2-\tlambda_-(K)\big)\big((c_w^2+\ep^2)\grave{K}^2-\tlambda_-(\grave{K})\big)}
\end{align*}

Call the last two terms above $I$ and $II$.  Set $\delta = m_*(w) - \ep_*$ and invoke Lemma \ref{quadratic bound below} to find $C_{\text{quad},\delta} > 0$ such that 
\be\label{use of quad bound}
C_{\text{quad},\delta}K^2 \le c_w^2K^2-\tlambda_-(K) \le (c_w^2+\ep^2)K^2-\tlambda_-(K)
\ee
for all $K \in [\delta,\infty)$. By \eqref{ep aux} we have $\ep(K_{\ep}+t)k \ge \delta$ for all $0 < \ep < 1, |t| \le 1, k \ge 1$.  Then using the additional estimates $\Lip(\tlambda_-) \le 2$ from \eqref{lambda bounds} and ${K_{\ep}}+t \ge 1$, which is \eqref{ep aux}, we estimate I by
$$
|I| \le \left|\frac{\ep^2(\tlambda_-(K)-\tlambda_-(\grave{K}))}{(c_w^2+\ep^2)K^2-\tlambda_-(K)}\right| \le \frac{2\ep^2K-\grave{K}|}{C_{\text{quad},\delta}|K|^2} =\frac{2\ep^3|k||t-\grave{t}|}{C_{\text{quad},\delta}\ep^2({K_{\ep}}+t)^2k^2} \le 2\ep_{11}|t-\grave{t}|.
$$

Next, we rewrite $II$ as
$$
II = \frac{\ep^2(c_w^2+\ep^2)\tlambda_-(\grave{K})(\grave{K}^2-K^2) + \ep^2\tlambda_-(\grave{K})(\tlambda_-(K)-\tlambda_-(\grave{K}))}{\big((c_w^2+\ep^2)K^2-\tlambda_-(K)\big)\big((c_w^2+\ep^2)\grave{K}^2-\tlambda_-(\grave{K})\big)},
$$
hence
$$
|II| \le \frac{\ep^2(c_w^2+\ep^2)|\tlambda_-(\grave{K})(\grave{K}^2-K^2)|}{C_{\text{quad},\delta}^2|K|^2|\grave{K}|^2} + \frac{\ep^2|\tlambda_-(\grave{K})(\tlambda_-(K)-\tlambda_-(\grave{K}))|}{C_{\text{quad},\delta}^2|K|^2|\grave{K}|^2}.
$$
Labeling these two terms as $III$ and $IV$, we find
\begin{align*}
|III| &\le \frac{2\ep^4(c_w^2+\ep^2)k^2|({K_{\ep}}+t)+({K_{\ep}}+\grave{t})||t-\grave{t}|}{C_{\text{quad},\delta}^2\ep^4k^4({K_{\ep}}+t)^2({K_{\ep}}+\grave{t})^2} \\
\\
&\le \frac{2(c_w^2+\ep^2)}{C_{\text{quad},\delta}^2}\left(\frac{|{K_{\ep}}+t|}{({K_{\ep}}+t)^2({K_{\ep}}+\grave{t})^2} + \frac{|{K_{\ep}}+\grave{t}|}{({K_{\ep}}+t)^2({K_{\ep}}+t)^2}\right)|t-\grave{t}| \\
\\
&\le \frac{4(c_w^2+\ep_{11}^2)}{C_{\text{quad},\delta}^2}|t-\grave{t}|
\end{align*}
and, since $\grave{K}^2\tzeta(\grave{K}) = \tlambda_-(\grave{K})$,
$$
|IV| \le \frac{2\ep^2|\grave{K}^2||\tzeta(\grave{K})||K-\grave{K}|}{C_{\text{quad},\delta}^2|K|^2|\grave{K}|^2} \le \frac{2\ep^5c_w^2({K_{\ep}}+\grave{t})^2k^3|t-\grave{t}|}{C_{\text{quad},\delta}^2\ep^4k^4({K_{\ep}}+t)^2({K_{\ep}}+\grave{t})^2} \le \frac{2c_w^2\ep_{11}}{C_{\text{quad},\delta}^2}|t-\grave{t}|.
$$
Together, the estimates on $I$, $III$, and $IV$ give \eqref{varpi lip} for $k \ge 1$, which satisfies our purposes.
\end{proof}

\begin{proof} (of Lemma \ref{xi symbol props}) 
\begin{enumerate}[(i)]
\item Since $\tilde{\Pi}_2(\pm1) = 0$ and $\txi_{\cep}(0) = 2+2w$, we show
\be\label{xi bdd below}
0 < \inf_{\substack{0 < \ep < \ep_{12} \\ |k| \ge 2 \\ |t| \le 1}} |\tilde{\xi}_{\cep}(\ep({K_{\ep}}+t)k)| = :m_{\txi}
\ee
for an appropriate $\ep_{12} >0$ and set
\be\label{C xi min}
C_{\txi \min} = \min\{m_{\txi},2+2w\} \mand C_{\txi \max} = C_{\txi \min}^{-1}.
\ee

We begin with some seemingly unrelated calculations which result in our choice of $\ep_{12}$.  Set
$$
f(\gamma,w) := -(1+\gamma)^2\left(\frac{4w^2}{3w+1}\right)+2+2w \mand g(w) := \lim_{\gamma\to1^-} f(\gamma,w) = -\frac{16w^2}{3w+1}+2+2w.
$$
Elementary algebra shows that for $w > 1$, $g(w) < 0$ if and only if $5w^2-4w-1 > 0$, and it is the case that this quadratic is positive on $(1,\infty)$. So, we may find some $\gamma_* = \gamma_*(w) \in (0,1)$ such that $f(\gamma,w) < 0$.  

Let $\ep_{12} = \ep_{12}(\gamma,w) > 0$ be so small that the inequalities \eqref{ep min} from the proof of Part (i) of Lemma \ref{varpi symbol props} above hold with $\ep_* = \ep_{12}$.  Furthermore, require $\ep_{12}$ to satisfy
\be\label{ep gamma}
2m_*(w) - 2\ep_* \ge (1+\gamma_*)m_*(w). 
\ee
and suppose that $\ep_{12}$ is small enough that $|c_{\ep}-c_w| < \delta$ for $0 < \ep < \ep_{12}$, where $\delta$ is from Lemma \ref{xi lemma}.

Then for $k \ge 2, |t| \le 1, 0 < \ep < \ep_*$, we have
\begin{align*}
\ep(K_{\ep}+t)k &= \ep K_{\ep}(k-1) + \ep{t}k + \ep K_{\ep} \\
&\ge m_*(w)(k-1)-\ep_*k + m_*(w) \\
&= (m_*(w)-\ep_*)k \\
&\ge 2(m_*(w)-\ep_*)k \\
&\ge (1+\gamma_*)m_*(w) \text{ by \eqref{ep gamma}}.
\end{align*}

Now, observe that $\txi_c$ is increasing on $(0,\infty)$ whenever $c \ge c_w$ as from \eqref{lambda derivative bounds}, when $k > 0$ we have 
\be\label{xi aux 1}
\txi_c'(k) = 2c^2k- \tlambda_+'(k) \ge 2c_w^2k- \tlambda_+'(k) \ge 2c_w^2|k|-|\tlambda_+'(k)| \ge 0.
\ee
Then the work above shows
\begin{align*}
\txi_{\cep}(\ep({K_{\ep}}+t)k) &\le \txi_{c_w}(\ep({K_{\ep}}+t)k) \\
&\le \txi_{c_w}((1+\gamma_*)m_*(w)) \\
&= -c_w^2[(1+\gamma_*)m_*(w)]^2 + \tilde{\lambda}_+((1+\gamma_*)m_*(w)) \\
&\le  -c_w^2[(1+\gamma_*)m_*(w)]^2 +2+2w \\
&\le f(\gamma_*,w) < 0.
\end{align*}
Thus \eqref{xi bdd below} follows with $0 < |f(\gamma_*,w)| \le m_{\txi}$.

\item To prove the Lipschitz estimate \eqref{xi lip}, note that it already holds when $k=0$, so for $k \ge 2$ set $K = \ep({K_{\ep}}+t)k, \grave{K} = \ep({K_{\ep}}+\grave{t})k$ and compute
\begin{align*}
\left|\frac{1}{\txi_{\cep}(K)} - \frac{1}{\txi_{\cep}(\grave{K})}\right| &= \left|\frac{(c_w^2+\ep^2)\grave{K}^2-\tlambda_+(\grave{K})-\big((c_w^2+\ep^2)K^2-\tlambda_+(K)\big)}{\txi_{\cep}(K)\txi_{\cep}(\grave{K})}\right| \\
\\
&\le \frac{(c_w^2+\ep^2)|\grave{K}^2-K^2|}{\left|\txi_{\cep}(K)\txi_{\cep}(\grave{K})\right|} + \frac{|\tlambda_+(K)-\tlambda_+(\grave{K})|}{\left|\txi_{\cep}(K)\txi_{\cep}(\grave{K})\right|}
\end{align*}
Call the two terms above $I$ and $II$.  By choice of $\ep_{12}$ above, Lemma \ref{xi lemma} furnishes $C,R > 0$ such that when $k \ge R$ and $0 < \ep < 2$, then
\be\label{xi intermediate bound}
\frac{1}{|\txi_{\cep}(k)|} \le \frac{1}{Ck^2} 
\ee
Since $m_*(w) - \ep_{12} > 0$ by \eqref{ep min}, we may set  $R_* = R/(m_*(w)-\ep_{12})$ to see that when $k > R_*$, then 
$$
R < (m_*(w)-\ep_{12})k \le \ep({K_{\ep}}+t)k.
$$

{\bf{Estimates when $2 \le k \le R_*$.}} With $C_{\txi \min}$ as in \eqref{C xi min}, we bound
\begin{align*}
|I| \le \frac{\ep_{12}^2(c_w^2+\ep^2)|\grave{K}+K|\ep|k||t-\grave{t}|}{C_{\txi \min}^2} &= \frac{\ep_{12}^2(c_w^2+\ep^2)\ep{k}^2|2\ep{K_{\ep}}+\ep{t}+\ep\grave{t}||t-\grave{t}|}{C_{\txi \min}^2} \\
\\
&\le \left(\frac{2\ep_{12}^3R_*^2(c_w^2+\ep_{12}^2)(\beta(w)+\ep_{12})}{C_{\txi \min}^2}\right)|t-\grave{t}|
\end{align*}
and using $\Lip(\tlambda_+) \le 2$,
$$
|II| \le \frac{2\ep_{12}^2|K-\grave{K}|}{C_{\txi \min}^2} = \frac{2\ep|k|||t-\grave{t}|}{C_{\txi \min}^2} \le \left(\frac{2\ep_{12}^3R_*}{C_{\txi \min}^2}\right)|t-\grave{t}|.
$$

{\bf{Estimates when $k > R_*$.}} Using \eqref{xi intermediate bound} we have
$$
|I| \le \frac{\ep^2(c_w^2+\ep^2)|K+\grave{K}||K-\grave{K}|}{C^2|K|^2|\grave{K}|^2} = \frac{\ep^4(c_w^2+\ep^2)k^2|({K_{\ep}}+t) + ({K_{\ep}}+\grave{t})||t-\grave{t}|}{C^2\ep^4k^4({K_{\ep}}+t)^2(K_{\ep}+\grave{t})^2} \le \left(\frac{2(c_w^2+\ep_{12}^2)}{C^2R_*^2}\right)|t-\grave{t}|,
$$
since by \eqref{ep min}
$$
\frac{|({K_{\ep}}+t)+({K_{\ep}}+\grave{t})|}{({K_{\ep}}+t)^2(K_{\ep}+\grave{t})^2} \le 2.
$$

Next, adding and subtracting $K\tzeta(\grave{K})$ and using the triangle inequality in the numerator of {II} and \eqref{xi intermediate bound} in the resulting denominators gives
\begin{align*}
|II| &\le \frac{\ep^2|K^2\tzeta(K)-\grave{K}^2\tzeta(\grave{K})|}{C^2|K|^2|\grave{K}|^2} \\
\\
&\le  \frac{\ep^2K^2|\tzeta(K)-\tzeta(\grave{K})|}{C^2|{K}|^2|\grave{K}|^2} + \frac{\ep^2|\tzeta(\grave{K})||K^2-\grave{K}^2|}{C^2|K|^2|\grave{K}|^2} \\
\\
&\le \frac{\ep^2\Lip(\tzeta)|K-\grave{K}|}{C^2|\grave{K}|^2} + \frac{\ep^2c_w^2|K+\grave{K}||K-\grave{K}|}{C^2|K|^2|\grave{K}|^2} \\
\\
&\le \frac{\ep^3|k|\Lip(\tzeta)|t-\grave{t}|}{C^2\ep^2|{K_{\ep}}+t|^2|k|^2} + \frac{c_w^2\ep^4k^2|({K_{\ep}}+t)+({K_{\ep}}+\grave{t})||t-\grave{t}|}{C^2\ep^4k^4({K_{\ep}}+t)^2(K_{\ep}+\grave{t})^2} \\
\\
&\le \left(\frac{\Lip(\tzeta)\ep_{12}+2c_w^2}{C^2}\right)|t-\grave{t}|
\end{align*}
by reasoning similar to that above. 
\end{enumerate}
\end{proof} 

\begin{proof} (of Lemma \ref{xi decomp}) 
Taylor's theorem and some straightforward algebra imply
$$
\txi_{\cep}(\ep{K_{\ep}}+\tau)-\txi_{\cep}'(\ep{K_{\ep}})\tau = \tau^2\left(\int_0^1(1-s)\tlambda_+''(\ep{K_{\ep}}+s\tau) \ ds - (c_w^2+\ep^2)\right)
$$
Set
$$
R_{\ep}(\tau) := \int_0^1(1-s)\tlambda_+''(\ep{K_{\ep}}+s\tau) \ ds - (c_w^2+\ep^2).
$$
The estimates \eqref{R bounds} for $R_{\ep}$ follow directly from properties of $\tlambda_-'''$ so long as $\ep$ is bounded.
\end{proof}

\section{Conclusions/Questions/Future Directions}\label{CR}

We have considered a diatomic lattice where the only spatially variable material property was the particles' masses. We also 
took a very basic form for the spring force $F_s(r) = -k_s r - b_s r^2$. 
We are confident that the results here can be extended to much more general situations, be it including more complicated and spatially heterogeneous springs or assuming that the number of ``species" of masses and springs is greater than two, {\it i.e.} a polymer lattice.

%

As mentioned above, we do not yet have lower bounds for the size of the periodic part's amplitude, though the methods used in \cite{sun} provide a roadmap 
for establishing them.
Again, we expect that those methods will show that that the periodic part is genuinely non-zero, at least for almost all $\ep$.
But if the periodic part is non-zero then necessarily the total mechanical energy of the nanopteron solution will be {\it infinite}. 
This stands in stark contrast to the solitary waves for monatomic FPUT, which are not only finite energy but also constrained minimizers of an appropriate related energy functional \cite{friesecke-wattis}.

Nonetheless we contend that the nanopteron solutions we construct are essential for understanding the long time behavior of small amplitude long wave solutions for diatomic FPUT. 
It is known that the monatomic solitary waves are asymptotically stable \cite{friesecke-pego2}-\cite{friesecke-pego4} and, moreover, there are stable multisoliton-like solutions \cite{hoffman-wayne1} \cite{mizumachi}. It is widely held that all small, long wave initial conditions for monatomic FPUT will satisfy that the soliton resolution conjecture, which is to say that they  will converge to a linear superposition of well-separated solitary waves plus a dispersive tail, also called ``radiation."

But if, as we expect, there are no localized traveling long wave solutions in diatomic FPUT then clearly some other asymptotic behavior takes place.
We know, by virtue of the approximation results in \cite{gaison-etal} and \cite{chirilus-bruckner-etal}, that long wave initial data for \eqref{r eqn} remains close to suitably scaled solutions of the KdV equation
for very long times. Specifically for times $t$ up to  $\O(1/\ep^3)$ where $\ep$ is consistent with its meaning here.
And since the soliton resolution conjecture is known to be true ({\it via} integrability) for KdV that means we can expect the solution of diatomic FPUT to resolve, at least temporarily, into a sum of $\sech^2$ like solitary waves. But on time scales beyond this those approximation theorems tell us {\it nothing}. 

There are any number of possibilities for what happens afterwards. One possibility, which we favor, is that there is a very slow ``leak" of energy from the acoustic branch into the optical branch that will eventually erode the solution into nothing but radiation. That is to say, we conjecture the existence of metastable solutions which look for very long times like localized solitary waves but eventually converge to zero.
Another possibility is that there is a heretofore unknown finite energy coherent structure  with a more complicated temporal behavior to which the solution converges---for instance something akin to the traveling breathers that exist in  modified KdV 
or a localized quasi-periodic solution.
Yet another scenario is that there are a discrete set of choices for $\ep$ where the ripple vanishes.
These waves would then, in a rough sense,  quantize the possible behavior as $t$ goes to infinity.

These questions are likely very difficult to settle. Note that the time scales are so long that only very careful numerics performed on very large domains will shed any light. 
And so, as we stated in the introduction, we feel our work here raises many interesting questions.

\bibliographystyle{alpha}
\bibliography{massdimer-longwavelimit}{}

\end{document}